\newcounter{author}
\renewcommand*\author[1]{%
	\stepcounter{author}%
	\ifnum\c@author=1
	\gdef\@author{#1}%
	\else
	\xdef\@author{\unexpanded\expandafter{\@author\and#1}}%
	\fi
	\csgdef{author@\the\c@author}{#1}}
\newcommand*\email[1]{%
	\csgdef{email@\the\c@author}{#1}}
\newcommand*\address[1]{%
	\csgdef{address@\the\c@author}{#1}}
	\xdef\author@count{\the\c@author}%
\newcommand*\print@authors{%
	\ifnum\c@author>\author@count
	\else
	\print@author{\the\c@author}%
	\advance\c@author by 1
	\expandafter\print@authors
	\fi}
\newcommand*\print@author[1]{%
	\par\medskip
	\begin{tabular}{@{}l@{}}%
		\csuse{address@#1}\\
		\textit{Email address}:
		\csuse{email@#1}
\end{tabular}}
\let\emptyset\varnothing
\newtheorem*{lemma*}{Lemma}
\newtheorem{theorem}{Theorem}[section]
\newtheorem{proposition}[theorem]{Proposition}
\newtheorem{lemma}[theorem]{Lemma}
\newtheorem{corollary}[theorem]{Corollary}
\theoremstyle{definition}
\newtheorem{claim}[theorem]{Claim}
\newtheorem*{claim*}{Claim}
\newtheorem{definition}[theorem]{Definition}
\newtheorem{example}[theorem]{Example}
\theoremstyle{remark}
\newtheorem{remark}[theorem]{Remark}
\newcommand{\ind}{\textup{Ind}}
\newcommand{\supp}{\textup{Supp}}
\newcommand{\ev}{\textup{ev}}
\newcommand{\topo}{{TOP}}
\newcommand{\id}{\textup{Id}}
\newcommand{\sub}{\textup{Sub}}
\newcommand{\ord}{\textup{ord}}
\newcommand{\fin}{\textup{fin}}
\newcommand{\homt}{\textup{Aut}_h}
\newcommand{\rel}{\mathrm{\,rel\,}}
\newcommand{\GP}{\mathcal{GP}}
\mathchardef\mhyphen="2D
\begin{document}

\title{Additivity of higher rho invariants and nonrigidity of topological manifolds}

\author{Shmuel Weinberger\thanks{partially supported by NSF 1510178.}}
\address{(Shmuel Weinberger) Department of Mathematics, University of Chicago }
\email{shmuel@math.uchicago.edu}

\author{Zhizhang Xie\thanks{partially supported by NSF 1500823, NSF 1800737}}
\address{(Zhizhang Xie) Department of Mathematics, Texas A\&M University }
\email{xie@math.tamu.edu}

\author{Guoliang Yu\thanks{partially supported by NSF 1700021, NSF 1564398 and Simons Fellows Program.}}
\address{(Guoliang Yu) Department of
Mathematics, Texas A\&M University}
\email{guoliangyu@math.tamu.edu}

\date{}

\maketitle

{\centering\footnotesize \textit{In memory of  Ron Douglas, Andrew Ranicki and John Roe with a deep sense of loss.} \par}

\begin{abstract}
Let $X$ be a closed oriented connected topological manifold of dimension $n\geq 5$. The structure group $\mathcal S^\topo(X)$ is the abelian group of equivalence classes of all pairs $(f, M)$ such that $M$ is a closed oriented manifold and $f\colon M \to X$ is an orientation-preserving homotopy equivalence.  The main purpose of this article is to prove that a higher rho invariant defines a group homomorphism from the structure group $\mathcal S^\topo(X)$ of $X$ to the analytic structure group $K_n(C^\ast_{L,0}(\widetilde X)^\Gamma)$ of $X$. Here $\widetilde X$ is the universal cover of $X$, $\Gamma = \pi_1 X$ is the fundamental group of $X$, and $C^\ast_{L,0}(\widetilde X)^\Gamma$ is a certain $C^\ast$-algebra. In fact, we introduce a higher rho invariant map on the homology manifold structure group of a closed oriented connected \emph{topological} manifold, and prove its additivity. This higher rho invariant map restricts to the higher rho invariant map on the topological structure group. More generally, the same techniques developed in this paper can be applied to define a higher rho invariant map on the homology manifold structure group of a closed oriented connected \emph{homology} manifold. As an application, we use the additivity of the higher rho invariant map to study non-rigidity of topological manifolds. More precisely,  we give a lower bound for the free rank of the \emph{algebraically reduced} structure group of $X$ by the number of torsion elements in $\pi_1 X$. Here the algebraic reduced structure group of $X$ is the quotient of $\mathcal S^\topo(X)$ modulo a certain action of self-homotopy equivalences of $X$. We also introduce a notion of homological higher rho invariant, which can be used to detect many elements in the structure group of a  closed oriented topological manifold, even when the fundamental group  of the manifold is torsion free. In particular, we apply this homological higher rho invariant to show that the structure group is not finitely generated for a class of manifolds.  
\end{abstract}

\section{Introduction}

Let $D$ be an elliptic operator on a closed manifold $M$ of dimension $n$. Suppose $\widetilde M$ is the universal cover of $M$, and $\widetilde D$ is the lift of $D$ onto $\widetilde M$. Then $\widetilde D$ defines a higher index class in $K_n(C^\ast_r(\pi_1 M))$, where $\pi_1 M$ is the fundamental group of $M$ and $K_n(C^\ast_r(\pi_1 M))$ is the $K$-theory of the reduced group $C^\ast$-algebra $C^\ast_r(\pi_1 M)$. This higher index class is an obstruction to the invertibility of $\widetilde D$. It is a far-reaching generalization of the classical Fredholm index and plays a fundamental role in the studies of many problems in geometry and topology such as the Novikov conjecture and the Gromov-Lawson-Rosenberg conjecture. Higher index classes are often referred to as primary invariants. When the higher index class of an elliptic operator is trivial and given a specific trivialization, a secondary index theoretic invariant naturally arises.   This secondary invariant is  called the higher rho invariant in acknowledgement of the discussion in \cite[Chapter 14E]{MR1687388} of invariants for odd dimensional manifolds with finite fundamental group and the discussion in \cite{A-P-S75b} of invariants of odd dimensional manifolds with flat bundles, and its connection to index theory for manifolds with boundary. It serves as an obstruction of the locality of the inverse of an invertible elliptic operator.   In our current context,   if there is  an orientation-preserving homotopy equivalence between two oriented closed manifolds, then the higher index of the signature operator on the disjoint union of the two manifolds (one of them with opposite orientation) is trivial with a trivialization given by the homotopy equivalence. Hence such a homotopy equivalence naturally defines a higher rho invariant. More generally, the notion of higher rho invariants can be defined for homotopy equivalences between topological manifolds, and it is a powerful tool to detect whether a homotopy equivalence can be ``deformed" into a homeomorphism.  The main purpose of this paper is to prove that the higher rho invariant defines a group homomorphism on the structure group of a \emph{topological} manifold. As an application, we use the higher rho invariant to detect elements in reduced structure groups of topological manifolds. 

Let $X$ be a closed oriented connected \emph{topological} manifold of dimension $n$. The structure group $\mathcal S^\topo(X)$ is the abelian group of equivalence classes of all pairs $(f, M)$ such that $M$ is a closed oriented manifold and $f\colon M \to X$ is an orientation-preserving homotopy equivalence. In this version, the zero element corresponds to the identity map $\id\colon X \to X$. Of course replacing $X$ by a homotopy equivalent manifold $Y$ means that the new zero element is $\id\colon Y \to Y$. Thus, although it transpires that $\mathcal S^\topo(X)$ is a functor, its identification with the set of manifolds homotopy equivalent to $X$ is that of collapsing a torsor to a group by choosing a base element. The main result of this article is to prove that the higher rho invariant defines a group homomorphism from $\mathcal S^\topo(X)$ to $K_n(C^\ast_{L,0}(\widetilde X)^\Gamma)$, where $\Gamma = \pi_1 X$ is the fundamental group of $X$, $\widetilde X$ is the universal cover of $X$ and $C^\ast_{L,0}(\widetilde X)^\Gamma$ is a certain geometric $C^\ast$-algebra. The definition of $C^\ast_{L,0}(\widetilde X)^\Gamma$ is reviewed in Section $\ref{sec:prem}$, and the precise definition of the higher rho invariant is given in Section $\ref{sec:highrho}$. 

Perhaps the simplest interpretation of the abelian group structure on $\mathcal S^\topo(X)$ can be described through a periodicity map\footnote{The periodicity map was first given by Siebemann \cite[Appendix C to Essay V]{MR0645390}, with a correction by Nicas \cite{MR668807} (cf. the discussion after Proposition $\ref{prop:strid}$). We denote this Siebenmann periodicity map by $\mathcal{SP}$. A geometric construction of a periodicity map, denoted by $\GP$,  was given by Cappell and the first author \cite{MR873283}, cf. the discussion after Proposition $\ref{prop:strid}$ below. At the time when Cappell and the first author wrote their paper, it was not known whether the map $\GP$ coincides with the Siebenmann periodicity map $\mathcal{SP}$. Later, Crowley and Macko showed that a quaternionic (resp. octonionic) version of $\GP$ coincides with $\mathcal{SP}^2$ (resp. $\mathcal{SP}^4$) \cite{MR2826928}. In this paper, we show that $\GP$ indeed coincides with $\mathcal{SP}$ by applying the device of periodicity spaces from the work of the first author and Yan \cite{MR1823952,MR2154831} (cf. Proposition $\ref{prop:strhom}$ and Theorem $\ref{thm:gp=sp}$).}, which is an injection from $\mathcal S^\topo(X)$ to $\mathcal S^\topo_\partial(X\times D^4)$, where $D^4$ is the $4$-dim Euclidean unit ball and $\mathcal S^\topo_\partial(X\times D^4)$ is the $\rel \partial$ version of structure set of $X\times D^4$ (cf. Definition $\ref{def:relbdry}$ below).   The set $\mathcal S^\topo_\partial(X\times D^4)$  carries a natural abelian group structure by stacking (cf. Definition $\ref{def:stack}$), hence induces an abelian group structure on $\mathcal S^\topo(X)$.   Both $\mathcal S^\topo(X)$ and   $\mathcal S^\topo_\partial(X\times D^4)$ carry a higher rho invariant map. It is not difficult to verify that the higher rho invariant map on $\mathcal S^\topo_\partial(X\times D^4)$ is additive, i.e.,  a homomorphism between abelian groups. One possible approach to show the additivity of the higher rho invariant map on $\mathcal S^\topo(X)$ is to prove  the compatibility of higher rho invariant maps on $\mathcal S^\topo(X)$ and   $\mathcal S^\topo_\partial(X\times D^4)$. However, there are some essential analytical difficulties to \emph{directly} prove such a compatibility,  due to the subtleties of the periodicity map.   A main novelty of this paper is to give a new description  of the topological structure group in terms of smooth manifolds with boundary (see Definition $\ref{def:newstruc}$ and the discussion in Section $\ref{sec:smstr}$ below). This new description uses a broader class of objects than closed manifolds and an equivalence relation broader than $h$-cobordism, which allows us to replace topological manifolds in the usual definition of $\mathcal S^\topo(X)$ by smooth manifolds with boundary. Such a description   leads to a transparent group structure given by disjoint union. The main body of the paper is devoted to proving that the new description coincides with the classical description of the topological structure group; and to developing the theory of higher rho invariants in this new setting, in which higher rho invariants are easily seen to be additive.    As a consequence, the higher rho invariant maps on   $\mathcal S^\topo(X)$ and   $\mathcal S^\topo_\partial(X\times D^4)$ are indeed compatible (see Proposition $\ref{prop:comp}$ below). 

We point out that, in the odd dimensional case,  the higher rho invariant defined in this paper is a refinement of  the higher rho invariant for signature operators  in the literature (cf. \cite[Section 3]{MR2220524} \cite[Remark 4.6]{MR3514938} \cite{MR3622237}). More precisely, in the odd dimensional case, the higher rho invariant in the literature  is twice of the  higher rho invariant of this paper (cf. Remark $\ref{rmk:rhocompare}$ and Theorem $\ref{thm:tworho}$ below). 

The higher rho invariant map we construct is actually defined on the homology manifold structure group $\mathcal S^{HTOP}(X)$ of $X$, where $\mathcal S^{HTOP}(X)$  is the abelian group of equivalence classes of all pairs $(f, M)$ such that $M$ is a closed oriented ANR homology manifold and $f\colon M \to X$ is an orientation-preserving homotopy equivalence.  This higher rho invariant map coincides with the original higher rho invariant map when restricted to the topological structure group, cf. the discussion after Theorem $\ref{thm:struciso}$ below.  More generally, our method can also be applied to define a higher rho invariant map on the homology manifold structure group of a closed oriented connected \emph{homology} manifold, cf. Remark $\ref{rk:hommfld}$.     All results of the paper can be easily extended to the case where $X$ has multiple connected components, by studying each component separately. For simplicity, we shall only consider the case where $X$ is connected.

As an application, we use our main theorem to estimate the sizes of reduced structure groups of topological manifolds.  There are two different ways for a self-homotopy $h$ of $X$ to act on $\mathcal S^\topo(X)$ (cf. Section $\ref{sec:nonrig}$). One action induces a group isomorphism of $\mathcal S^\topo(X)$, and is compatible with the actions of $h$ on other terms in the topological surgery long exact sequence of $X$. We denote this action by $\alpha_h$.  The other action, denoted by $\beta_h$, only induces a set-theoretic bijection  of $\mathcal S^\topo(X)$, and in general is \emph{not} compatible with the actions of $h$ on other terms in the topological surgery long exact sequence of $X$.  Let $\widetilde{\mathcal S}^\topo_{alg}(X)$ be the quotient group of $\mathcal S^\topo(X)$ modulo the subgroup generated by elements of the form $\theta - \alpha_h(\theta)$ for all $\theta\in \mathcal S^\topo(X)$ and all orientation-preserving self-homotopy equivalences $h$ of $X$ (see Definition $\ref{def:rdstr}$). We call $\widetilde{\mathcal S}^\topo_{alg}(X)$ the \emph{algebraically reduced structure group} of $X$. Similarly, we can define a version of reduced structure group for the other action, which will be denoted by $\widetilde{ \mathcal S}^\topo_{geom}(X)$  and called the \emph{geometrically reduced structure group} of $X$ from now on.  We apply our main theorem, combining with the work in \cite{MR3590536},   to give a lower bound of the free rank of $\widetilde{\mathcal S}^\topo_{alg}(X)$. There is strong evidence suggesting that an analogue holds for  $\widetilde{ \mathcal S}^\topo_{geom}(X)$ as well.

When the strong Novikov conjecture holds for $\pi_1 X$, we introduce a homological higher rho invariant. We call this invariant the Novikov rho invariant for obvious reasons to be explained in Section $\ref{sec:novrho}$. The Novikov rho invariant can be used to detect many elements in $\mathcal S^\topo(X)$, even when $\pi_1 X$ is torsion free. In particular, we apply this Novikov rho invariant to show that the structure group is not finitely generated for certain manifolds. 

If the Baum-Connes conjecture holds for $\pi_1 X$, then the Novikov rho invariant is equivalent to the higher rho invariant.  The first author also studied a different homological higher rho invariant for manifolds satisfying certain   vanishing conditions on homology in the middle dimension \cite{MR1707352}.  The higher rho invariant in the current paper can also be generalized to those settings. For example, if $M$ is a manifold and $\mathbb Z/2$ acts freely and homologically trivially in the sense of \cite{MR963636}, then this involution defines a homological higher rho invariant \cite[Remark 0.8(a)]{MR1707352}, which away from the prime number $2$ is realized by an element in the  topological structure group of $M$. This plays an important role in the cobordism of homologically trivial actions \cite{MR963636}.

The higher rho invariant map on the structure set of a smooth manifold was first introduced by Higson and Roe \cite{MR2220524}.  Zenobi extended the higher rho invariant map (as a map of sets) to topological manifolds \cite{MR3622237}. In the cyclic cohomology setting, Lott studied the higher eta invariant (a close relative of the higher rho invariant) under certain conditions \cite{JLott92}.

Our approach to the higher rho invariant is very much inspired by the work of Higson and Roe on the analytic surgery long exact sequence for smooth manifolds and structure sets of smooth manifolds \cite{MR2220522,MR2220523,MR2220524}. In their work, Higson and Roe proved that in the smooth setting, the higher rho invariant establishes a set  theoretic commutative diagram between the smooth surgery sequence and  the analytic surgery sequence. Our main result implies that in the topological setting, the higher rho invariant can be used to construct a commutative diagram of abelian groups between the topological surgery sequence  and the analytic surgery sequence. In fact, the same method also establishes a commutative diagram between the homology-manifold surgery sequence and the analytic surgery (cf. Proposition $\ref{prop:homocomm}$).  Furthermore, this method can also be used to show that the Cheeger-Gromov rho invariant \cite{MR806699} defines a homomorphism on the structure group. 

There are other equivalent ways of studying the topological surgery sequence. Our approach in the current paper is closer to those of Wall \cite{MR1687388} and Quinn \cite{MR0276980}, and is more geometric in nature. If we were to take a more algebraic approach by using Ranicki's algebraic surgery long exact sequence \cite{MR1211640}, then many of the discussions in Section $\ref{sec:strgrp}$ can be avoided. In particular, if we use Ranicki's algebraic surgery long exact sequence, then the techniques from \cite{MR2220522,MR2220523,MR2220524}  can be adapted more directly to the topological setting. On the other hand, our geometric approach appears to be more intuitive and directly implicates elliptic operators in the discussion. We remark that the rational additivity of the higher rho invariant for finite fundamental groups (more generally, the rational additivity after mapping the fundamental group to a finite group) was proved by Crowley and Macko \cite[Theorem 1.1]{MR2826928}. 

The paper is organized as follows. In Section $\ref{sec:prem}$, we recall some standard definitions of geometric $C^\ast$-algebras. In Section $\ref{sec:strgrp}$, we introduce a new definition of structure groups of topological manifolds based on ideas of Wall and ideas from controlled topology. This new definition leads to a transparent group structure of the topological structure group, which is given by disjoint union. We prove that the new definition of the structure group  is naturally isomorphic to the classical structure group.  In Section $\ref{sec:hirhoadd}$ and Section $\ref{sec:bord}$, we define the higher rho invariant map, and prove that it is well-defined and additive. In Section $\ref{sec:toana}$, we compare the topological surgery long exact sequence to the analytic surgery long exact sequence. In particular, the topological  surgery long exact sequence maps naturally to the analytic surgery long exact sequence, and they fit into a commutative diagram of exact sequences of abelian groups (cf. Diagram $\eqref{diag:surgery}$). In Section $\ref{sec:novrho}$, we introduce the Novikov rho invariant, which is a homological version of the higher rho invariant. We use the Novikov rho invariant to show that the structure group is not finitely generated for a class of manifolds.   In Section $\ref{sec:nonrig}$, we give a lower bound of the free-rank of the algebraically reduced structure groups of a topological manifold, under certain mild conditions. In Section $\ref{sec:lip}$, we outline how to adapt the methods in this paper to handle signature operators arising from Lipschitz structures on topological manifolds. We also show that the higher rho invariant map defined using Lipschitz structures is compatible with the Siebenmann periodicity map.

The first author is partially supported by NSF 1510178. The second author is partially supported by NSF 1500823 and NSF 1800737. The third author is partially supported by NSF 1700021, NSF 1564398 and Simons Fellows Program.

The authors wish to thank the referees for carefully reading the article and making many constructive suggestions which have significantly clarified the discussion throughout the article. 
The authors also would like to thank Paolo Piazza, Thomas Schick and Vito Felice Zenobi for helpful comments.

Part of the paper was written while the second and third authors were staying at the Shanghai Center for Mathematical Sciences.   The second and third authors would like to thank Shanghai Center for Mathematical Sciences for its hospitality during their stays, supported by the international exchange grant NSFC 11420101001.

\section{Preliminaries}\label{sec:prem}

In this section, we briefly recall some standard definitions of geometric $C^\ast$-algebras. We refer the reader to \cite{MR2431253, MR1147350, MR1451759} for more details.

Let $X$ be a proper metric space, i.e., every closed metric ball in $X$ is compact. An $X$-module is a separable Hilbert space equipped with a	$\ast$-representation of $C_0(X)$, the algebra of all continuous functions on $X$ which vanish at infinity. An	$X$-module is called nondegenerate if the $\ast$-representation of $C_0(X)$ is nondegenerate. An $X$-module is said to be standard if no nonzero function in $C_0(X)$ acts as a compact operator. 
\begin{definition}
	Let $H_X$ be a $X$-module and $T$ a bounded linear operator acting on $H_X$. 
	\begin{enumerate}[(i)]
		\item The propagation of $T$ is defined to be the nonnegative real number 
		\[ \sup\{ d(x, y)\mid (x, y)\in \supp(T)\},\] where $\supp(T)$ is  the complement (in $X\times X$) of the set of points $(x, y)\in X\times X$ for which there exist $f, g\in C_0(X)$ such that $gTf= 0$ and $f(x)\neq 0$, $g(y) \neq 0$;
		\item $T$ is said to be locally compact if $fT$ and $Tf$ are compact for all $f\in C_0(X)$; 
		\item $T$ is said to be pseudo-local if $[T, f]$ is compact for all $f\in C_0(X)$.  
	\end{enumerate}
\end{definition}

\begin{definition}\label{def:localg}
	Let $H_X$ be a standard nondegenerate $X$-module and $\mathcal B(H_X)$ the set of all bounded linear operators on $H_X$.  
	\begin{enumerate}[(i)]
		\item The Roe algebra of $X$, denoted by $C^\ast(X)$, is the $C^\ast$-algebra generated by all locally compact operators in $\mathcal B(H_X)$ with finite propagation.
		\item $C_L^\ast(X)$  is the $C^\ast$-algebra generated by all bounded and uniformly norm-continuous functions $f: [0, \infty) \to C^\ast(X)$   such that 
		\[ \textup{propagation of $f(t) \to 0 $, as $t\to \infty$. }\]  
		\item $C_{L, 0}^\ast(X)$ is the kernel of the evaluation map 
		\[  \ev : C_L^\ast(X) \to C^\ast(X),  \quad   \ev (f) = f(0).\]
		In particular, $C_{L, 0}^\ast(X)$ is an  ideal of $C_L^\ast(X)$.
		\item If $Y$ is a subspace of $X$, then the $C^\ast$-algebra $C_L^\ast(Y; X)$ (resp. $C_{L,0}^\ast(Y;X)$)  is defined to be the closed subalgebra of $C_L^\ast(X)$ (resp. $C_{L,0}^\ast(X)$) generated by all elements $f$ such that there exist $c_t>0$ satisfying $ \lim_{t\to \infty} c_t = 0$ and \[ \hspace{1cm}\supp(f(t)) \subset \{ (x, y) \in X\times X \mid d((x,y), Y\times Y) \leq c_t\} \]
		for all $t$.	
	\end{enumerate}
\end{definition}

\begin{remark}
	Similarly, we can also define  maximal versions of $C_L^\ast(X)$,  $C_{L,0}^\ast( X)$,  $C^\ast_{L}( Y;X)$ and  $C^\ast_{L,0}( Y;X)$, cf.  \cite{MR2431253}. We point out that all the above $C^\ast$-algebras are  nonunital. 
\end{remark}

Now in addition we assume that a discrete group $\Gamma$ acts properly on  $X$  by isometries. Let $H_X$ be a $X$-module equipped with a covariant unitary representation of $\Gamma$. If we denote the representation of $C_0(X)$ by $\varphi$ and the representation of $\Gamma$ by $\pi$, this means 
\[  \pi(\gamma) (\varphi(f) v )  =  \varphi(f^\gamma) (\pi(\gamma) v),\] 
where $f\in C_0(X)$, $\gamma\in \Gamma$, $v\in H_X$ and $f^\gamma (x) = f (\gamma^{-1} x)$. In this case, we call $(H_X, \Gamma, \varphi)$ a covariant system.  

\begin{definition}[\cite{MR2732068}]
	A covariant system $(H_X, \Gamma, \varphi)$ is called admissible if 
	\begin{enumerate}[(1)]
		\item the $\Gamma$-action on $X$ is proper and cocompact;
		\item $H_X$ is a nondegenerate standard $X$-module;
		\item for each $x\in X$, the stabilizer group $\Gamma_x$ acts on $H_X$ regularly in the sense that the action is isomorphic to the action of $\Gamma_x$ on $l^2(\Gamma_x)\otimes H$ for some infinite dimensional Hilbert space $H$. Here $\Gamma_x$ acts on $l^2(\Gamma_x)$ by translations and acts on $H$ trivially. 
	\end{enumerate}
\end{definition}
We remark that for each locally compact metric space $X$ with a proper and cocompact isometric action of $\Gamma$, there exists an admissible covariant system $(H_X, \Gamma, \varphi)$. Also, we point out that the condition $(3)$ above is automatically satisfied if $\Gamma$ acts freely on $X$. If no confusion arises, we will denote an admissible covariant system $(H_X, \Gamma, \varphi)$ by $H_X$ and call it an admissible $(X, \Gamma)$-module.

\begin{definition}
	Let $X$ be a locally compact metric space $X$ with a proper and cocompact isometric action of $\Gamma$. If $H_X$ is an admissible $(X, \Gamma)$-module, we denote by $\mathbb C[X]^\Gamma$ the $\ast$-algebra of all $\Gamma$-invariant locally compact operators with finite propagations in $\mathcal B(H_{X})$.  We define $C^\ast(X)^\Gamma$ to be the completion of $\mathbb C[X]^\Gamma$ in $\mathcal B(H_{ X})$.
\end{definition}

Similarly, we can also define   $C_L^\ast( X)^\Gamma, C_{L,0}^\ast( X)^\Gamma,C^\ast_{L}( Y;  X)^\Gamma$ and  $C^\ast_{L,0}( Y;  X)^\Gamma$. 

\begin{remark}
	Up to isomorphism, $C^\ast(X) = C^\ast(X, H_X)$ does not depend on the choice of the standard nondegenerate $X$-module $H_X$. The same holds for  $C_L^\ast(X)$,  $C_{L,0}^\ast( X)$,  $C^\ast_{L}( Y;X)$, $C^\ast_{L,0}( Y;X)$ and their $\Gamma$-equivariant versions. 
\end{remark}

\begin{remark}
	Note that we can also define maximal versions of all $\Gamma$-equivariant $C^\ast$-algebras above. For example, we define the maximal $\Gamma$-invariant Roe algebra $C^\ast_{\max}(X)^\Gamma$ to be the completion of $\mathbb C[X]^\Gamma$ under the maximal norm:
	\[  \|a\|_{\max} = \sup_{\phi} \ \big\{\|\phi(a)\| \mid \phi: \mathbb C[X]^\Gamma \to \mathcal B(H') \ \textup{a $\ast$-representation} \big\}. \]
	Similarly, we can define the maximal versions of  $C_L^\ast( X)^\Gamma, C_{L,0}^\ast( X)^\Gamma,$ $  C^\ast_{L}( Y;  X)^\Gamma$ and $C^\ast_{L,0}( Y;  X)^\Gamma$.  	See for example \cite{MR3590536} for more details.
\end{remark}

\section{Structure groups of topological manifolds}\label{sec:strgrp}

In this section we introduce a definition of structure groups for topological manifolds,  based on ideas of Wall and ideas from controlled topology. We shall prove that our new definition is naturally isomorphic to  the classical  structure group. This new definition is similar in spirit to the algebraic definition given by Ranicki in \cite{MR1211640}, but has some advantages for our analytic purposes. One feature of our definition is that  elements of the structure group can be represented by smooth manifolds with boundary, which is crucial for  construction of our higher rho invariant.  
Another feature is that the group structure of the structure group becomes transparent. Indeed, the group structure is given by  disjoint union.  

Given an oriented closed connected topological manifold $X$, the structure set $\mathcal S^\topo(X)$ is defined to be the set of equivalence classes of orientation-preserving homotopy equivalences $f\colon M \to X$. Here $M$ is an oriented closed connected topological manifold. Two orientation-preserving homotopy equivalences $f\colon M \to X$ and $g\colon N \to X$ are equivalent if there exists an  h-cobordism\footnote{To be precise, $M$ and $N$ are identified with the corresponding boundary component of $W$ by some orientation preserving homeomorphisms, which are part of the data of an $h$-cobordism. Following the usual convention, we shall omit these homeomorphisms from the notation.  } $(W; M, N)$ with an orientation-preserving homotopy equivalence 
\[ F\colon (W; M, N) \to (X\times I; X\times\{0\}, X\times\{1\})\] such that $F|_M = f$ and $F|_N = g$.  
It is known that $\mathcal S^\topo(X)$ has an abelian group structure, cf. \cite{MR873283, MR0645390, MR1211640}. 

More generally, let $X$ be a (not necessarily oriented) closed topological manifold  Let  $w\colon \pi_1(X)\to \mathbb Z/2$ be its orientation character. We can similarly define a group $\mathcal S^\topo(X, w)$ consisting of equivalence classes of orientation character preserving homotopy equivalences $f\colon (M, w_M) \to (X, w)$, where $(M, w_M)$ is a closed topological manifold $M$ with its orientation character $w_M \colon \pi_1 M \to \mathbb Z/2$. Here a continuous map $f\colon (M, w_M) \to (X, w)$ is called orientation character preserving if the map $\pi_1 M \xrightarrow{f_\ast} \pi_1 X \to \mathbb Z/2$ agrees with $w_M$.  The equivalence relation is defined similarly.

In the above definition of $\mathcal S^\topo(X, w)$, if we replace manifolds by ANR homology manifolds everywhere, then we obtain the  homology-manifold-structure group $\mathcal S^{HTOP}(X, w)$ of $(X, w)$,  cf. \cite{MR1183997}.  

\subsection{A new definition of the structure group}

In this subsection, we give a new definition of the structure group of a topological manifold.   Let $X$ be a closed topological manifold. Fix a metric on $X$ that agrees with the topology of $X$. Note that such a metric always exists.

\begin{definition}
	Let $Y$ be a topological space. We call a continuous map  $\varphi\colon Y \to X$ a control map of $Y$. 
\end{definition}

\begin{definition}
	Let $Y$ and $Z$ be two compact Hausdorff spaces equipped with continuous control maps $\psi\colon Y \to X$ and $\varphi\colon  Z\to X$. 	A continuous map $f\colon Y \to Z$ is said to be an infinitesimally controlled homotopy equivalence over $X$, if there exist proper
	continuous maps 
	\[ \Phi\colon Z\times [1, \infty) \to X\times [1, \infty), \] 
	\[\Psi\colon Y\times [1,\infty)  \to X\times [1, \infty),\]   
	\[ F\colon Y\times [1,\infty)  \to Z\times [1,\infty)\]
	and
	\[ G\colon Z\times [1,\infty)   \to Y\times [1,\infty)   \] satisfying the following conditions:
	\begin{enumerate}[(1)]
		\item $\Phi \circ F = \Psi$;
		\item $F|_{Y\times \{1\}} = f$, $\Phi|_{Z\times \{1\}} = \varphi$, and  $\Psi|_{Y\times \{1\} } = \psi$;
		\item there is a proper continuous homotopy $\{H_s\}_{0\leq s \leq 1}$ between 
		\[ H_0 = F\circ G \textup{ and }  H_1 = \id\colon Z\times[1, \infty) \to Z \times [1, \infty) \] such that the diameter of the set 
		\[  \Phi(H(z, t)) = \{\Phi(H_s(z, t))\mid 0\leq s \leq 1 \}\] goes uniformly (i.e. independent of $z\in Z$) to zero, as $t\to \infty$;
		\item there is a proper continuous homotopy $\{R_s\}_{0\leq s \leq 1}$ between 
		\[ R_0 = G\circ F \textup{ and } R_1 = \id\colon Y\times [1, \infty) \to Y \times [1, \infty)\]  such that the diameter of the set 
		\[  \Psi(R(y, t)) = \{\Psi(R_s(y, t))\mid 0\leq s \leq 1 \} \] goes uniformly  to zero, as $t \to \infty$. 
		
	\end{enumerate}
	
\end{definition}

We will also need the following notion of restrictions of homotopy equivalences gaining infinitesimal control on parts of spaces.   Suppose $M$ is a topological manifold with boundary $\partial M$. We define the space of $M$ attached with a cylinder by  
\[ CM = M\cup_{\partial M} (\partial M\times [1, \infty)). \]
Suppose $(M, \partial M, \varphi)$ and $(N, \partial N, \psi)$ are two manifold pairs equipped with continuous maps $\varphi\colon M \to X$ and $\psi\colon N \to X$. Let  \[ f\colon (N, \partial N)  \to (M, \partial M)\] be a homotopy equivalence with $\varphi \circ f = \psi$.  Suppose 
\[ g\colon (M, \partial M) \to (N, \partial N)  \] is a homotopy inverse of $f$. Note that $\psi\circ g \neq  \varphi$ in general. Let  $\{h_s\}_{0\leq s\leq 1}$ be a homotopy between $f\circ g$ and 
\[ \id\colon (M, \partial M) \to (M, \partial M).\] Similarly,  let $\{r_s\}_{0\leq s\leq 1}$ be a homotopy  between $g\circ f$ and \[ \id\colon (N, \partial N) \to (N, \partial N).\]

\begin{definition}\label{def:infcon}
	With the above notation, we say that  on the boundary $f$ restricts to an \emph{infinitesimally controlled} homotopy equivalence $f|_{\partial N} \colon \partial N\to \partial M$ over $X$, if there exist proper continuous maps 
	\[ \Phi\colon CM \to X\times [1, \infty) \textup{ and } \Psi\colon CN \to X\times [1, \infty),\]   
	\[ F\colon CN  \to CM \textup{ and } G\colon CM \to CN,  \]
	a proper continuous homotopy $\{H_s\}_{0\leq s \leq 1}$ between 
	\[ H_0 = F\circ G \textup{ and } H_1 = \id\colon CM\to CM\] and 
	a proper continuous homotopy $\{R_s\}_{0\leq s \leq 1}$ between 
	\[  R_0 = G\circ F \textup{ and } R_1 = \id\colon CN\to CN \]
	satisfying the following conditions:
	\begin{enumerate}[(1)]
		\item $\Phi|_{M} = \varphi$, $\Psi|_{N} = \psi$, $F|_N = f$, $G|_{M} = g$, $H_s|_{M} = h_s$, and $R_s|_{N} = r_s$;
		\item $\Phi \circ F = \Psi$;
		\item the diameter of the set 
		\[  \Phi(H(a, t)) = \{\Phi(H_s(a, t))\mid 0\leq s \leq 1 \}\] goes uniformly to zero, for all $(a, t) \in \partial M\times [1, \infty)$,  as $t\to \infty$;
		\item the diameter of the set 
		\[  \Psi(R(b, t)) = \{\Psi(R_s(b, t))\mid 0\leq s \leq 1 \}\] goes uniformly  to zero, for all $(b, t)\in \partial N\times [1, \infty)$, as $t\to \infty$. 
		
	\end{enumerate}
	
\end{definition}

In the following, we adopt the notion of manifold $k$-ads from Wall's book \cite[Chapter 0]{MR1687388} to encode a total space with some number of distinguished subsets all of whose intersections are required to be ``good". For example, a manifold $1$-ad is a manifold with boundary. 

In order to make our definition of structure groups functorial, we shall follow Farrell and Hsiang's modifications of Wall's definition of $L$-groups \cite[Section 3]{MR0448372}. Although Farrell and Hsiang's modifications are for $L$-groups, the same idea also applies to structure groups.   Let $w$ be a $\mathbb Z/2$-principal bundle over $X$, and if no confusion is likely to rise, denote the corresponding morphism induced on $\pi_1(X)$ (after choosing a base point in $X$) also by $w\colon \pi_1(X)\to \mathbb Z/2$. Moreover, we denote the local $\mathbb Z$-coefficients on $X$ associated to $w$ by $w\mathbb Z$. 

We define
$\mathcal S_n(X, w)$ to be the set of equivalence classes of the following objects.

\begin{definition}\label{def:newstruc}
	An object of  $\mathcal S_n(X, w)$ consists of the following data:
	\begin{figure}[h]
		\centering
		\begin{tikzpicture}[scale=1, every node/.style={transform shape}]
		\coordinate (ref) at (0,0); 
		\draw [ultra thick] ($(ref) + (0, 2.5)$) to [out = 0, in = 180] ($(ref) + (2, 2)$)  arc (-90:90:2 and 1) ($(ref) + (2, 4)$) to [out = 180, in = 0] ($(ref) + (0, 3.5)$);
		\draw[red, very thick]  ($(ref) + (0, 3.5)$) arc (90:270:0.25 and 0.5);
		\draw[red, very thick]  ($(ref) + (0, 2.5)$) arc (270:450:0.25 and 0.5);
		\draw [ ultra thick] ($(ref) + (1.5, 3)$) to [out = 20, in = 160] ($(ref) + (2.7, 3)$) to [out = -160, in = -20] ($(ref) + (1.5, 3)$);
		\draw [ ultra thick] ($(ref) + (1.5, 3)$) to [out = 160, in = 175] ($(ref) + (1.43, 3.05)$);
		\draw [ ultra thick] ($(ref) + (2.7, 3)$) to [out = 20, in = 5] ($(ref) + (2.77, 3.05)$);

		\draw [->, red, very thick] (-0.4, 7.9) to [out = 225, in = 135 ] (-0.4, 3.1); 
		
		\draw [->, thick] (4.2, 7.5) -- (7, 5.7); 
		
		\node at (7.3, 5.5) {$X$}; 
		\node at (4.4, 8) {$N$}; 
		\node at (4.4, 3) {$M$}; 
		\node at (-1, 8.5) {$\partial N$}; 
		
		\draw [dashed, ->] (-1, 8.3) to [out = -90, in = 180] (-0.4, 8.1);

		\node at (-1, 2.4) {$\partial M$}; 
		
		\draw [dashed, ->] (-1, 2.6) to [out = 90, in = 180] (-0.4, 2.9);
		
		\node at (2.2, 5.5) {$f$}; 
		\node at (5.5, 4) {$\varphi$}; 
		\node at (5.5, 7) {$\psi$}; 
		\node [red] at (-2, 5.5) {$f|_{\partial N}$};
		
		\draw [->, thick] (4.2, 3.5) -- (7, 5.3); 
		
		\draw [->, ultra thick] (2, 6.7) -- (2, 4.3);
		\begin{scope}[shift ={(0, 5) }]
		\coordinate (ref) at (0,0); 
		\draw [ultra thick] ($(ref) + (0, 2.5)$) to [out = 0, in = 180] ($(ref) + (2, 2)$)  arc (-90:90:2 and 1) ($(ref) + (2, 4)$) to [out = 180, in = 0]  ($(ref) + (0, 3.5)$);
		\draw[red, very thick]  ($(ref) + (0, 3.5)$) arc (90:270:0.25 and 0.5);
		\draw[red, very thick]  ($(ref) + (0, 2.5)$) arc (270:450:0.25 and 0.5);
		\draw [ ultra thick] ($(ref) + (1.5, 3)$) to [out = 20, in = 160] ($(ref) + (2.7, 3)$) to [out = -160, in = -20] ($(ref) + (1.5, 3)$);
		\draw [ ultra thick] ($(ref) + (1.5, 3)$) to [out = 160, in = 175] ($(ref) + (1.43, 3.05)$);
		\draw [ ultra thick] ($(ref) + (2.7, 3)$) to [out = 20, in = 5] ($(ref) + (2.77, 3.05)$);
		\end{scope}
		\end{tikzpicture}
		\caption{An object  $\theta = (M, \partial M, \varphi, N, \partial N, \psi, f)$ of  $\mathcal  S_n(X, w)$, where $f|_{\partial N}$ is an infinitesimally controlled homotopy equivalence and $f$ is a homotopy equivalence.}
	\end{figure}
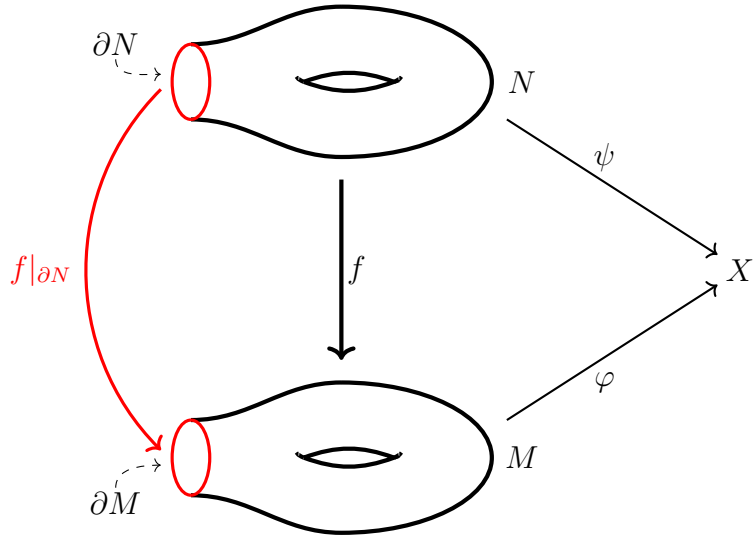
	
	\begin{enumerate}[(1)]
		\item two manifold $1$-ads $(M, \partial M)$ and  $(N, \partial N)$ with $\dim M = \dim N = n$, where $\partial M$ (resp. $\partial N$) is the boundary of $M$ (resp. $ N$);  
		\item continuous maps $\varphi\colon M \to X$ and $\psi\colon N \to X$ so that the pullback $\mathbb Z/2$-principal bundle  $\varphi^\ast(w)$ (resp. $\psi^\ast(w)$) is the orientation covering of $M$ (resp. $N$); 
		\item a homotopy equivalence of manifold $1$-ads \[ f\colon (N, \partial N) \to (M, \partial M)\] such that $f_\ast([M]) = [N]$ and  $\varphi \circ f = \psi$, where $[M]$ (resp. $[N]$)  is the fundamental class of $M$  (resp. $N$), an element in $H_n(M, \partial M; \varphi^\ast(w)\mathbb Z)$ (resp. $H_n(N, \partial N; \psi^\ast(w)\mathbb Z)$). Moreover, on the boundary $f$ restricts to an infinitesimally controlled homotopy equivalence $f|_{\partial N} \colon \partial N \to \partial M$  over $X$. 
	\end{enumerate} 
\end{definition}

\begin{remark}
	We shall prove below that, if $X$ is a closed connected topological manifold of dimension $\geq 5$ and $w$ is the orientation covering of $X$, then $\mathcal S_n(X,  w) $ is naturally isomorphic to  the  structure group $\mathcal S^{\topo}(X, w)$ of $(X, w)$, where the latter group is described at the beginning of this section with $w$ being the associated orientation character of $X$.   
\end{remark}


If $\theta = (M, \partial M, \varphi, N, \partial N, \psi, f)$ is an object, then we denote by $-\theta$ to be the same object except that the fundamental classes of $M$ and $N$ switch sign. For two objects $\theta_1$ and $\theta_2$, we write $\theta_1 +\theta_2$ to be the disjoint union of $\theta_1$ and $\theta_2$. This sum operation is clearly commutative and associative,  and admits a zero element: the object with $M$ (hence $N$) empty. We denote the zero element by $0$.

\begin{definition}\label{def:strequiv}	
	The equivalence relation for defining $\mathcal  S_n(X, w)$ is given as follows.  Let \[ \theta = (M, \partial M, \varphi, N, \partial N, \psi, f) \] be an object from Definition $\ref{def:newstruc}$. We write $\theta \sim 0$ if the following conditions are satisfied. 
	\begin{figure}[H]
		\centering
		\begin{tikzpicture}[scale=1, every node/.style={transform shape}]
		\coordinate (ref) at (0,0); 
		\shade[shading = ball, ball color = gray!30!white, opacity=1,  draw = black] 
		($(ref) + (-3, 4)$)  arc (90:270:1) ($(ref) + (-3, 2)$)  to [out = 0, in = 180] ($(ref) + (-2, 2.5)$) to  [out = 0, in = 180]   ($(ref) + (-1, 2)$) to [out = 0, in = 180]  ($(ref) + (0, 2.5)$) to [out = 0, in = 180] ($(ref) + (2, 2)$)  arc (-90:90:2 and 1) ($(ref) + (2, 4)$) to [out = 180, in = 0] ($(ref) + (0, 3.5)$) to [out = 180, in = 0]  ($(ref) + (-1, 4)$) to [out = 180, in = 0 ]  ($(ref) + (-2, 3.5)$) to [out = 180, in = 0]  ($(ref) + (-3, 4)$)  arc (90:270:1) ($(ref) + (-3, 2)$);

		%
		\draw [red, very thick] ($(ref) + (0, 3.5)$) to [out = 180, in = 0]  ($(ref) + (-1, 4)$) to [out = 180, in = 0 ]  ($(ref) + (-2, 3.5)$) to [out = 180, in = 0]  ($(ref) + (-3, 4)$)  arc (90:270:1) ($(ref) + (-3, 2)$)  to [out = 0, in = 180] ($(ref) + (-2, 2.5)$) to  [out = 0, in = 180]   ($(ref) + (-1, 2)$) to [out = 0, in = 180]  ($(ref) + (0, 2.5)$);
		
		\draw [ultra thick] ($(ref) + (0, 2.5)$) to [out = 0, in = 180] ($(ref) + (2, 2)$)  arc (-90:90:2 and 1) ($(ref) + (2, 4)$) to [out = 180, in = 0] ($(ref) + (0, 3.5)$);
		\draw[red, very thick]  ($(ref) + (0, 3.5)$) arc (90:270:0.25 and 0.5);
		\draw[dashed, red, very thick]  ($(ref) + (0, 2.5)$) arc (270:450:0.25 and 0.5);
		
		\draw [fill =white,  ultra thick] ($(ref) + (1.5, 3)$) to [out = 20, in = 160] ($(ref) + (2.7, 3)$) to [out = -160, in = -20] ($(ref) + (1.5, 3)$);
		\draw [ ultra thick] ($(ref) + (1.5, 3)$) to [out = 160, in = -10] ($(ref) + (1.4, 3.03)$);
		\draw [ ultra thick] ($(ref) + (2.7, 3)$) to [out = 20, in = -170] ($(ref) + (2.8, 3.03)$);
		
		\draw [red, fill =white,  very thick] ($(ref) + (-3.4, 3)$) to [out = 60, in = 120] ($(ref) + (-2.6, 3)$) to [out = -120, in = -60] ($(ref) + (-3.4, 3)$);	
		\draw [ red, very thick] ($(ref) + (-3.4, 3)$) to [out = 120, in = -75] ($(ref) + (-3.46, 3.15)$);
		\draw [ red, very thick] ($(ref) + (-2.6, 3)$) to [out = 60, in = -115] ($(ref) + (-2.54, 3.15)$);
		
		\begin{scope}[shift= {(2, 0)}]
		\coordinate (ref) at (0,0);
		\draw [red, fill =white,  very thick] ($(ref) + (-3.4, 3)$) to [out = 60, in = 120] ($(ref) + (-2.6, 3)$) to [out = -120, in = -60] ($(ref) + (-3.4, 3)$);	
		\draw [red, very thick] ($(ref) + (-3.4, 3)$) to [out = 120, in = -75] ($(ref) + (-3.46, 3.15)$);
		\draw [ red, very thick] ($(ref) + (-2.6, 3)$) to [out = 60, in = -115] ($(ref) + (-2.54, 3.15)$);
		
		\end{scope}

		
		\draw [->, red, very thick] (-2, 6.8) to [out = 225, in = 135 ] (-2, 3.8); 
		
		\draw [->, thick] (4.2, 7.5) -- (7, 5.7); 
		
		\node at (7.3, 5.5) {$X$}; 
		\node at (2.5, 6.6) {$N$}; 
		\node at (2.5, 1.6) {$M$}; 
		
		
		\node at (-2.7, 6.6) {$\partial_2 V$}; 
		
		\node at (-2.7, 1.6) {$\partial_2 W$}; 
		
		\node at (0.75, 9.5) {\scriptsize $\partial N = \partial \partial_2 V$};

		\draw [dashed, ->] (0.55, 9.3) to [out = -90, in = 90] (0, 8.6); 
		
		\node at (0.75, 1.5) {\scriptsize $\partial M = \partial \partial_2 W$}; 
		
		\draw [dashed, ->] (0.55, 1.7) to [out = 90, in = -90] (0, 2.4);

		\node at (5.5, 4) {$\Phi$}; 
		\node at (5.5, 7) {$\Psi$}; 
		\node [red] at (-2, 5.5) {$F|_{\partial_2 V}$};
		
		\draw [->, thick] (4.2, 3.5) -- (7, 5.3);

		
		\begin{scope}[shift ={(0, 5) }]
		\coordinate (ref) at (0,0);
		\shade[shading = ball, ball color = gray!30!white,  draw = black] 
		($(ref) + (-3, 4)$)  arc (90:270:1) ($(ref) + (-3, 2)$)  to [out = 0, in = 180] ($(ref) + (-2, 2.5)$) to  [out = 0, in = 180]   ($(ref) + (-1, 2)$) to [out = 0, in = 180]  ($(ref) + (0, 2.5)$) to [out = 0, in = 180] ($(ref) + (2, 2)$)  arc (-90:90:2 and 1) ($(ref) + (2, 4)$) to [out = 180, in = 0] ($(ref) + (0, 3.5)$) to [out = 180, in = 0]  ($(ref) + (-1, 4)$) to [out = 180, in = 0 ]  ($(ref) + (-2, 3.5)$) to [out = 180, in = 0]  ($(ref) + (-3, 4)$)  arc (90:270:1) ($(ref) + (-3, 2)$);	
		
		\draw [red, very thick] ($(ref) + (0, 3.5)$) to [out = 180, in = 0]  ($(ref) + (-1, 4)$) to [out = 180, in = 0 ]  ($(ref) + (-2, 3.5)$) to [out = 180, in = 0]  ($(ref) + (-3, 4)$)  arc (90:270:1) ($(ref) + (-3, 2)$)  to [out = 0, in = 180] ($(ref) + (-2, 2.5)$) to  [out = 0, in = 180]   ($(ref) + (-1, 2)$) to [out = 0, in = 180]  ($(ref) + (0, 2.5)$);

		\draw [ultra thick] ($(ref) + (0, 2.5)$) to [out = 0, in = 180] ($(ref) + (2, 2)$)  arc (-90:90:2 and 1) ($(ref) + (2, 4)$) to [out = 180, in = 0] ($(ref) + (0, 3.5)$);
		\draw[red, very thick]  ($(ref) + (0, 3.5)$) arc (90:270:0.25 and 0.5);
		\draw[dashed, red, very thick]  ($(ref) + (0, 2.5)$) arc (270:450:0.25 and 0.5);

		\draw [fill =white,  ultra thick] ($(ref) + (2.5, 3.4)$) to [out = 205, in = 155] ($(ref) + (2.5, 2.6)$) to [out = 20, in = -20] ($(ref) + (2.5, 3.4)$);
		\draw [ultra thick] ($(ref) + (2.5, 3.4)$) to [out = 25, in = -165] ($(ref) + (2.65, 3.45)$);
		\draw [ultra thick] ($(ref) + (2.5, 2.6)$) to [out = -25, in = 165] ($(ref) + (2.65, 2.55)$);
		
		\draw [red, fill =white,  very thick] ($(ref) + (-3.4, 3)$) to [out = 60, in = 120] ($(ref) + (-2.6, 3)$) to [out = -120, in = -60] ($(ref) + (-3.4, 3)$);	
		\draw [ red, very thick] ($(ref) + (-3.4, 3)$) to [out = 120, in = -75] ($(ref) + (-3.46, 3.15)$);
		\draw [ red, very thick] ($(ref) + (-2.6, 3)$) to [out = 60, in = -115] ($(ref) + (-2.54, 3.15)$);
		
		\begin{scope}[shift= {(2, 0)}]
		\coordinate (ref) at (0,0);
		\draw [red, fill =white,  very thick] ($(ref) + (-3.4, 3)$) to [out = 60, in = 120] ($(ref) + (-2.6, 3)$) to [out = -120, in = -60] ($(ref) + (-3.4, 3)$);	
		\draw [red, very thick] ($(ref) + (-3.4, 3)$) to [out = 120, in = -75] ($(ref) + (-3.46, 3.15)$);
		\draw [ red, very thick] ($(ref) + (-2.6, 3)$) to [out = 60, in = -115] ($(ref) + (-2.54, 3.15)$);
		
		\end{scope}
		
		\end{scope}
		
		
		\node at (0.6, 8) {$V$}; 
		\node at (0.6, 3) {$W$}; 
		
		\node at (0.75, 5.5) {$F$}; 
		\draw [->, thick] (0.5, 6.8) -- (0.5, 4.2);

		\end{tikzpicture}
		\vspace{3mm}
		\caption{Equivalence relation $\theta\sim 0$ in the definition of  $\mathcal S_n(X, w)$, where $F|_{\partial_2 V}$ is an infinitesimally controlled homotopy equivalence and $F|_{N} = f$ is a homotopy equivalence. In this picture,  $V$ (resp. $W$) should be viewed as a solid  with boundary $\partial V$ (resp. $\partial W$). }
	\end{figure}
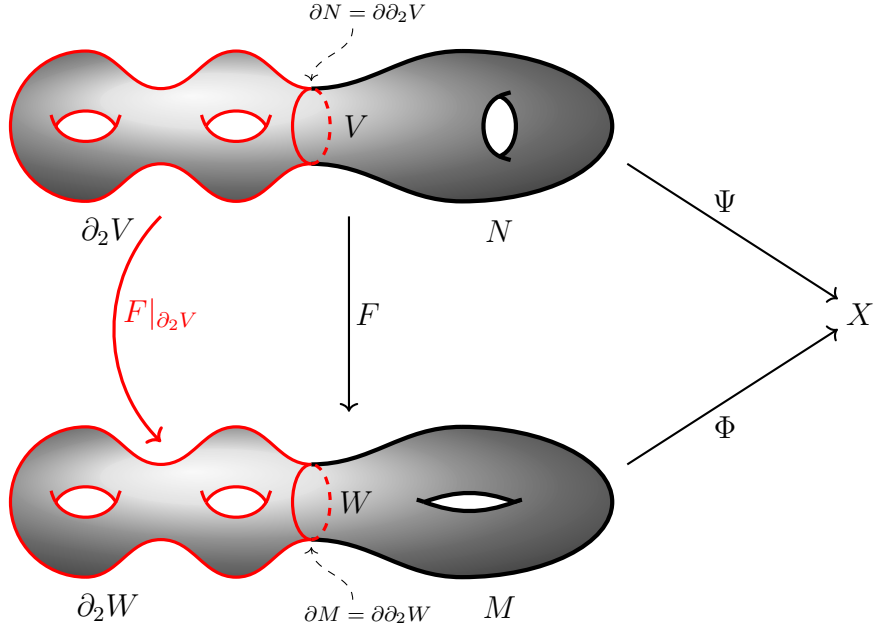
	
	\begin{enumerate}[(1)]
		\item There exists a manifold $2$-ad $(W, \partial W)$ of dimension $(n+1)$ with a continuous map $\Phi\colon W\to X$ so that the pullback $\mathbb Z/2$-principal bundle $\Phi^\ast(w)$ is the orientation covering of $W$. Here $\partial W = M \cup_{\partial M} \partial_2 W $, and in particular $\partial M = \partial (\partial_2 W)$. In other words, $W$ is a manifold with corners, and its boundary is the union of  $M$ and $\partial_2 W$ (two manifolds with boundary) which are glued together along their common boundary $\partial M = \partial (\partial_2 W)$. We also assume $\Phi|_M = \varphi$.
		\item Similarly, we have a manifold $2$-ad $(V, \partial V)$ of dimension $(n+1)$ with a continuous map $\Psi: V\to X$ so that  the pullback $\mathbb Z/2$-principal bundle $\Psi^\ast(w)$ is the orientation covering of $V$. Moreover,    $\partial V = N \cup_{\partial N} \partial_2 V $ and $\Psi|_N = \psi$.  
		\item There is a homotopy equivalence of $2$-ads \[ F\colon (V, \partial V) \to (W, \partial W)\] such that $F_\ast([V]) = [W]$ and $\Psi\circ F  = \Phi$, where  $[V]$ (resp. $[W]$)  is the fundamental class of $V$ (resp. $W$), an element of  $H_n(V, \partial V; \Psi^\ast(w)\mathbb Z)$ (resp. $H_n(W, \partial W; \Phi^\ast(w)\mathbb Z)$).  Moreover, $F$ restricts to $f$ on $N$, and  $F$ restricts\footnote{Here we are using an obvious generalization of Definition $\ref{def:infcon}$ to the case of manifold $2$-ads or manifold $n$-ads.} to  an infinitesimally controlled homotopy equivalence \[ F|_{\partial_2 V} \colon \partial_2V \to \partial_2 W\] over $X$.  
	\end{enumerate}
\end{definition}

We further write $\theta_1\sim \theta_2$ if $\theta_1 +(-\theta_2) \sim 0$. It is not difficult to check that $\sim$ is an equivalence relation. 	We denote the set of equivalence classes by  $\mathcal  S_n(X, w)$. Note that $\mathcal  S_n(X, w)$ is an abelian group with the addition given by disjoint union. 

\begin{remark}
	Let $w$ and $v$ be $\mathbb Z/2$-principal bundles over $X$ and $Y$ respectively. Then each bundle map $\Delta\colon w \to v$  induces a group homomorphism $\Delta_\ast\colon \mathcal  S_n(X, w) \to \mathcal S_n(Y, v)$ by essentially composing\footnote{For example, for the element $\theta = (M, \partial M, \varphi, N, \partial N, \psi, f)\in \mathcal S(X, w)$, $\Delta$ induces a canonical local coefficient isomorphism $\varphi^\ast(w) \to (\Delta\varphi)^\ast(v)$, which in turn induces a map from $H_n(M, \partial M; \varphi^\ast(w)\mathbb Z)$ to $H_n\big(M, \partial M; (\Delta\varphi)^\ast(v)\mathbb Z\big)$. Then the corresponding fundamental class for the manifold pair $(M, \partial M)$  as part of the data of $\Delta(\theta) = (M, \partial M, \Delta\circ \varphi, N, \partial N, \Delta\circ \varphi, f) $ is  the image of $[M]$ under this map $H_n(M, \partial M; \varphi^\ast(w)\mathbb Z) \to H_n\big(M, \partial M; (\Delta\varphi)^\ast(v)\mathbb Z\big)$. See also \cite[Section 3, page 103]{MR0448372}. } the data of an element $\theta\in \mathcal S_n(X, w)$ with the map  $\Delta$. This makes the definition of $\mathcal  S_n(X, w)$ functorial.  
\end{remark}

\subsection{Surgery long exact sequences}
In this subsection, we give a description of  surgery long exact sequences based on ideas of Wall. This will be used later to naturally identify $\mathcal S_n(X, w)$ with the  structure group $\mathcal S^\topo(X, w)$   . 

First, let us review the definition of normal maps. Let both $(M, \partial N)$ and $(N, \partial N)$ be $n$-dimensional manifolds with boundary. Let $w_M$ (resp. $w_N$) be the orientation covering of $M$ (resp. $N$). Equivalently, we think of $w_M$ and $w_N$ as elements in $H^1(M; \mathbb Z/2)$ and $H^1(N; \mathbb Z/2)$ respectively. 
\begin{definition}\label{def:normal}
	Let $\nu$ be a $k$-dimensional vector bundle over $N$ such that the first Stiefel-Whitney class of $\nu$ agrees with $w_N\in H^1(N; \mathbb Z/2)$. A map 
	\[ f\colon (M, \partial M) \to (N, \partial N)\] is called a normal map if the following conditions are satisfied: 
	\begin{enumerate}[(1)]
		\item $f$ preserves orientation characters, that is, $w_M = f^\ast (w_N)$;
		\item there exists an embedding $M \hookrightarrow \mathbb R^{n+k}$ with its normal bundle denoted by $\nu_M$ such that there is a bundle map 	
		\[ \bar f\colon \nu_M\to \nu\] covering $f$ and is an isomorphism on each fiber.

	\end{enumerate}
	
	If in addition $f$ maps the fundamental class $[M]$, an element in $ H_n(M, \partial M; w_M\mathbb Z)$, to the fundamental class  $ [N]$, an element in $H_n(N, \partial N; w_N\mathbb Z)$, then we say $f$ has degree one. 
\end{definition}

Now we review the following geometric definition of $L$-groups due to Wall \cite[Chapter 9]{MR1687388}.

\begin{definition}\label{def:Lgrp}
	An object of ${L}_n(\pi_1 X, w)$  consists of the following data: 
	
	\begin{enumerate}[(1)]
		\item two manifold $1$-ads $(M, \partial M)$ and  $(N, \partial N)$ with $\dim M = \dim N = n$, where $\partial M$ (resp. $\partial N$) is the boundary of $M$ (resp. $\partial N$);
		\item continuous maps $\varphi\colon M \to X$ and $\psi\colon N \to X$  so that the pullback $\mathbb Z/2$-principal bundles $\varphi^\ast(w)$ (resp. $\psi^\ast(w)$) is the orientation covering of $M$ (resp. $N$); 
		\item a degree one normal map of the $1$-ads 
		\[ f\colon (N, \partial N) \to (M, \partial M)\] such that $\varphi \circ f = \psi$. Moreover, on the boundary 
		$f|_{\partial N} \colon \partial N\to \partial M$ is a homotopy equivalence.
	\end{enumerate} 
	\begin{figure}[H]
		\centering
		\begin{tikzpicture}[scale=1, every node/.style={transform shape}]
		\coordinate (ref) at (0,0); 
		\draw [blue,  thick] ($(ref) + (0, 2.5)$) to [out = 0, in = 180] ($(ref) + (2, 2)$)  arc (-90:90:2 and 1) ($(ref) + (2, 4)$) to [out = 180, in = 0] ($(ref) + (0, 3.5)$);
		\draw[ultra thick]  ($(ref) + (0, 3.5)$) arc (90:270:0.25 and 0.5);
		\draw[ultra thick]  ($(ref) + (0, 2.5)$) arc (270:450:0.25 and 0.5);
		\draw [ blue,  thick] ($(ref) + (1.5, 3)$) to [out = 20, in = 160] ($(ref) + (2.7, 3)$) to [out = -160, in = -20] ($(ref) + (1.5, 3)$);
		\draw [ blue, thick] ($(ref) + (1.5, 3)$) to [out = 160, in = 175] ($(ref) + (1.43, 3.05)$);
		\draw [ blue, thick] ($(ref) + (2.7, 3)$) to [out = 20, in = 5] ($(ref) + (2.77, 3.05)$);

		\draw [->, ultra thick] (-0.4, 7.9) to [out = 225, in = 135 ] (-0.4, 3.1); 
		
		\draw [->, thick] (4.2, 7.5) -- (7, 5.7); 
		
		\node at (7.3, 5.5) {$X$}; 
		\node at (4.4, 8) {$N$}; 
		\node at (4.4, 3) {$M$}; 
		\node at (-1, 8.5) {$\partial N$}; 
		
		\draw [dashed, ->] (-1, 8.3) to [out = -90, in = 180] (-0.4, 8.1);

		\node at (-1, 2.4) {$\partial M$}; 
		
		\draw [dashed, ->] (-1, 2.6) to [out = 90, in = 180] (-0.4, 2.9);
		
		\node [blue] at (2.2, 5.5) {$f$}; 
		\node at (5.5, 4) {$\varphi$}; 
		\node at (5.5, 7) {$\psi$}; 
		\node  at (-2, 5.5) {$f|_{\partial N}$};
		
		\draw [->, thick] (4.2, 3.5) -- (7, 5.3); 
		
		\draw [->, blue,  thick] (2, 6.7) -- (2, 4.3);
		
		\begin{scope}[shift ={(0, 5) }]
		\coordinate (ref) at (0,0); 
		\draw [blue,  thick] ($(ref) + (0, 2.5)$) to [out = 0, in = 180] ($(ref) + (2, 2)$)  arc (-90:90:2 and 1) ($(ref) + (2, 4)$) to [out = 180, in = 0] ($(ref) + (0, 3.5)$);
		\draw[ultra thick]  ($(ref) + (0, 3.5)$) arc (90:270:0.25 and 0.5);
		\draw[ultra thick]  ($(ref) + (0, 2.5)$) arc (270:450:0.25 and 0.5);
		
		\draw [blue, fill =white, thick] ($(ref) + (2.5, 3.4)$) to [out = 205, in = 155] ($(ref) + (2.5, 2.6)$) to [out = 20, in = -20] ($(ref) + (2.5, 3.4)$);
		\draw [blue, thick] ($(ref) + (2.5, 3.4)$) to [out = 25, in = -165] ($(ref) + (2.65, 3.45)$);
		\draw [blue, thick] ($(ref) + (2.5, 2.6)$) to [out = -25, in = 165] ($(ref) + (2.65, 2.55)$);
		
		\begin{scope}[shift = {(-1, 0)}]
		\coordinate (ref) at (0,0);
		\draw [blue, fill =white, thick] ($(ref) + (2.5, 3.4)$) to [out = 205, in = 155] ($(ref) + (2.5, 2.6)$) to [out = 20, in = -20] ($(ref) + (2.5, 3.4)$);
		\draw [blue, thick] ($(ref) + (2.5, 3.4)$) to [out = 25, in = -165] ($(ref) + (2.65, 3.45)$);
		\draw [blue, thick] ($(ref) + (2.5, 2.6)$) to [out = -25, in = 165] ($(ref) + (2.65, 2.55)$);
		
		\end{scope}
		\end{scope}
		\end{tikzpicture}
		\vspace{3mm}
		\caption{An object  $\theta = (M, \partial M, \varphi, N, \partial N, \psi, f)$ in  $ L_n(\pi_1 X,  w)$, where $f$ is a degree one normal map and $f|_{\partial N}$ is a homotopy equivalence.}
	\end{figure}
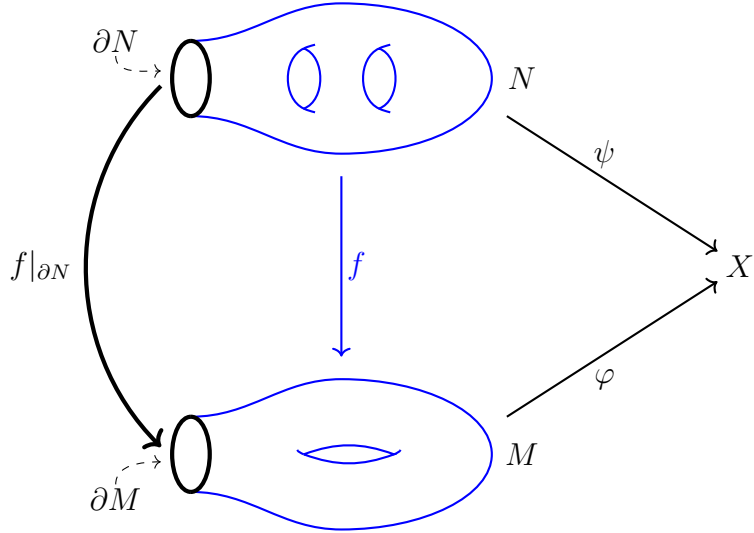
\end{definition}

\begin{definition}\label{def:equivforL} 
	The equivalence relation for defining $L_n(\pi_1 X, w)$ is given as follows. Let \[ \theta = (M, \partial M, \varphi, N, \partial N, \psi, f) \] be an object from Definition $\ref{def:Lgrp}$ above. We write $\theta \sim 0$ if the following conditions are satisfied. 
	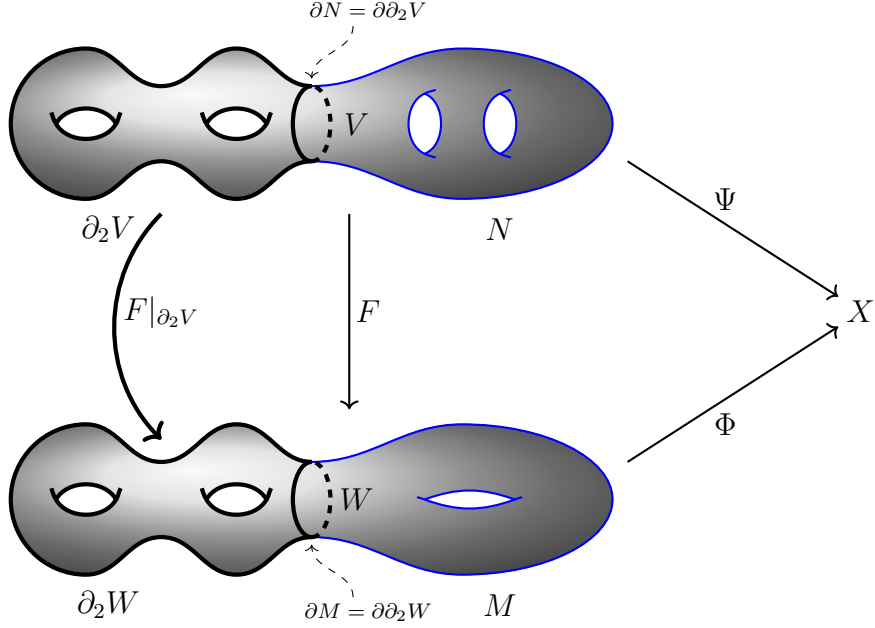
\begin{figure}[h]
		\centering
		\begin{tikzpicture}[scale=1, every node/.style={transform shape}]
		\coordinate (ref) at (0,0); 
		
		\shade[shading = ball, ball color = gray!30!white,  draw = black] 
		($(ref) + (-3, 4)$)  arc (90:270:1) ($(ref) + (-3, 2)$)  to [out = 0, in = 180] ($(ref) + (-2, 2.5)$) to  [out = 0, in = 180]   ($(ref) + (-1, 2)$) to [out = 0, in = 180]  ($(ref) + (0, 2.5)$) to [out = 0, in = 180] ($(ref) + (2, 2)$)  arc (-90:90:2 and 1) ($(ref) + (2, 4)$) to [out = 180, in = 0] ($(ref) + (0, 3.5)$) to [out = 180, in = 0]  ($(ref) + (-1, 4)$) to [out = 180, in = 0 ]  ($(ref) + (-2, 3.5)$) to [out = 180, in = 0]  ($(ref) + (-3, 4)$)  arc (90:270:1) ($(ref) + (-3, 2)$);	
		
		\draw [ultra thick] ($(ref) + (0, 3.5)$) to [out = 180, in = 0]  ($(ref) + (-1, 4)$) to [out = 180, in = 0 ]  ($(ref) + (-2, 3.5)$) to [out = 180, in = 0]  ($(ref) + (-3, 4)$)  arc (90:270:1) ($(ref) + (-3, 2)$)  to [out = 0, in = 180] ($(ref) + (-2, 2.5)$) to  [out = 0, in = 180]   ($(ref) + (-1, 2)$) to [out = 0, in = 180]  ($(ref) + (0, 2.5)$);
		
		\draw [blue, thick] ($(ref) + (0, 2.5)$) to [out = 0, in = 180] ($(ref) + (2, 2)$)  arc (-90:90:2 and 1) ($(ref) + (2, 4)$) to [out = 180, in = 0] ($(ref) + (0, 3.5)$);
		\draw[ultra thick]  ($(ref) + (0, 3.5)$) arc (90:270:0.25 and 0.5);
		\draw[dashed, ultra thick]  ($(ref) + (0, 2.5)$) arc (270:450:0.25 and 0.5);
		
		\draw [blue, fill =white,   thick] ($(ref) + (1.5, 3)$) to [out = 20, in = 160] ($(ref) + (2.7, 3)$) to [out = -160, in = -20] ($(ref) + (1.5, 3)$);
		\draw [ blue, thick] ($(ref) + (1.5, 3)$) to [out = 160, in = -10] ($(ref) + (1.4, 3.03)$);
		\draw [ blue,  thick] ($(ref) + (2.7, 3)$) to [out = 20, in = -170] ($(ref) + (2.8, 3.03)$);
		
		\draw [fill =white,  ultra thick] ($(ref) + (-3.4, 3)$) to [out = 60, in = 120] ($(ref) + (-2.6, 3)$) to [out = -120, in = -60] ($(ref) + (-3.4, 3)$);	
		\draw [ ultra thick] ($(ref) + (-3.4, 3)$) to [out = 120, in = -75] ($(ref) + (-3.46, 3.15)$);
		\draw [ ultra thick] ($(ref) + (-2.6, 3)$) to [out = 60, in = -115] ($(ref) + (-2.54, 3.15)$);
		
		\begin{scope}[shift= {(2, 0)}]
		\coordinate (ref) at (0,0);
		\draw [fill =white,  ultra thick] ($(ref) + (-3.4, 3)$) to [out = 60, in = 120] ($(ref) + (-2.6, 3)$) to [out = -120, in = -60] ($(ref) + (-3.4, 3)$);	
		\draw [ ultra thick] ($(ref) + (-3.4, 3)$) to [out = 120, in = -75] ($(ref) + (-3.46, 3.15)$);
		\draw [ ultra thick] ($(ref) + (-2.6, 3)$) to [out = 60, in = -115] ($(ref) + (-2.54, 3.15)$);
		
		\end{scope}

		
		\draw [->, ultra thick] (-2, 6.8) to [out = 225, in = 135 ] (-2, 3.8); 
		
		\draw [->, thick] (4.2, 7.5) -- (7, 5.7); 
		
		\node at (7.3, 5.5) {$X$}; 
		\node at (2.5, 6.6) {$N$}; 
		\node at (2.5, 1.6) {$M$}; 
		
		
		\node at (-2.7, 6.6) {$\partial_2 V$}; 
		
		\node at (-2.7, 1.6) {$\partial_2 W$}; 
		
		\node at (0.75, 9.5) {\scriptsize $\partial N = \partial \partial_2 V$};

		\draw [dashed, ->] (0.55, 9.3) to [out = -90, in = 90] (0, 8.6); 
		
		\node at (0.75, 1.5) {\scriptsize $\partial M = \partial \partial_2 W$}; 
		
		\draw [dashed, ->] (0.55, 1.7) to [out = 90, in = -90] (0, 2.4);

		\node at (5.5, 4) {$\Phi$}; 
		\node at (5.5, 7) {$\Psi$}; 
		\node at (-2, 5.5) {$F|_{\partial_2 V}$};
		
		\draw [->, thick] (4.2, 3.5) -- (7, 5.3);

		
		\begin{scope}[shift ={(0, 5) }]
		\coordinate (ref) at (0,0);
		\shade[shading = ball, ball color = gray!30!white] 
		($(ref) + (-3, 4)$)  arc (90:270:1) ($(ref) + (-3, 2)$)  to [out = 0, in = 180] ($(ref) + (-2, 2.5)$) to  [out = 0, in = 180]   ($(ref) + (-1, 2)$) to [out = 0, in = 180]  ($(ref) + (0, 2.5)$) to [out = 0, in = 180] ($(ref) + (2, 2)$)  arc (-90:90:2 and 1) ($(ref) + (2, 4)$) to [out = 180, in = 0] ($(ref) + (0, 3.5)$) to [out = 180, in = 0]  ($(ref) + (-1, 4)$) to [out = 180, in = 0 ]  ($(ref) + (-2, 3.5)$) to [out = 180, in = 0]  ($(ref) + (-3, 4)$)  arc (90:270:1) ($(ref) + (-3, 2)$);	
		
		\draw [ultra thick] ($(ref) + (0, 3.5)$) to [out = 180, in = 0]  ($(ref) + (-1, 4)$) to [out = 180, in = 0 ]  ($(ref) + (-2, 3.5)$) to [out = 180, in = 0]  ($(ref) + (-3, 4)$)  arc (90:270:1) ($(ref) + (-3, 2)$)  to [out = 0, in = 180] ($(ref) + (-2, 2.5)$) to  [out = 0, in = 180]   ($(ref) + (-1, 2)$) to [out = 0, in = 180]  ($(ref) + (0, 2.5)$);

		\draw [blue, thick] ($(ref) + (0, 2.5)$) to [out = 0, in = 180] ($(ref) + (2, 2)$)  arc (-90:90:2 and 1) ($(ref) + (2, 4)$) to [out = 180, in = 0] ($(ref) + (0, 3.5)$);
		\draw[ultra thick]  ($(ref) + (0, 3.5)$) arc (90:270:0.25 and 0.5);
		\draw[dashed, ultra thick]  ($(ref) + (0, 2.5)$) arc (270:450:0.25 and 0.5);

		\draw [blue, fill =white, thick] ($(ref) + (2.5, 3.4)$) to [out = 205, in = 155] ($(ref) + (2.5, 2.6)$) to [out = 20, in = -20] ($(ref) + (2.5, 3.4)$);
		\draw [blue, thick] ($(ref) + (2.5, 3.4)$) to [out = 25, in = -165] ($(ref) + (2.65, 3.45)$);
		\draw [blue, thick] ($(ref) + (2.5, 2.6)$) to [out = -25, in = 165] ($(ref) + (2.65, 2.55)$);
		
		\begin{scope}[shift = {(-1, 0)}]
		\coordinate (ref) at (0,0);
		\draw [blue, fill =white, thick] ($(ref) + (2.5, 3.4)$) to [out = 205, in = 155] ($(ref) + (2.5, 2.6)$) to [out = 20, in = -20] ($(ref) + (2.5, 3.4)$);
		\draw [blue, thick] ($(ref) + (2.5, 3.4)$) to [out = 25, in = -165] ($(ref) + (2.65, 3.45)$);
		\draw [blue, thick] ($(ref) + (2.5, 2.6)$) to [out = -25, in = 165] ($(ref) + (2.65, 2.55)$);
		
		\end{scope}
		
		\coordinate (ref) at (0,0);
		\draw [fill =white,  ultra thick] ($(ref) + (-3.4, 3)$) to [out = 60, in = 120] ($(ref) + (-2.6, 3)$) to [out = -120, in = -60] ($(ref) + (-3.4, 3)$);	
		\draw [ ultra thick] ($(ref) + (-3.4, 3)$) to [out = 120, in = -75] ($(ref) + (-3.46, 3.15)$);
		\draw [ultra thick] ($(ref) + (-2.6, 3)$) to [out = 60, in = -115] ($(ref) + (-2.54, 3.15)$);
		
		\begin{scope}[shift= {(2, 0)}]
		\coordinate (ref) at (0,0);
		\draw [fill =white,  ultra thick] ($(ref) + (-3.4, 3)$) to [out = 60, in = 120] ($(ref) + (-2.6, 3)$) to [out = -120, in = -60] ($(ref) + (-3.4, 3)$);	
		\draw [ ultra thick] ($(ref) + (-3.4, 3)$) to [out = 120, in = -75] ($(ref) + (-3.46, 3.15)$);
		\draw [ultra thick] ($(ref) + (-2.6, 3)$) to [out = 60, in = -115] ($(ref) + (-2.54, 3.15)$);
		\end{scope}
		
		\end{scope}


		\node at (0.6, 8) {$V$}; 
		\node at (0.6, 3) {$W$}; 
		
		\node at (0.75, 5.5) {$F$}; 
		\draw [->, thick] (0.5, 6.8) -- (0.5, 4.2);

		\end{tikzpicture}
		\caption{Equivalence relation $\theta\sim 0$ for the definition of $
			L_n(\pi_1X,  w)$, where $F|_{\partial_2 V}$ is a homotopy equivalence and $F|_{N} = f$ is a degree one normal map.}
	\end{figure}
	\begin{enumerate}[(1)]
		\item There exists a manifold $2$-ad $(W, \partial W)$ of dimension $(n+1)$ with a continuous map $\Phi\colon W\to X$ so that the pullback $\mathbb Z/2$-principal bundle $\Phi^\ast(w)$ is the orientation covering of $W$. Here $\partial W = M \cup_{\partial M} \partial_2 W $ and $\Phi|_M = \varphi$.   
		\item Similarly, we have a manifold $2$-ad $(V, \partial V)$ of dimension $(n+1)$ with a continuous map $\Psi: V\to X$ so that the pullback $\mathbb Z/2$-principal bundle $\Psi^\ast(w)$ is the orientation covering of $V$. Moreover,    $\partial V = N \cup_{\partial N} \partial_2 V $ and $\Psi|_N = \psi$.  
		\item There is a degree one normal map of manifold $2$-ads 
		\[ F\colon (V, \partial V) \to (W, \partial W)\] such that $\Psi\circ F  = \Phi$. Moreover, $F$ restricts to $f$ on $N$,  and  $ F|_{\partial_2 V} \colon \partial_2V \to \partial_2 W $ is a homotopy equivalence over $X$.  
	\end{enumerate}
\end{definition}

We denote the set of equivalence classes by $L_n(\pi_1 X,  w)$. Note that $L_n(\pi_1 X,  w)$ is an abelian group with the addition given by  disjoint union.

It is a theorem of Wall that the above definition of $L$-groups is equivalent to the algebraic definition of $L$-groups $L^h_{n}(\Gamma,  w)$ when $n\geq 5$ \cite[Chapter 9]{MR1687388} \cite{MR0246310}. More precisely, Wall's chapter $9$ \cite[Chapter 9]{MR1687388} dealt with the surgery theory for simple homotopy equivalences, and the algebraic $L$-groups that appeared in that chapter are usually denoted by $L_\ast^s(\Gamma, w)$, where $s$ stands for simple and $w$ is a group homomorphism $\Gamma\to \mathbb Z/2$. In the current paper, we deal with homotopy equivalences instead of simple homotopy equivalences, hence the groups $L_\ast^h(\Gamma, w)$ instead of $L_\ast^s(\Gamma, w)$. When $n$ is even,  $L_n^s(\Gamma, w)$ is defined as the abelian group of equivalence classes of quadratic forms of the ring\footnote{Here $\mathbb Z\Gamma$ is considered as a ring with involution, where the involution is induced by $w$ that maps $\gamma \to w(\gamma) g^{-1}$.} $\mathbb Z\Gamma$  \cite[Chapter 5, page 49]{MR1687388}. When $n$ is odd,   $L_n^s(\Gamma, w)$ is defined as the abelian group of equivalence classes of automorphisms on hyperbolic forms of the ring $\mathbb Z\Gamma$ \cite[Chapter 6, page 68]{MR1687388}.  The definition of $L_\ast^h(\Gamma, w)$ is essentially the same,  once we drop the simplicity condition.  In general, the groups $L_\ast^h(\Gamma, w)$ and $L_\ast^s(\Gamma, w)$ are different. The same argument in Wall's chapter 9 \cite[Chapter 9]{MR1687388} proves the following theorem.

\begin{theorem}[{\cite[Chapter 9]{MR1687388}}] \label{thm:idenL} Let $\Gamma = \pi_1 X$. For all $n\geq 5$,  $L_n(\pi_1 X, w)$ is naturally isomorphic to the algebraic definition of $L_n^h(\Gamma,  w)$.
\end{theorem}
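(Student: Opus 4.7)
The plan is to reproduce the argument in Chapter 9 of Wall's book \cite{MR1687388}, where this equivalence is established in essentially the formulation above. My strategy is to construct a natural homomorphism $\sigma\colon L_n(\pi_1 X; w) \to L_n^{\mathrm{alg}}(\mathbb Z[\Gamma]; w)$ given by the Wall surgery obstruction, and then to prove it is a bijection for $n \geq 5$.

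First, I would attach a surgery obstruction to each representative $\theta = (M, \partial M, \varphi, N, \partial N, \psi, f)$. The map $f$ is a degree one normal map of pairs that is a homotopy equivalence on the boundary. After a finite sequence of preliminary surgeries below the middle dimension (which can be carried out in the interior of $N$ without disturbing $\partial N$ once $n \geq 5$), one may assume $f$ is $[n/2]$-connected. The surgery kernel in middle-dimensional homology then carries a canonical quadratic intersection form (when $n$ is even) or quadratic formation (when $n$ is odd) as a $\mathbb Z[\Gamma]$-module, where the $\Gamma$-action is read off from $\varphi$ and the involution is twisted by $w$. I would set $\sigma([\theta])$ to be the Witt class of this form or formation. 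Additivity under disjoint union is immediate from the orthogonal additivity of Wall forms; a bordism $(V, \partial V; W, \partial W; F)$ witnessing $\theta \sim 0$ translates into an explicit null-bordism of the Witt class by applying the same procedure to $F$ relative to the $\partial_2$-face, so $\sigma$ descends to a well-defined group homomorphism.

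Surjectivity will follow from Wall's realization theorem: for $n \geq 5$, every class in $L_n^{\mathrm{alg}}(\mathbb Z[\Gamma]; w)$ is the surgery obstruction of some normal map $(N, \partial N) \to (X \times I, X \times \partial I)$ that restricts to a homotopy equivalence on each copy of $X$. This is proved by plumbing in the even case and by handle attachment realizing a prescribed formation in the odd case, and directly exhibits any algebraic class as $\sigma([\theta])$. For injectivity, assume $\sigma([\theta]) = 0$. I would start from the product bordism on $N$, attach handles in the interior of $N$ whose trace realizes a null-bordism of the algebraic obstruction, and then perform further surgeries in the resulting $(n+1)$-dimensional cobordism so as to convert $F$ into a homotopy equivalence of pairs; the trace is the desired 2-ad bordism to the zero object.

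The principal obstacle is the injectivity step, specifically arranging that the resulting bordism restricts to a genuine homotopy equivalence on the $\partial_2$-face rather than merely a degree one normal map. This requires the Whitney trick together with the $\pi$-$\pi$ theorem in ambient dimension $n + 1 \geq 6$, which jointly allow one to kill the remaining middle-dimensional surgery kernel on the closed face by interior surgery. It is exactly here that the hypothesis $n \geq 5$ becomes indispensable, and this is the technical heart of the translation between the geometric and algebraic pictures of $L_n$.
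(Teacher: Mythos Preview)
The paper does not give a proof of this statement; it is stated with a bare citation to Wall \cite[Chapter 9]{MR1687388}. Your proposal is therefore not competing with anything in the paper but is a sketch of Wall's own argument, and as such it is correct in outline: the surgery obstruction defines $\sigma$, Wall realization gives surjectivity, and vanishing of the obstruction lets one surger $f$ to a homotopy equivalence rel boundary, whose trace furnishes the required null-bordism.

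One small correction to the injectivity step: the $\pi$-$\pi$ theorem is not the right tool there, and invoking it obscures the argument. What you actually need is the fundamental theorem of surgery for a degree one normal map of pairs that is already a homotopy equivalence on the boundary: if $\sigma(\theta)=0$ and $n\geq 5$, then $f$ is normally cobordant rel $\partial$ to a homotopy equivalence $f'\colon (N',\partial N)\to (M,\partial M)$. Taking $W=M\times I$ with $\partial_2 W=(\partial M\times I)\cup (M\times\{1\})$ and $V$ the trace of the surgeries with $\partial_2 V=(\partial N\times I)\cup N'$, the restriction $F|_{\partial_2 V}$ is assembled from $f|_{\partial N}\times\mathrm{id}$ and $f'$, both homotopy equivalences, so $\theta\sim 0$ directly. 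The $\pi$-$\pi$ theorem enters instead on the \emph{surjectivity} side, inside Wall's realization construction over $X\times I$, where one needs to kill the surgery kernel on a face without obstruction; your sketch has the roles of the two theorems slightly reversed.
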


The dimension restriction ($n\geq 5$) in the above theorem is necessary. If $n < 5$, although this cobordism theoretic definition of $ L_n(\pi_1 X, w)$ still gives an abelian group, it is not clear what it really describes because of well known problems of low dimensional surgery. Moreover, these groups (in low dimensions) could well be dependent on the category. On the other hand, as long as $n\geq 5$, not only  $L_n(\pi_1 X, w)$ is naturally isomorphic to the algebraic definition of $L_n^h(\pi_1 X,  w)$, but also the map $\times \mathbb{CP}^2$ (i.e. taking the direct product with $\mathbb{CP}^2$) induces an isomorphism 
\[  L_n(\pi_1 X, w) \xrightarrow{\, \cong\, }  L_{n+4}(\pi_1 X, w). \]  This motivates us to give the following definition, which makes Wall's geometric definition of $L$-groups into a $4$-periodic theory in all dimensions.  

\begin{definition}\label{def:periodL}
	For each $n\in \mathbb Z$, we define $ \mathfrak L_{n}(\pi_1 X, w)$ to be the direct limit of 
	\[
	\scalebox{1}{	$\cdots  \to  L_{k}(\pi_1 X, w) \xrightarrow{\times \mathbb{CP}^2}   L_{k+4}(\pi_1 X, w)
		\xrightarrow{\times \mathbb{CP}^2} L_{k+8}(\pi_1 X, w) \to \cdots,$}
	\]
	where $k \equiv n \pmod{4}$. 
\end{definition} 

Now we shall also introduce a controlled version of Wall's $L$-group definition, which will be identified with $H_\ast(X; \mathbb L_\bullet)$. Here $\mathbb L_\bullet$ is an $\Omega$-spectrum of simplicial sets of quadratic forms and formations over $\mathbb Z$ such that $\mathbb L_0\simeq G/TOP$, cf. \cite[Section 3]{MR884801}. 

\begin{definition}\label{def:norm}
	An object of $\mathcal N_n(X,  w)$ consists of the following data: 
	\begin{figure}[H]
		\centering
		\begin{tikzpicture}[scale=1, every node/.style={transform shape}]
		\coordinate (ref) at (0,0); 
		\draw [blue,  thick] ($(ref) + (0, 2.5)$) to [out = 0, in = 180] ($(ref) + (2, 2)$)  arc (-90:90:2 and 1) ($(ref) + (2, 4)$) to [out = 180, in = 0] ($(ref) + (0, 3.5)$);
		\draw[red, very thick]  ($(ref) + (0, 3.5)$) arc (90:270:0.25 and 0.5);
		\draw[red, very thick]  ($(ref) + (0, 2.5)$) arc (270:450:0.25 and 0.5);
		\draw [ blue,  thick] ($(ref) + (1.5, 3)$) to [out = 20, in = 160] ($(ref) + (2.7, 3)$) to [out = -160, in = -20] ($(ref) + (1.5, 3)$);
		\draw [ blue, thick] ($(ref) + (1.5, 3)$) to [out = 160, in = 175] ($(ref) + (1.43, 3.05)$);
		\draw [ blue, thick] ($(ref) + (2.7, 3)$) to [out = 20, in = 5] ($(ref) + (2.77, 3.05)$);

		\draw [->, red, very thick] (-0.4, 7.9) to [out = 225, in = 135 ] (-0.4, 3.1); 
		
		\draw [->, thick] (4.2, 7.5) -- (7, 5.7); 
		
		\node at (7.3, 5.5) {$X$}; 
		\node at (4.4, 8) {$N$}; 
		\node at (4.4, 3) {$M$}; 
		\node at (-1, 8.5) {$\partial N$}; 
		
		\draw [dashed, ->] (-1, 8.3) to [out = -90, in = 180] (-0.4, 8.1);

		\node at (-1, 2.4) {$\partial M$}; 
		
		\draw [dashed, ->] (-1, 2.6) to [out = 90, in = 180] (-0.4, 2.9);
		
		\node [blue] at (2.2, 5.5) {$f$}; 
		\node at (5.5, 4) {$\varphi$}; 
		\node at (5.5, 7) {$\psi$}; 
		\node [red] at (-2, 5.5) {$f|_{\partial N}$};
		
		\draw [->, thick] (4.2, 3.5) -- (7, 5.3); 
		
		\draw [->, blue,  thick] (2, 6.7) -- (2, 4.3);
		
		\begin{scope}[shift ={(0, 5) }]
		\coordinate (ref) at (0,0); 
		\draw [blue,  thick] ($(ref) + (0, 2.5)$) to [out = 0, in = 180] ($(ref) + (2, 2)$)  arc (-90:90:2 and 1) ($(ref) + (2, 4)$) to [out = 180, in = 0] ($(ref) + (0, 3.5)$);
		\draw[red, very thick]  ($(ref) + (0, 3.5)$) arc (90:270:0.25 and 0.5);
		\draw[red, very thick]  ($(ref) + (0, 2.5)$) arc (270:450:0.25 and 0.5);
		
		\draw [blue, fill =white, thick] ($(ref) + (2.5, 3.4)$) to [out = 205, in = 155] ($(ref) + (2.5, 2.6)$) to [out = 20, in = -20] ($(ref) + (2.5, 3.4)$);
		\draw [blue, thick] ($(ref) + (2.5, 3.4)$) to [out = 25, in = -165] ($(ref) + (2.65, 3.45)$);
		\draw [blue, thick] ($(ref) + (2.5, 2.6)$) to [out = -25, in = 165] ($(ref) + (2.65, 2.55)$);
		
		\begin{scope}[shift = {(-1, 0)}]
		\coordinate (ref) at (0,0);
		\draw [blue, fill =white, thick] ($(ref) + (2.5, 3.4)$) to [out = 205, in = 155] ($(ref) + (2.5, 2.6)$) to [out = 20, in = -20] ($(ref) + (2.5, 3.4)$);
		\draw [blue, thick] ($(ref) + (2.5, 3.4)$) to [out = 25, in = -165] ($(ref) + (2.65, 3.45)$);
		\draw [blue, thick] ($(ref) + (2.5, 2.6)$) to [out = -25, in = 165] ($(ref) + (2.65, 2.55)$);
		
		\end{scope}
		\end{scope}
		\end{tikzpicture}
		\vspace{3mm}
		\caption{An object  $\theta = (M, \partial M, \varphi, N, \partial N, \psi, f)$  of  $\mathcal N_n(X,   w)$, where $f$ is a degree one normal map and  $f|_{\partial N}$ is an infinitesimally controlled homotopy equivalence.}
	\end{figure}
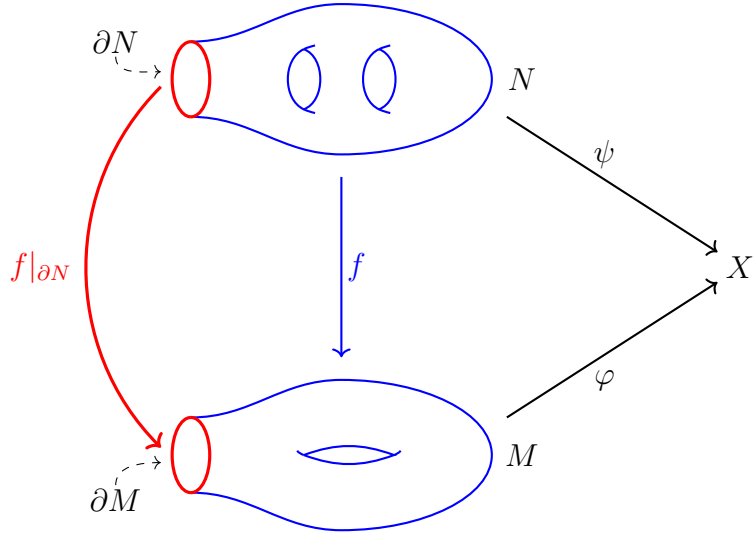
	\begin{enumerate}[(1)]
		\item two manifold $1$-ads $(M, \partial M)$ and  $(N, \partial N)$ with $\dim M = \dim N = n$, where $\partial M$ (resp. $\partial N$) is the boundary of $M$ (resp. $\partial N$);
		\item continuous maps $\varphi\colon M \to X$ and $\psi\colon N \to X$ so that the pullback $\mathbb Z/2$-principal bundles $\varphi^\ast(w)$ (resp. $\psi^\ast(w)$) is the orientation covering of $M$ (resp. $N$) respectively; 
		\item a degree one normal map of manifold $1$-ads \[ f\colon (N, \partial N) \to (M, \partial M)\] such that $\varphi \circ f = \psi$. Moreover, on the boundary $ f|_{\partial N} \colon \partial N\to \partial M$ is an \emph{infinitesimally controlled} homotopy equivalence over  $X$.
	\end{enumerate} 
\end{definition}

\begin{definition}
	The equivalence relation for defining $\mathcal N_n(X, w)$ is given as follows.  Let \[ \xi = (M, \partial M, \varphi, N, \partial N, \psi, f) \] be an object from Definition $\ref{def:norm}$ above.  We write $\xi \sim 0$ if the following conditions are satisfied. 
	\begin{figure}[h]
		\centering
		\begin{tikzpicture}[scale=1, every node/.style={transform shape}]
		\coordinate (ref) at (0,0); 
		
		\shade[shading = ball, ball color = gray!30!white,  draw = black] 
		($(ref) + (-3, 4)$)  arc (90:270:1) ($(ref) + (-3, 2)$)  to [out = 0, in = 180] ($(ref) + (-2, 2.5)$) to  [out = 0, in = 180]   ($(ref) + (-1, 2)$) to [out = 0, in = 180]  ($(ref) + (0, 2.5)$) to [out = 0, in = 180] ($(ref) + (2, 2)$)  arc (-90:90:2 and 1) ($(ref) + (2, 4)$) to [out = 180, in = 0] ($(ref) + (0, 3.5)$) to [out = 180, in = 0]  ($(ref) + (-1, 4)$) to [out = 180, in = 0 ]  ($(ref) + (-2, 3.5)$) to [out = 180, in = 0]  ($(ref) + (-3, 4)$)  arc (90:270:1) ($(ref) + (-3, 2)$);	
		
		\draw [red, very thick] ($(ref) + (0, 3.5)$) to [out = 180, in = 0]  ($(ref) + (-1, 4)$) to [out = 180, in = 0 ]  ($(ref) + (-2, 3.5)$) to [out = 180, in = 0]  ($(ref) + (-3, 4)$)  arc (90:270:1) ($(ref) + (-3, 2)$)  to [out = 0, in = 180] ($(ref) + (-2, 2.5)$) to  [out = 0, in = 180]   ($(ref) + (-1, 2)$) to [out = 0, in = 180]  ($(ref) + (0, 2.5)$);
		
		\draw [blue, thick] ($(ref) + (0, 2.5)$) to [out = 0, in = 180] ($(ref) + (2, 2)$)  arc (-90:90:2 and 1) ($(ref) + (2, 4)$) to [out = 180, in = 0] ($(ref) + (0, 3.5)$);
		\draw[red, very thick]  ($(ref) + (0, 3.5)$) arc (90:270:0.25 and 0.5);
		\draw[dashed, red, very thick]  ($(ref) + (0, 2.5)$) arc (270:450:0.25 and 0.5);
		
		\draw [blue, fill =white,   thick] ($(ref) + (1.5, 3)$) to [out = 20, in = 160] ($(ref) + (2.7, 3)$) to [out = -160, in = -20] ($(ref) + (1.5, 3)$);
		\draw [ blue, thick] ($(ref) + (1.5, 3)$) to [out = 160, in = -10] ($(ref) + (1.4, 3.03)$);
		\draw [ blue,  thick] ($(ref) + (2.7, 3)$) to [out = 20, in = -170] ($(ref) + (2.8, 3.03)$);
		
		\draw [red, fill =white,  very thick] ($(ref) + (-3.4, 3)$) to [out = 60, in = 120] ($(ref) + (-2.6, 3)$) to [out = -120, in = -60] ($(ref) + (-3.4, 3)$);	
		\draw [  red, very thick] ($(ref) + (-3.4, 3)$) to [out = 120, in = -75] ($(ref) + (-3.46, 3.15)$);
		\draw [ red, very thick] ($(ref) + (-2.6, 3)$) to [out = 60, in = -115] ($(ref) + (-2.54, 3.15)$);
		
		\begin{scope}[shift= {(2, 0)}]
		\coordinate (ref) at (0,0);
		
		\draw [red, fill =white,  very thick] ($(ref) + (-3.4, 3)$) to [out = 60, in = 120] ($(ref) + (-2.6, 3)$) to [out = -120, in = -60] ($(ref) + (-3.4, 3)$);	
		\draw [  red, very thick] ($(ref) + (-3.4, 3)$) to [out = 120, in = -75] ($(ref) + (-3.46, 3.15)$);
		\draw [ red, very thick] ($(ref) + (-2.6, 3)$) to [out = 60, in = -115] ($(ref) + (-2.54, 3.15)$);

		\end{scope}

		
		\draw [->, red, very thick] (-2, 6.8) to [out = 225, in = 135 ] (-2, 3.8); 
		
		\draw [->, thick] (4.2, 7.5) -- (7, 5.7); 
		
		\node at (7.3, 5.5) {$X$}; 
		\node at (2.5, 6.6) {$N$}; 
		\node at (2.5, 1.6) {$M$}; 
		
		
		\node at (-2.7, 6.6) {$\partial_2 V$}; 
		
		\node at (-2.7, 1.6) {$\partial_2 W$}; 
		
		\node at (0.75, 9.5) {\scriptsize $\partial N = \partial \partial_2 V$};

		\draw [dashed, ->] (0.55, 9.3) to [out = -90, in = 90] (0, 8.6); 
		
		\node at (0.75, 1.5) {\scriptsize $\partial M = \partial \partial_2 W$}; 
		
		\draw [dashed, ->] (0.55, 1.7) to [out = 90, in = -90] (0, 2.4);

		\node at (5.5, 4) {$\Phi$}; 
		\node at (5.5, 7) {$\Psi$}; 
		\node [red] at (-2, 5.5) {$F|_{\partial_2 V}$};
		
		\draw [->, thick] (4.2, 3.5) -- (7, 5.3);

		
		\begin{scope}[shift ={(0, 5) }]
		\coordinate (ref) at (0,0);
		\shade[shading = ball, ball color = gray!30!white] 
		($(ref) + (-3, 4)$)  arc (90:270:1) ($(ref) + (-3, 2)$)  to [out = 0, in = 180] ($(ref) + (-2, 2.5)$) to  [out = 0, in = 180]   ($(ref) + (-1, 2)$) to [out = 0, in = 180]  ($(ref) + (0, 2.5)$) to [out = 0, in = 180] ($(ref) + (2, 2)$)  arc (-90:90:2 and 1) ($(ref) + (2, 4)$) to [out = 180, in = 0] ($(ref) + (0, 3.5)$) to [out = 180, in = 0]  ($(ref) + (-1, 4)$) to [out = 180, in = 0 ]  ($(ref) + (-2, 3.5)$) to [out = 180, in = 0]  ($(ref) + (-3, 4)$)  arc (90:270:1) ($(ref) + (-3, 2)$);	
		
		\draw [red, very thick] ($(ref) + (0, 3.5)$) to [out = 180, in = 0]  ($(ref) + (-1, 4)$) to [out = 180, in = 0 ]  ($(ref) + (-2, 3.5)$) to [out = 180, in = 0]  ($(ref) + (-3, 4)$)  arc (90:270:1) ($(ref) + (-3, 2)$)  to [out = 0, in = 180] ($(ref) + (-2, 2.5)$) to  [out = 0, in = 180]   ($(ref) + (-1, 2)$) to [out = 0, in = 180]  ($(ref) + (0, 2.5)$);

		\draw [blue, thick] ($(ref) + (0, 2.5)$) to [out = 0, in = 180] ($(ref) + (2, 2)$)  arc (-90:90:2 and 1) ($(ref) + (2, 4)$) to [out = 180, in = 0] ($(ref) + (0, 3.5)$);
		\draw[red, very thick]  ($(ref) + (0, 3.5)$) arc (90:270:0.25 and 0.5);
		\draw[dashed, red, very thick]  ($(ref) + (0, 2.5)$) arc (270:450:0.25 and 0.5);

		\draw [blue, fill =white, thick] ($(ref) + (2.5, 3.4)$) to [out = 205, in = 155] ($(ref) + (2.5, 2.6)$) to [out = 20, in = -20] ($(ref) + (2.5, 3.4)$);
		\draw [blue, thick] ($(ref) + (2.5, 3.4)$) to [out = 25, in = -165] ($(ref) + (2.65, 3.45)$);
		\draw [blue, thick] ($(ref) + (2.5, 2.6)$) to [out = -25, in = 165] ($(ref) + (2.65, 2.55)$);
		
		\begin{scope}[shift = {(-1, 0)}]
		\coordinate (ref) at (0,0);
		\draw [blue, fill =white, thick] ($(ref) + (2.5, 3.4)$) to [out = 205, in = 155] ($(ref) + (2.5, 2.6)$) to [out = 20, in = -20] ($(ref) + (2.5, 3.4)$);
		\draw [blue, thick] ($(ref) + (2.5, 3.4)$) to [out = 25, in = -165] ($(ref) + (2.65, 3.45)$);
		\draw [blue, thick] ($(ref) + (2.5, 2.6)$) to [out = -25, in = 165] ($(ref) + (2.65, 2.55)$);
		
		\end{scope}
		
		\coordinate (ref) at (0,0);
		
		\draw [red, fill =white,  very thick] ($(ref) + (-3.4, 3)$) to [out = 60, in = 120] ($(ref) + (-2.6, 3)$) to [out = -120, in = -60] ($(ref) + (-3.4, 3)$);	
		\draw [  red, very thick] ($(ref) + (-3.4, 3)$) to [out = 120, in = -75] ($(ref) + (-3.46, 3.15)$);
		\draw [ red, very thick] ($(ref) + (-2.6, 3)$) to [out = 60, in = -115] ($(ref) + (-2.54, 3.15)$);
		
		\begin{scope}[shift= {(2, 0)}]
		\coordinate (ref) at (0,0);
		
		\draw [red, fill =white,  very thick] ($(ref) + (-3.4, 3)$) to [out = 60, in = 120] ($(ref) + (-2.6, 3)$) to [out = -120, in = -60] ($(ref) + (-3.4, 3)$);	
		\draw [  red, very thick] ($(ref) + (-3.4, 3)$) to [out = 120, in = -75] ($(ref) + (-3.46, 3.15)$);
		\draw [ red, very thick] ($(ref) + (-2.6, 3)$) to [out = 60, in = -115] ($(ref) + (-2.54, 3.15)$);

		\end{scope}
		
		\end{scope}
		

		\node at (0.6, 8) {$V$}; 
		\node at (0.6, 3) {$W$}; 
		
		\node at (0.75, 5.5) {$F$}; 
		\draw [->, thick] (0.5, 6.8) -- (0.5, 4.2);
		
		\end{tikzpicture}
		\caption{Equivalence relation $\theta\sim 0$ for the definition of  $\mathcal N_n(X, w)$, where $F|_{\partial_2 V}$ is an infinitesimally controlled homotopy equivalence and $F|_N = f$ is a degree one normal map. }
	\end{figure}
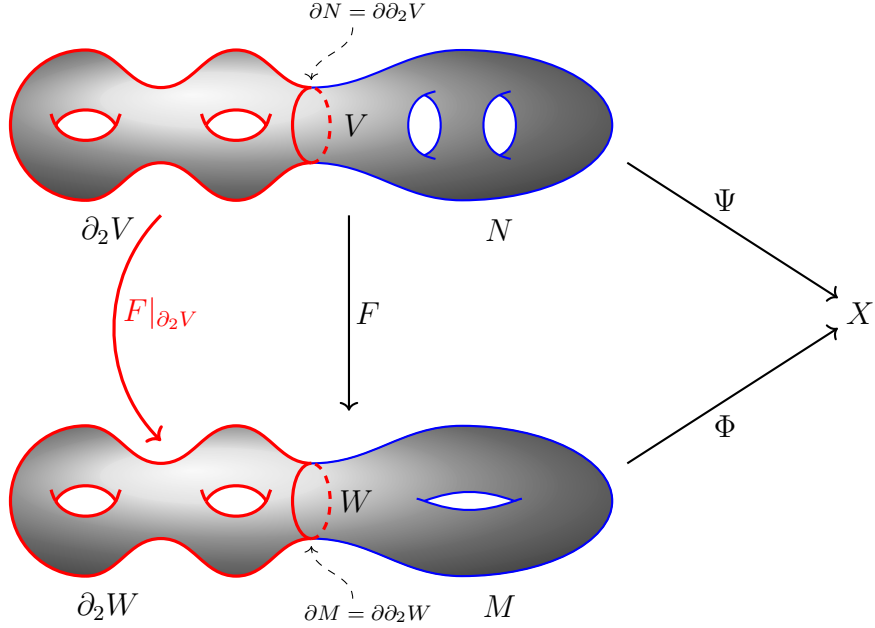			
	
	\begin{enumerate}[(1)]
		\item There exists a manifold $2$-ad $(W, \partial W)$ of dimension $(n+1)$ with a continuous map $\Phi\colon W\to X$ so that the pullback $\mathbb Z/2$-principal bundle $\Phi^\ast(w)$ is the orientation covering of $W$. Moreover, $\partial W = M \cup_{\partial M} \partial_2 W $ and $\Phi|_M = \varphi$. 
		\item Similarly, we have a manifold $2$-ad $(V, \partial V)$ of dimension $(n+1)$ with a continuous map $\Psi: V\to X$ so that the pullback $\mathbb Z/2$-principal bundle $\Psi^\ast(w)$ is the orientation covering of $V$. Moreover,    $\partial V = N \cup_{\partial N} \partial_2 V $ and $\Psi|_N = \psi$.    
		\item There is a degree one normal map of $2$-ads \[ F\colon (V, \partial V) \to (W, \partial W)\] such that $\Psi\circ F  = \Phi$. Moreover, $F$ restricts to $f$ on $N$, and  
		\[ F|_{\partial_2 V} \colon \partial_2V \to \partial_2 W\] is an \emph{infinitesimally controlled} homotopy equivalence over $X$.  
	\end{enumerate}
\end{definition}

We denote  by $\mathcal N_n(X,  w)$ the set of equivalence classes from Definition $\ref{def:norm}$. Note that  $\mathcal   N_n(X, w)$ is an abelian group with the addition given by disjoint union.

Following the same strategy from Definition $\ref{def:periodL}$, we shall turn $\mathcal N_n(X, w)$  and $\mathcal  S_n(X, w)$ into $4$-periodic theories.  
\begin{definition}\label{def:period}
	For each $n\in \mathbb Z$, we define $ \mathfrak N_{n}(X, w)$ to be the direct limit 
	\[  \cdots \to  \mathcal N_{k}(X, w) \xrightarrow{\times \mathbb{CP}^2}  \mathcal N_{k+4}(X, w) \xrightarrow{\times \mathbb{CP}^2} \mathcal N_{k+8}(X, w) \to \cdots, \]
	where $k \equiv n \pmod{4}$. Similarly, we define $ \mathfrak S_{n}(X, w)$ to be the direct limit 
	\[  \cdots \to  \mathcal S_{k}(X, w) \xrightarrow{\times \mathbb{CP}^2}  \mathcal S_{k+4}(X, w) \xrightarrow{\times \mathbb{CP}^2} \mathcal S_{k+8}(X, w) \to \cdots, \]
	where $k \equiv n \pmod{4}$.
\end{definition}

Let us discuss a key point of our definitions of $\mathcal N_n(X, w)$ and $\mathcal S_n(X, w)$. For simplicity\footnote{The argument for the general case is the same, once we use twisted $\mathbb L$-spectra \cite[Appendix A]{MR1211640} instead of the usual $\mathbb L$-spectrum.}, let us focus on the oriented case, that is, $w$ is the trivial $\mathbb Z/2$-principal covering of $X$, and will be dropped from our notation during this discussion. The $L$-groups (either $L^{h}$ or $L^s$), besides having an algebraic definition, also have a cobordism theoretic definition, according to Wall \cite[Chapter 9]{MR1687388}.  Now by ideas from controlled topology (cf. \cite{MR1388311} \cite[Section 3]{MR884801}),  if we impose extra infinitesimal control on objects in Wall's cobordism theoretic definition of $L$-groups, we obtain a generalized homology theory. Indeed, a cobordism type construction such as $\mathcal N_n(X)$ defines a generalized homology theory if it satisfies an appropriate transversality condition (see for example \cite{MR1388303}). This transversality condition is indeed satisfied by $\mathcal N_n(X)$ due to infinitesimal control. See for example Definition $\ref{def:pltrans}$ for how this works in the PL (i.e. piecewise-linear) category. 

Let us recast the above discussion in the language of spectra.  In fact,  based on Wall's cobordism theoretic definition,  Quinn constructed geometric surgery spectra of $\Delta$-sets which realized $L$-groups as their homotopy groups \cite{MR0282375}. From this perspective, our construction $\mathcal N_\ast(X)$ above can be viewed as a cobordism theoretic representation of the generalized homology theory determined by such a spectrum. See \cite{MR1388303} for a more thorough discussion of some closely related topics.   On the other hand, based on the algebraic definition of $L$-groups, Ranicki constructed quadratic $L$-theory spectra which also realize $L$-groups as their homotopy groups \cite[Chapter 13]{MR1211640}. Consequently, Quinn's geometric surgery spectra and Ranicki's quadratic $L$-theory spectra give arise to (homotopy) equivalent spectra. It follows that  $\mathcal N_n(X)$ is naturally isomorphic to $H_n(X; \mathbb L_\bullet)$, where $\mathbb L_\bullet$ is the quadratic $L$-theory spectrum for the trivial group $e$ --- an $\Omega$-spectrum of simplicial sets of quadratic forms and formations over $\mathbb Z$ --- such that $\mathbb L_0\simeq  G/TOP$. Moreover, the natural morphism
\[ i_\ast \colon \mathcal N_n(X) \to  L_n(\pi_1 X),   \]
which is defined by forgetting infinitesimal control,  can also be viewed as  induced by a map $\mu$ of spectra. Now the groups $\mathcal S_\ast(X)$ are just the (stable) homotopy groups of the homotopy fiber of this map $\mu$.  

Let us prove that $\mathcal N_n(X)$ is naturally isomorphic to $H_n(X; \mathbb L_\bullet)$ for all $n\geq 0$. This isomorphism will be used later in the surgery exact sequence to identify $\mathcal S_n(X)$ with $\mathcal S^\topo(X)$ when $X$ is a closed connected oriented topological manifold of dimension $\geq 5$.   By a standard fact from algebraic topology, it suffices to show that  there is a natural morphism between the two homolgy theories that induces an isomorphism when $X$ is one point. In our current case, there is a natural morphism from $\mathcal N_n(X)$ to $H_n(X; \mathbb L_\bullet)$ by mapping an element of $\mathcal N_n(X)$ to its corresponding (local) algebraic Poincar\'{e} complex. It remains to identify the groups $\mathcal N_n(\{pt\})$ and $H_n(\{pt\}; \mathbb L_\bullet)$. In fact, in order to make this identification, we shall slightly modify our definitions of $L_n(\pi_1 X)$, $\mathcal N_n(X)$ and $\mathcal S_n(X)$. It will become clear that the modified version coincides with our original version above (Definition  $\ref{def:newstruc}$, $\ref{def:Lgrp}$ and $\ref{def:norm}$) when $n\neq 4$. In other words, the only essential change happens in dimension $ n = 4$.

\begin{definition}\label{modLgrp}
	An object of ${L}^{new}_n(\pi_1 X)$  consists of the following data: 
	
	\begin{enumerate}[(1)]
		\item two \emph{oriented} manifold $1$-ads $(M, \partial M)$ and  $(N, \partial N)$ with $\dim M = \dim N = n$, where $\partial M$ (resp. $\partial N$) is the boundary of $M$ (resp. $\partial N$);
		\item continuous maps $\varphi\colon M \to X$ and $\psi\colon N \to X$; 
		\item a degree one normal map of the $1$-ads 
		\[ f\colon (N, \partial N) \to (M, \partial M)\] such that $\varphi \circ f = \psi$. Moreover on the boundary,  \[ f|_{\partial N} \colon \partial N\to \partial M\]  is a $\mathbb Z\pi$-homology equivalence.
	\end{enumerate} 
\end{definition}
Here   $f|_{\partial N}$ is a $\mathbb Z\pi$-homology equivalence means that it induces an isomorphism $H_\ast(\partial N; \mathbb Z\pi) \xrightarrow{ \cong } H_\ast(\partial M; \mathbb Z\pi)$ on homology with local coefficients in $\mathbb Z\pi$, where $\pi = \pi_1(\partial M)$. Equivalently, let $\widetilde{\partial M}$ be the universal covering space of $\partial M$ and  $(\partial N)_\pi$ the covering space of $\partial N$ which is the pullback of $\widetilde{\partial M}$ along the map $f|_{\partial N}$. We say $f|_{\partial N}$ is a $\mathbb Z\pi$-homology equivalence if the lifted map $\widetilde f\colon (\partial N)_\pi \to \widetilde{\partial M}$ induces an isomorphism on homology. 

The equivalence relation for the objects in ${L}^{new}_n(\pi_1 X)$ is defined the same as in Definition $\ref{def:equivforL}$ except that we replace homotopy equivalences by $\mathbb Z\pi$-homology equivalences everywhere. Similarly, we can define $\mathcal N_n^{new}(X)$ and $\mathcal S_n^{new}(X)$ using (infinitesimally controlled) $\mathbb Z\pi$-homology equivalences instead of (infinitesimally controlled) homotopy equivalences. 

In fact, in dimension $n \geq 5$, the modified version $L^{new}_n(\pi_1 X)$ is naturally isomorphic to the original version $L_n(\pi_1 X)$. See for example \cite{MR0423377}. Following the discussion above, from the viewpoint of spectra,  we also see that $\mathcal N_n^{new}(X)$ and $\mathcal S_n^{new}(X)$ are naturally isomorphic to $\mathcal N_n(X)$ and $\mathcal S_n(X)$ respectively when $n \geq 5$.

With the same notation from above, observe that 
\[ f|_{\partial N}\colon \partial N \to \partial M\] is  a degree one map, since $f\colon (N, \partial N) \to (M, \partial M)$ is. When $n=0$ or $1$, $f|_{\partial N}$ is automatically a homeomorphism. When $n=2$,  $f|_{\partial N}$ is a degree one map between circles, which is necessarily a homotopy equivalence and thus homotopic to a homeomorphism. When $n=3$, $f|_{\partial N}$ is a degree one map between oriented surfaces, in particular,  it induces a surjection $(f|_{\partial N})_\ast\colon \pi_1(\partial N) \twoheadrightarrow \pi_1(\partial M)$ between the fundamental groups. Let $G \subset \pi_1(\partial N)$ be the kernel of this surjection. If $G$ is trivial, then $(f|_{\partial N})_\ast$ is an isomorphism, which together with $\mathbb Z\pi$-homology equivalence implies that $f$ is a homotopy equivalence.
If $G$ is nontrivial, we claim that $\pi_1(\partial N)/G$ has to be finite. Indeed, if $\pi_1(\partial N)/G$ is infinite,  then  $\widetilde{\partial N}/G$ is a noncompact surface, where  $\widetilde {\partial N}$ is the universal covering space of $\partial N$. It follows that $G$ is a free group. Note that the pullback covering space $(\partial N)_\pi$ over $\partial N$ is isomorphic to the covering space $\widetilde{\partial N}/G$ over $\partial N$. However, the first homology group of $\widetilde{\partial N}/G$ is nontrivial, whereas the first homology group of $\widetilde{\partial M}$ is always trivial. This is a contradiction to the assumption that $f|_{\partial N}$ is a $\mathbb Z\pi$-homology equivalence. 
Therefore, if $G$ is nontrivial, then    $\pi_1(\partial N)/G$ is finite. It follows that $\pi_1(\partial N)/G = \pi_1(\partial M)$ has to be trivial, that is, $\partial M$ is the $2$-sphere. In this case, $(\partial N)_\pi = \partial N$, and a comparison of homology groups shows that $f|_{\partial N}$ is a  homotopy equivalence. Note that a homotopy equivalence between oriented surfaces is homotopic to a homeomorphism.

To summarize, in dimension $n=0, 1, 2$ and $3$, the new definitions $L^{new}_n(\pi_1 X)$, $\mathcal N_n^{new}(X)$ and $\mathcal S_n^{new}(X)$ coincide with their original versions. Moreover, in these dimensions, we can assume that our objects have no
boundary. Indeed, for each \[ \theta = (M, \partial M, \varphi, N, \partial N, \psi, f)\in L^{new}_n(\pi_1 X), \]  we can first assume $f|_{\partial N}$ is a homeomorphism by the above discussion, then glue the element $ (N, \partial N, \psi, N, \partial N, \psi, \textup{Id})$ to $\theta$, that is, glue $(N, \partial N)$ to $(M, \partial M)$ along the boundary and $(N, \partial N)$ to $(N, \partial N)$ along the boundary. Note that the element 
\[(N, \partial N, \psi, N, \partial N, \psi, \textup{Id})\] is equivalent to $0$ in $L^{new}_n(\pi_1 X)$, thus such a gluing does not change the class of $\theta$ in  $L^{new}_n(\pi_1 X)$. In other words, for the trivial group $e$, the group $L_n^{new}(e) = L_n(e)$ is precisely the conventional
manifold bordism group $\Omega_n(G/TOP)$, when $n=0, 1, 2$ or $3$. It follows from the Atiyah-Hirzebruch spectral sequence and the homotopy groups of $G/TOP$ that $L_0(e) = L_1(e) = L_3(e) = 0$, and $L_2(e) = \mathbb Z/2$ (given by the Arf invariant).

The case where $n=4$ is more subtle. Ideally, we would like to realize all intersection forms in the algebraic definition of $L^h_4(e)$ by elements in the geometric definition of $L_4(e)$ (Definition $\ref{def:Lgrp}$). One standard method is to apply the Wall realization (cf. \cite[Chapter 10]{MR1687388}). For any orientation-preserving homotopy equivalence between two closed $3$-manifolds $g\colon  A \to B$,  the Wall realization process applied to $A$ will produce a cobordism $W$ between $A$ and another $3$-manifold $C$ together with a degree one normal map $G\colon W\to B\times I$ such that  $G|_C\colon C\to B = B\times \{1\}$ is a $\mathbb Z\pi$-homology equivalence\footnote{In dimensions $\geq 4$, the Wall realization will actually produce a homotopy equivalence on the other end. However, for $3$-manifolds, we only get a $\mathbb Z\pi$-homology equivalence in general.}. This is the reason that we need the modified version $L^{new}_4(e)$. With this modification, every element of $L^h_4(e)$ can be realized by an element of $L^{new}_4(e)$. More explicitly, 
recall that the Poincar\'e homology sphere  bounds a manifold $E$ with the $E_8$-quadratic form. Now consider a map  $f\colon (E, \partial E) \to (D^4, S^3)$ that induces a homology equivalence from $\partial E$ to $S^3$. This defines a generator of the group $L^h_4(e)$. In particular, all elements of  $L^h_4(e)$ can be realized by using boundary connected sums of $f\colon (E, \partial E) \to (D^4, S^3)$. 

\begin{remark}
	For $n \leq 4$, we have only showed that  $L_n^{new}(e)$ is isomorphic to $L_4^h(e)$ in the case of the trivial group $e$. This is all we need to conclude Theorem $\ref{thm:normal}$ below. We do \emph{not} claim that $L_4^{new}(\pi_1 X)$ is isomorphic $L_4^h(\pi_1 X)$ for a general fundamental group. 
	
\end{remark}

\begin{theorem}\label{thm:normal} We have a natural isomorphism
	\[\mathcal N_n^{new}(X) \cong H_n(X; \mathbb L_\bullet)  \textup{ for all $n\in \mathbb Z$}. \]
\end{theorem}
\begin{proof}
	There is a natural morphism from $\mathcal N^{new}_n(X)$ to $H_n(X; \mathbb L_\bullet)$ by mapping an element of $\mathcal N^{new}_n(X)$ to its corresponding (local) algebraic Poincar\'{e} complex\footnote{Since an object in $\mathcal N_n(X)$ consists of two manifolds with boundary with their boundaries related by an infinitesimal $\mathbb Z\pi$-homology equivalence, the (local) algebraic Poincar\'{e} complex is obtained by gluing the two local relative Poincar\'e complexes along the boundary by this infinitesimal $\mathbb Z\pi$-homology equivalence.}. This is a natural morphism of two generalized homology theories. To show this natural morphism is an isomorphism, it suffices to show that it induces an isomorphism when $X$ is a point. Now if $X$ is a point, then any $\mathbb Z\pi$-homology equivalence automatically has  infinitesimal control. If $n\geq 5$, it follows from  \cite[Chapter 9]{MR1687388} of Wall that 
	\[ \mathcal N_n^{new}(\{pt\}) = L_n^{new}(e) \to H_n(X; \mathbb L_\bullet) = L^h_n(e)\] is an isomorphism. Now by the discussion before the theorem, we have 
	\[ \mathcal N_n^{new}(\{pt\}) =  L_n^{new}(e) = \begin{cases}
	0 & \textup{ if } n=0,\\
	L_n^h(e) & \textup{ if } 1 \leq n\leq 4,
	\end{cases} \] which coincides with  $H_n(\{pt\}; \mathbb L_0)$.  This finishes the proof. 
\end{proof}

Note that the new versions $L_n^{new}(\pi_1 X)$, $\mathcal N_n^{new}(X)$ and $\mathcal S^{new}_n(X)$ were only needed to make sure that these groups indeed give us the topological surgery exact sequence. As we have seen above, when $n\neq 4$, the new version coincides with the original version, and in fact  we shall  exclusively be interested in the case where $n\geq 5$. From now on, if no confusion is likely to arise, we will continue to write $L_n(\pi_1 X)$, $\mathcal N_n(X)$ and $\mathcal S_n(X)$ instead of $L_n^{new}(\pi_1 X)$, $\mathcal N_n^{new}(X)$ and $\mathcal S^{new}_n(X)$.

Now to form the surgery long exact sequence, let us introduce the following relative $L$-groups.

\begin{definition}\label{def:relL}
	An object   
	\[ \theta = (M, \partial_\pm M, \varphi, N, \partial_\pm N,  \psi, f)\] of $L_n(\pi_1X, X, w)$ consists of the following data (see Figure $\ref{fig:relL}$): 
	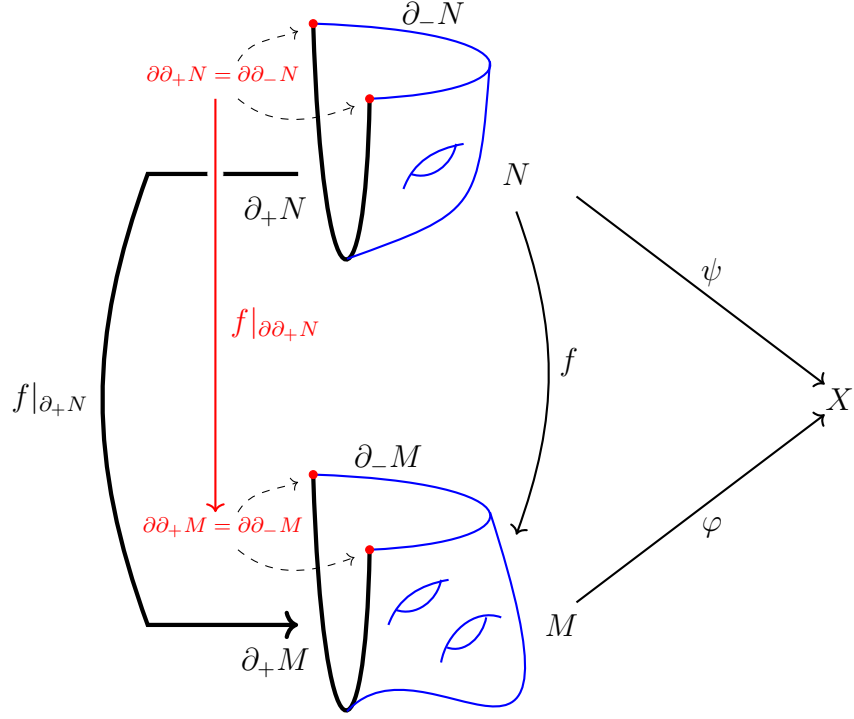
\begin{figure}[H]
		\centering
		\begin{tikzpicture}[scale=1, every node/.style={transform shape}]
		\clip (-5.5,1.5) rectangle (6.5,11.5); 	
		\coordinate (center) at (0, 0);
		\draw[blue, thick] ($(center) +(-1,5)$) .. controls ($(center) + (2,4.9)$) and ($(center) +(2, 4.1)$) .. ($(center) +(-0.25, 4)$);
		
		\draw[ultra  thick] ($(center) +(-1,5)$) .. controls ($(center) +(-0.875, 1)$) and ($(center) +(-0.3, 1)$).. ($(center) +(-0.25, 4)$);
		
		\filldraw [red] ($(center) + (-1,5)$) circle (1.5pt); 			
		\filldraw [red] ($(center) +(-0.25,4)$) circle (1.5pt); 
		
		\draw[blue, thick] ($(center) +(-0.55, 1.85)$) .. controls   ($(center) +(0.5, 3)$) and ($(center) +(2.8, 0)$) ..  ($(center) +(1.35, 4.5)$); 
		\draw[blue, thick] ($(center) +(0, 3)$) to [out=70, in=190] ($(center) +(0.8,3.6)$); 
		\draw[blue, thick] ($(center) +(0.1, 3.2)$) to [out=-20, in=-100] ($(center) +(0.7,3.6)$); 		
		\draw[blue, thick] ($(center) +(0.7, 2.5)$) to [out=80, in=160] ($(center) +(1.5, 3.1)$); 
		\draw[blue, thick] ($(center) +(0.78, 2.7)$) to [out=-20, in=-100] ($(center) +(1.3,3.1)$);
		
		\node [red] at (-2.2, 4.3) {\scriptsize $\partial \partial_+ M = \partial \partial_- M$};
		\draw[->, dashed] (-2, 4.5) to [out=60, in=200] (-1.2, 4.9);
		\draw[->, dashed] (-2, 4) to [out=-40, in=200] (-0.45, 3.9);
		\node at (0, 5.2) {$\partial_- M$};
		\node at (-1.5, 2.5) {$\partial_+ M$};
		\node at (2.3, 3) {$M$};
		\draw [->, thick]	(1.7, 8.5) to [out = -70, in = 70] (1.7, 4.2);
		\node at (2.4, 6.5) {$f$};
		
		\draw [->, ultra thick, name path = a]	(-1.2, 9) -- (-3.2, 9) to [out = -110, in = 110] (-3.2, 3) -- (-1.2, 3);
		\node at (-4.5, 6) {$f|_{\partial_+ N}$};  
		
		\path [name path = b] (-2.3, 10) -- (-2.3, 4.5);
		
		\path [name intersections={of = a and b}];
		\coordinate (S)  at (intersection-1);
		\draw [fill = white, white] (S) circle (1.mm);

		\draw [->, red, thick] (-2.3, 10) -- (-2.3, 4.5);
		\node [red] at (-1.5, 7) {$f|_{\partial \partial_+ N}$} ;

		\node at (6, 6) {$X$}; 	
		\draw[->, thick] (2.5, 8.7) -- (5.8, 6.2);
		\draw[->, thick] (2.5, 3.3) -- (5.8, 5.8);
		\node at (4.3, 4.3) {$\varphi$}; 	
		\node at (4.3, 7.7) {$\psi$}; 	
		
		
		\begin{scope}[shift = {(0, 6)}]
		\coordinate (center) at (0, 0);
		\draw[blue, thick] ($(center) +(-1,5)$) .. controls ($(center) + (2,4.9)$) and ($(center) +(2, 4.1)$) .. ($(center) +(-0.25, 4)$);
		
		\draw[ultra thick] ($(center) +(-1,5)$) .. controls ($(center) +(-0.875, 1)$) and ($(center) +(-0.3, 1)$).. ($(center) +(-0.25, 4)$);
		
		\draw[blue, thick] ($(center) + (-0.55, 1.87)$) .. controls ($(center) +(1.2, 2.5)$) ..  ($(center) +(1.35, 4.5)$); 
		
		\filldraw [red] ($(center) + (-1,5)$) circle (1.5pt); 			
		\filldraw [red] ($(center) +(-0.25,4)$) circle (1.5pt); 
		
		\draw[blue,thick] ($(center) +(0.2, 2.8)$) to [out=70, in=190] ($(center) +(1,3.4)$); 
		\draw[blue,thick] ($(center) +(0.3, 3)$) to [out=-20, in=-100] ($(center) +(0.9,3.4)$); 		
		\node at ($(center) + (0.6, 5.1)$) {$\partial_- N$};
		\node at ($(center) +  (-1.5, 2.5)$) {$\partial_+ N$};
		
		\node [red] at ($(center) + (-2.2, 4.3)$) {\scriptsize $\partial \partial_+ N = \partial \partial_- N$};
		\draw[->, dashed]  ($(center) + (-2, 4.5)$) to [out=60, in=200]  ($(center) +(-1.2, 4.9)$);
		\draw[->, dashed]  ($(center) +(-2, 4)$) to [out=-40, in=200]  ($(center) +(-0.45, 3.9)$);
		\node at  ($(center) + (1.7, 3)$) {$N$};
		\end{scope}
		\end{tikzpicture}
		\vspace{2mm}
		\caption{An object $\theta = (M, \partial_\pm M, \varphi, N, \partial_\pm N,  \psi, f)$ of $L_n(\pi_1X, X, w)$,  where $f|_{\partial \partial_+ N}$ is an infinitesimally controlled homotopy equivalence, $f$ is  a degree one normal map, and $f|_{\partial_+ N}$ is a homotopy equivalence.}
		\label{fig:relL}
	\end{figure}

	\begin{enumerate}[(1)]
		\item two manifold $2$-ads $(M, \partial_\pm M) $ and $(N, \partial_\pm N)$ of $\dim M = \dim N = n$, with $\partial M = \partial_+ M\cup \partial_- M$ (resp. $\partial N = \partial_+ N\cup \partial_- N$) the boundary of $M$ (resp. $\partial N$),  in particular, $\partial (\partial_+ M) = \partial (\partial_- M)$ and $\partial (\partial_+ N) = \partial (\partial_- N)$;
		\item continuous maps $\varphi\colon M \to X$ and $\psi\colon N \to X$ so that the pullback $\mathbb Z/2$-principal bundles $\varphi^\ast(w)$ (resp. $\psi^\ast(w)$) is the orientation covering of $M$ (resp. $N$) respectively; 
		\item a degree one normal map of manifold $2$-ads \[ f\colon (N, \partial N) \to (M, \partial M)\] such that $\varphi \circ f = \psi$; 
		\item the restriction $f|_{\partial_+ N}\colon \partial_+ N \to \partial_+ M $ is a  homotopy equivalence over $X$ such that $f_\ast[\partial_+ N] = [\partial_+ M]$; 
		\item the restriction $f|_{\partial_- N}\colon \partial_- N \to \partial_- M $ is a degree one normal map over $X$; 
		\item the homotopy equivalence $f|_{\partial_+ N}$ restricts to an \emph{infinitesimally controlled} homotopy equivalence 
		$ f|_{\partial(\partial_\pm N)}\colon \partial (\partial_\pm N) \to \partial (\partial_\pm M)$ over $X$.
	\end{enumerate} 
\end{definition}

\begin{definition}\label{def:equirel}
	The equivalence relation for defining  $L_n(\pi_1X, X, w)$ is given as follows. 
	Let \[ \theta = (M, \partial_\pm M, \varphi, N, \partial_\pm N,  \psi, f) \]
	be an object from Definition $\ref{def:relL}$ above. We write $\theta \sim 0$ if the following conditions are satisfied. 
	
	\begin{figure}[h]
		\begin{tikzpicture}[scale=0.85, every node/.style={transform shape}]
		\coordinate (center) at (4, 5);
		\shade[shading = ball, ball color=gray!60!white, opacity=1] (center) circle [radius =2];

		\begin{scope}[rotate around = {-20:(center) }]
		\coordinate (h1) at (4.4, 6);
		\coordinate (h2) at (5, 6.4);
		\draw[fill=white, ultra thick] (h1) to [out=70, in=190] (h2) to [out = -100, in = -20] (h1) ; 
		\draw [ultra thick] ($(h1)-(0.05, 0.2)$) to [out=90, in=-110] (h1);
		\draw [ultra thick] (h2) to [out=10, in= 185] ($(h2)+(0.2, 0.03)$); 
		\end{scope}   
		\draw[ red, thick, rotate around={-70:(center)}, name path = c1] ($(center)-(2,0)$) arc (180:281:2 and 0.5);
		\draw[red,  thick, dashed, rotate around={-70:(center)}, name path = dc1] ($(center)-(2,0)$) arc (180:97:2 and 0.5);
		
		\draw[blue, thick, rotate around={70:(center)}] ($(center)-(2,0)$) arc (180:106:2 and 0.17); 
		\draw[blue, dashed, thick, rotate around={-110:(center)}] ($(center)+(2,0)$) arc (0:110:2 and 0.17); 
		
		\draw[ultra thick, name path = c2] ($(center)+(2,0)$) arc (360:259:2 and 0.5);
		\draw[ultra thick, dashed, name path = dc2]($(center)+(2,0)$) arc (0:83: 2 and 0.5);

		\fill[red, name intersections={of=c1 and c2, by = i1}]
		(i1) circle (1.5pt); 
		\fill[red, name intersections={of=dc1 and dc2, by = i2}]
		(i2) circle (1.5pt);

		\begin{scope}[rotate around = {100:(center) }]
		\coordinate (h1) at (4.4, 6);
		\coordinate (h2) at (5, 6.4);
		\draw[blue, fill=white, thick] (h1) to [out=70, in=190] (h2) to [out=-100, in=-20] (h1); 
		\draw [blue, thick] ($(h1)-(0.05, 0.2)$) to [out=90, in=-110] (h1);
		\draw [blue, thick] (h2) to [out=10, in= 185] ($(h2)+(0.2, 0.03)$); 
		\end{scope} 
		\begin{scope}[shift ={(-2, -1) }]
		\begin{scope}[rotate around = {55:(center) }]
		\coordinate (h1) at (4.4, 6);
		\coordinate (h2) at (5, 6.4);
		\draw[blue, fill=white, thick] (h1) to [out=70, in=190] (h2) to [out = -100, in = -20] (h1); 
		\draw [blue, thick] ($(h1)-(0.05, 0.2)$) to [out=90, in=-110] (h1);
		\draw [blue, thick] (h2) to [out=10, in= 185] ($(h2)+(0.2, 0.03)$); 
		\end{scope}
		\end{scope}

		\begin{scope}[rotate around = {250:(center) }]
		\coordinate (h1) at (4.4, 6);
		\coordinate (h2) at (5, 6.4);
		\draw[blue, fill=white, thick] (h1) to [out=70, in=190] (h2) to [out = -100, in = -20] (h1); 
		\draw [blue, thick] ($(h1)-(0.05, 0.2)$) to [out=90, in=-110] (h1);
		\draw [blue, thick] (h2) to [out=10, in= 185] ($(h2)+(0.2, 0.03)$); 
		\end{scope}

		\begin{scope}[shift = {(4, 5)}, rotate around = {230:(center)} ]
		\begin{scope}[scale = 0.8]
		\begin{scope}[shift = {(-4, -5)}]
		\coordinate (h1) at (4.4, 6);
		\coordinate (h2) at (5, 6.4);
		\draw[blue, fill=white, thick] (h1) to [out=70, in=190] (h2) to [out = -100, in = -20] (h1); 
		\draw [blue, thick] ($(h1)-(0.05, 0.2)$) to [out=90, in=-110] (h1);
		\draw [blue, thick] (h2) to [out=10, in= 185] ($(h2)+(0.2, 0.03)$); 
		
		\end{scope}   
		\end{scope}
		\end{scope}
		
		
		\node at (5.7, 3.3) {$N$};
		\node at (5.6, 6.8) {$\partial_2 V$};
		\node at (7.2, 4.3) {\scriptsize $\partial_+ N = \partial \partial_{2, +}V$};
		\draw [->, dashed] (6.8, 4.6) to [out = 100, in = -20] (6.1, 5);
		\node at (3, 2.2) {\scriptsize $\partial_- N = \partial \partial_{3, -}V$};
		\draw [->, dashed] (3, 2.4) to [out = 120, in = -150] (3.25, 3.1);
		\node at (1.8, 3.8) {$\partial_{3}V$};
		\node [draw, red] at (1.7, 7.5) {\scriptsize $\partial \partial_{2, -}V = \partial\partial_{3,+} V$};
		\draw [->, dashed] (1.7, 7.2) to [out = -100, in = 150] (3.3, 6.95); 
		
		\node at (3.9, 5) {$V$};

		
		\begin{scope}[shift = {(7, 0 )}] 
		\coordinate (center) at (4, 5);
		\draw[thick]  (center) circle [radius = 2];
		\shade[shading = ball, ball color=gray!60!white,opacity=1] (center) circle [radius =2]; 
		
		\begin{scope}[rotate around = {90:(center)}, scale = 0.9, shift = {(0.8, -1.8)}]
		\coordinate (h1) at (4.4, 6);
		\coordinate (h2) at (5, 6.4);
		\draw[fill=white, ultra thick] (h1) to [out=70, in=190] (h2) to [out=-100, in=-20] (h1); 
		\draw [ultra thick] ($(h1)-(0.05, 0.2)$) to [out=90, in=-110] (h1);
		\draw [ultra thick] (h2) to [out=10, in= 185] ($(h2)+(0.2, 0.03)$);  
		\end{scope}   
		\draw[red, thick, rotate around={-70:(center)}, name path = c1] ($(center)-(2,0)$) arc (180:281:2 and 0.5);
		\draw[ red, thick, dashed, rotate around={-70:(center)}, name path = dc1] ($(center)-(2,0)$) arc (180:97:2 and 0.5);
		\draw[blue, dashed, thick, rotate around={-110:(center)}] ($(center)+(2,0)$) arc (0:110:2 and 0.17); 
		\draw[blue, thick, rotate around={70:(center)}] ($(center)-(2,0)$) arc (180:106:2 and 0.17); 
		
		\draw[ultra thick, name path = c2] ($(center)+(2,0)$) arc (360:259:2 and 0.5);
		\draw[ultra thick, dashed, name path = dc2]($(center)+(2,0)$) arc (0:83: 2 and 0.5);

		\fill[red, name intersections={of=c1 and c2, by = i1}]
		(i1) circle (1.5pt); 
		\fill[red, name intersections={of=dc1 and dc2, by = i2}]
		(i2) circle (1.5pt);
		
		%
		\begin{scope}[rotate around = {120:(center) }]
		\coordinate (h1) at (4.4, 6);
		\coordinate (h2) at (5, 6.4);
		\draw[blue, fill=white, thick] (h1) to [out=70, in=190] (h2) to [out=-100, in=-20] (h1); 
		\draw [blue, thick] ($(h1)-(0.05, 0.2)$) to [out=90, in=-110] (h1);
		\draw [blue, thick] (h2) to [out=10, in= 185] ($(h2)+(0.1, 0.015)$); 
		\end{scope}

		\begin{scope}[shift = {(4, 5)}, rotate around = {230:(center)} ]
		\begin{scope}[scale = 0.8]
		\begin{scope}[shift = {(-4, -5)}]
		\coordinate (h1) at (4.4, 6);
		\coordinate (h2) at (5, 6.4);
		\draw[blue, fill=white, thick] (h1) to [out=70, in=190] (h2)  to [out=-100, in=-20] (h1); 
		\draw [blue, thick] ($(h1)-(0.05, 0.2)$) to [out=90, in=-110] (h1);
		\draw [blue, thick] (h2) to [out=10, in= 185] ($(h2)+(0.2, 0.03)$);  
		\end{scope}   
		\end{scope}
		\end{scope}
		
		\node at (5.7, 3.3) {$M$};
		\node at (5.6, 6.8) {$\partial_2 W$};
		\node at (7.2, 4.3) {\scriptsize $\partial_+ M = \partial \partial_{2, +}W$};
		\draw [->, dashed] (6.8, 4.6) to [out = 100, in = -20] (6.1, 5);
		\node at (3.7, 2.2) {\scriptsize $\partial_- M = \partial \partial_{3, -}W$};
		\draw [->, dashed] (3.5, 2.4) to [out = 150, in = -130] (3.25, 3.1);
		\node at (1.8, 3.8) {$\partial_{3}W$};
		\node [draw, red] at (1.7, 7.5) {\scriptsize $\partial \partial_{2, -}W = \partial\partial_{3,+} W$};
		\draw [->, dashed] (1.7, 7.2) to [out = -100, in = 150] (3.3, 6.95); 
		
		\end{scope}
		
		\draw[->, thick] (6.5, 5.5) -- (8.5, 5.5);	
		\node at (7.5, 5.8) {$F$};

		\draw[name path = a, ->, very thick]  (4.5, 7.1) to [out = 45, in = 135] (11.5, 7.1);  
		\node at (8.5, 8.8) {$F|_{\partial_2 V}$ is a homotopy equivalence};
		
		\node at (10.9, 5) {$W$};

		\path [name path = b] (3.3, 7.5) -- (7, 7.5);
		
		\path [name intersections={of = a and b}];
		\coordinate (S)  at (intersection-1);
		\draw [fill = white, white] (S) circle (1.mm); 
		
		\draw[->, red, thick]  (3.3, 7.5) -- (7, 7.5); 
		
		
		\draw [->, thick] (4.7, 2.9) -- (7.3, 0.7);
		\draw [->, thick] (9.5, 3.3) -- (7.7, 0.7);
		\node at (7.5, 0.5) {$X$};
		\node at (5.7, 1.5) {$\Psi$};
		\node at (8.6, 1.5) {$\Phi$};
		\end{tikzpicture}
		\caption{Equivalence relation  $\theta \sim 0$ for the definition of $L_n(\pi_1X, X,  w)$, where $F|_{\partial\partial_{2,-}V}$ is an infinitesimally controlled homotopy equivalence and  $F|_{\partial_2 V}$ is a homotopy equivalence.} 	
	\end{figure}
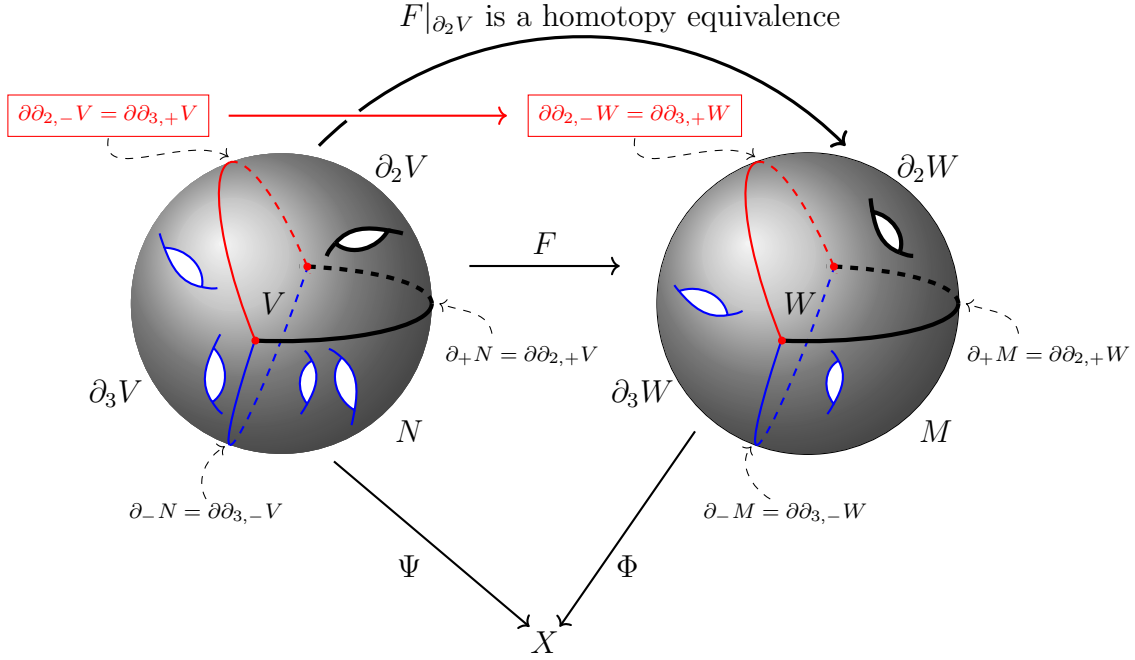

	\begin{enumerate}[(1)]
		\item There exists a manifold $3$-ad $(W, \partial W)$ of dimension $(n+1)$ with a continuous map $\Phi\colon W\to X$ so that the pullback $\mathbb Z/2$-principal bundle $\Phi^\ast(w)$ is the orientation covering of $W$. Here $\partial W$ is the union of $M$, $\partial_2 W$ and $\partial_3 W$, and $\Phi|_M = \varphi$.  Moreover, we have decompositions   
		\[ \partial M =\partial_+ M \cup \partial_- M, \] \[ \partial(\partial_2 W) = \partial\partial_{2,+} W \cup \partial\partial_{2,-} W\]    \[\textup{ and } \partial(\partial_3 W) = \partial\partial_{3,+} W \cup \partial\partial_{3, -} W\] such that 
		$ \partial_+ M = \partial\partial_{2,+} W,  \partial_- M = \partial\partial_{3, -} W $ and $ \partial\partial_{2, -}W = \partial\partial_{3, +}W.$
		Furthermore, we have 
		\begin{align*}
		\partial_+ M \cap  \partial_- M  = \partial\partial_{2, +} W\cap \partial\partial_{2, -} W  = \partial\partial_{3, +} W\cap \partial\partial_{3, -} W. 
		\end{align*}  
		
		\item Similarly, we have a manifold $3$-ad $(V, \partial V)$ of dimension $(n+1)$ with a continuous map $\Psi: V\to X$ so that the pullback $\mathbb Z/2$-principal bundle $\Psi^\ast(w)$ is the orientation covering of $V$. Moreover,  $\partial V = N \cup \partial_2 V \cup \partial_3 V$ satisfies similar conditions as $W$. 
		\item There is a degree one normal map of manifold $3$-ads 
		\[ F\colon (V, \partial V) \to (W, \partial W)\] such that $\Phi\circ F  = \Psi$ and  $F|_N = f$.
		\item  the map $F|_{\partial_2 V}\colon \partial_2 V \to \partial_2 W$ is a homotopy equivalence over $X$ such that $F_\ast[\partial_2 V] = [\partial_2 W]$.
		\item $F$ restricts to an \emph{infinitesimally controlled} homotopy equivalence 
		\[ F|_{\partial\partial_{2,-}V }\colon  \partial\partial_{2,-}V \to \partial\partial_{2,-} W\]
		over $X$.
		
	\end{enumerate}
\end{definition}

We denote by $L_n(\pi_1X, X; w)$ the set of equivalence classes from Definition $\ref{def:relL}$. Note that $L_n(\pi_1X, X; w)$ is an abelian group with the addition given by disjoint union. We also can make this relative $L$-groups into a $4$-periodic theory. 

\begin{definition}\label{def:periodrelL}
	For each $n\in \mathbb Z$, we define $ \mathfrak L_{n}(\pi_1 X, X, w)$ to be the direct limit of 
	\[
	\scalebox{1}{$ \cdots \to  L_{k}(\pi_1 X, X, w) \xrightarrow{\times \mathbb{CP}^2}   L_{k+4}(\pi_1 X, X, w) \xrightarrow{\times \mathbb{CP}^2} L_{k+8}(\pi_1 X, X, w) \to \cdots,$}
	\]
	where $k \equiv n \pmod{4}$. 
\end{definition}

Note that every element 
\[ \theta = (M, \partial M, \varphi, N, \partial N, \psi, f) \in L_n(\pi_1 X, w)\]
naturally defines an element in $L_n(\pi_1 X, X,  w)$ by letting $\partial_- M = \emptyset$. We denote this map by 
\[ j_\ast\colon L_n(\pi_1 X, w) \to  L_n(\pi_1 X, X,  w).  \]
For every element 
\[ \theta = (M, \partial_\pm M, \varphi, N, \partial_\pm N,  \psi, f)\in L_n(\pi_1 X, X, w), \] we see that 
\[ \theta_- \coloneqq  \{ \partial_- M, \partial(\partial_- M), \varphi, \partial_- N, \partial(\partial_- N), \psi, f\}  \]
defines an element in $\mathcal N_{n-1}(X,  w)$. We call $\theta_-$ the $(-)$-boundary of $\theta$.

Various groups defined above fit into the following long exact sequence. The proof is essentially identical to that of Theorem $9.6$ of Wall's Chapter 9 \cite{MR1687388}.

\begin{theorem}\label{thm:les}
	We have the following long exact sequence: 
	\begin{align*}
	\cdots & \to \mathcal N_n(X, w) \xrightarrow{i_\ast} L_n(\pi_1 X, w) \xrightarrow{j_\ast} L_n(\pi_1 X, X, w) \\
	& \xrightarrow{\partial_\ast} \mathcal N_{n-1}(X, w) \to L_{n-1}(\pi_1 X, w) \to \cdots  
	\end{align*}
	where  $i_\ast \colon \mathcal N_n(X, w) \to  L_n(\pi_1 X,  w)$ is the natural map defined 
	by forgetting infinitesimal control, and the map $\partial_\ast$ maps each element of $L_n(\pi_1 X, X, w)$ to its $(-)$-boundary, that is, if 
	$ \theta = \{M, \partial_\pm M, \varphi, N, \partial_\pm N,  \psi, f\}\in L_n(\pi_1 X, X, w),$ then
	\[  \partial_\ast(\theta)  = \theta_- \coloneqq  \{ \partial_- M, \partial(\partial_- M), \varphi, \partial_- N, \partial(\partial_- N), \psi, f\}. \]
	Consequently, we have the following $4$-periodic (for all $n\in \mathbb Z$)  long exact sequence 
	\begin{equation}\label{perilong}
	\begin{gathered}
	\cdots \to \mathfrak N_n(X, w) \xrightarrow{i_\ast} \mathfrak L_n(\pi_1 X, w) \xrightarrow{j_\ast} \mathfrak L_n(\pi_1 X, X, w) \\
	\xrightarrow{\partial_\ast} \mathfrak N_{n-1}(X, w) \to \mathfrak L_{n-1}(\pi_1 X, w) \to  \cdots 
	\end{gathered} 
	\end{equation}
\end{theorem}
\begin{proof}
	It is easy to see that the map $\partial_\ast$ is well-defined. It remains to prove the exactness of the sequence. 
	
	An element in $\mathcal N_{n-1}(X, w)$ maps to zero in $L_{n-1}(\pi_1 X, w)$ if and only if its image  is cobordant to the empty set in $L_{n-1}(\pi_1 X, w)$.  However, such a cobordism  defines an element in $L_n(\pi_1 X, X,  w)$. This proves the exactness at $\mathcal N_{n-1}(X)$. 
	
	Note that $\partial_\ast j_\ast = 0$ by definition. On the other hand, given an element $\theta$ in $ L_n(\pi_1 X, X,  w)$, if $\theta$ maps to zero in $\mathcal N_{n-1}(X,  w)$, then we can take a cobordism of $\partial_\ast(\theta)$ to the empty set, and glue it to $\theta$ along $\partial_\ast(\theta)$. The resulting new element is cobordant to $\theta$, and its $(-)$-boundary is empty, so it lies in the image of the map $j_\ast$. This proves the exactness at $L_n(\pi_1 X, X,  w)$. 
	
	Finally, let us prove the exactness at $L_n(\pi_1 X, w)$. Suppose  
	\[ \xi = \{M, \partial M, \varphi, N, \partial N, \psi, f\}\] is an element of $\mathcal N_n(X,  w)$, then we shall show that it is cobordant to zero in $L_n(\pi_1 X, X, w)$. Indeed, a cobordism of $\xi$ to the empty set is provided by $\xi \times I$ where $I$ is the unit interval. More precisely, $\xi\times I$ consists of the following data.
	\begin{enumerate}[(i)]
		\item 	$W = M\times I $ with a continuous map 
		\[ \Phi = \varphi\colon W\xrightarrow{p_1} M \to X, \] where $p_1\colon W \to M$ is the projection map onto $M$. Moreover,  
		\[ \partial W = M \cup \partial_2 W \cup \partial_3 W\] with $\partial_2 W = \partial M \times I$ and  $\partial_3 W = M$. 
		\item Similarly,  $V = N\times I$ with a continuous map  
		\[ \Psi = \psi\colon V\xrightarrow{q_1} N \to X, \] where $q_1 \colon V \to N$ is the projection map onto $N$.  Moreover, 
		\[ \partial V = N \cup \partial_2 V \cup \partial_3 V, \] where $\partial_2 V= \partial N \times I $ and $\partial_3 V = N$. 
		\item  A degree one normal map 
		\[ F = f\times \id\colon (V, \partial V) \to (W, \partial W)\]
		such that $\Phi\circ F = \Psi$ and $F|_N = f$.
		\item $F|_{\partial_{2} V}\colon \partial_{2} V = \partial N \times I \to \partial_2 W = \partial M \times I$ is a homotopy equivalence such that $F_\ast[\partial_2 V] = [\partial_2 W]$. This is because $f\colon \partial N \to \partial M$ is an infinitesimally controlled homotopy equivalence, thus in particular a homotopy equivalence. 
		\item $F$ restricts to  an \emph{infinitesimally controlled} homotopy equivalence  \[ F|_{\partial\partial_{2,-}V }\colon  \partial\partial_{2,-}V = \partial N  \to \partial\partial_{2,-} W = \partial M\] over $X$. 
	\end{enumerate} 
	This proves that $j_\ast i_\ast = 0$. Conversely, suppose an element 
	\[ \theta = \{M, \partial M, \varphi, N, \partial N, \psi, f\} \in L_n(\pi_1 X, w)\] maps to zero in $L_n(\pi_1 X, X, w)$, that is, $\theta$ is cobordant to $0$ in $L_n(\pi_1 X, X, w)$. Let us use the same notation as in Definition $\ref{def:equirel}$. In our current case,   we have $\partial M =\partial_+ M$ with $\partial_- M = \emptyset$, $\partial\partial_2 W = \partial\partial_{2,+} W \cup \partial\partial_{2,-} W$, and   $\partial\partial_3 W = \partial\partial_{3,+} W $ with $ \partial\partial_{3, -} W = \emptyset$ such that 
	\[ \partial_+ M = \partial\partial_{2,+} W \textup{ and } \partial_{2, -}W = \partial\partial_{3, +}W.  \]
	Moreover, we have $\partial \partial_{2, +} W\cap \partial\partial_{2, -} W = \emptyset.$ Similar conditions also apply to $V$.  It follows that  $F\colon (V, \partial V) \to (W, \partial W)$ is a cobordism between $\theta$ and the element
	\[ \eta = (\partial_3 W, \partial \partial_3 W, \Phi|_{\partial_3 W}, \partial_3 V, \partial \partial_3 V, \Psi|_{\partial_3 V}, F|_{\partial_3 W}).\] Note that $\eta$ is an element of $\mathcal N_n(X; w)$. This finishes the proof. 
\end{proof}

\subsection{Relations between various structure groups}\label{sec:iden}

In this subsection, we prove  $\mathcal S_{n}(X; w)$ is naturally isomorphic to $L_{n+1}(\pi_1 X, X, w)$. Moreover,  when $X$ is a closed topological manifold of dimension $n\geq 5$, we show that $\mathcal S^\topo(X, w)$ is naturally isomorphic to  $\mathcal S_{n}(X; w)$.  

Consider the natural homomorphism 
\[  c_\ast\colon  \mathcal S_{n}(X, w) \to L_{n+1}(\pi_1 X, X, w) \]
given by 
\[ \theta = \{M, \partial M, \varphi, N, \partial N, \psi, f\} \mapsto \theta\times I, \] 
where $\theta \times I$ consists of the following data:
\begin{enumerate}[(1)]
	\item a manifold $2$-ad $(M \times I, \partial_\pm (M\times I)) $ with 
	\[  \partial_+ (M\times I) = M = \partial_- (M\times I);\] in particular, we have $\partial \partial_+(M\times I) = \partial M= \partial \partial_-(M\times I)$;
	\item similarly, another manifold $2$-ad $(N \times I, \partial_\pm (N\times I)) $ with 
	\[  \partial_+ (N\times I) = N = \partial_- (N\times I); \]
	\item a continuous map $\widetilde \varphi \coloneqq  \varphi\circ p\colon M\times I \xrightarrow{p_1} M \xrightarrow{\varphi} X$  such that the pullback $\mathbb Z/2$-principal bundle  $(\varphi\circ p)^\ast(w)$ $M\times I $ is the orientation covering of $M\times I$, where $p$ is the projection map from $M\times I$ to $M$; similarly, a continuous map 
	\[ \widetilde \psi \coloneqq  \psi\circ q\colon N\times I \xrightarrow{q} N \xrightarrow{\psi} X\]  such that the pullback $\mathbb Z/2$-principal bundle $(\psi\circ q)^\ast(w)$ is the orientation covering of $N\times I $, where $q$ is the projection map from $N\times I$ to $N$; 
	\item a degree one normal map of manifold $2$-ads  \[\hspace{1cm} \widetilde f \coloneqq f\times \id \colon (N\times I, \partial_\pm (N\times I) ) \to (M, \partial_\pm (M\times I) )\] such that $\widetilde \varphi \circ \widetilde f = \widetilde \psi$;
	\item the restriction 
	\[ \widetilde f|_{\partial_+ (N\times I)}\colon \partial_+ (N\times I) = N \to \partial_+ (M\times I) = M \] is a  homotopy equivalence over $X$ such that $\widetilde f_\ast [N] = [M]$;
	\item the restriction $\widetilde f|_{\partial_- (N\times I)}\colon N \to M $ is a degree one normal map over $X$; here we recall that every homotopy equivalence naturally defines a degree one normal map; 
	\item $\widetilde f$ restricts to an \emph{infinitesimally controlled} homotopy equivalence 
	\[ \widetilde f|_{\partial \partial_\pm (N\times I)}\colon \partial \partial_\pm (N\times I) = \partial N \to \partial \partial_\pm (M\times I) = \partial M \] over $X$.
\end{enumerate} 

There is also  a natural homomorphism 
\[  r_\ast\colon  L_{n+1}(\pi_1 X, X, w) \to  \mathcal S_{n}(X,  w)\] 	
by taking the $(+)$-boundary of an element, that is, 
\[\theta = \{M, \partial_\pm M, \varphi, N, \partial_\pm N, \psi, f\}\in L_{n+1}(\pi_1 X, X, w)\]
maps to 
\[ \theta_+ = \{\partial_+ M, \partial \partial_+ M, \varphi, \partial_+ N, \partial\partial_+ N, \psi, f\}\in \mathcal S_{n}(X,  w).  \] 
It is easy to see that $c_\ast$ and $r_\ast$ are well-defined. 
\begin{proposition}\label{prop:isorel} 
	The homomorphisms $c_\ast$ and $r_\ast$ are inverses of each other. In particular, we have $ \mathcal S_{n}(X, w) \cong L_{n+1}(\pi_1 X, X, w)$. 
\end{proposition}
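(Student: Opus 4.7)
The plan is to verify the two compositions separately. That $r_\ast \circ c_\ast = \id_{\mathcal S_n(X;w)}$ is essentially tautological from the constructions: given $\theta = (M, \partial M, \varphi, N, \partial N, \psi, f) \in \mathcal S_n(X;w)$, the element $c_\ast(\theta) = \theta \times I$ is built so that $\partial_+(M \times I) = M$ and $\partial_+(N \times I) = N$, with the $(+)$-boundary data on the $N$-side being exactly $f$ (since $f|_{\partial_+(N \times I)} = f$). Applying $r_\ast$ therefore returns $\theta$ together with all of its maps and control data unchanged.

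For the opposite composition $c_\ast \circ r_\ast = \id_{L_{n+1}(\pi_1 X, X; w)}$, fix $\theta = (M, \partial_\pm M, \varphi, N, \partial_\pm N, \psi, f)$; I need to show $\theta \sim \theta_+ \times I$, i.e., that $\theta + (-\theta_+ \times I)$ bounds in the equivalence relation defining $L_{n+1}(\pi_1 X, X; w)$. The cobordism I would construct takes $W$ to be $M \times I$ with a trivial collar flap $\partial_+ M \times I \times J$ attached along $\partial_+ M \times I \subset \partial(M \times I)$, and $V$ similarly to be $N \times I$ with the flap $\partial_+ N \times I \times J$ attached; the map $F \colon V \to W$ is $f \times \id_I$ extended by $f|_{\partial_+ N} \times \id_I \times \id_J$ on the flap, a degree-one normal map by construction, and the control maps are $\Phi = \varphi \circ p$ and $\Psi = \psi \circ p$ where $p$ collapses the flap and $I$-factor to $M$ respectively $N$. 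The $3$-ad decomposition of $\partial W$ is arranged so that $\mathcal M$ is the disjoint union of the bottom face $M \times \{0\}$ (representing $\theta$) and the top face of the flap $\partial_+ M \times I \times \{1\}$ (representing $-\theta_+ \times I$, with the sign absorbed into the orientation of the flap), which are physically separated in $\partial W$ thanks to the flap attachment; $\partial_2 W$ is taken to be the two lateral sides $\partial_+ M \times \{0,1\} \times J$ of the flap, on which $F$ restricts to $f|_{\partial_+ N} \times \id_J$, a homotopy equivalence because $f|_{\partial_+ N}$ is one in $\theta$; and $\partial_3 W$ gathers the remaining pieces $M \times \{1\}$, $\partial_- M \times I$, and the flap's edge $\partial(\partial_+ M) \times I \times J$. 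The infinitesimally controlled condition required on $F|_{\partial \partial_{2,-} V}$ reduces, after corner smoothing, to the infinitesimal control of $f|_{\partial \partial_+ N}$ built into $\theta$.

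The principal difficulty is the combinatorial bookkeeping required to verify the corner-matching conditions defining $\theta + (-\theta_+ \times I) \sim 0$, namely $\partial_+ \mathcal M = \partial \partial_{2,+} W$, $\partial_- \mathcal M = \partial \partial_{3,-} W$, $\partial \partial_{2,-} W = \partial \partial_{3,+} W$, together with the triple equality of codimension-two corners. The essential point of the flap is precisely to separate the two components of $\mathcal M$ in $\partial W$, so that they form a genuine disjoint union of $2$-ads; once this is achieved, every remaining matching collapses after corner smoothing onto the single identity $\partial(\partial_+ M) = \partial(\partial_- M)$ built into the $2$-ad structure of $M$ (and analogously for $N$, $\partial_2 W$, etc.). Once the tuple $(W, V, F, \Phi, \Psi)$ is verified to satisfy the full list of conditions, we obtain $\theta \sim \theta_+ \times I$, hence $c_\ast \circ r_\ast = \id$, completing the proof that $c_\ast$ and $r_\ast$ are mutually inverse and establishing the isomorphism $\mathcal S_n(X;w) \cong L_{n+1}(\pi_1 X, X; w)$.
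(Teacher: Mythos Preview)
Your proof is correct and takes essentially the same approach as the paper. The paper's argument is terser---it simply writes the cobordism as $(\theta\times I)\cup_{(\theta_+\times I)\times\{0\}\subset\theta\times\{1\}}(\theta_+\times I\times I)$, attaching the flap along a collar of $\theta_+$ in the top face rather than along the lateral face $\partial_+M\times I$ as you do---but the two gluings yield homeomorphic $3$-ads and the verification is the same; your more explicit decomposition of $\partial W$ into $\mathcal M$, $\partial_2 W$, $\partial_3 W$ is exactly the bookkeeping the paper leaves implicit.
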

\begin{proof}
	Clearly, we have $r_\ast \circ c_\ast = 1$. Conversely, if 
	\[ \theta = \{M, \partial_\pm M, \varphi, N, \partial_\pm N, \psi, f\}\] is an element in $L_{n+1}(\pi_1 X, X,  w)$, then  $c_\ast\circ r_\ast(\theta)$ is cobordant to $\theta$ in $L_{n+1}(\pi_1 X, X, w)$. Indeed, consider the element 
	\[ (\theta\times I)  \bigcup_{(\theta_+\times I)\times \{0\}\, \subset\, \theta\times \{1\} } (\theta_+\times I\times I)  \]
	where $(\theta_+\times I)\times \{0\}$ is glued to the subset $(\theta_+\times I) \subset \theta$ in $\theta\times \{1\}$. This 
	produces a cobordism between $c_\ast\circ r_\ast(\theta) = (\theta_+ \times I)\times \{1\}$ and $\theta \times \{0\}$. In other words, we have $c_\ast\circ r_\ast = 1$. This finishes the proof. 
\end{proof}

Now we shall use the surgery long exact sequence to  identify  $\mathcal S_{n}(X; w)$ with $\mathcal S^\topo(X; w)$. Consider the  natural map
\[  \iota_\ast\colon \mathcal S^\topo(X; w) \to \mathcal S_{n}(X; w)  \]
by 
\[ [\varphi \colon M \to X] \mapsto \theta = \{M, \partial M =\emptyset, \varphi, X, \partial X = \emptyset, \id, f = \varphi\}. \]
It is easy to see that $\iota_\ast$ is a well-defined map of sets.

For notational simplicity, we will work with the case where $X$ is oriented and the $\mathbb Z/2$-principal  bundle  $w$ on $X$ is trivial. The same argument also works for the general case. Recall that, for $n = \dim X \geq 5$, we have the following geometric surgery long exact sequence 
\begin{equation}\label{eq:sles}
\scalebox{0.95}{$\begin{aligned}
	\cdots \to & L^h_{n+i+1}(\pi_1 X) \to \mathcal S^\topo_\partial (X\times D^i)  \\
	\to &  \mathcal N^\topo_{\partial}(X\times D^i) \to L^h_{n+i}(\pi_1 X)  \to   \\
	\cdots \to&   L^h_{n+1}(\pi_1 X) \dashrightarrow \mathcal S^\topo(X)\to   \mathcal N^\topo(X) \to L^h_{n}(\pi_1 X)
	\end{aligned}$}
\end{equation}
where 
\begin{enumerate}[(a)]
	\item $D^i$ is the $i$-dimensional Euclidean unit ball;
	\item $\mathcal S^\topo_\partial$ is the $\rel\partial$ version\footnote{$\rel\partial$ means ``relative to boundary".} of structure set, whose definition we shall review below;
	\item $\mathcal N^\topo$ is the set of normal invariants, and $\mathcal N^\topo_\partial$ is its $\rel\partial$ version;
	\item  and the map $L^h_{n+1}(\pi_1 X) \dashrightarrow \mathcal S^\topo(X)$ is a natural action of $L^h_{n+1}(\pi_1 X)$ on $\mathcal S^\topo(X)$.
\end{enumerate}  
Moreover, all terms starting from $L^h_{n+1}(\pi_1X)$ to the left, in the sequence  $\eqref{eq:sles}$,  are abelian groups,  and all arrows to the left of $L^h_{n+1}(\pi_1X)$ are group homomorphisms. 

\begin{definition}\label{def:relbdry}
	Suppose $Y$ is an oriented compact manifold with boundary $\partial Y$. We define $ S^\topo_\partial(Y)$ to be the set of equivalence classes of orientation-preserving homotopy equivalences 
	\[ f\colon (M, \partial M) \to (Y, \partial Y)\] from compact manifolds with boundary such that $f\colon \partial M \to \partial Y$ is a homeomorphism. Two orientation-preserving homotopy equivalences $f\colon (M, \partial M) \to (Y, \partial Y)$ and $g\colon (N, \partial N) \to (Y, \partial Y)$ are equivalent if there exists a $\rel\partial$-h-cobordism\footnote{In particular, $ \partial W = M\cup_{\partial M}\partial Y\times I\cup_{\partial N} N$. } $ (W; M, N)$ with  an orientation-preserving homotopy equivalence 
	\[ F\colon (W; M,  N) \to (Y\times I; Y\times\{0\}, Y\times\{1\})\]
	such that $F|_M = f$, $F|_N = g$ and $F|_{\partial Y \times I}$ is a homeomorphism, where $\partial Y\times I$ is the part of $\partial W$ that sits between $\partial M$ and $\partial N$.
\end{definition}

Similar to the definition of $\iota_\ast\colon \mathcal S^\topo(X)  \to \mathcal S_{n}(X)$,   there is a natural map 
\[  \beta_\ast\colon \mathcal S^\topo_\partial (X\times D^i)  \to \mathcal S_{n+i}(X) \]
for all $i\geq 1$, by mapping 
\[ \{\varphi \colon (M, \partial M)  \to (X\times D^i, X\times S^{i-1})  \in \mathcal S^\topo_\partial (X\times D^i)\] to 
\[ \theta = \{M, \partial M, p\circ \varphi, X\times D^i, X\times S^{i-1}, p, f = \varphi\} \in \mathcal S_{n+i}(X) \]
where $p\colon X\times D^i \to X$ is the projection of $X\times D^i$ onto $X$. When $i\geq 1$, there is a natural group structure on $ \mathcal S^\topo_\partial (X\times D^i) $,   which is geometrically defined by stacking. Let us review the definition of stacking (cf. \cite{MR873283}, \cite[Definition 2.4]{MR2826928}). 
For $i\geq 1$, let us denote $ S^{i-1}_{\pm} \coloneqq  \{ (x_1, \cdots, x_i)\in S^{i-1} = \partial D^i \mid \pm x_1\geq 0\},$
and 
\[  D^{i}_{\pm} \coloneqq \{ (x_1, \cdots, x_i)\in D^i \mid \pm x_1\geq 0\}.  \]
Fix suitable homeomorphisms 
\[ (D^i, S_+^{i-1}, S_-^{i-1}) \cong (D_+^i, S^{i-1}_+, D^{k-1})\] and 
\[ (D^i, S_+^{i-1}, S_-^{i-1}) \cong (D_-^i, S^{i-1}_-, D^{k-1}).\] We see that $D^i = D^i_+ \cup_{D^{i-1}} D^i_-$. 
\begin{definition}\label{def:stack}
	Suppose 
	\[ f_1\colon (M_1, \partial M_1) \to (X\times D^i, X\times S^{i-1})\] 
	and 
	\[ f_2\colon (M_2, \partial M_2) \to (X\times D^i, X\times S^{i-1})\]  are two elements in $\mathcal S^\topo_{\partial}(X\times D^4)$. We define the sum of $f_1$ and $f_2$ to be the following element: 
	\[  f = f_1\cup f_2\colon M \to  X\times D^i = 
	X\times D_+^i\cup X\times D_-^i \]
	where $M$ is obtained from gluing $M_1$ and $M_2$ along \[ f_1^{-1}(X\times D^{i-1}) 
	\cong f_2^{-1}(X\times D^{i-1}). \]  
\end{definition}

With respect to this group structure, the map $\beta_\ast$ is in fact a group homomorphism. 

\begin{lemma}\label{lm:grphom}
	For $i\geq 1$, the map 
	\[ \beta_\ast\colon \mathcal S^\topo_\partial (X\times D^i)  \to \mathcal S_{n+i}(X) \] is a group homomorphism. 
\end{lemma}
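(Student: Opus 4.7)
The plan is to construct, given representatives $f_k\colon(M_k,\partial M_k)\to(X\times D^i,X\times S^{i-1})$ of two classes in $\mathcal S^\topo_\partial(X\times D^i)$, an explicit equivalence in the sense of Definition $\ref{def:strequiv}$ witnessing that $\beta_\ast(f_1\# f_2) = \beta_\ast(f_1)+\beta_\ast(f_2)$ in $\mathcal S_{n+i}(X)$, where $\#$ denotes stacking and $+$ on the right is disjoint union. Setting $\theta = \beta_\ast(f_1\# f_2) - \beta_\ast(f_1) - \beta_\ast(f_2)$ in $\mathcal S_{n+i}(X)$, the goal is to produce $(n+i+1)$-dimensional manifold $2$-ads $V,W$ together with a homotopy equivalence of $2$-ads $F\colon V\to W$ verifying $\theta\sim 0$.

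The target $2$-ad is $W = X\times Q$, where $Q$ is the ``pair-of-pants'' manifold with corners obtained from $D^i\times[0,1]$ by excising a smoothly-shaped open tongue straddling the equator at $t=0$:
\[
Q = \{(x_1,\dots,x_i,t)\in D^i\times[0,1]\colon |x_i|\ge\delta(t)\},
\]
for a smooth, strictly decreasing $\delta\colon[0,1]\to[0,1/2]$ with $\delta(0) = 1/2$ and $\delta(1) = 0$, after corner-smoothing near the cusp $\{(0,\dots,0,1)\}$. The horizontal boundary of $Q$ then consists of three copies of $D^i$ (one at $t=1$ and two at $t=0$), so $X\times Q^{\textup{hor}}$ matches the target $M_\theta$ up to orientation, and the control map $\Phi\colon W\to X$ is the first projection. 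For $V$, let $\Sigma$ be the gluing locus of $M_1\cup_X M_2$, equipped with a bicollar $\Sigma\times(-\epsilon,\epsilon)\subset M_1\cup_X M_2$ chosen so that $f_1\# f_2$ restricts there to a product identification compatible with the equatorial bicollar in $D^i$, and set
\[
V = \{(m,t)\in(M_1\cup_X M_2)\times[0,1]\colon |u(m)|\ge 2\epsilon\cdot\delta(t)\text{ when $m$ lies in the bicollar}\}.
\]
Then $V|_{t=1} = M_1\cup_X M_2$ (the stack) and $V|_{t=0}\cong M_1\sqcup M_2$ (the disjoint union), so the horizontal boundary of $V$ realizes $N_\theta$. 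The map $F\colon V\to W$ is simply $F(m,t) = ((f_1\# f_2)(m),t)$, and the control map $\Psi = \Phi\circ F$ is then the base projection.

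With this data in place, conditions (1)--(2) of Definition $\ref{def:strequiv}$ are immediate: $\Psi = \Phi\circ F$ by construction, and $F$ restricts to the prescribed object data on the horizontal boundaries of $V$ and $W$. The principal obstacle I anticipate is condition (3): verifying that $F|_{\partial_2 V}$ is an \emph{infinitesimally controlled} homotopy equivalence over $X$ in the sense of Definition $\ref{def:infcon}$. However, the side pieces of both $V$ and $W$ decompose as products of either $\partial M_k$ (resp.\ $S^{i-1}$) or $\Sigma$ (resp.\ $D^{i-1}$) with an arc in the pants region, and in each piece $F|_{\partial_2 V}$ is already a homeomorphism onto its image in $\partial_2 W$, inherited from the boundary homeomorphism $f_k|_{\partial M_k}$ and from the chosen compatible bicollar. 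Hence, after introducing the cylinder parameter $[1,\infty)$ of Definition $\ref{def:infcon}$, the required homotopies can be taken to be constant in the $X$-direction, making their $\Phi$- and $\Psi$-images of uniformly zero diameter, which verifies infinitesimal control. The remaining subtlety is the smoothing of $Q$ (and correspondingly of $V$) at the tongue cusp, which is handled by a routine choice of $\delta$ and standard corner-rounding.
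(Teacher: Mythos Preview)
Your proposal is correct and follows the same idea as the paper's proof: the paper simply asserts that ``the stacking procedure gives a cobordism between the resulting new element and the disjoint union of the two elements that we started with,'' and your pair-of-pants construction $(V,W,F)$ is precisely an explicit realization of that cobordism. The only point worth flagging is that the compatibility of bicollars you assume (so that $F$ really lands in $W$ and restricts to a homeomorphism along the inner wall of the pants) relies on first normalizing $f_k$ near $\partial M_k$ via a collar, which is routine since $f_k|_{\partial M_k}$ is already a homeomorphism.
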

\begin{proof}
	It is easy to see that the stacking procedure gives a cobordism between the resulting new element and the disjoint union of the two elements that we started with. This finishes the proof. 
\end{proof}

For $i\geq 0$, we have a natural homomorphism
\[  \alpha_\ast\colon \mathcal N^\topo_\partial (X\times D^i)  \to \mathcal N_{n+i}(X) \] 
which is defined similarly as the map $\beta_\ast$ above . 

\begin{lemma}\label{lm:norminj}
	If $X$ is a closed oriented connected topological manifold of dimension $n\geq 5$, then the map
	\[ \alpha_\ast\colon \mathcal N^\topo (X)  \to \mathcal N_{n}(X)\]
	is  an isomorphism.   
\end{lemma}
\begin{proof}
	For each $i\geq 0$, there is a commutative diagram 
	\[  \xymatrix{ \mathcal N^\topo_\partial (X\times D^i)  \ar[r] \ar[d]^\cong & \mathcal N_{n+i}(X) \ar[d]^\cong \\
		H_{n+i}(X; \mathbb L_\bullet) \ar@{=}[r] & H_{n+i}(X; \mathbb L_\bullet)   } \]
	where the vertical isomorphisms are the corresponding  algebraic normal invariant maps. This finishes the proof. 
\end{proof}

\begin{remark}
	We remark that all explicit descriptions of normal invariants are torsorial. While a Poincar\'e complex can have a normal invariant, and this is a well defined homotopical notion, and the description of normal invariants as lifts of the Spivak fibration is also canonical, the description of the aggregate of these lifts in terms of the classifying space $G/\mathrm{CAT}$ involves first picking a preferred lift.
\end{remark}

Now by construction, the geometric surgery long exact sequence $\eqref{eq:sles}$ and the long exact sequence from Theorem $\ref{thm:les}$,  together with the maps $\alpha_\ast$, $\beta_\ast$ and $\iota_\ast$, fit into the following commutative diagram:
\begin{equation}\label{diag:topsurg}
\begin{split}
\scalebox{1}{\xymatrixcolsep{1pc}\xymatrix{ 
		\mathcal N^{\topo}_\partial (X\times I)  \ar[r] \ar[d]_{\alpha_\ast}^{\cong} & L_{n+1}(\pi_1 X) \ar@{-->}[r] \ar@{=}[d]& \mathcal S^\topo (X) \ar[r] \ar[d]_{\iota_\ast} &  \mathcal N^{\topo}(X) \ar[r] \ar[d]_{\alpha_\ast}^{\cong} & L_n(\pi_1 X)  \ar@{=}[d] \\
		\mathcal N_{n+1}(X)\ar[r] & L_{n+1}(\pi_1 X) \ar[r] & \mathcal S_n(X) \ar[r] & 	  \mathcal N_n(X)  \ar[r] & L_n(\pi_1 X). 
	}   
}
\end{split}
\end{equation}
By using the action of  $L_{n+1}(\pi_1X)$ on $\mathcal S^\topo(X)$ and the proof of the standard five lemma, we obtain the following proposition. 

\begin{proposition}\label{prop:strid} If $ \dim X = n\geq 5$, then 
	the map 
	\[ \iota_\ast\colon \mathcal S^\topo(X) \to \mathcal S_{n}(X) \] is a bijection of sets.  Moreover, for $i\geq 1$, the map \[ \beta_\ast\colon \mathcal S^\topo_\partial (X\times D^i)  \to \mathcal S_{n+i}(X) \]  is a group isomorphism. 
\end{proposition}

For any oriented closed topological manifold $X$ with $\dim X\geq 5$, $\mathcal S^\topo(X)$ carries an abelian group structure by Siebenmann's periodicity theorem \cite[Appendix C to Essay V]{MR0645390}, which makes the geometric surgery long exact sequence $\eqref{eq:sles}$ into an exact sequence of abelian groups everywhere. More precisely, Siebenmann's periodicity theorem states that there is an injection  
\[  \mathcal S^\topo(X) \hookrightarrow \mathcal S^\topo_\partial(X\times D^4). \] To see that the abelian group structure on $\mathcal S^\topo_\partial(X\times D^4)$ induces an abelian group structure on $\mathcal S^\topo(X)$, one needs the fact that the map $\sigma\colon \mathcal S^\topo_\partial (X\times D^4) \to \mathbb Z$ arising in Nicas' correction \cite{MR668807} to Siebenmann's periodicity theorem is a group homomorphism. The map $\sigma$ can be described as follows. Suppose   
\[ f\colon (Y, \partial Y) \to
(X\times D^4, X\times S^3)\] is an element of $\mathcal S^\topo_\partial (X\times D^4)$. Let $V$ be the transverse inverse image of $\{pt\}\times D^4$, where $pt$ is a point of $X$. Then $V$ is a compact oriented manifold whose boundary is $S^3$.  Now we glue a $D^4$ to $V$ along $S^3$, and obtained a  closed oriented manifold $V'$ of dimension $4$. The map $\sigma$ takes $f$ to $1/8$ of the signature of $V'$ (since this signature is automatically a multiple of $8$). It is easy to see that $\sigma$ is a group homomorphism, since the signature of a connected sum is the sum of the signatures. The Siebenmann's periodicity theorem (with the above correction by Nicas) can now be stated in terms of  the following exact sequence:  
\[ 0 \to \mathcal S^\topo(X) \to \mathcal S^\topo_\partial(X\times D^4) \xrightarrow{\sigma } \mathbb Z. \] As a consequence, $\mathcal S^\topo(X)$,  viewed as the kernel of the group homomorphism $\sigma$, carries an abelian group structure. 

In order to see how the map $\iota_\ast \colon \mathcal S^\topo(X) \to \mathcal S_n(X)$ behaves respect to the abelian group structure on $\mathcal S^\topo(X)$ induced by the Siebenmann periodicity theorem, we shall in fact use a geometric periodicity map, denoted by $\GP$,  due to Cappell and the first author \cite{MR873283}. A priori,  it is not clear whether the map $\GP$ coincides with the Siebenmann periodicity map $\mathcal{SP}$. Crowley and Macko showed that a quaternionic (resp. octonionic) version of $\GP$ coincides with $\mathcal{SP}^2\colon S^\topo(X) \to \mathcal S^\topo_\partial(X\times D^8)$ (resp. $\mathcal{SP}^4\colon S^\topo(X) \to \mathcal S^\topo_\partial(X\times D^{16})$) \cite{MR2826928}. Our discussion below will show that $\GP$ coincides with $\mathcal{SP}$.

Now let us briefly review the construction of the map $\GP$.   It is a fundamental fact that any homotopy equivalence $f\colon M \to X$ in $\mathcal S^\topo(X)$ has a unique (up to isotopy)  associated embedding $M \to X\times D^3$. This is due to Browder-Casson-Haefiger-Sulivan-Wall, which is
explained in  \cite{MR1687388} in the analogous PL case, with the topological case following from \cite{MR0645390}. Quinn's end theorem \cite{MR549490} gives a canonical mapping cylinder structure on the neighborhood of $M$ in $X\times D^3$. The same reasoning gives a mapping cylinder approximate fibration $X\times D^3$ over $M$. The map   
\[ \GP: \mathcal S^\topo(X) \to \mathcal S^\topo_{\partial}(X\times D^4)\] is defined by taking the Hopf fibration $S^3\to S^2$ over the mapping cylinder (away from $M$) and  gluing $M$ back.

\begin{proposition}\label{prop:strhom}
	If $ n = \dim X\geq 5$, 	the map 
	\[ \iota_\ast\colon \mathcal S^\topo(X) \to \mathcal S_{n}(X) \] is a group homomorphism. 
\end{proposition}
\begin{proof}
	It suffices to prove the following diagram commutes: 
	\begin{equation}\label{diag:perio}
	\begin{split}
	\xymatrixcolsep{4pc}\xymatrix{ \mathcal S^\topo (X) \ar@{^{(}->}[r]^-{\GP} \ar[d]_{\iota_\ast} &  \mathcal S^\topo_\partial (X\times D^4) \ar[d]^{\beta_\ast} \\
		\mathcal S_{n}(X)  \ar@{^{(}->}[r]^-{\times \mathbb{CP}^2}  &  \mathcal S_{n+4 }(X) 
	}   
	\end{split}
	\end{equation} 
	where the map $\GP$ is the geometric Siebenmann periodicity map described above, and $\times \mathbb {CP}^2$ is the map induced by taking product with $\mathbb {CP}^2$.  Here is the intuitive reason why this diagram commutes. For an element $f\colon M \to X$ in $\mathcal S^\topo(X)$, the construction of $\GP$ first looks at the boundary of the neighborhood $M$ in $X\times D^3$, which is homeomorphic to $X\times S^2$ and  approximately fibers over $M$ with fiber $S^2$. Now taking the Hopf disk bundle over  $S^2$ (i.e. the unit disk bundle of the tautological line bundle over $S^2 = \mathbb{CP}^1$) and fiberwise coning its boundary is somehow invoking  the $\times \mathbb{CP}^2$ isomorphism in surgery theory. Indeed,  $\mathbb{CP}^2$ can be obtained by gluing the Hopf disk bundle over $S^2$ and $D^4$ along their boundaries, both of which are $S^3$.
	
	Now let us give a rigorous argument using the device of ``periodicity spaces"  \cite{MR1823952,MR2154831}. One convenient setting for our argument below is the stratified surgery theory of Browder and Quinn \cite{MR0375348}. The relevant periodicity space for our discussion is the stratified space $P = \mathbb{CP}^2\cup D^3$, where $D^3$ is glued to $\mathbb{CP}^2$ along $S^2 = \mathbb{CP}^1$. And $P$ is a stratified space with three strata: $S^2$, $\mathring{D}^3$ and $\mathring{D}^4 = \mathbb{CP}^2 - S^2$. 
	
	In our definition (Definition $\ref{def:Lgrp}$)  of $L_n(\pi_1 X)$, an object consists of manifolds with boundary and some extra data. In the following, we shall enlarge the set of objects by allowing stratified (singular) manifolds  over $P$. Here we call $Y$ a stratified manifold (with boundary)  over $P$ if there is a map  $f\colon Y \to P$ such that the strata of $Y$ are the pullbacks of the strata of $P$ and the map is transversal to each stratum of $P$. 
	
	The group $L^{BQ}_{n+4}(\pi_1 X; P)$ is defined to be the cobordism of surgery problems\footnote{Here we follow the usual convention to call an element in Wall's geometric definition (cf. Definition $\ref{def:Lgrp}$) of $L$-groups a \emph{surgery problem}.} over $X$,  except that we allow the objects to be $(n+4)$-dimensional stratified manifolds over $P$. Equivalently, each element of $L^{BQ}_{n+4}(\pi_1 X; P)$ can be thought of as a surgery problem over $X\times S^2$  with extensions over $X\times D^3$ and $X\times \mathbb{CP}^2$, where in the latter case, we insist that the surgery problem over $X\times \mathbb{CP}^2$ restricts transversally to the given one on $X\times S^2$. We point out that the reference space here is still $X$; and $P$ is only used as a model space to produce a stratification structure for these objects, hence the notation  $L^{BQ}_{n+4}(\pi_1 X; P)$ to distinguish the role of $P$ from that of $X$. Similarly, we define $L_{n+4}^{BQ}(\pi_1 X; D^4 \rel \partial)$, where the objects are stratified over $D^4$ and restrict to a trivial surgery problem over $S^3 \subset D^4$. 
	
	We make the following key observations. 
	\begin{enumerate}[(1)]
		\item Some straightforward calculations within Browder-Quinn's stratified surgery  theory show that the inclusion map $D^4 \rel \partial$ into $P$ (that is, $D^4$ is identified with the complement of a tubular neighborhood of $S^2$ in  $\mathbb{CP}^2$) induces an isomorphism $L_{n+4}^{BQ}(\pi_1 X; D^4\rel \partial) \xrightarrow{\ \cong \ } L^{BQ}_{n+4}(\pi_1 X; P)$.  The restriction map that takes a surgery problem over $P$ to its restriction over $\mathbb{CP}^2$ also induces an isomorphism \[ L^{BQ}_{n+4}(\pi_1 X; P) \xrightarrow{\ \cong \ } L_{n+4}(\pi_1 X). \] In terms of spectra, these maps induce homotopy equivalences of the corresponding spectra. 
		
		\item Similar to Definitions $\ref{def:newstruc}$  and $\ref{def:norm}$,  we can also define groups  $\mathcal S_\ast^{BQ}(X; P)$ and $\mathcal N_\ast^{BQ}(X; P)$, and they fit into an exact sequence:
		\begin{align*}
		\cdots \to L_{n+5}^{BQ}(\pi_1 X; P)   \to  \mathcal S_{n+4}^{BQ}(X; P)  
		\to  	\mathcal N_{n+4}^{BQ}(X; P) \\
		\to L_{n+4}^{BQ}(\pi_1 X;  P)\to  \mathcal S_{n+3}^{BQ}(X; P)  \to  \cdots 
		\end{align*}
		\[	\scalebox{0.85}{$ $} \]
	\end{enumerate}
	
	\begin{remark}
		Let $L_{n+4}(\pi_1 X; D^3, S^2)$ be the relative $L$-group for the pair of spaces $(X\times D^3, X\times S^2)$, which is trivial by Wall's $\pi\mhyphen\pi$ theorem \cite[Theorem 3.3]{MR1687388}. The composite map
		\[  \mathcal{S}^{\topo}(X) \xrightarrow{\times P} \mathcal{S}^{BQ}_{n+4}(X; P) \to L_{n+4}(\pi_1 X; D^3, S^2)\] is trivial because it ends in the trivial group, where the first map is taking direct product with $P$ and the second map is given by the restriction from $P$ to $D^3$.  It also has the interpretation, in the topological category, that after crossing a structure over $X$ with $D^3$, it can be (approximately) fibered over $X$.  This is essentially the content of the embedding theorem of Browder-Casson-Haefiger-Sulivan-Wall, which is the core of the construction of the geometric periodicity map $\mathcal{GP}$.  		
	\end{remark}
	The inclusion map $D^4 \rel\partial$ into $P$ and the restriction map from $P$ to $\mathbb{CP}^2$ induce the following commutative diagram: 
	\[ 
	\scalebox{0.95}{\xymatrixcolsep{1pc}\xymatrix{ 
			L_{n+5}^{BQ}(\pi_1 X;  D^4 \rel \partial) \ar[r] \ar[d]&  \mathcal S_{n+4}^{BQ}(X; D^4\rel \partial) \ar[r]  \ar[d]_{i_\ast} &  \mathcal N_{n+4}^{BQ}(X; D^4\rel \partial) \ar[r] \ar[d] &  L_{n+4}^{BQ}(\pi_1 X; D^4\rel \partial) \ar[d]   \\		
			L_{n+5}^{BQ}(\pi_1 X; P)   \ar[r] \ar[d] &  \mathcal S_{n+4}^{BQ}(X; P)  \ar[r] \ar[d]_{r_\ast} &  \mathcal N_{n+4}^{BQ}(X; P) \ar[r] \ar[d] &  L_{n+4}^{BQ}(\pi_1 X; P)  \ar[d]    \\		
			L_{n+5}(\pi_1 X)   \ar[r] &  \mathcal S_{n+4}(X)  \ar[r] &  \mathcal N_{n+4}(X) \ar[r] &  L_{n+4}(\pi_1 X)    
		}
	}
	\]
	where  all vertical maps are isomorphisms. Moreover, the same argument from Proposition $\ref{prop:strid}$ shows that the natural map 
	\[ \mathcal S^\topo_\partial (X\times D^4) \to \mathcal S_{n+4}^{BQ}(X; D^4\rel\partial)\] is an isomorphism. 
	
	Now it follows by construction that the map $\GP$ coincides with the following composition\footnote{Here the map  $\times P\colon \mathcal S_n(X) \to \mathcal S_{n+4}^{BQ}(X; P)$ is  given by taking the direct product of an element with $P$. It is an isomorphism, hence $P$ is called a periodicity space. The fact that $\times P$ is an isomorphism is essentially a consequence of Wall's $\pi\mhyphen\pi$ theorem \cite[Theorem 3.3]{MR1687388} (see \cite{MR1823952,MR2154831}).}
	\[ \mathcal S^\topo(X) \xrightarrow{\iota_\ast} \mathcal S_n(X)\xrightarrow{\times P} \mathcal S_{n+4}^{BQ}(X; P) \xrightarrow{i_\ast^{-1}} \mathcal S_{n+4}^{BQ}(X; D^4\rel\partial) \cong \mathcal S^\topo_\partial (X\times D^4).   \]
	The map 
	\[ \times \mathbb{CP}^2\colon  \mathcal S_{n}(X)\to \mathcal S_{n+4}(X)\] coincides with the following composition 
	\[   \mathcal S_n(X)\xrightarrow{\times P} \mathcal S_{n+4}^{BQ}(X; P) \xrightarrow{r_\ast} \mathcal S_{n+4}(X) \]
	and the map $\beta_\ast\colon \mathcal S^\topo_\partial (X\times D^4)  \to \mathcal S_{n+4}(X) $  coincides with the composition:
	\[   \mathcal S^\topo_\partial (X\times D^4)\cong \mathcal S_{n+4}^{BQ}(X; D^4\rel\partial) \xrightarrow{i_\ast} \mathcal S_{n+4}^{BQ}(X; P) \xrightarrow{r_\ast} \mathcal S_{n+4}(X). \]
	Therefore, we have the following commutative diagram. 
	\[ 
	\xymatrixcolsep{4pc}\xymatrix{ \mathcal S^\topo(X) \ar@{^{(}->}[r]^-{\GP} \ar[d]_{\iota_\ast} &  \mathcal S^\topo_\partial (X\times D^4) \ar[d]^{\beta_\ast}\\
		\mathcal S_{n}(X)  \ar@{^{(}->}[r]^-{\times \mathbb{CP}^2} &  \mathcal S_{n+4}(X) 
	}   
	\]
	This finishes the proof. 	
\end{proof}

For reference, we mention explicitly that our discussion readily implies:

\begin{theorem}\label{thm:gp=sp}
	The geometric periodicity map coincides with the Siebenmann periodicity map, that is, $\mathcal{GP} = \mathcal{SP}$.
\end{theorem} 
\begin{proof}
	The former is induced by crossing with the stratified space $P$ and the latter by crossing with $\mathbb{CP}^2$, as explained above. Hence the theorem follows from the discussion above. 
\end{proof}

To summarize, combining Proposition $\ref{prop:strid}$ and Proposition $\ref{prop:strhom}$, we have the following theorem. 
\begin{theorem}\label{thm:struciso}
	If $X$ is a closed oriented connected topological manifold with $\dim X = n \geq 5$, then the map
	\[ \iota_\ast\colon  \mathcal S^\topo(X) \to \mathcal S_n(X)\]
	is an isomorphism.
\end{theorem}

\begin{remark}
	More generally, if $X$ is a closed connected topological manifold of dimension $\geq 5$ and $w$ is the orientation covering of $X$, then the map
	\[ \iota_\ast\colon  \mathcal S^\topo(X, w) \to \mathcal S_n(X, w)\] is an isomorphism.
\end{remark}

We conclude this subsection with the following brief discussion of the $4$-periodic surgery exact sequence from line $\eqref{perilong}$ in Theorem $\ref{thm:les}$ and homology manifold structure groups. 

For this discussion, let us assume that $X$ is a closed connected \emph{topological} manifold of dimension $\geq 6$. If we invert $\mathbb{CP}^2$ and consider the $4$-periodic theory as in Definition $\ref{def:periodL}$ and $\ref{def:period}$, then $\mathfrak S_n(X, w)$ is naturally isomorphic to the homology manifold structure group $\mathcal S^{HTOP}(X, w)$ of $X$. Indeed, according to \cite[Corollary, Page 438]{MR1183997},  $\mathcal S^{HTOP}(X) $ is isomorphic to  $\mathcal S^\topo_\partial (X\times D^4)$, where the latter group consists entirely of structures with manifold representatives, due to the rel $\partial$ condition\footnote{Quinn showed that an ANR homology manifold whose boundary is a manifold can be resolved rel boundary \cite{MR700771,MR848688}.}. The $4$-periodic surgery long exact sequence from line $\eqref{perilong}$ in Theorem $\ref{thm:les}$ gives the surgery long exact sequence for homology manifold structures, cf. \cite[Main Theorem]{MR1183997}. In fact, by construction, our higher rho invariant map (Definition $\ref{def:highrho}$) is naturally defined on the homology manifold structure group. Also see the discussion after Proposition $\ref{prop:plplus}$.  All the main results, in particular Theorem $\ref{thm:main}$ and Theorem $\ref{thm:structure}$, of this paper hold for both the manifold structure group and the homology manifold structure group. 

\begin{remark}
	For the $4$-periodic theory, the analogue of Theorem $\ref{thm:normal}$ is 
	\[  \mathfrak N_n(X) \cong  H_n(X; \mathbb L_\bullet\langle 0\rangle) \textup{ for all } n \in \mathbb Z, \] 
	where $\mathbb L_\bullet\langle0\rangle$ is an $\Omega$-spectrum of simplicial sets of quadratic forms and formations over $\mathbb Z$ such that $\mathbb L_0\langle0\rangle\simeq \mathbb Z\times G/TOP$. In fact, the proof for this $4$-periodic analogue is easier, and the extra discussion  surrounding Theorem $\ref{thm:normal}$   (such as the modifications in dimension $4$) is not needed, since after applying the periodicity map all calculations can be done in a sufficiently high dimension.  
\end{remark}

\begin{remark}\label{rk:hommfld}
	More generally, the same method from above can be applied to the case where $X$ is a closed oriented connected \emph{ANR homology} manifold of dimension $\geq 6$. The analogue of the $4$-periodic  exact sequence from line $\eqref{perilong}$ in Theorem $\ref{thm:les}$  in this case is precisely the homology manifold surgery exact sequence of \cite[Main Theorem, Page 439]{MR1183997}. Here the only essential difference from the topological manifold case is that we do not have special low dimensional features to correct for the lack of realization of $L_0(e)$ by manifolds.
\end{remark}

\subsection{Structure group by smooth or PL representatives} \label{sec:smstr}
In this subsection, we observe that the elements in our definition of the structure group always have smooth representatives. Perhaps this is the main philosophical point of this approach to surgery -- the most naturally funtorial version of structures is independent of the category, and boils down to the topological category. 

Let $X$ be a closed connected topological	manifold. Consider the smooth and PL (piecewise-linear) versions of the long exact sequence from Theorem $\ref{thm:les}$, and denote them by 
\begin{equation}\label{eq:smles}
\begin{aligned}
\cdots \to \mathcal N^{C^\infty}_{n+1}(X; w) \xrightarrow{i_\ast} L^{C^\infty}_{n+1}(\pi_1 X; w) \xrightarrow{j_\ast}
\mathcal S^{C^\infty}_{n}( X; w) \xrightarrow{\partial_\ast} \mathcal N^{C^\infty}_{n}(X; w) \to \cdots 
\end{aligned}
\end{equation}  
and 
\begin{equation}\label{eq:plles}
\begin{aligned}
\cdots  \to \mathcal N^{PL}_{n+1}(X; w) \xrightarrow{i_\ast} L^{PL}_{n+1}(\pi_1 X; w) \xrightarrow{j_\ast} 
\mathcal S^{PL}_{n}( X; w) \xrightarrow{\partial_\ast} \mathcal N^{PL}_{n}(X; w) \to\cdots 
\end{aligned}
\end{equation}  
respectively, where the various groups are defined as follows. 
\begin{definition}\label{def:smrep}
	An element 
	\[ \theta =  (M, \partial M, \varphi, N, \partial N, f) \in \mathcal S^{C^\infty}_{n}(X; w) \textup{ (resp. $ \mathcal S^{PL}_{n}(X; w)$)}\]   consists of the following data: 
	\begin{enumerate}[(1)]
		\item $\theta$ is an element of $\mathcal S_{n}(X; w)$ (cf. Definition $\ref{def:newstruc}$);
		\item $M $ and $ N$ are smooth (resp. PL) manifolds with boundary, and the map $f\colon N \to M$ is smooth (resp. PL).
	\end{enumerate} 
\end{definition}	
We point out that $X$ is only a topological manifold, and the control maps $\varphi$ and $\psi$ are only assumed to be continuous.
The groups $\mathcal N^{C^\infty}_n(X; w)$, $ L^{C^\infty}_n(\pi_1 X; w)$, $\mathcal N^{PL}_n(X; w)$ and $ L^{PL}_n(\pi_1 X; w)$  are defined similarly. There is an obvious map from the smooth version to the PL version, which in turn can be mapped to the topological version.

Recall that, for $n \geq 5$, the group $L_n(\pi_1 X, w)$ remains the same in all smooth, PL, and topological categories (see for example \cite[Chapter 9]{MR1687388} where the proof for identifying $L_n(\pi_1 X, w)$ with the algebraic definition $L^h_n(\pi_1 X, w)$  works equally for all three categories). 

We shall make the same modification as in Definition $\ref{modLgrp}$ for dimensions $\leq 4$ so that both $\mathcal N^{C^\infty}_n(X; w)$ and $\mathcal N^{PL}_n(X; w)$ define the same homology theory as $\mathcal N_n(X; w)$. To be more precise,  the same  discussion before Theorem $\ref{thm:normal}$ works for the PL category. While surgery
is usually impossible in the smooth category, thanks to the work of Cappell and
Shaneson \cite{MR0301750}, it still works stably, that is, after taking connected sums with sufficiently many copies of $S^2\times S^2$, where $S^2$ is the standard $2$-sphere. As a consequence,  it follows that $\mathcal N^{C^\infty}_n(X; w)$ is naturally isomorphic to $\mathcal N^{PL}_n(X; w)$ and $\mathcal N_n(X; w)$ for all $n\geq 0$. Now the following proposition is an immediate consequence of the five lemma.

\begin{proposition}\label{prop:smooth} For $n\geq 5$, we have natural isomorphisms
	\[ \mathcal S^{C^\infty}_{n}(X; w) \cong \mathcal S^{PL}_{n}(X; w) \cong \mathcal S_{n}(X; w). \] In particular, every element in $\mathcal S_{n}(X; w)$ has a smooth \textup{(}resp. PL\textup{)} representative.  
\end{proposition}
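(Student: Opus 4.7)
The plan is to apply the five lemma to the commutative ladder formed by the three surgery long exact sequences of Theorem \ref{thm:les} in the smooth, PL, and topological categories (the topological version being Theorem \ref{thm:les} itself, and the other two being the sequences \eqref{eq:smles} and \eqref{eq:plles}).

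First, I would assemble the ladder. The forgetful functors smooth $\to$ PL $\to$ topological act on each term of the long exact sequence by forgetting the smooth or PL structure on the representative manifold $k$-ads and on the $k$-ads providing the cobordism equivalence relations, and thereby induce natural maps between the three versions of the sequence. Commutativity with $i_\ast$, $j_\ast$, $\partial_\ast$ is immediate since these connecting homomorphisms are defined purely by manipulating underlying manifolds (inclusion, empty-boundary stabilization, and restriction to the $(-)$-boundary), independently of category refinement.

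Second, I would invoke the category-independence of the $L$-groups and of the normal invariant groups, which is explicitly recalled in the paragraph preceding the proposition. For the $L$-groups, Wall's identification of the geometric with the purely algebraic $L_n(\Gamma;w)$ shows they depend only on $\pi_1 X$ and the orientation character $w$, hence agree across categories. For the normal invariants, the identification $\mathcal N_n(X;w)\cong H_n(X;\mathbb L_\bullet)$ applies in all categories because our definition only requires the control map $\varphi\colon M \to X$ to be continuous and $X$ itself is merely topological; the source manifold's refined structure is therefore irrelevant up to the cobordism relation defining $\mathcal N_n$. I would then apply the five lemma to the resulting diagram
\[
\xymatrix@C=1.1pc{
\cdots \ar[r] & \mathcal N^{C^\infty}_{n+1}(X;w) \ar[r] \ar[d]^{\cong} & L^{C^\infty}_{n+1}(\pi_1X;w) \ar[r] \ar[d]^{\cong} & \mathcal S^{C^\infty}_n(X;w) \ar[r] \ar[d] & \mathcal N^{C^\infty}_n(X;w) \ar[r] \ar[d]^{\cong} & L^{C^\infty}_n(\pi_1X;w) \ar[d]^{\cong} \\
\cdots \ar[r] & \mathcal N_{n+1}(X;w) \ar[r] & L_{n+1}(\pi_1X;w) \ar[r] & \mathcal S_n(X;w) \ar[r] & \mathcal N_n(X;w) \ar[r] & L_n(\pi_1X;w)
}
\]
to conclude that the central vertical arrow $\mathcal S^{C^\infty}_n(X;w) \to \mathcal S_n(X;w)$ is an isomorphism, and the same argument with PL in place of smooth gives $\mathcal S^{PL}_n(X;w) \cong \mathcal S_n(X;w)$. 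The ``in particular'' clause is then just surjectivity: any $\theta \in \mathcal S_n(X;w)$ lifts to some $\theta^\infty \in \mathcal S^{C^\infty}_n(X;w)$, whose representatives are smooth manifolds with smooth $f$ by Definition \ref{def:smrep}.

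The only conceptual subtlety, insofar as a five-lemma argument has one, is the category-independence of $\mathcal N_n$: classical normal invariant sets genuinely differ across smooth/PL/topological (corresponding to $G/O$, $G/PL$, $G/TOP$), so one must use that the $\mathcal N_n$ of this paper is a controlled/cobordism variant whose equivalence relation, combined with $X$ being only topological and the control maps only continuous, is broad enough to absorb those differences — a point made precise by the identification with $H_n(X;\mathbb L_\bullet)$ cited above.
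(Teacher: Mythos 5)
Your argument is essentially the paper's proof: apply the five lemma to the ladder of long exact sequences \eqref{eq:smles}, \eqref{eq:plles}, and the one from Theorem \ref{thm:les}, with the outer vertical isomorphisms supplied by the category-independence of $L_n(\pi_1X;w)$ and of $\mathcal N_n(X;w)\cong H_n(X;\mathbb L_\bullet)$. The subtlety you flag about $\mathcal N_n$ (that the naive comparison would fail for classical $G/O$, $G/PL$, $G/TOP$ normal invariants, and that the manifolds-with-boundary/cobordism formulation is what rescues it) is exactly the point made in the remark following the proposition in the paper.
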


\begin{remark}
	The reader should not confuse $\mathcal S^{C^\infty}_{n}(X; w)$ with the smooth structure set of a smooth manifold. The group  $\mathcal S^{C^\infty}_{n}(X; w)$ still characterizes topological manifold structures on $X$. The novelty here is that we allow manifolds with boundary and corners in the definition of $\mathcal S^{C^\infty}_{n}(X; w)$. Similar remarks apply to $\mathcal S^{PL}_{n}(X; w) $ as well. 
	
	This type of argument would fail if we restrict ourselves to only closed manifolds as in the classical definition of $\mathcal S^\topo(X)$.
	Moreover, we point out that our definition $\mathcal S_{n}(X)$ continues to make sense even if $X$ is not a manifold. One essential point here is that with our new definition, we are forcing structure groups to be \emph{functorial}, which they do not  seem to be in the case of smooth manifold structures. In fact, the smooth manifold structure set does \emph{not} carry an abelian group structure which makes the smooth surgery exact sequence into an exact sequence of abelian groups.

\end{remark}

\subsection{Piecewise linear control}\label{sec:PL}

In this subsection, we will give another definition of the structure group using a different type of control that, we will see, can be used to obtain infinitesimal control.

Note that our definition of $\mathcal S_n(X)$ only depends on the homotopy type of $X$. In other words,  $\mathcal S_n(X)$ is isomorphic to $\mathcal S_n(X')$ for every pair of homotopy equivalent spaces $X$ and $X'$.   Recall that every closed topological manifold is homotopy equivalent to a finite $CW$-complex (cf. \cite{MR0242166}), and therefore  homotopy equivalent\footnote{In fact, any manifold of dimension $\neq 4$ is homeomorphic to a CW complex.}  to a finite simplicial complex. On the other hand, every finite simplicial complex is homotopy equivalent to a smooth manifold with boundary. Indeed, after being embedded into a Euclidean space, a finite simplicial complex is homotopy equivalent to a regular neighborhood, which is a smooth manifold with boundary\footnote{Note that, in general, the dimension of this smooth manifold with boundary is larger than the dimension of the original topological manifold we started with.}. To summarize, every topological manifold of dimension $\geq 5$ is homotopy equivalent to a smooth manifold with boundary. So from now on,   without loss of generality, let us assume $X$ is a smooth manifold with boundary. In particular, let us fix a triangulation of $X$ throughout this subsection. 

\begin{remark}
	In the above discussion, when we homotope a topological manifold $Z$ to a smooth manifold, say $X$, the dimension of $X$ is larger than that of $Z$ in general. However, we point out that the objects in the definition of $\mathcal S_n(Z)$ and $\mathcal S_n(X)$ are still of dimension $n$, regardless of the dimension of $X$ or $Z$.
\end{remark}

In the following, we work in the PL category. In particular, all objects are equipped with a triangulation and all morphisms are assumed to be simplicial. We refer the reader to \cite{MR0226645,MR0226646,MR0232404} for more details on PL transversality. 
\begin{definition}\label{def:pltrans}
	Let $Y$ and $Z$ be a pair of PL manifolds equipped with certain triangulations. A homotopy equivalence $h\colon Y \to Z$ is said to be PL controlled over $X$ via the control map $\varphi\colon Z \to X$ if  the following is satisfied. 
	\begin{enumerate}[(1)]
		\item $\varphi$ is transversal to the triangulation of $X$. That is, the map $\varphi\colon Z\to X$ is transversal to every simplex $\Delta^k$ in the triangulation of $X$.  In particular,  the inverse image of each simplex $\Delta^k$ (in the triangulation of  $X$) is a manifold $k$-ad. 
		\item $h$ restricts to a homotopy equivalence from $(\varphi\circ h)^{-1}(\Delta^k) $ to $\varphi^{-1}(\Delta^k)$ for every simplex $\Delta^k$ of $X$. More precisely, there exists a homotopy inverse $g\colon Z \to Y$  of $h$ such that 
		\begin{enumerate}[(i)]
			\item the homotopy $H\colon h\circ g \simeq \id $ restricts to a homotopy on $\varphi^{-1}(\Delta^k )$;
			\item  the homotopy $H'\colon g\circ h \simeq \id $ restricts to a homotopy on $(\varphi\circ h)^{-1}(\Delta^k )$.
		\end{enumerate}
	\end{enumerate} 
\end{definition}

\begin{remark}
	Note that, in the above definition, for  each simplex $\Delta^k$ in $X$, the homotopy equivalences $h$, $g$ and the homotopies $H$ and $H'$ all respect the  appropriate manifold ad structure on the inverse image $\varphi^{-1}(\Delta^k)$. In particular, near various boundaries of $\varphi^{-1}(\Delta^k)$, the map $h$, $g$, $H$ and $H'$ have appropriate product structures. For example, the inverse image $ K = \varphi^{-1}(\Delta^1)$ of an $1$-simplex $\Delta^1$ is a manifold $1$-ad, that is, a manifold with boundary $\partial K$. In this case, the restrictions of $h$, $g$, $H$ and $H'$ on $\varphi^{-1}(\Delta^1)$ maps $\partial K$ to $\partial K$, and have product structure near $\partial K$. We refer the reader to \cite[Chapter 0]{MR1687388} for more details on the notion of manifold $m$-ads.
\end{remark}

Now similar to Section $\ref{sec:smstr}$, we can define a new surgery long exact sequence by using PL-control instead of infinitesimal control (Definition $\ref{def:infcon}$) :
\begin{equation}\label{eq:ples}
\begin{aligned}
\cdots \to \mathcal N^{PL^+}_{n+1}(X; w) \xrightarrow{i_\ast} L^{PL^+}_{n+1}(\pi_1 X; w) \xrightarrow{j_\ast}  \mathcal S^{PL^+}_{n}( X; w) \xrightarrow{\partial_\ast} \mathcal N^{PL^+}_{n}(X; w)\to \cdots 
\end{aligned}
\end{equation}
where the superscript $PL^+$ stands for PL-representatives with PL-control. More precisely, for example, for elements in $S^{PL}_{n}( X; w)$, we replace  infinitesimal control with PL-control, and denote the new group by  $\mathcal S^{PL^+}_{n}( X; w)$.

\begin{remark}
	The definition of PL-control we gave works well only when $X$ is a PL manifold. We point out that, when $X$ is a  PL manifold with boundary, to define $\mathcal S^{PL^+}_{n}( X; w)$,   every element $\theta = (M, \partial M, \varphi, N, \partial N, \psi, f)$ is assumed to be disjoint from the boundary of $X$. That is, $\varphi(M)\cap \partial X = \emptyset$, $\psi(N) \cap \partial X = \emptyset$,  and all other relevant data do not intersect  $\partial X$. Similar remarks apply to $\mathcal N^{PL^+}_n(X; w)_{PL}$ and $L^{PL^+}_n(\pi_1 X; w)$.  
\end{remark}

\begin{remark}
	Using the ideas of Quinn from \cite{MR1388303}, we can generalize the above construction to the case where $X$ is an arbitrary finite polyhedron.  In that case, to define PL control, the conditions are not on inverse images of simplices, but rather of their dual cones. In this more general setting, the covariant functoriality of  $\mathcal S^{PL^+}_{n}( X; w)$ becomes clear.  
	
\end{remark}

In the following proposition, we prove that the above notion of PL-control in fact implies infinitesimal control, by modifying the control map $\varphi\colon Z\to X$ if necessary. More precisely, we can keep the map $h\colon Y \to Z$ unchanged, and homotope the control map $\varphi$ to another control map $\bar \varphi$ so that $h$ becomes infinitesimally controlled with respect to $\bar \varphi$. 

\begin{proposition}\label{prop:gaincontrol}
	Let $X$ be an $n$-dimensional PL manifold with boundary equipped with a triangulation.  Suppose  $h\colon Y \to Z$ is a PL controlled homotopy equivalence over $X$ via the control map $\varphi \colon Z \to X$.  Then there exists a control map  $\bar{\varphi} \colon Z \to X$ such that 
	\begin{enumerate}[(1)]
		\item $\bar{\varphi}$ is homotopic to $\varphi$;
		\item  $h$ restricts to a proper homotopy equivalence 
		\[  h\colon \bar{\psi}^{-1} U \to \bar{\varphi}^{-1} U  \]
		for all open subsets $U\subset X$, where $\bar{\psi} = \bar{\varphi}\circ h$.
	\end{enumerate}
\end{proposition}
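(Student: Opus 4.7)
The strategy is to modify $\varphi$ within its homotopy class to a new control map $\bar\varphi$ whose fibers have a bundle-like structure compatible with the manifold-ad decomposition $\{A_\sigma = \varphi^{-1}(\sigma)\}$ of $Z$ supplied by PL transversality; then the PL-controlled simplex-wise homotopy equivalences $h_\sigma\colon B_\sigma \to A_\sigma$ (where $B_\sigma = h^{-1}A_\sigma$) assemble into a global proper homotopy equivalence on every preimage $\bar\varphi^{-1}(U)$.

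\textbf{Construction of $\bar\varphi$.} Pass to the barycentric subdivision $X'$, whose open simplices are indexed by chains $\bar\sigma = (\sigma_0 \subsetneq \cdots \subsetneq \sigma_k)$ of simplices of $X$; let $\Delta_{\bar\sigma}$ denote the corresponding simplex of $X'$ and $D(\sigma)$ the dual cell of $\sigma$. Using the collar neighborhoods of the boundary inclusions $A_\tau \hookrightarrow A_\sigma$ provided by the manifold-ad structure, I build a mapping-telescope-like space
\[
\tilde Z = \Bigl(\bigsqcup_{\bar\sigma} A_{\sigma_0}\times\Delta_{\bar\sigma}\Bigr)\big/\!\sim,
\]
where the identifications are generated by the face inclusions of the $\Delta_{\bar\sigma}$ together with the collar inclusions $A_{\sigma_0}\hookrightarrow A_{\sigma_0'}$ (for $\sigma_0 \subset \sigma_0'$). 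Project $\tilde\varphi\colon \tilde Z \to X$ by sending $A_{\sigma_0}\times\Delta_{\bar\sigma}$ to $\Delta_{\bar\sigma}\subset X$ via the second factor, and construct a section $s\colon Z \to \tilde Z$ from the same collar data. Define $\bar\varphi := \tilde\varphi \circ s$; it is homotopic to $\varphi$ since the composite only moves points within closed simplices of $X$. Build $\tilde Y$ and $\tilde h\colon \tilde Y \to \tilde Z$ analogously, with $\tilde h = h_{\sigma_0}\times\mathrm{id}$ on each piece, so that $\bar\psi = \bar\varphi\circ h$ is the analogous composite on the $Y$-side.

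\textbf{Verification and main obstacle.} For any open $U\subset X$, the preimage $\bar\varphi^{-1}(U)$ is a union of pieces indexed by the chains $\bar\sigma$ with $\Delta_{\bar\sigma}\cap U \neq \emptyset$, and via the collar identifications the restriction of $h$ to such a piece is a product $h_{\sigma_0}\times\mathrm{id}$. These are homotopy equivalences with inverses $g_{\sigma_0}\times\mathrm{id}$, and the PL-controlled homotopies $H_{\sigma_0}, H'_{\sigma_0}$ have product form along collars, so they glue into global homotopies on $\bar\psi^{-1}(U) \to \bar\varphi^{-1}(U)$. Properness follows from local finiteness of the triangulation: any compact $K\subset U$ meets only finitely many $\Delta_{\bar\sigma}$, so its preimage lies in a finite union of compact pieces $A_{\sigma_0}\times(\Delta_{\bar\sigma}\cap K)$, and likewise for $\bar\psi^{-1}(K)$. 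The main obstacle I expect is ensuring that all of the simplex-wise homotopy data, collar neighborhoods, and product structures can be chosen compatibly across every chain of face inclusions simultaneously; this is precisely the compatibility encoded by Wall's manifold $m$-ad formalism and is achievable by standard PL collaring techniques, after which the assembly above goes through and yields the desired proper homotopy equivalence on every open preimage.
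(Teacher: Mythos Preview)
Your proposal is correct and follows essentially the same approach as the paper. Both arguments pass to the barycentric subdivision, use the dual-cone picture, and exploit the product/collar structure of the manifold-ad decomposition $\{A_\sigma\}$ to replace $\varphi$ by a map whose fibers over dual cones are constant; your mapping-telescope construction is a more formal repackaging of the paper's inductive ``crush to barycenters and stretch the product collars'' procedure, and the compatibility obstacle you flag is exactly what the paper handles implicitly via the manifold $m$-ad product structure.
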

\begin{proof}
	The proof is by induction and uses a ``dual cone" picture as described, for example, in \cite[Section 6]{MR1388303}. 
	
	Suppose $K$ is a simplicial complex. We take the first barycentric subdivision of $K$. For every simplex $\sigma$ in $K$, we define the dual cone $D(\sigma)$ to be the union of all simplices of the subdivision which intersect $\sigma$ in exactly the barycenter of $\sigma$. Now the key idea of the proof is to crush all the nontrivial changes in topology of $Z$ and $Y$, and the homotopy equivalence $h$ to small parts.  More precisely, we have the following induction construction.  Let $X^{(k)}$ be the $k$-skeleton of $X$, that is, $X^{(k)}$ is the union of all simplices in $X$ of dimensions $\leq k$.
	
	\begin{enumerate}[(i)]
		\item \textbf{Initial step}.  First, consider $Z^{(1)} = \varphi^{-1}(X^{(1)})$ the inverse image of $X^{(1)}$. Note that $\varphi^{-1}(X^{(1)})$ has  product structure near $Z^{(0)} = \varphi^{-1}(X^{(0)})$. Let us denote by $Z^{(0)}_\varepsilon$ for such a small open neighborhood (with product structure) of $Z^{(0)}$ in $Z^{(1)}$.   Let $Z^{(1)}_c = Z^{(1)} - Z^{(0)}_\varepsilon $ be the complement of $Z^{(0)}_\varepsilon$ in $Z^{(1)}$.   
		
		We define a new control map 
		\[ \varphi_1\colon Z^{(1)} = \varphi^{-1}(X^{(1)}) \to X\] by mapping each component of $Z^{(1)}_c$ to the barycenter of the corresponding $1$-simplex in $X$, and stretching out $Z^{(0)}_\varepsilon$ (which is of product structure) accordingly. Intuitively, we see that the nontrivial changes of topology from $Z^{(0)}$ to $Z^{(1)}$ are all pushed to the barycenters of $1$-simplices in $X$. In particular, for all open subsets $V\subset X^{(1)}$, $h$ restricts to a proper homotopy equivalence $h\colon \psi_1^{-1}(V)  \to \varphi_1^{-1}(V)$, where $\psi_1 = \varphi_1\circ h$. 
		
		\begin{figure}[H]
			\hspace*{0.5cm}   
			\begin{tikzpicture}[scale=1, every node/.style={transform shape}]
			\path [pattern = crosshatch, pattern color = blue] (0, 5) to [out = 0, in = 180]  (6, 4) -- (6, 3) to [out = 180, in = 0] (0, 1.5) -- (0, 2.5)   arc (-90:90:1.5 and 0.75) (0, 4) -- (0, 5);

			\draw[very thick] 	(0, 5) to [out = 0, in = 180]  (6, 4) ; 
			\draw[very thick]  (6, 3) to [out = 180, in = 0] (0, 1.5); 
			\draw [very thick] (0, 2.5)   arc (-90:90:1.5 and 0.75) ;

			
			\path [fill = white, draw = white] (0, 4) arc (-90:90:0.25 and 0.5) (0, 5)-- (0, 4); 
			\draw [dashed, thick] (0, 5)  arc (90:270:0.25 and 0.5); 
			
			\draw [pattern = horizontal lines, very thick] (-0.7, 5) arc (90:450:0.25 and 0.5);

			\draw [pattern = horizontal lines, very thick] (0, 4) arc (-90:90:0.25 and 0.5) (0, 5) -- (-0.7, 5) arc (90:-90:0.25 and 0.5) (-0.7, 4) -- (0, 4) arc (-90:90:0.25 
			and 0.5) ; 
			
			
			\path [fill = white, draw = white] (0, 1.5) arc (-90:90:0.25 and 0.5) (0, 2.5 )-- (0, 1.5); 
			
			
			\draw [dashed, thick] (0, 2.5)  arc (90:270:0.25 and 0.5); 
			
			\draw [pattern = horizontal lines, very thick] (-0.7, 2.5) arc (90:450:0.25 and 0.5);

			\draw [pattern = horizontal lines, very thick] (0, 1.5) arc (-90:90:0.25 and 0.5) (0, 2.5) -- (-0.7, 2.5) arc (90:-90:0.25 and 0.5) (-0.7, 1.5) -- (0, 1.5) arc (-90:90:0.25 
			and 0.5) ;

			
			\path [fill = white, draw = white] (6, 4) arc (90:270:0.25 and 0.5) (6, 3 )-- (6, 4);

			
			\draw [pattern = horizontal lines, very thick] (6, 4) arc (90:270:0.25 and 0.5) (6, 3 ) -- (6.7, 3) arc (-90:90:0.25 and 0.5) (6.7, 4) -- (6, 4) arc (90:270:0.25 and 0.5); 
			\draw [very thick ] (6.7, 4) arc (90:270:0.25 and 0.5); 
			
			\draw [dashed, thick] (6, 3)  arc (-90:90:0.25 and 0.5);

			\draw  [fill = white, thick] (2.6, 3.5) to [out = -45, in = -135] (4, 3.5) to [out = 135, in = 45] (2.6, 3.5); 
			\draw [thick] (4, 3.5) to [out = 45, in = -135] (4.08, 3.6);
			\draw [thick] (2.6, 3.5) to [out = 135, in = -45] (2.52, 3.6);
			
			
			\draw [very thick]  (-0.7, 0) -- (6.7, 0);
			
			\filldraw (-0.7, 0) circle (1.5pt);	
			\filldraw (6.7, 0) circle (1.5pt);
			\filldraw (3, 0) circle (1.5pt);


			\draw [->,thick] (-0.7, 1.35) -- (-0.7, 0.1); 
			\draw [->, thick] (0, 1.35) -- (0, 0.1);
			
			\draw [->, thick] (1, 1.45) -- (1, 0.1);
			
			\draw [->, thick] (2, 1.7) -- (2, 0.1); 
			\draw [->, thick] (3, 2.05) -- (3, 0.1);
			\draw [->, thick] (4, 2.4) -- (4, 0.1);
			\draw [->, thick] (5, 2.7) -- (5, 0.1);
			\draw [->, thick] (6, 2.85) -- (6, 0.1);
			\draw [->, thick] (6.7, 2.85) -- (6.7, 0.1); 
			
			\node at (7.5, 1.5) {$\varphi$}; 
			\end{tikzpicture}
			\caption{the original map $\varphi$}
		\end{figure}
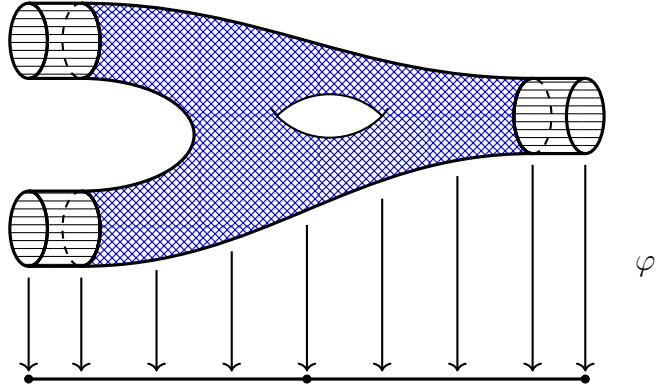
		
		\begin{figure}[H]
			\hspace*{0.5cm}  
			\begin{tikzpicture}[scale=1, every node/.style={transform shape}]
			
			\path [pattern = crosshatch, pattern color = blue] (0, 5) to [out = 0, in = 180]  (6, 4) -- (6, 3) to [out = 180, in = 0] (0, 1.5) -- (0, 2.5)   arc (-90:90:1.5 and 0.75) (0, 4) -- (0, 5);

			\draw[very thick] 	(0, 5) to [out = 0, in = 180]  (6, 4) ; 
			\draw[very thick]  (6, 3) to [out = 180, in = 0] (0, 1.5); 
			\draw [very thick] (0, 2.5)   arc (-90:90:1.5 and 0.75) ;

			
			\path [fill = white, draw = white] (0, 4) arc (-90:90:0.25 and 0.5) (0, 5)-- (0, 4); 
			\draw [dashed, thick] (0, 5)  arc (90:270:0.25 and 0.5); 
			
			\draw [pattern = horizontal lines, very thick] (-0.7, 5) arc (90:450:0.25 and 0.5);

			\draw [pattern = horizontal lines, very thick] (0, 4) arc (-90:90:0.25 and 0.5) (0, 5) -- (-0.7, 5) arc (90:-90:0.25 and 0.5) (-0.7, 4) -- (0, 4) arc (-90:90:0.25 
			and 0.5) ; 
			
			
			\path [fill = white, draw = white] (0, 1.5) arc (-90:90:0.25 and 0.5) (0, 2.5 )-- (0, 1.5); 
			
			
			\draw [dashed, thick] (0, 2.5)  arc (90:270:0.25 and 0.5); 
			
			\draw [pattern = horizontal lines, very thick] (-0.7, 2.5) arc (90:450:0.25 and 0.5);

			\draw [pattern = horizontal lines, very thick] (0, 1.5) arc (-90:90:0.25 and 0.5) (0, 2.5) -- (-0.7, 2.5) arc (90:-90:0.25 and 0.5) (-0.7, 1.5) -- (0, 1.5) arc (-90:90:0.25 
			and 0.5) ;

			
			\path [fill = white, draw = white] (6, 4) arc (90:270:0.25 and 0.5) (6, 3 )-- (6, 4);

			
			\draw [pattern = horizontal lines, very thick] (6, 4) arc (90:270:0.25 and 0.5) (6, 3 ) -- (6.7, 3) arc (-90:90:0.25 and 0.5) (6.7, 4) -- (6, 4) arc (90:270:0.25 and 0.5); 
			\draw [very thick ] (6.7, 4) arc (90:270:0.25 and 0.5); 
			
			\draw [dashed, thick] (6, 3)  arc (-90:90:0.25 and 0.5);

			\draw  [fill = white, thick] (2.6, 3.5) to [out = -45, in = -135] (4, 3.5) to [out = 135, in = 45] (2.6, 3.5); 
			\draw [thick] (4, 3.5) to [out = 45, in = -135] (4.08, 3.6);
			\draw [thick] (2.6, 3.5) to [out = 135, in = -45] (2.52, 3.6);
			
			
			\draw [very thick]  (-0.7, 0) -- (6.7, 0);
			
			\filldraw (-0.7, 0) circle (1.5pt);	
			\filldraw (6.7, 0) circle (1.5pt);
			\filldraw (3, 0) circle (1.5pt);

			\path [pattern = crosshatch, pattern color  = blue!20!white, opacity = 0.5] (0.3, 1.3) to [out = 0, in = 180] (5.8, 2.8) -- (3, 0.1) -- (0.3, 1.3);  
			
			%

			\draw [->,thick] (-0.7, 1.4) -- (-0.7, 0.1); 
			
			\draw [->, thick] (-0.2, 1.4) -- (2, 0.1); 
			\draw [->, thick] (-0.35, 1.4) -- (1.15, 0.1); 	
			\draw [->, thick] (-0.5, 1.4) -- (0.36, 0.1);
			
			\draw [ thick] (0.1, 1.4) -- (3, 0.1); 
			\draw [thick] (1, 1.45) -- (3, 0.1);
			
			\draw [ thick] (2, 1.7) -- (3, 0.1); 
			\draw [ thick] (3, 2.05) -- (3, 0.1);
			\draw [ thick] (4, 2.4) -- (3, 0.1);
			\draw [ thick] (5, 2.7) -- (3, 0.1);

			\draw [thick] (5.9, 2.9) -- (3, 0.1);
			\draw [->, thick] (6.7, 2.9) -- (6.7, 0.1); 
			\draw [->, thick] (6.35, 2.9) -- (4.85, 0.1); 
			\draw [->, thick] (6.2, 2.9) -- (4.05, 0.1); 
			\draw [->, thick] (6.5, 2.9) -- (5.64, 0.1);

			\node at (7.5, 1.5) {$\varphi_1$}; 
			\end{tikzpicture}
			\caption{the new map $\varphi_1$}
		\end{figure}
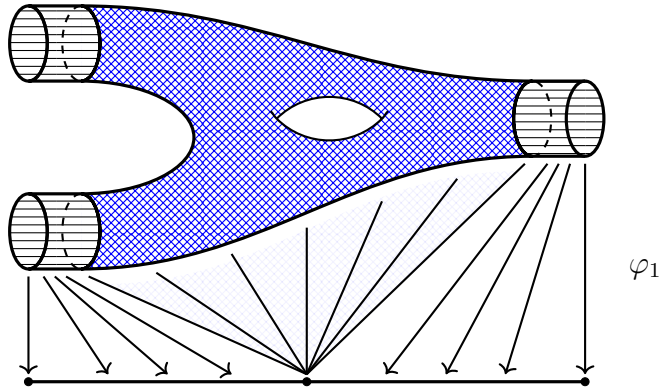
		
		\item \textbf{Induction step}.	Suppose we have defined the control map \[ \varphi_k \colon Z^{(k)} =  \varphi^{-1}(X^{(k)}) \to X^{(k)}. \]   Now let us extend  $\varphi_k$ to a control map \[ \varphi_{k+1}\colon Z^{(k+1)} = \varphi^{-1}(X^{(k+1)}) \to  X^{(k+1)}.\] Intuitively, for each simplex $\Delta^{k+1}$ of $X$, we shall define $\varphi_{k+1}$ so that most of $\varphi^{-1}(\Delta^{k+1})$ is mapped to the barycenter of $\Delta^{k+1}$, and the remaining part of $\varphi^{-1}(\Delta^{k+1})$ (which  again has an appropriate product structure) is stretched out accordingly.  
		
		More precisely, note that $Z^{(k+1)}$ has product structure near $Z^{(k)}$. Let us denote by $Z^{(k)}_\varepsilon$ for such a small open neighborhood (with product structure) of $Z^{(k)}$ in $Z^{(k+1)}$.   Let $Z^{(k+1)}_c = Z^{(k+1)} - Z^{(k)}_\varepsilon $ be the complement of $Z^{(k)}_\varepsilon$ in $Z^{(k+1)}$. We define a new control map 
		\[ \varphi_2\colon Z^{(2)} = \varphi^{-1}(X^{(2)}) \to X\] by mapping each component of $Z^{(2)}_c$ to the barycenter of the corresponding $(k+1)$-simplex in $X$, and stretching out $Z^{(1)}_\varepsilon$ (which has a product structure) accordingly.   It is clear that this  process keeps $\varphi_k$ unchanged on $Z^{(k)}$.  
	\end{enumerate}	
	In the end, we obtain a new control  map $\bar \varphi = \varphi_n \colon Z \to X$, where  $n$ is the dimension of $X$. It is clear from the construction  that $\bar \varphi$ is homotopic to $\varphi$. Moreover, $h$ restricts to a proper homotopy equivalence 
	\[  h\colon \bar{\psi}^{-1} U \to \bar{\varphi}^{-1} U  \]
	for all open subsets $U\subset X$, where $\bar{\psi} = \bar{\varphi}\circ h$.  This finishes  the proof. 
	
\end{proof}

\begin{definition}
	Let $Y$ and $Z$ be a pair of PL manifolds equipped with triangulations. A homotopy equivalence $h\colon Y \to Z$ is said to be PL infinitesimally controlled over $X$ via a control map $\bar\varphi \colon Z \to X$ if $h$ is PL controlled over $X$ via $\bar{\varphi}$ and  $h$ restricts to a proper homotopy equivalence 
	\[  h\colon \bar{\psi}^{-1} U \to \bar{\varphi}^{-1} U  \]
	for all open subsets $U\subset X$, where $\bar{\psi} = \bar{\varphi}\circ h$.
\end{definition}

The following is an immediate corollary of Proposition $\ref{prop:gaincontrol}$. 

\begin{corollary} \label{cor:gaincontr}
	Suppose $\theta = (M, \partial M, \varphi, N, \partial N, \psi, f)$ is an element in $\mathcal S^{PL}_{m}(X)_{PL}$. 
	Then there exists a control map $\bar{\varphi}\colon M \to X$ such that $\bar{\varphi}$ is homotopic to $\varphi$ and on the boundary  $f$ restricts to a PL infinitesimally controlled homotopy equivalence 
	$f|_{\partial N} \colon \partial N \to \partial M$.  
\end{corollary}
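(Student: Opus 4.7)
The plan is to reduce the corollary directly to Proposition \ref{prop:gaincontrol}, applied on the boundary, and then extend the resulting modification of the control map back into the interior of $M$ by a collar argument.

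First, unpack the hypothesis. By the definition of $\mathcal S^{PL}_{m}(X)_{PL}$ (the version of Definition \ref{def:newstruc} assembled in Section \ref{sec:PL} with PL-representatives and PL-control instead of infinitesimal control), the restriction $f|_{\partial N}\colon \partial N \to \partial M$ is a PL-controlled homotopy equivalence over $X$ with respect to the control map $\varphi|_{\partial M}\colon \partial M \to X$. In particular, the hypotheses of Proposition \ref{prop:gaincontrol} are satisfied for the pair $Y = \partial N$, $Z = \partial M$, $h = f|_{\partial N}$, and control map $\varphi|_{\partial M}$.

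Applying Proposition \ref{prop:gaincontrol} produces a new control map $\bar{\varphi}_\partial \colon \partial M \to X$ together with a homotopy $\{\varphi^t_\partial\}_{t \in [0,1]}$ from $\varphi|_{\partial M}$ to $\bar{\varphi}_\partial$, such that $f|_{\partial N}$ is PL infinitesimally controlled over $X$ via $\bar{\varphi}_\partial$. This gives the desired statement on the boundary; it remains to extend $\bar{\varphi}_\partial$ to a control map on all of $M$ that is still homotopic to $\varphi$.

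For this, use a (smooth/PL) collar neighborhood $U \cong \partial M \times [0,1)$ of $\partial M$ in $M$, which exists since $M$ is a PL manifold with boundary. Choose a continuous cut-off function $\rho \colon M \to [0,1]$ with $\rho \equiv 1$ on $\partial M$, with support in $U$, and with $\rho(x, s) = \max(1 - 2s, 0)$ in the collar coordinates. Define
\[
\bar{\varphi}(x) = \begin{cases} \varphi^{\rho(x)}_\partial(\pi(x)) & \text{if } x \in U \text{ and } \rho(x) > 0, \\ \varphi(x) & \text{otherwise}, \end{cases}
\]
where $\pi\colon U \to \partial M$ is the collar projection, interpolated continuously across $\rho = 0$ using the homotopy so that $\bar{\varphi}$ agrees with $\varphi$ outside a slightly larger subcollar. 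Then $\bar{\varphi}|_{\partial M} = \bar{\varphi}_\partial$, and linearly interpolating $\rho$ to $0$ through the parameter gives an explicit homotopy $\bar{\varphi} \simeq \varphi$. Since the infinitesimal control property only concerns behavior on $\partial M$, on which $\bar{\varphi}$ equals $\bar{\varphi}_\partial$, the restriction $f|_{\partial N}$ is PL infinitesimally controlled over $X$ via $\bar{\varphi}$, as required.

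The only mildly delicate point is checking that the collar extension does not disturb the manifold-ad structure of $M$ (and hence that $\bar{\varphi}$ is still a legitimate control map in the sense of Section \ref{sec:PL}); this is automatic because the modification is supported in a collar of $\partial M$ and is carried out through the canonical product structure there, so every object in the data of $\theta$ remains intact. Everything else is a direct transcription of Proposition \ref{prop:gaincontrol}.
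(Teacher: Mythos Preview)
Your approach is correct and matches what the paper intends. The paper gives no proof beyond declaring the statement an ``immediate corollary of Proposition~\ref{prop:gaincontrol}''; what you have written supplies exactly the missing step, namely that after applying the proposition on $\partial M$ one must extend the new control map $\bar\varphi_\partial$ back to all of $M$. Your collar argument does this.

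One small point: the explicit piecewise formula you wrote for $\bar\varphi$ does not patch continuously at the locus $\rho(x)=0$, since $\varphi_\partial^{0}(\pi(x))=\varphi(\pi(x))$ need not equal $\varphi(x)$ when $x$ is in the collar but off the boundary. You are aware of this (``interpolated continuously across $\rho=0$''), but it would be cleaner simply to invoke the homotopy extension property for the pair $(M,\partial M)$: the homotopy $\{\varphi_\partial^t\}$ on $\partial M$ extends to a homotopy of $\varphi$ on all of $M$, and the time-$1$ map is your $\bar\varphi$. This avoids any ambiguity in the formula and is exactly equivalent to what your collar construction is trying to do.
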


In order to more directly apply the discussion above to the geometrically controlled category (Section $\ref{sec:geomhp}$), let us state the PL infinitesimal control in terms of triangulations. 
We borrow the notation from Definition $\ref{def:infcon}$. In our current situation,  we can choose proper simplicial maps \[ \Phi\colon CM \to X\times [1, \infty) \textup{ and } \Psi\colon CN \to X\times [1, \infty),\]   
\[ F\colon CN  \to CM \textup{ and } G\colon CM \to CN,  \]
and a proper simplicial homotopy $\{H_s\}_{0\leq s \leq 1}$ between 
\[ H_0 = F\circ G \textup{ and } H_1 = \id\colon CM\to CM\] and 
a proper simplicial homotopy $\{H'_s\}_{0\leq s \leq 1}$ between 
\[  H'_0 = G\circ F \textup{ and } H'_1 = \id\colon CN\to CN \]
such that the following are satisfied:
\begin{enumerate}[(1)]
	\item $\Phi = \bar \varphi \times \id \colon \partial M\times [1, \infty) \to \partial M\times [1, \infty)$, where $\bar{\varphi}$ is the new  controlled map obtained from $\varphi$ as in Proposition $\ref{prop:gaincontrol}$ above;  
	$\Psi = \bar \psi \times \id \colon \partial N\times [1, \infty) \to \partial N\times [1, \infty)$, where $\bar{\psi} =  \bar{\varphi}\circ f$;
	and  $F = f\times \id\colon \partial N\times [1, \infty) \to \partial M\times [1, \infty) $ with commutative diagram  
	\[  \xymatrix{ \partial N\times [1, \infty)  \ar[dr]_{\bar\psi \times \id}  \ar[rr]^{f\times \id} &  & \partial M \times [1, \infty) \ar[dl]^{\bar{\varphi}\times \id}  \\
		&  X\times[1, \infty) & 
	}\]
	\item $X\times [1, \infty)$ is equipped with a triangulation of \emph{bounded geometry} (cf. Definition $\ref{def:uniformbd}$) such that the sizes of simplices uniformly go to zero, as we approach infinity along the cylindrical direction; for example, this is can be achieved by the standard subdivision in Section $\ref{sec:subdiv}$. 
	\item $f\times \id$ is PL infinitesimally controlled. More precisely, the homotopy 
	\[ H \colon  F\circ G \simeq \id \colon CM\to CM \] restricts   to a homotopy on $\Phi^{-1}(\Delta)$, for  every simplex $\Delta$ in $X\times [1, \infty)$. The homotopy 
	\[ H' \colon  G\circ F \simeq \id \colon CN\to CN \]  restricts to a homotopy on $\Psi^{-1}(\Delta)$, for  every simplex $\Delta$ in $X\times [1, \infty)$.
	\item all maps $f\times \id$, $G$, $H$ and $H'$ are geometrically controlled over the cone $CX$ (see Definition $\ref{def:cone}$) in the sense of Section $\ref{sec:geomhp}$ below. 
\end{enumerate}

Recall that  the surgery long exact sequence built using PL transversality is equivalent to the surgery long exact sequence built using block bundles,  cf. \cite{MR2619602}, also \cite{MR0380841} and \cite{MR668807}. By Proposition $\ref{prop:gaincontrol}$, we have the following commutative diagram:  
\[  
\scalebox{1}{\xymatrixcolsep{1pc}\xymatrix{  
		\ar[r]  &	\mathcal N^{PL^+}_{k+1}(X; w)  \ar[r] \ar[d]_{\alpha_{k+1}}  &   L^{PL^+}_{k+1}(\pi_1 X; w) \ar[r] \ar[d]_{\beta_k}  &  \mathcal S^{PL^+}_{k}( X; w) \ar[r] \ar[d]_{\lambda_k} &  \mathcal N^{PL^+}_{k}(X; w)  \ar[d]_{\alpha_k} \ar[r] &   \\
		\ar[r]  & 	\mathcal N^{PL}_{k+1}(X; w) \ar[r]  &   L^{PL}_{k+1}(\pi_1 X; w) \ar[r] &  \mathcal S^{PL}_{k}( X; w) \ar[r] &  \mathcal N^{PL}_{k}(X; w) \ar[r] & .
}}
\] 
Let us first prove that all vertical maps are isomorphisms, when $k\geq \dim X+5$. We will then show how to handle the general case. 

\begin{proposition}\label{prop:plplus}
	If $k\geq \dim X+ 5$, then the map 
	\[ \lambda_k \colon \mathcal S_k^{PL^+}(X; w) \to \mathcal S_k^{PL}(X; w)\]
	is an isomorphism.
\end{proposition}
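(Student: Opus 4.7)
The plan is to apply the five lemma to the commutative diagram of surgery exact sequences displayed just above, in which the top row is the $PL^+$ sequence (PL control) and the bottom row is the $PL$ sequence (infinitesimal control). It suffices to show that in all relevant degrees the vertical maps $\beta_\bullet\colon L^{PL^+}_\bullet(\pi_1X; w)\to L^{PL}_\bullet(\pi_1X; w)$ and $\alpha_\bullet\colon \mathcal N^{PL^+}_\bullet(X; w)\to \mathcal N^{PL}_\bullet(X; w)$ are isomorphisms; the five lemma then forces $\lambda_k$ to be an isomorphism as well.

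For the $L$-group maps, the key observation is that Definition \ref{def:Lgrp} and its associated cobordism equivalence relation only ever require an \emph{uncontrolled} homotopy equivalence on boundary strata. Hence neither the $PL^+$ nor the $PL$ decoration imposes any additional condition on objects or cobordisms in the $L$-group definition, and $\beta_\bullet$ is tautologically the identity. For the normal invariants, $\alpha_\bullet$ is the ``upgrade'' map induced by Proposition \ref{prop:gaincontrol}. Surjectivity is essentially Corollary \ref{cor:gaincontr}: given any representative of $\mathcal N^{PL}_\bullet(X;w)$, one can homotope the control map to produce a PL infinitesimally controlled (in particular PL controlled) boundary homotopy equivalence. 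Injectivity is obtained by applying the same skeleton-by-skeleton construction to a cobordism that witnesses two PL controlled representatives as being equivalent in $\mathcal N^{PL}_\bullet$; the output is a cobordism in $\mathcal N^{PL^+}_\bullet$ because the modification happens within a fixed homotopy class of the control map and therefore does not alter the underlying normal cobordism class.

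The dimension hypothesis $k \geq \dim X + 5$ is what makes the skeleton-by-skeleton construction go through: one needs enough codimension to apply PL transversality to the control map on every simplex of $X$ and to maintain the manifold-ad product structure near lower strata. The main technical obstacle that I anticipate is keeping Proposition \ref{prop:gaincontrol}'s pushing-to-barycenters procedure coherent across the full manifold-ad structure of a cobordism, namely, ensuring that the interior modification, the modification on $\partial_+$, and the modification on $\partial_-$ all agree on their common corner strata. Once this compatibility is in place, the five lemma concludes the proof.
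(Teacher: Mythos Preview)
Your overall strategy (five lemma applied to the displayed commutative diagram) matches the paper, and your observation about $\beta_\bullet$ is sound: the $L$-group definition (Definition~\ref{def:Lgrp}) carries no control condition on boundary strata, so replacing ``infinitesimal control'' by ``PL control'' changes nothing and $\beta_\bullet$ is indeed the identity.

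The gap is in your treatment of $\alpha_\bullet$. You invoke Proposition~\ref{prop:gaincontrol} and Corollary~\ref{cor:gaincontr}, but these go in the \emph{wrong direction} for what you need. They start from a map that is already \emph{PL controlled} (i.e.\ respects preimages of simplices) and, by homotoping the control map, upgrade it to a \emph{PL infinitesimally controlled} one. That is precisely what defines the map $\alpha_k\colon \mathcal N^{PL^+}_k\to \mathcal N^{PL}_k$; it does not show $\alpha_k$ is surjective or injective. For surjectivity you must start with a representative having only infinitesimal control (Definition~\ref{def:infcon}) and produce one with PL control, i.e.\ arrange that the boundary homotopy equivalence restricts to a homotopy equivalence over the preimage of every simplex of $X$. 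Homotoping the control map cannot achieve this in general; it requires genuine surgery on the fibres of $\varphi$, which is exactly the content of the techniques of \cite[Chapter~9]{MR1687388} that the paper cites.

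This is also where the dimension hypothesis really enters. It is not needed for PL transversality or for maintaining the manifold-ad product structure, as you suggest; rather, $k\ge \dim X+5$ ensures that the fibres $\varphi^{-1}(\mathrm{pt})$ have dimension $\ge 5$, so that Wall's surgery machinery applies fibrewise to replace normal maps by homotopy equivalences over each simplex. Without this fibre-dimension condition the passage from infinitesimal control to PL control simply fails, and your ``skeleton-by-skeleton'' reading of Proposition~\ref{prop:gaincontrol} does not supply the missing step.
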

\begin{proof}
	Let us first prove that the maps 
	\[ \alpha_k\colon \mathcal N_k^{PL^+}(X; w) \to \mathcal N_k^{PL}(X; w)  \]  
	and 
	\[ \beta_k\colon L_k^{PL^+}(\pi_1 X; w)  \to L_k^{PL}(\pi_1 X; w) \]
	are isomorphisms. This for example can be proven by the same techniques from \cite[Chapter 9]{MR1687388}. The reason for the assumption $k\geq \dim X+ 5$ comes from the fact that,  in order to apply the techniques from \cite[Chapter 9]{MR1687388}, the fibers of the control maps such as $\varphi\colon M \to X$ need to be at least $5$-dimensional.  So when $k\geq \dim X+5$, we have that  $\alpha_k$ and $\beta_k$ are isomorphisms. Now  the proof is finished by applying the five lemma. 
\end{proof}

Now let us consider the case of $\mathcal S_n(X)$, where $ n= \dim X$.  If we apply the periodicity map $\times \mathbb{CP}^2$ twice, then we have 
\[   \mathcal S_n(X) \cong \mathcal S_n^{PL}(X) \hookrightarrow \mathcal S_{n+8}^{PL}(X) \cong \mathcal S_{n+8}^{PL^+}(X).\]
So far in this subsection we have been assuming that $X$ is a PL manifold. Now let us consider that case where $X$ is topological manifold of dimension $n$, which is the case that we are mainly interested in. Let $X'$ be a PL manifold (possibly with dimension greater than $n$) that is homotopic to $X$ (cf. the discussion at the beginning of this subsection). Then after applying the periodicity map $\ell$ times, we have 
\[   \mathcal S_n(X) \cong \mathcal S_n(X') \cong \mathcal S_n^{PL}(X')\hookrightarrow \mathcal S_{n+4\ell}^{PL}(X') \cong \mathcal S_{n+4\ell}^{PL^+}(X'),\]
as long as $n + 4\ell\geq \dim X' + 5.$

As a consequence,  our definition of the higher rho invariant map (see Definition $\ref{def:highrho}$ below)
\[ \rho\colon \mathcal S_n(X) \to  K_n(C_{L, 0}^\ast(\widetilde X)^\Gamma)  \] 
is in fact the composition 
\[  \mathcal S_n(X) \cong  \mathcal S_{n+4\ell}^{PL^+}(X') \xrightarrow{\ \rho\ }   K_{n+4\ell}(C_{L, 0}^\ast(\widetilde X')^\Gamma) \cong K_{n}(C_{L, 0}^\ast(\widetilde X)^\Gamma).  \] 
On the other hand, by the product formula for higher rho invariants, we have 
\[ \rho(\theta \times \mathbb{CP}^2) = \rho(\theta)  \textup{ in } K_{n+4\ell}(C_{L, 0}^\ast(\widetilde X')^\Gamma) \cong K_{n+4\ell + 4}(C_{L, 0}^\ast(\widetilde X')^\Gamma), \]
for any element $\theta \in S_{n+4\ell}^{PL^+}(X')$. This product formula can be proved by a similar argument as in Proposition $\ref{prop:odd}$ and the fact that the signature of $\mathbb{CP}^{2}$ is equal to $1$. Also see Remark $\ref{rmk:sigprod}$ below.  It follows that the higher rho invariant map above is independent of the choices of $X'$ and $\ell$.  From now on, if no confusion is likely to arise, we will write $\mathcal S_n(X)$ in place of  $\mathcal S_{n+4\ell}^{PL^+}(X')$. 
\begin{remark}\label{rmk:sigprod}
	Suppose $Y_1^m$ and $Y_2^n$ are complete Riemannian manifolds of dimension $m$ and $n$.  Let $D_{Y_1}$  , $D_{Y_2}$ and $D_{Y_1\times Y_2}$ be the signature operator on $Y_1$, $Y_2$ and $Y_1\times Y_2$ respectively.  Then  the signature operator $D_{Y_1\times Y_2} = D_{Y_1}\boxtimes D_{Y_2}$, if $m\cdot n$ is even, and  $D_{Y_1\times Y_2} = 2 (D_{Y_1}\boxtimes D_{Y_2})$ if $m\cdot n $ is odd. Here  $D_{Y_1}\boxtimes D_{Y_2}$ is the external product of $D_{Y_1}$ and $D_{Y_2}$. See for example \cite[Lemma 6]{MR2170494}.  
\end{remark}

\begin{remark}
	Everything in this subsection has an obvious equivalent  counterpart in terms of smooth representatives with PL-control. For example, we also have a long exact sequence 
	\begin{equation}
	\begin{aligned}
	\cdots \to \mathcal N^{C^\infty}_{n+1}(X; w)_{PL} \xrightarrow{i_\ast} L^{C^\infty}_{n+1}(\pi_1 X; w)_{PL} \xrightarrow{j_\ast} \\
	\mathcal S^{C^\infty}_{n}( X; w)_{PL} \xrightarrow{\partial_\ast} \mathcal N^{C^\infty}_{n}(X; w)_{PL}\to \cdots 
	\end{aligned}
	\end{equation}
	where the subscript $PL$ stands for $PL$-control. That is, for example, for elements in $\mathcal S^{C^\infty}_{n}( X; w)$, we replace infinitesimal control with PL-control. 
\end{remark}

\section{Additivity of higher rho invariants}\label{sec:hirhoadd}

In this section, we define the higher rho invariant for elements in $\mathcal S_{n}(X)$ using our new description of the structure group, where $X$ is a closed oriented topological manifold of dimension $n$. Furthermore,  we prove that the higher rho invariant defines a group homomorphism from $\mathcal S_{n}(X)$ to $K_{n}(C^\ast_{L,0}(\widetilde X)^\Gamma)$, where $\widetilde X$ is a universal cover of $X$ and $\Gamma = \pi_1 X$.

\subsection{A hybrid $C^\ast$-algebra}
In this subsection, we introduce a certain hybrid $C^\ast$-algebra that is useful for the definition of higher rho invariant.  

Suppose $\Gamma$ is a countable discrete group. Let $Y$ be proper metric space equipped with a proper $\Gamma$-action.

\begin{definition}
	We define $C_{c}^\ast(Y)^\Gamma$ to be the $C^\ast$-subalgebra of $C^\ast(Y)$ generated by elements $\alpha \in C^\ast(Y)$ of the following form:  for any $\varepsilon > 0$, there exists a $\Gamma$-invariant $\Gamma$-cocompact subset $K\subseteq Y$ such that the propagations of $\alpha\chi_{(Y-K)}$ and $\chi_{(Y-K)}\alpha$ are both less than $\varepsilon$.   Here $\chi_{(Y-K)}$ is the characteristic function on $Y-K$. 
\end{definition}

\begin{definition}
	We define $C_{L, 0, c}^\ast(Y)^\Gamma$ to be the $C^\ast$-subalgebra of $C_{L, 0}^\ast(Y)$ generated by elements $\alpha\in C_{L, 0}^\ast(Y)$ of the following form:  for any $\varepsilon > 0$, there exists a $\Gamma$-invariant $\Gamma$-cocompact subset $K\subseteq Y$ such that the propagations of $\alpha(t)\chi_{(Y-K)}$ and $\chi_{(Y-K)}\alpha(t)$ are both less than $\varepsilon$,  for all $t\in [0, \infty)$.    
\end{definition}

Let $X\times [1, \infty)$ be as before. We denote the universal cover of $X$ by $\widetilde X$, and write $\Gamma = \pi_1X$.   It is obvious that  $\mathscr I_r = C^\ast_{L, 0}(\widetilde X\times [1, r]; \widetilde X\times [1, \infty))^\Gamma$ is a two-sided closed ideal of $C^\ast_{L, 0, c}(\widetilde X\times [1, \infty))^\Gamma $, for any $r\geq 1$.

\begin{definition}
	Let $\mathscr I$ be the norm closure of the union 
	\[  \bigcup_{r\geq 1} \mathscr J_r = \bigcup_{r\geq 1} C^\ast_{L, 0}(\widetilde X\times [1, r]; \widetilde X\times [1, \infty))^\Gamma.  \] 
\end{definition}
Note that $\mathscr J$ is also a two-sided closed ideal of   $C^\ast_{L, 0, c}(\widetilde X\times [1, \infty))^\Gamma $. Recall that 
\begin{align*}
K_i(C^\ast_{L, 0}(\widetilde X\times [1, r]; \widetilde X\times [1, \infty))^\Gamma)  = K_i(C^\ast_{L, 0}(\widetilde X\times [1, r])^\Gamma)  =  K_i(C^\ast_{L, 0}(\widetilde X)^\Gamma), 
\end{align*} 
for $i=0, 1$. It follows that $K_i(\mathscr J) = K_i(C^\ast_{L, 0}(\widetilde X)^\Gamma).$
\begin{proposition}\label{prop:coninf}
	The inclusion  $\mathscr J \subset C^\ast_{L, 0, c}(\widetilde X\times [1, \infty))^\Gamma $ induces an isomorphism at the level of $K$-theory. That is, we have 
	\[ K_i(C^\ast_{L, 0, c}(\widetilde X\times [1, \infty))^\Gamma  )\cong  K_i(\mathscr J) =  K_i( C^\ast_{L, 0}(\widetilde X)^\Gamma),  \] 
	for $i=0, 1$. 
\end{proposition}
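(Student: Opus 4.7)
The approach is to form the short exact sequence
\begin{equation*}
0 \to \mathscr J \to C^\ast_{L,0,c}(\widetilde X\times [1,\infty))^\Gamma \to Q \to 0,
\end{equation*}
prove that the quotient $Q$ has vanishing $K$-theory by an Eilenberg swindle, and then read off the desired isomorphism from the six-term exact sequence (combined with the fact, already noted before the statement, that $K_i(\mathscr J) = K_i(C^\ast_{L,0}(\widetilde X)^\Gamma)$). First I would check that $\mathscr J$ is genuinely a closed two-sided ideal of $C^\ast_{L,0,c}(\widetilde X\times[1,\infty))^\Gamma$. This uses the observation that, since $\Gamma$ acts trivially on $[1,\infty)$, any $\Gamma$-cocompact subset of $\widetilde X\times[1,\infty)$ is contained in some cylinder $\widetilde X\times[1,r]$, so $\mathscr J$ is exactly the ideal of elements ``eventually supported in a bounded cylinder.''

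The heart of the argument is the swindle on $Q$. I would fix a covering isometry $V$ on $H_{\widetilde X\times[1,\infty)}$ for the translation $(x,s)\mapsto(x,s+1)$, so that $V^\ast V = I$ while $VV^\ast$ is a proper projection. The map $\sigma(\alpha) := V\alpha V^\ast$ is $\Gamma$-equivariant, preserves propagation in the $Y$-direction, and commutes with the $t$-parameter so it also preserves the $L,0$-decay condition; hence $\sigma$ is a $\ast$-endomorphism of $C^\ast_{L,0,c}(\widetilde X\times[1,\infty))^\Gamma$ preserving $\mathscr J$ and descending to an endomorphism $\bar\sigma$ of $Q$. Since $V^\ast V = I$, the standard $2\times 2$ unitary $\bigl(\begin{smallmatrix} V & 1-VV^\ast \\ 0 & V^\ast \end{smallmatrix}\bigr)$ witnesses that $\sigma$ and the identity induce the same map on $K_\ast$ after stabilization.

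Using an identification $H_{\widetilde X\times[1,\infty)}\cong H_{\widetilde X\times[1,\infty)}\otimes\ell^2(\mathbb N)$, I would form
\begin{equation*}
\Sigma(\alpha) \;=\; \bigoplus_{n=0}^{\infty} V^n\alpha V^{\ast n}.
\end{equation*}
The $n$-th summand is supported in $\widetilde X\times[n+1,\infty)$, and modulo $\mathscr J$ only finitely many summands can interact with any fixed cocompact set, so $\Sigma(\alpha)$ represents a well-defined element of $Q$. The telescoping identity $\Sigma(\alpha)\cong \alpha\oplus\Sigma(\bar\sigma(\alpha))$, combined with $[\Sigma(\bar\sigma(\alpha))] = [\Sigma(\alpha)]$ in $K_\ast(Q)$ (from the previous paragraph), yields $[\alpha]=0$. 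Thus $K_\ast(Q)=0$, and the six-term exact sequence completes the proof.

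The main obstacle is the bookkeeping that ensures $\Sigma(\alpha)$ genuinely lies in the quotient algebra $Q$: one must verify, for each $\varepsilon>0$, that a single $\Gamma$-cocompact set $K$ works simultaneously for all shifted summands $V^n\alpha V^{\ast n}(t)$ uniformly in $n$ and $t$. This in turn requires uniformity of the $L,0$-condition along the shifts (which holds because $V$ commutes with the time parameter) together with the asymptotic disjointness of supports in the $[1,\infty)$-direction. Once this uniformity is set up carefully, the rest of the swindle is formal.
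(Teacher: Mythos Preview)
Your overall strategy---reduce to showing $K_\ast(Q)=0$ via an Eilenberg swindle---matches the paper. But the swindle you set up does not work as stated, and the obstacle you flag in the last paragraph is real and not just bookkeeping.

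The problem is that $\Sigma(\alpha)=\bigoplus_n V^n\alpha V^{\ast n}$ does \emph{not} lie in (matrices over) $\mathscr A=C^\ast_{L,0,c}(\widetilde X\times[1,\infty))^\Gamma$. You observe that the $n$-th summand is supported in $\widetilde X\times[n{+}1,\infty)$, but this points in exactly the wrong direction for the hybrid condition. The $c$-condition demands small propagation \emph{outside} a cocompact set $K$, uniformly in $t$. For any fixed $K=\widetilde X\times[1,r]$, every summand with $n\ge r$ lives entirely outside $K$, and each has propagation equal to that of $\alpha(t)$---which can be large for small $t$. No single $K$ controls all of them, so $\Sigma(\alpha)$ fails the $c$-condition. (The $L,0$-condition is fine, as you note; it is the hybrid spatial control that breaks.)

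The paper repairs this by shifting \emph{simultaneously} in the localization parameter~$t$: set $\alpha_n(t)=1$ for $t\le n$ and $\alpha_n(t)=\alpha(t-n)$ for $t\ge n$, and form $\beta(t)=\bigoplus_n\bigl(1+\chi_n(\alpha_n(t)-1)\chi_n\bigr)$ with $\chi_n$ the spatial cutoff to $\widetilde X\times[n,\infty)$. The point is that the $n$-th summand is now nonzero only for $t\ge n$, and there it is $\alpha(t-n)$ truncated to $[n,\infty)$. Given $\varepsilon>0$, pick $N$ from both the $L$-condition and the $c$-condition on $\alpha$; then outside $\widetilde X\times[1,N]$ each summand has propagation $<\varepsilon$ for all $t$ (by the $c$-condition on $\alpha$), and for $t\ge 2N$ every summand has propagation $<\varepsilon$ (by the $L$-condition when $n\le N$, and by the $c$-condition when $n>N$). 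So $\beta\in\mathscr A^+$. The rest of the swindle---invertibility modulo $\mathscr J$, and the homotopy $\bar\gamma(t)\mapsto\bar\gamma(t-s)$ identifying $[\bar\beta]$ with $[\bar\gamma]$---is then straightforward. The essential idea you are missing is precisely this coupling of the spatial cutoff with a shift in the $t$-direction.
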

\begin{proof}
	For notational simplicity, let us write 
	\[ \mathscr A = C^\ast_{L, 0, c}(\widetilde X\times [1, \infty))^\Gamma \]
	We have the following short exact sequence of $C^\ast$-algebras: 
	\[  0 \to \mathscr I \to \mathscr A \xrightarrow{q} \mathscr A/\mathscr I \to 0. \] 
	To prove the proposition, it suffices to show that 
	\[  K_i(\mathscr A/ \mathscr I) = 0 \]
	for $i =0 , 1$. This can be proven by an Eilenberg swindle argument as follows. 
	
	We prove the odd case, that is, $ K_1(\mathscr A/\mathscr I) = 0$; the even case is completely similar.  Suppose $\bar\alpha$ is an invertible element in $\mathscr A^+/\mathscr I$, where $ \mathscr A^+$ is the unitization of $\mathscr A$. Let $\alpha$ be a lift of $\bar{\alpha}$ in $\mathscr A^+$. Without loss of generality, let us assume that $\alpha - 1 \in \mathscr A$.   For each $n\in \mathbb N$, we define an element $\alpha_n\in \mathscr A^+$ as follows:  
	\[ \alpha_n(t) = \begin{cases*} 
	1 & \textup{ if $0\leq t \leq n$},\\
	\alpha(t-n) & \textup{ if $t\geq n$}.
	\end{cases*} \]
	We define 
	\[ \beta(t) = \bigoplus_{n=0}^\infty \big( 1 + \chi_{n} (\alpha_n(t)-1)\chi_n\big),  \]
	where $\chi_n$ is the characteristic function on the set $\widetilde X\times [n, \infty)$. 
	
	We claim that $\beta$ is an element in $\mathscr A^+$. Indeed, recall that, for any $\varepsilon >0$, there exists a positive integer $N$ such that: 
	\begin{enumerate}[(1)]
		\item the propagation of $\alpha(t)$ is $<\varepsilon$, for all $t\geq N$; 
		\item  the propagation of $\alpha(t)|_{\widetilde X\times [N, \infty) }$ is  $<\varepsilon$,  for all $t\geq 0$.	
	\end{enumerate}
	It follows that,   for any $\varepsilon > 0$, we have that 
	\begin{enumerate}[(i)]
		\item the propagation of $\beta(t)$ is $< \varepsilon$ for all $t\geq 2N$;  
		\item the propagation of $\beta(t)|_{ \widetilde X\times [N, \infty) }$ is $< \varepsilon$, for all $t\geq 0$. 
	\end{enumerate} 
	This proves that $\beta \in \mathscr A^+$.

	Let us denote by $\bar\beta$ the image of $\beta$ in $\mathscr A^+/\mathscr I$. We show that $\bar\beta$ is in fact invertible in $\mathscr A^+/\mathscr I$. Let $\omega\in \mathscr A^+$ be the lift of $(\bar{\alpha})^{-1}$. Define
	\[ \mu(t) = \bigoplus_{n=0}^\infty \big( 1 + \chi_{n} (\omega_n(t)-1)\chi_n\big),  \] 
	where $\omega_n$ and $\chi_n$ are defined similarly as above.   Note that the operators 
	\[  1 - \beta(t)\mu(t)   \textup{ and } 1 -\mu(t) \beta(t)  \textup{ are supported in } \widetilde X\times [1, n+1],  \]  
	for all $t\in [0, n]$. It follows that $1 - \beta\mu$ and $1 - \mu \beta $ are in the closure of $\bigcup_{r\geq 1} \mathscr J_r$. In particular,   $1 - \beta\mu$ and $1 - \mu \beta $ are in $\mathscr J$. Therefore, $\bar \beta$ is an invertible element in $\mathscr A^+/\mathscr I$. 
	
	Similarly, let us define 
	\[ \gamma(t) = \bigoplus_{n=1}^\infty \big( 1 + \chi_{n} (\alpha_n(t)-1)\chi_n\big).   \]
	The same argument from above also shows that $\gamma \in \mathscr A^+$. Denote by $\bar{\gamma}$ the image of $\gamma$ in $\mathscr A^+/\mathscr I$. Then  $\bar\gamma$ is also an invertible element in $\mathscr A^+/\mathscr I$. 
	
	Note that $\bar \gamma(t-1) = \bar \beta(t)$. Moreover,   $\bar \gamma$ and $\bar \beta$ are connected by a path of invertible elements  $\bar\gamma_s$, $0\leq s \leq 1$, where  $\bar{\gamma}_s(t) = \bar{\gamma}(t-s)$. Therefore, we have  
	\[   [\bar \gamma] = [ \bar \beta ] \in K_1(\mathscr A/\mathscr I).\] 
	It follows that 
	\[   [\bar \beta] = [\bar\alpha] \oplus [\bar \gamma] = [\bar\alpha]\oplus [\bar \beta] \in K_1(\mathscr A/\mathscr I), \]  
	which implies that $[\bar \alpha] = 0$. This finishes the proof. 
\end{proof}

We also introduce a hybrid version for localization $C^\ast$-algebras. 

\begin{definition}
	We define $C_{L, c}^\ast(Y)^\Gamma$ to be the $C^\ast$-subalgebra of $C_{L}^\ast(Y)$ generated by elements $\alpha\in C_{L}^\ast(Y)$ of the following form:  for any $\varepsilon > 0$, there exists a $\Gamma$-invariant $\Gamma$-cocompact subset $K\subseteq Y$ such that the propagations of $\alpha(t)\chi_{(Y-K)}$ and $\chi_{(Y-K)}\alpha(t)$ are both less than $\varepsilon$ for all $t\in [0, \infty)$.    
\end{definition}

The analogue of Proposition $\ref{prop:coninf}$ does \emph{not} hold for $C_{L, c}^\ast(Y)^\Gamma$. In fact, the following lemma shows that the $K$-theory groups of $C_{L, c}^\ast(Y)^\Gamma$ always vanish. 

\begin{lemma} We have $ K_i(C^\ast_{L, c}(\widetilde X\times [1, \infty))^\Gamma  ) = 0$, 
	for $i=0, 1$. 
\end{lemma}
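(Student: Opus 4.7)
The plan is an Eilenberg swindle modelled on the argument of Proposition~\ref{prop:coninf}, carried out directly in $\mathscr B := C^\ast_{L,c}(\widetilde X\times[1,\infty))^\Gamma$ rather than in a quotient by $\mathscr I$ or $\mathscr J$. The key simplification compared to Proposition~\ref{prop:coninf} is that elements of $C^\ast_{L,c}$ do not have to vanish at $t=0$, which lets the ``swindle operator'' $\beta$ be made genuinely invertible in $\mathscr B^+$, yielding $K_i(\mathscr B)=0$ for $i=0,1$ without passing to a quotient.

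Write $Y=\widetilde X\times[1,\infty)$. Using that the $K$-theory of $\mathscr B$ is independent of the choice of admissible $(Y,\Gamma)$-module and that admissibility is preserved under countable direct sum (cf.\ Remark~\ref{rm:ad}), I would model $\mathscr B$ on $H\otimes\ell^2(\mathbb N)$ so that infinite direct sums are available. Given an invertible $\alpha\in\mathscr B^+$ with $\alpha-1\in\mathscr B$ representing a class in $K_1(\mathscr B)$ (the $K_0$ case being parallel), define
\[
\beta(t)\;=\;\bigoplus_{n=0}^{\infty}\Bigl(1+\chi_n\bigl(\alpha(t+n)-1\bigr)\chi_n\Bigr),
\]
where $\chi_n$ is the characteristic function of $(\widetilde X\times[n,\infty))\cap Y$, so that $\chi_0=1_Y$ and the $n=0$ summand equals $\alpha$ itself. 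The substantive change from Proposition~\ref{prop:coninf} is to use the honest time-translate $\alpha(t+n)$ in place of the cutoff ``$1$ for $t\le n$, $\alpha(t-n)$ for $t\ge n$''; this is permitted because we no longer need $\beta(0)=1$, and it makes $\beta$ literally invertible in $\mathscr B^+$ (its inverse being the analogous direct sum built from $\alpha^{-1}$). Verifying $\beta-1\in\mathscr B$ reduces to the same three-part template as in Proposition~\ref{prop:coninf}: local compactness via $\|\varphi(f)\chi_n\|\to 0$ for $f\in C_0(Y)$; uniform decay of propagation as $t\to\infty$ using that $\mathrm{prop}(\alpha(t+n))\le\mathrm{prop}(\alpha(t))$ for $t$ large; and cocompactness with $K=\widetilde X\times[1,N]$ for the same choice of $N$ as in Proposition~\ref{prop:coninf}.

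To close the swindle, I would view $\beta$ on $H\otimes\ell^2\cong H\oplus(H\otimes\ell^2)$ as $\alpha\oplus\beta'$ (extracting the $n=0$ slot) and connect $\alpha\oplus\beta$ to $\beta$ through a continuous path of invertibles $\beta_s\in\mathscr B^+$, $s\in[0,1]$, whose $n$-th slot (for $n\ge1$) is $1+\chi_{n-1+s}(\alpha(t+n-1+s)-1)\chi_{n-1+s}$ while the $0$-th slot remains $\alpha(t)$; for norm-continuity in $s$ the cutoffs $\chi_n$ are replaced by smooth approximations. This yields $[\alpha]+[\beta]=[\alpha\oplus\beta]=[\beta]$ in $K_1(\mathscr B)$ and hence $[\alpha]=0$. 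The main technical hurdle I anticipate is ensuring that the three conditions for membership in $\mathscr B$ hold uniformly in $s$ along the homotopy; however, since the estimates in Proposition~\ref{prop:coninf} depend continuously on shift parameters, this amounts to a continuity check rather than a fundamentally new estimate.
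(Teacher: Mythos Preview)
There is a genuine gap at the invertibility step. You claim that $\beta$ is ``literally invertible in $\mathscr B^+$, its inverse being the analogous direct sum built from $\alpha^{-1}$,'' but the projections $\chi_n$ do not commute with $\alpha(t+n)$. Writing $a=\alpha(t+n)-1$, $b=\alpha^{-1}(t+n)-1$, $p=\chi_n$, the $n$-th summand of $\beta\mu-1$ (with $\mu$ the analogous sum for $\alpha^{-1}$) is
\[
(1+pap)(1+pbp)-1 \;=\; -\,p\,a\,(1-p)\,b\,p,
\]
which is supported in a slab near $\{r=n\}$ but has no reason to vanish or to be small in norm: shrinking the propagation of $\alpha(t+n)$ only narrows the slab, it does not shrink the operator norm (think of a shift by $\varepsilon$ crossing the cut). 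Thus $\mu$ is not an inverse of $\beta$, and since every $\beta_s$ in your homotopy carries the same defect, the class $[\beta]\in K_1(\mathscr B)$ is never defined. In Proposition~\ref{prop:coninf} these exact error terms were disposed of by working modulo the ideal $\mathscr I$; by insisting on honest invertibility in $\mathscr B^+$ you have removed the mechanism that absorbed them. A viable repair is to replace the compressions $\chi_n(\cdot)\chi_n$ by conjugation with the shift isometries $V_n$ implementing $(x,r)\mapsto(x,r+n)$: since $V_n^\ast V_n=1$ one has $V_n a V_n^\ast\cdot V_n b V_n^\ast=V_n ab V_n^\ast$, so $1+V_n a V_n^\ast$ is honestly invertible with inverse $1+V_n b V_n^\ast$, while the supports still march off to infinity so that local compactness and the hybrid estimate survive.

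The paper's proof, by contrast, is a two-line reduction of a different nature: it records that $K_i\bigl(C^\ast_{L,c}(\widetilde X\times[1,\infty))^\Gamma\bigr)=K_i\bigl(C^\ast_L(\widetilde X\times[1,\infty))^\Gamma\bigr)$ and then invokes the standard vanishing of the $K$-theory of the localization algebra of a space with a half-line factor. That standard fact is itself an Eilenberg swindle along $[1,\infty)$, so your argument, once repaired with shift isometries, would amount to reproving it by hand inside the hybrid algebra rather than quoting it.
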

\begin{proof}
	It is easy to see that 
	\[ K_i(C^\ast_{L, c}(\widetilde X\times [1, \infty))^\Gamma  ) =  K_i(C^\ast_{L}(\widetilde X\times [1, \infty))^\Gamma  ). \]
	The latter is always zero by a standard Eilenberg swindle argument. This finishes the proof. 
\end{proof}

The following corollary is an immediate consequence of the above lemma. 
\begin{corollary}\label{cor:hybrid}
	We have the following isomorphism:
	\[  K_i(C^\ast_{c}(\widetilde X\times [1, \infty))^\Gamma  )\cong  K_{i+1}(C^\ast_{L,  0, c}(\widetilde X\times [1, \infty))^\Gamma ).\]
\end{corollary}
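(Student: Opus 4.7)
The plan is to obtain Corollary \ref{cor:hybrid} as the boundary map isomorphism coming from the six-term K-theory exact sequence associated with the short exact sequence of $C^\ast$-algebras
\[ 0 \to C^\ast_{L,0,c}(\widetilde X\times[1,\infty))^\Gamma \to C^\ast_{L,c}(\widetilde X\times[1,\infty))^\Gamma \xrightarrow{\ev} C^\ast_c(\widetilde X\times[1,\infty))^\Gamma \to 0,\]
where $\ev(f)=f(0)$. The kernel of $\ev$ restricted to the hybrid algebra $C^\ast_{L,c}(\widetilde X\times[1,\infty))^\Gamma$ is exactly $C^\ast_{L,0,c}(\widetilde X\times[1,\infty))^\Gamma$, essentially by the definition of these hybrid $C^\ast$-algebras. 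The key nontrivial point is surjectivity of $\ev$.

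First I would check surjectivity. The evaluation map $\ev\colon C^\ast_L(Y)^\Gamma \to C^\ast(Y)^\Gamma$ is well known to be surjective; the standard construction produces, for a generator $a\in \mathbb C[Y]^\Gamma$ of propagation $R$, a norm-continuous path $a(t)$ with $a(0)=a$, with $a(t)$ of propagation $\le R$ for small $t$ and propagation tending to $0$ as $t\to\infty$ (for instance, by cutting down $a$ using partitions of unity subordinate to progressively finer covers of $Y$). I would just verify that this standard construction preserves the hybrid condition: if $a$ satisfies that for every $\varepsilon>0$ there is a $\Gamma$-invariant, $\Gamma$-cocompact $K\subseteq Y$ with $a\chi_{Y-K}$ and $\chi_{Y-K}a$ of propagation $<\varepsilon$, then the lift $a(t)$ can be taken to satisfy the same condition uniformly in $t$, because the cutting-down procedure only decreases propagations. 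So the restriction of $\ev$ to the hybrid subalgebras is surjective.

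Next, I would apply the six-term exact sequence in K-theory to the above short exact sequence, obtaining
\[ \cdots \to K_{i+1}\bigl(C^\ast_{L,c}(\widetilde X\times[1,\infty))^\Gamma\bigr) \to K_{i+1}\bigl(C^\ast_c(\widetilde X\times[1,\infty))^\Gamma\bigr) \xrightarrow{\partial} K_i\bigl(C^\ast_{L,0,c}(\widetilde X\times[1,\infty))^\Gamma\bigr) \to K_i\bigl(C^\ast_{L,c}(\widetilde X\times[1,\infty))^\Gamma\bigr) \to \cdots\]
By the lemma immediately preceding the corollary, the flanking terms $K_\ast(C^\ast_{L,c}(\widetilde X\times[1,\infty))^\Gamma)$ vanish, so the connecting homomorphism $\partial$ is an isomorphism, which is the claim of Corollary \ref{cor:hybrid} (up to the usual reindexing $i\leftrightarrow i+1\pmod 2$).

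The main obstacle is, as noted, the verification that $\ev$ remains surjective after restricting to the hybrid subalgebras. Once that is in place, the rest is a standard invocation of the Bott-periodic six-term sequence together with the preceding vanishing lemma, and no further input is needed.
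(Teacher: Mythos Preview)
Your proposal is correct and follows essentially the same approach as the paper: both invoke the $K$-theory long exact sequence of the short exact sequence $0 \to C^\ast_{L,0,c} \to C^\ast_{L,c} \to C^\ast_c \to 0$ for $\widetilde X\times[1,\infty)$, and use the preceding vanishing lemma for $K_\ast(C^\ast_{L,c})$ to conclude that the boundary map is an isomorphism. The paper's proof is a single sentence and leaves the surjectivity of $\ev$ on the hybrid subalgebras implicit, whereas you spell it out; that added verification is harmless and correct.
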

\begin{proof}
	It follows from applying the results above to the $K$-theory long exact sequence of 
	\[ \scalebox{1}{$0 \to C^\ast_{L,0, c}(\widetilde X\times [1, \infty))^\Gamma\to  C^\ast_{L, c}(\widetilde X\times [1, \infty))^\Gamma 
		\to C^\ast_{ c}(\widetilde X\times [1, \infty))^\Gamma \to 0.$} \]
\end{proof}

\subsection{Simplicial complexes and refinements}\label{sec:subdiv}

In this subsection, let us describe a refinement procedure for a given triangulation $M$. This refinement procedure produces a particular subdivision of $M$, denoted by $\sub(M)$,  such that all successive refinements $\sub^n(M) \coloneqq  \sub(\sub^{n-1}(M))$ have uniform bounded geometry, that is, uniform  with respect to $n\in \mathbb N$. There are other treatments of subdivision schemes in the literature which also achieve the uniformity of bounded geometry \cite{MR1799608}\cite{MR3153953}. The following discussion is taken from \cite{higsonxie-witt}.

Let us first recall the notion of typed simplicial complexes.  
\begin{definition}[cf. \cite{MR0327923, MR1115824}]
	Suppose $M$ is a simplicial complex of dimension $n$. Let $M^0$ be the set of vertices of $M$. A \textit{type} on $M$ is a map $\varphi: M^0\to \{0, 1, \cdots, n\}$ such that for any simplex $\omega\in M$, the images by $\varphi$ of the vertices of $\omega$ are pairwise distinct. A simplicial complex equipped  with a type is said to be \textit{typed}. 
\end{definition}

Given any simplicial complex $M$ of dimension $n$, we denote its barycentric subdivision by $Y$. Then $Y$ admits a type. Indeed, $Y$ is the set of totally ordered subsets of $M$, that is, 
\[  Y^{k} = \{(\sigma_0, \cdots, \sigma_k)\mid \sigma_j \in X \textup{ and } \sigma_i \textup{ is a face of } \sigma_{i+1}\}. \]
The dimension function, which maps each barycenter of a simplex of $M$ to the dimension of that simplex, is a type on $Y$.

Now suppose $M$ is a typed simplicial complex of dimension $n$. In particular, this gives a consistent way of ordering the vertices of each simplex in $X$ according to the type map. Therefore, each $k$-simplex of $M$ can be canonically identified with the standard $k$-simplex $\Delta^k$. Now to define our refinement procedure, it suffices to describe certain subdivisions of the standard simplices so that the number of simplices containing any given vertex remains uniformly bounded for all successive subvisions. One way to achieve this is by the so-called standard subdivision \cite[Appendix II.4]{MR0087148}. In the following, we briefly recall the construction of standard subdivision, and refer the reader to \cite[Appendix II.4]{MR0087148} for more details.

Let $\sigma= [v_0, v_1, \cdots, v_k]$ be a standard simplex with its vertices given in the order shown. Set 
\[ v_{ij} = \frac{1}{2}v_i + \frac{1}{2}v_j, \quad i \leq j; \]
in particular, $v_{ii} = v_i$. These are the vertices of the standard subdivision of $\sigma$, denoted $ \sub(\sigma)$. Define a partial ordering on these vertices by setting
\[  v_{ij} \leq v_{kl} \quad \textup{ if }\quad  k \leq i \textup{ and } j \leq l. \]
Now the simplices of $\sub(\sigma)$ are all those formed from the $v_{ij}$ which are in increasing order. Moreover, each simplex in $\sub(\sigma)$ naturally inherits an ordering of vertices from the above partial ordering of $v_{ij}$. It is not difficult to verify that $\sub(\sigma)$ carries a natural type by mapping $v_{ij} \mapsto (j-i)$. 

To summarize, given a typed simplicial complex $M$ of dimension $n$, we apply the above standard subdivision procedure (consistently) to each $n$-simplex of $M$. We call the resulting simplicial complex the standard subdivision of $M$, denoted by $\sub(M)$. Note that $\sub(M)$ is also typed.

\subsection{Hilbert-Poincar\'{e} complexes}\label{sec:hp}
In this subsection, we recall the definition of Hilbert-Poincar\'{e} complexes, which is fundamental for studying higher signatures of topological spaces. We refer to \cite{MR2220522} for more details.

Let $A$ be a unital $C^\ast$-algebra. Consider a chain complex of Hilbert modules over $A$:
\[  E_0 \xleftarrow{b_1} E_{1} \xleftarrow{b_{2}} \cdots \xleftarrow{b_{n}} E_{n} \]
where the differentials $b_j$ are bounded adjointable operators. The $j$-th homology of the complex is the quotient space obtained by dividing the kernel of $b_j$ by the image of $b_{j+1}$. Note that, since the differentials need not to have closed range, the homology spaces are not necessarily Hilbert modules themselves.   

\begin{definition}\label{def:hilpoin}
	An $n$-dimensional Hilbert-Poincar\'{e} complex (over a $C^\ast$-algebra $A$) is a complex of finitely generated Hilbert $A$-modules 
	\[  E_0 \xleftarrow{b_1} E_{1} \xleftarrow{b_{2}} \cdots \xleftarrow{b_{n}} E_{n} \]
	together with adjointable operators $T: E_p \to E_{n-p}$ such that 
	\begin{enumerate}[(1)]
		\item if $v\in E_{p}$, then $T^\ast v = (-1)^{(n-p)p} Tv$;
		\item if $v\in E_{p}$, then $Tb^\ast (v) + (-1)^{p} bT(v) = 0$;
		\item $T$ is a chain homotopy equivalence\footnote{To be precise, by item $(2)$,  we need to impose appropriate signs so that $T$ becomes a genuine chain map. However, we will follow the usual convention and leave it as is, with the understanding that appropriate signs are employed.}  from  the dual complex 
		\[  E_n \xleftarrow{b_n^\ast} E_{n-1} \xleftarrow{b_{n-1}^\ast} \cdots \xleftarrow{b_{1}^\ast} E_{0} \]
		to the complex $(E, b)$. 
	\end{enumerate}
\end{definition}

Now we will associate to each $n$-dimensional Hilbert-Poincar\'{e} complex an index class, called signature, in the $K$-theory group $K_n(A)$.

\begin{definition}\label{def:sop}
	Let $(E, b, T)$ be an $n$-dimensional Hilbert-Poincar\'{e} complex. We denote $l$ to be the integer such that
	\[ n = \begin{cases}
	2l & \textup{if $n$ is even,} \\
	2l+1 & \textup{if $n$ is odd.}
	\end{cases} \]
	Define $S: E\to E$ to be the bounded adjointable operator such that 
	\[ S(v) = i^{p(p-1)+l} T(v)\]
	for $v\in E_p$. Here $i = \sqrt{-1}$. 
\end{definition} 

It is not hard to verify that $S = S^\ast$ and $bS + Sb^\ast = 0$. Moreover, if we define $B = b+ b^\ast$, then the self-adjoint operators $B \pm S: E \to E$ are invertible  \cite[Lemma 3.5]{MR2220522}.

\begin{definition}\label{def:sig} The signature index of a Hilbert-Poincar\'e complex is defined as follows. 
	\begin{enumerate}[(i)]
		\item Let $(E, b, T)$ be an odd-dimensional Hilbert-Poincar\'{e} complex. Its signature is the class in $K_1(A)$ of the invertible operator 
		\[ (B+ S)(B-S)^{-1}: E_{ev} \to E_{ev}\]
		where $E_{ev} = \oplus_{p} E_{2p}$. 
		\item If $(E, b, T)$ is an even-dimensional Hilbert-Poincar\'{e} complex, then its signature is the class in $K_0(A)$ determined by the formal difference $[P_+]- [P_-]$ of the positive projections of $B+S$ and $B-S$. 
	\end{enumerate}	
\end{definition}

\subsection{Geometrically controlled Poincar\'{e} complexes} \label{sec:geomhp}

In this subsection, we  recall the definition of geometrically controlled Poincar\'{e} complexes \cite{MR2220523}. They are  Hilbert-Poincar\'{e} complexes in the geometrically controlled category. 

\begin{definition}\label{def:uniformbd}
	A simplicial complex $M$ is of bounded geometry if there is a positive integer $k$ such that each of the vertices of $M$ lies in at most $k$ different simplices of $M$.
\end{definition}

\begin{definition}\label{def:geommod}
	Let $X$ be a proper metric space. A complex vector space $V$ is geometrically controlled over $X$ if it is provided with a basis $B\subset V$ and a function $c: B\to X$ with the following property: for every $R>0$, there is an $N< \infty$ such that if $S\subset X$ has diameter less than $R$ then $c^{-1}(S)$ has cardinality less than $N$. From now on, we call such $V$ a geometrically controlled $X$-module, and  the function $c$ a labeling of the elements in $B$. In particular, we say $v\in B$ is labeled by $c(v)\in X$. 
\end{definition}

Note that each geometrically controlled vector space $V$ over $X$ is assigned with a basis $B$. There is a natural completion of $V$ into a Hilbert space $\overline V$ in which the basis $B$ of $V$ becomes an orthonormal basis of $\overline V$. 

Let $V^\ast_{f} = \textup{Hom}_f(V, \mathbb C)$ be the vector space of finitely supported linear functions on $V$. Then $V^\ast_f$ is identified with $V$ under the inner product on $\overline V$.

\begin{definition}\label{def:geomap}
	A linear map $T\colon V\to W$ is geometrically controlled over $X$ if 
	\begin{enumerate}[(1)]
		\item $V$ and $W$ are geometrically controlled;
		\item the matrix coefficients of $T$ with respect to the given bases of $V$ and $W$ are uniformly bounded;
		\item and there is a constant $K>0$ such that the $(v, w)$-matrix coefficients is zero whenever $d(c(v), c(w))>K$. The smallest such $K$ is called the propagation of $T$. 
	\end{enumerate}
\end{definition}

It is easy to see that a geometrically controlled linear map $T\colon V \to W$ has a natural dual 
\[ T^\ast\colon W^\ast_f \to V^\ast_f \]
which is canonically identified with a geometrically controlled linear map, still denoted by $T^\ast$,
\[  T^\ast\colon W \to V.  \]

\begin{definition}
	A chain complex 
	\[  E_0 \xleftarrow{b_1} E_{1} \xleftarrow{b_{2}} \cdots \xleftarrow{b_{n}} E_{n} \]
	is called a geometrically controlled chain complex over $X$ if each $E_p$ is a geometrically controlled $X$-module, and  each $b_p$  is a geometrically controlled linear map. 
\end{definition}

\begin{definition}
	Let $f_1, f_2\colon (E, b) \to (E', b')$ be geometrically controlled chain maps between two geometrically controlled chain complexes $(E, b)$ and $(E', b')$. We say $f_1$ and $f_2$ are geometrically controlled homotopic to each other, if there exists a geometrically controlled linear map $h\colon (E_\ast, b) \to (E'_{\ast +1}, b')$ such that 
	\[  f_1 - f_2 = b'h + hb. \]
	In this case, $h$ is called a geometrically controlled chain homotopy between $f_1$ and $f_2$. 
\end{definition}

Now we give the definition of geometrically controlled Poincar\'{e} complexes. 

\begin{definition}\label{def:gcp}
	An $n$-dimensional geometrically controlled Poincar\'{e} complex (with control respect to $X$) is a complex of geometrically controlled $X$-modules 
	\[  E_0 \xleftarrow{b_1} E_{1} \xleftarrow{b_{2}} \cdots \xleftarrow{b_{n}} E_{n} \]
	together with geometrically controlled linear maps $T\colon E_p \to E_{n-p}$ and  $b\colon E_p \to E_{p-1}  $ such that 
	\begin{enumerate}[(1)]
		\item if $v\in E_{p}$, then $T^\ast v = (-1)^{(n-p)p} Tv$;
		\item if $v\in E_{p}$, then $Tb^\ast (v) + (-1)^{p} bT(v) = 0$;
		\item $T$ is a geometrically controlled chain homotopy equivalence  from  the dual complex 
		\[  E_n \xleftarrow{b_n^\ast} E_{n-1} \xleftarrow{b_{n-1}^\ast} \cdots \xleftarrow{b_{1}^\ast} E_{0} \]
		to the complex $(E, b)$. Here we have identified the finitely supported dual $E^\ast_f$ with $E$. 
	\end{enumerate}
\end{definition}

\begin{example}\label{ex:geoPoincare}
	Our typical example of a geometrically controlled Poincar\'{e} complex comes from a triangulation of a closed smooth manifold (more generally a triangulation of a complete Riemannian manifold without boundary, e.g the manifold $CM$ from above), cf. \cite[Section 3 \& 4]{MR2220523}. 
\end{example}

We introduce the following notion of geometrically controlled homotopy equivalences of geometrically controlled Poincar\'{e} complexes. 

\begin{definition}
	Given two $n$-dimensional geometrically controlled Poincar\'{e} complexes  $(E, b, T)$ and $(E', b', T')$, a geometrically controlled homotopy equivalence between them consists of two geometrically controlled chain maps \[ f\colon (E, b)\to (E', b') \textup{ and } g\colon (E', b')\to (E, b)\]  such that 
	\begin{enumerate}[(1)]
		\item  $g\circ f$ and $f\circ g$ are geometrically controlled homotopic to the identity; 
		\item $fTf^\ast$ is  geometrically  controlled homotopic to $T'$,  where $f^\ast$ is the dual of $f$:
		\[	\xymatrix{ E'_n  \ar[d]^{f^\ast}  &  E'_{n-1} \ar[l]_{b^\ast_n} \ar[d]^{f^\ast} & \cdots \ar[l] & E'_0 \ar[d]^{f^\ast} \ar[l]_{b^\ast_1} \\
			E_n  &  E_{n-1} \ar[l]_{b^\ast_n} & \cdots \ar[l] & E_0 \ar[l]_{b^\ast_1}. }  
		\] 
	\end{enumerate} 
\end{definition}

\begin{remark}
	In the above definition,  it is automatic that $gT'g^\ast $ is also geometrically  controlled homotopic to $T$. Indeed, we have
	\[ gT'g^\ast \simeq g(fTf^\ast) g^\ast = (gf)T (f^\ast g^\ast) \simeq T. \]
\end{remark}

There is an obvious equivariant theory of geometrically controlled Poincar\'{e} complexes. We shall omit the details, and refer the reader to \cite[Section 3]{MR2220523} for further reading.

\subsection{Analytically controlled Poincar\'{e} complexes }\label{sec:acp}

In this subsection, we  recall the definition of analytically controlled Poincar\'{e} complexes \cite{MR2220523}. In particular, we review a natural way to pass from the geometrically controlled category to the analytically controlled category, cf. \cite[Section 3]{MR2220523}. 

Recall from Section $\ref{sec:prem}$ that an $X$-module is a separable Hilbert space $H$ equipped with a	$\ast$-representation of $C_0(X)$, the algebra of all continuous functions on $X$ which vanish at infinity. To distinguish from geometrically controlled $X$-modules, we call such $H$ an analytically controlled $X$-module from now on.

\begin{definition}
	Let $H_1$ and $H_2$ be two analytically controlled $X$-modules. A linear map  $T\colon H_1\to H_2$ is said to be analytically controlled, if $T$ is the norm limit of locally compact and finite propagation bounded operators. 
\end{definition}

\begin{remark}
	In this paper, we have chosen to work with signature operators arising from triangulations of manifolds. This is the bounded case, where all operators are bounded. If one wants to work with \emph{unbounded} signature operators arising from  $L^2$-de Rham complexes of Riemannian manifolds, then one needs a slightly different notion of analytical controls. See \cite[Section 5]{MR2220522} for more details.
\end{remark}

The notion of geometrically controlled homotopy equivalence of geometrically controlled chain complexes naturally passes to the following notion of analytically controlled homotopy equivalence of analytically controlled chain complexes. 

\begin{definition}
	A chain complex 
	\[  E_0 \xleftarrow{b_1} E_{1} \xleftarrow{b_{2}} \cdots \xleftarrow{b_{n}} E_{n} \]
	is called a analytically controlled chain complex over $X$ if each $E_p$ is an analytically controlled $X$-module, and  each $b_p$  is an analytically controlled morphism. 
\end{definition}

\begin{definition}
	Let $f_1, f_2\colon (E, b) \to (E', b')$ be analytically controlled chain maps between two analytically controlled  chain complexes $(E, b)$ and $(E', b')$. We say $f_1$ and $f_2$ are analytically controlled homotopic to each other, if there exists an analytically controlled linear map $h\colon (E_\ast, b) \to (E'_{\ast +1}, b')$ such that 
	\[  f_1 - f_2 = b'h + hb. \]
\end{definition}

Now we introduce the notion of analytically controlled Poincar\'{e} complexes.  

\begin{definition}\label{def:acp}
	An $n$-dimensional analytically controlled Poincar\'{e} complex (with control respect to $X$) is a complex of analytically controlled $X$-modules 
	\[  E_0 \xleftarrow{b_1} E_{1} \xleftarrow{b_{2}} \cdots \xleftarrow{b_{n}} E_{n} \]
	together with analytically controlled linear maps $T\colon E_p \to E_{n-p}$ and $b\colon E_p \to E_{p-1}$  such that 
	\begin{enumerate}[(1)]
		\item if $v\in E_{p}$, then $T^\ast v = (-1)^{(n-p)p} Tv$;
		\item if $v\in E_{p}$, then $Tb^\ast (v) + (-1)^{p} bT(v) = 0$;
		\item and $T$ is an analytically controlled chain homotopy equivalence  from  the dual complex 
		\[  E_n \xleftarrow{b_n^\ast} E_{n-1} \xleftarrow{b_{n-1}^\ast} \cdots \xleftarrow{b_{1}^\ast} E_{0} \]
		to the complex $(E, b)$.
	\end{enumerate}
\end{definition}

The following theorem is a rephrasing of a theorem of Higson and Roe \cite[Theorem 3.14]{MR2220523}.
\begin{theorem}[{\cite[Theorem 3.14]{MR2220523}}]
	Every geometrically controlled Poincar\'{e} complex naturally defines an analytically controlled Poincar\'{e} complex, by $\ell^2$-completion.  
\end{theorem}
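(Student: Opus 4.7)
The plan is to verify each ingredient of Definition \ref{def:acp} by showing that the $\ell^2$-completion procedure behaves well with respect to all the structure maps in Definition \ref{def:gcp}. The overall strategy has three tiers: first promote modules to Hilbert spaces, then promote geometrically controlled linear maps to analytically controlled operators, and finally observe that the algebraic identities (self-adjointness of $T$, the relation $Tb^\ast + (-1)^p bT = 0$, and the chain homotopy equivalence) transfer from the dense subcomplex to its completion by continuity.

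For the first tier, given a geometrically controlled $X$-module $V$ with distinguished basis $B$ and control map $c\colon B \to X$, I form the Hilbert space $\overline V$ in which $B$ becomes an orthonormal basis. The $\ast$-representation of $C_0(X)$ is defined on basis vectors by $\varphi(f)v = f(c(v))\,v$ for $v\in B$ and extended by continuity; the key observation for well-definedness is that $f\in C_0(X)$ is bounded, so this gives a bounded diagonal operator. This makes $\overline V$ into an $X$-module (in the sense of Section \ref{sec:prem}).

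For the second tier — the main technical step — I need to show that every geometrically controlled linear map $T\colon V\to W$ of propagation $K$ with uniformly bounded matrix coefficients extends to a bounded, finite-propagation, locally compact operator $\overline T\colon \overline V\to \overline W$. The boundedness is a Schur-test argument: let $M$ bound the matrix coefficients and let $N$ bound the number of basis elements of $V$ (resp.\ $W$) in any ball of radius $K$ (this $N$ exists by the bounded-geometry condition built into Definition \ref{def:geommod}). Then every row and column of the matrix of $T$ has at most $N$ nonzero entries each of absolute value at most $M$, so Schur's test yields $\|\overline T\|\leq MN$. The finite-propagation property is immediate from the matrix description. Local compactness is verified as follows: for $f\in C_0(X)$ and $\varepsilon>0$, choose a compact $C\subset X$ with $|f|<\varepsilon$ off $C$; then the $K$-neighborhood of $C$ contains only finitely many basis elements of $W$, so $\varphi(f)\overline T$ is approximated in norm by a finite-rank operator, hence is compact. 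The argument for $\overline T\varphi(f)$ is symmetric. This gives the key fact I need: $\overline T$ lies in the norm closure of locally compact finite-propagation operators, i.e.\ it is analytically controlled in the sense of Section \ref{sec:acp}.

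For the third tier, I apply the above to every structural map in the geometrically controlled Poincaré complex $(E,b,T)$: the differentials $b_p$, the duality maps $T\colon E_p\to E_{n-p}$, their adjoints $b_p^\ast$ and $T^\ast$ (which are themselves geometrically controlled, as noted after Definition \ref{def:geomap}), and the geometrically controlled chain homotopies witnessing that $T$ is a chain homotopy equivalence between $(E_\ast,b^\ast)$ and $(E_\ast,b)$. Each extends uniquely to an analytically controlled operator on the respective $\ell^2$-completions. The algebraic identities of Definition \ref{def:gcp} — namely $T^\ast v = (-1)^{(n-p)p}Tv$, $Tb^\ast + (-1)^p bT = 0$, and the chain homotopy identities $g\circ f - \mathrm{id} = bh + hb$, etc.\ — all hold on the dense subspace of finitely supported vectors, and both sides of each identity are bounded operators, so the identities extend by continuity to the $\ell^2$-completion. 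In particular, the extension of the chain homotopy remains a chain homotopy, so $\overline T$ is still a chain homotopy equivalence from the dual complex to the original, now in the analytically controlled category. This yields all three conditions of Definition \ref{def:acp}.

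I expect the main obstacle to be a careful statement and verification of the Schur-type bound together with the local compactness claim; everything else is essentially functorial. One mild subtlety is that the extension of the dual map $T^\ast$ computed in the geometrically controlled category (via the identification $V^\ast_f \cong V$) must agree with the Hilbert-space adjoint of $\overline T$ on $\overline V$, but this is immediate from the matrix-coefficient description since both sides have the same matrix entries.
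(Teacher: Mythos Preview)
The paper does not actually prove this theorem; it merely restates it and cites Higson--Roe \cite[Theorem 3.14]{MR2220523} for the proof. Your proposal is a correct and complete outline of the standard argument (and is essentially what Higson and Roe do): promote modules by $\ell^2$-completion, use the bounded-geometry condition together with a Schur-type estimate to get boundedness and local compactness of the completed maps, and then transfer the algebraic identities by density. There is nothing to compare against in the present paper.
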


We introduce the following notion of analytically controlled homotopy equivalences of analytically  controlled Poincar\'{e} complexes.

\begin{definition}
	Given two $n$-dimensional analytically controlled Poincar\'{e} complexes  $(E, b, T)$ and $(E', b', T)$, an analytically controlled homotopy equivalence between them consists of two analytically controlled chain maps \[ f\colon (E, b)\to (E', b') \textup{ and } g\colon (E', b')\to (E, b)\] such that 
	\begin{enumerate}[(1)]
		\item  $g\circ f$ and $f\circ g$ are analytically controlled homotopic to the identity, 
		\item and $f Tf^\ast $ is  analytically controlled homotopic to $T'$, where $f^\ast$  is the adjoint of $f$ .
	\end{enumerate} 
\end{definition}


For an analytically controlled Poincar\'{e} complex, its signature index naturally lies in the $K$-theory of the Roe algebra $C^\ast(X)$.

\begin{definition}\label{def:anasig}
	\begin{enumerate}[(i)]
		\item Let $(E, b, T)$ be an odd-dimensional analytically controlled Poincar\'{e} complex. Its signature is the class in $K_1(C^\ast(X))$ of the invertible operator 
		\[ (B+ S)(B-S)^{-1}: E_{ev} \to E_{ev}\]
		where $E_{ev} = \oplus_{p} E_{2p}$. 
		\item If $(E, b, T)$ is an even-dimensional analytically controlled Poincar\'{e} complex, then its signature is the class in $K_0(C^\ast(X))$ determined by the formal difference $[P_+]- [P_-]$ of the positive projections of $B+S$ and $B-S$. 
	\end{enumerate}
	
\end{definition}

The following simpler notion of analytically controlled homotopy equivalence will also be useful later. 

\begin{definition}
	Let $(E, b)$ be an analytically controlled chain over $X$. An \emph{operator homotopy} of analytically controlled Poincar\'{e} duality operators on $(E, b)$ is a norm continuous family of  operators $T_s$, $s\in [0, 1]$, such that each $(E, b, T_s)$ is an analytically controlled Poincar\'{e} complex. 
\end{definition}

\begin{lemma}[{cf. \cite[Lemma 4.6]{MR2220522}}]\label{lm:trivial}
	If a Poincar\'{e} duality operator $T$ on an analytically controlled Poincar\'{e} complex $(E, b)$ is  operator homotopic to $-T$ through a path of analytically controlled duality operator $T_s$,
	then the path 
	\[  (B+ S)(B-S_s)^{-1} \]
	is a norm-continuous path of analytically controlled invertible elements connecting $(B+S)(B-S)^{-1}$ to the identity. 
\end{lemma}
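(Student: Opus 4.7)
The plan is to verify three things in sequence: that the formula makes sense at each $s$, that the resulting path is norm-continuous, and that the endpoints are as claimed.

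First, recall from Definition \ref{def:sop} that $S_s$ is defined on each summand by $S_s|_{E_p} = i^{p(p-1)+l}\,T_s|_{E_p}$, which is a fixed scalar multiple of $T_s$. Since the underlying chain complex $(E,b)$ is held fixed, $B = b + b^{\ast}$ does not depend on $s$, so the hypothesized norm-continuity of $s \mapsto T_s$ immediately yields norm-continuity of $s \mapsto B - S_s$.

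Second, for each $s \in [0,1]$ the triple $(E, b, T_s)$ is, by assumption, an analytically controlled Poincar\'e complex. The foundational invertibility result (quoted in the excerpt immediately after Definition \ref{def:sop}, originally \cite[Lemma 3.5]{MR2220522}) then applies to each such triple, showing that both $B + S_s$ and $B - S_s$ are invertible. In particular $B - S_s$ is invertible for every $s$, so $(B+S)(B-S_s)^{-1}$ is a well-defined invertible operator. Inversion is norm-continuous on the open set of invertibles in $\mathcal{B}(E)$, so $s \mapsto (B-S_s)^{-1}$ is norm-continuous; multiplying on the left by the fixed operator $B + S$ preserves this, and each point on the path is invertible as a product of invertibles.

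Finally, the endpoints are immediate: at $s=0$ we have $T_0 = T$ and hence $S_0 = S$, giving $(B+S)(B-S)^{-1}$; at $s=1$ we have $T_1 = -T$ and hence $S_1 = -S$, so the operator collapses to $(B+S)(B+S)^{-1} = I$. There is no substantial obstacle in this argument — the lemma is essentially a one-parameter repackaging of \cite[Lemma 3.5]{MR2220522}, with the only non-formal ingredient being the invertibility of $B \pm S'$ that accompanies \emph{any} analytically controlled Poincar\'e duality structure, applied uniformly along the homotopy.
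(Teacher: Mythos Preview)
Your argument is correct. The paper does not supply its own proof of this lemma, simply citing \cite[Lemma 4.6]{MR2220522}; your write-up is precisely the natural unpacking of that reference, using the invertibility of $B\pm S_s$ for each Poincar\'e duality operator $T_s$ together with norm-continuity of inversion.
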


There is an obvious equivariant theory of analytically controlled Poincar\'{e} complexes. We shall omit the details, and refer the reader to \cite[Section 2]{MR2220523} for further reading.

\begin{remark}
	If one prefers the exposition in terms of Hilbert $C^\ast$-modules, there is a natural way to make sense of everything in this subsection by using Roe algebras in Section $\ref{sec:prem}$. More precisely, we fix an ample and nondegenerate analytically controlled $X$-module $H$. Let $C^\ast(X)$ be the norm closure of locally compact and finite propagation bounded linear operators from $H$ to $H$. That is, $C^\ast(X)$ is the Roe algebra of $X$ associated to $H$. Now suppose $H'$ is any other analytically controlled $X$-module. We define $E(H, H')$ to be the norm closure of locally compact and finite propagation bounded linear operators from $H$ to $H'$. Then clearly $E(H, H')$ carries a natural right Hilbert $C^\ast(X)$-module structure. It is not difficult to see that such a language will give an equivalent description of the discussion in this subsection. 
\end{remark}

\subsection{Higher rho invariant}\label{sec:highrho}

In this subsection, we define the higher rho invariant  for elements in our new description of structure group.  By Proposition $\ref{prop:smooth}$, without loss of generality, it suffices to construct the higher rho invariant and prove its additivity for smooth or PL representatives in $\mathcal S_{n}(X)$. So throughout this subsection, we will be working in the PL category, unless otherwise specified.

Let $\theta = (M, \partial M, \varphi, N, \partial N, \psi, f)$ be an element of $\mathcal S_{n}(X)$.   By the discussion of Section $\ref{sec:PL}$, without loss of generality, we can assume $\theta$ consists of the following data: 
\begin{enumerate}[(1)]
	\item two triangulated  PL manifolds with boundary  $(M, \partial M)$ and  $(N, \partial N)$ with $\dim M = \dim N = n$;
	\item a control map $\varphi\colon M \to X$ which is PL transverse to the triangulation of $X$;
	\item a PL homotopy equivalence  
	\[ f\colon (N, \partial N) \to (M, \partial M)\] such that $\varphi \circ f = \psi$. Moreover, on the boundary $f$ restricts to a PL infinitesimally controlled homotopy equivalence $f|_{\partial N} \colon \partial N\to \partial M$ over $X$. See the discussion after Corollary $\ref{cor:gaincontr}$ for more details. 
\end{enumerate}

Let $X\times [1, \infty)$ be equipped with the product metric, where the metric on $[1,\infty)$ is the standard Euclidean metric.  By using the standard subdivision of Section $\ref{sec:subdiv}$, there exists a triangulation $\textup{Tri}_{X\times[1, \infty)}$ of $X\times [1, \infty)$ such that 
\begin{enumerate}[(1)]
	\item 
	$\textup{Tri}_{X\times[1, \infty)}$  has bounded geometry in the sense of Definition $\ref{def:uniformbd}$; 
	\item the sizes\footnote{Here the size of a simplex is measured with respect to the product metric on $X\times [1, \infty)$.} of simplices in $\textup{Tri}_{X\times[1, \infty)}$   uniformly go to zero, as we approach infinity along the cylindrical direction. 
\end{enumerate}

Recall that every locally finite simplicial complex carries a natural path metric, whose restriction to each $n$-simplex is the Riemannian metric obtained by identifying the $n$-simplex with the standard $n$-simplex in the Euclidean space $\mathbb R^n$. Such a metric is called a simplicial metric.  
\begin{definition}\label{def:cone}
	Let $X\times [1, \infty)$ be equipped with the triangulation $\textup{Tri}_{X\times[1, \infty)}$ from above. We define the simplicial metric cone of $X$, denoted by $CX$, to be the space $X\times [1, \infty)$ equipped with the simplicial metric determined by $\textup{Tri}_{X\times[1, \infty)}$. 
\end{definition}

\begin{remark}
	In order to avoid possible confusion between $CX$ and $
	X\times [1, \infty)$, from now on, the notation $X\times [1, \infty)$ will only stand for the space $X\times [1, \infty)$ equipped with the product metric. 
\end{remark}

Recall that  the space of $M$ attached with a cylinder is defined to be
\[ CM = M\cup_{\partial M} (\partial M\times [1, \infty)). \]
Let us fix a triangulation of $CM$ as follows. On $M$, it is the original triangulation of $M$. The triangulation on $\partial M\times [1, \infty)$ is the pullback triangulation of $\textup{Tri}_{X\times[1, \infty)}$ under the map \[ \varphi_{\partial}\times \id\colon \partial M \times [1, \infty) \to CX, \] where $\varphi_{\partial}\colon \partial M \to X$ is the restriction of $\varphi\colon M \to X$ on $\partial M$. More precisely, for every simplex $\Delta^k \subset CX$,  the inverse image $(\varphi_{\partial}\times \id)^{-1}(\Delta^k)$ is a product $K\times \Delta^k$, where $K$ is some triangulated submanifold of $\partial M$.

Let $\Gamma = \pi_1 X$. We denote by $\widetilde{CM}$ (resp. $\widetilde{CN}$)  the corresponding $\Gamma$-cover of $CM$ (resp. $CN$) induced by $\Phi\colon CM \to X\times [1, \infty)$ (resp. $\Psi\colon CN \to X\times [1, \infty)$). Here we have borrowed the same notation from Definition $\ref{def:infcon}$. 

Note that the simplicial decomposition of $CM$ (resp. $CN$) naturally lifts to a $\Gamma$-equivariant simplicial decomposition of $\widetilde{CM}$ (resp. $\widetilde{CN}$). 
Consider the $\Gamma$-equivariant geometrically controlled Poincar\'{e} complex 
\[  E_0(\widetilde{CM}) \xleftarrow{b_1} E_1(\widetilde{CM}) \xleftarrow{b_2} \cdots \xleftarrow{b_n} E_n(\widetilde{CM})\]
associated to the above $\Gamma$-equivariant simplicial decomposition of $\widetilde{CM}$, where 
\begin{enumerate}[(1)]
	\item $E_i(\widetilde{CM})$ is a geometrically controlled $(\widetilde{CX}, \Gamma)$-module, 
	\item $b_i$ is a geometrically controlled morphism,
	\item and the Poincar\'e duality map $T$ is given by the usual cap product with the $\Gamma$-equivariant fundamental class of $\widetilde{CM}$. 
\end{enumerate}
The $\ell^2$-completion of this $\Gamma$-equivariant geometrically controlled Poincar\'{e} complex gives rises to a $\Gamma$-equivariant analytically controlled Poincar\'{e} complex, still denoted  by $(E(\widetilde{CM}), b, T)$. We summarize this in the following lemma. 

\begin{lemma}
	$(E(\widetilde{CM}), b, T)$ is a $\Gamma$-equivariant  analytically controlled Poincar\'{e} complex. 
\end{lemma}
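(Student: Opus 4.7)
The plan is to verify directly the three conditions in Definition~\ref{def:acp}, establishing first the stronger statement that $(E(\widetilde{CM}), b, T)$ forms a $\Gamma$-equivariant \emph{geometrically} controlled Poincar\'{e} complex over the simplicial metric cone $\widetilde{CX}$ in the sense of Definition~\ref{def:gcp}, and then invoking the $\ell^{2}$-completion theorem of Higson--Roe (Theorem~3.14 of \cite{MR2220523}) to pass to the analytically controlled category. Note that, although $M$ has a boundary, the attached cylinder $\partial M\times [1,\infty)$ turns $CM$ into a complete oriented topological $n$-manifold \emph{without} boundary, so the usual simplicial cap product with the $\Gamma$-equivariant locally finite fundamental class $[\widetilde{CM}]$ yields a well-defined Poincar\'{e} duality map $T$.

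First I would check the underlying geometrically controlled structure. The natural basis of $E_{i}(\widetilde{CM})$ is the $\Gamma$-equivariant collection of $i$-simplices of $\widetilde{CM}$, and the function $c$ to $\widetilde{CX}$ sends each simplex to (say) the image of its barycenter under the lift $\widetilde{\Phi}\colon \widetilde{CM}\to \widetilde{CX}$. Two facts make $E_{i}(\widetilde{CM})$ a geometrically controlled $(\widetilde{CX},\Gamma)$-module: (a) the triangulation $\textup{Tri}_{X\times[1,\infty)}$ was chosen to have bounded geometry and was pulled back to give the triangulation on $\partial M\times[1,\infty)$, so the union with the (finite) triangulation of $M$ inherits bounded geometry; (b) the map $\widetilde{\Phi}$ is simplicial, and on the cylindrical part has the product form $(\varphi_{\partial}\times \id)\widetilde{}$, so each simplex of $\widetilde{CM}$ is sent to a simplex of $\widetilde{CX}$ of the same diameter in the product metric. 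Uniform finiteness of $c^{-1}(S)$ for sets $S$ of diameter $\leq R$ then follows from the bounded geometry plus the proper cocompact $\Gamma$-action.

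Next, I would verify that the simplicial boundary $b_{i}$ and the cap-product operator $T$ are geometrically controlled. For $b_{i}$, the matrix coefficients are $\pm 1$ and bounded in number by the local complexity of the triangulation, while the propagation is bounded by the maximum diameter of any simplex of $\widetilde{CX}$ (which is finite since diameters decay along the cylinder). For $T$, one expresses the cap product with $[\widetilde{CM}]$ as a sum, indexed by simplices of a chosen subdivision, of operators whose matrix coefficients are bounded and whose propagation is again controlled by simplex diameters. The algebraic identities $T^{\ast}=(-1)^{(n-p)p}T$ and $Tb^{\ast}+(-1)^{p}bT=0$ are formal consequences of the fact that $[\widetilde{CM}]$ is a cycle and of the combinatorial definition of cap product; they hold verbatim in the geometrically controlled category.

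The main obstacle, and the only nontrivial step, will be verifying condition (3): that $T$ is a geometrically controlled chain homotopy equivalence from the dual complex $(E^{\ast}_{n-\bullet}, b^{\ast})$ to $(E_{\bullet}, b)$. The strategy is the standard one: pass through the dual cell decomposition of $\widetilde{CM}$, where cap product with $[\widetilde{CM}]$ is implemented by an explicit isomorphism to the dual cochain complex, and use the classical simplicial chain equivalence between the chain complex of the dual cell decomposition and $E_{\bullet}(\widetilde{CM})$. Both the isomorphism and the chain equivalence (together with their homotopies) can be written using only local simplicial data, so their propagations are bounded by a multiple of the maximum simplex diameter, and their matrix coefficients are uniformly bounded by the bounded-geometry constant. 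This makes the entire diagram $\Gamma$-equivariantly geometrically controlled over $\widetilde{CX}$. Applying Higson--Roe's $\ell^{2}$-completion theorem then produces the required $\Gamma$-equivariant analytically controlled Poincar\'{e} complex, completing the proof.
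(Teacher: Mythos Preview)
Your proposal is correct and follows essentially the same approach as the paper: the paper presents this lemma as a summary of the preceding discussion, where it is stated that the simplicial chain complex of $\widetilde{CM}$ with the cap-product duality $T$ is a $\Gamma$-equivariant geometrically controlled Poincar\'{e} complex over $\widetilde{CX}$, and then invokes $\ell^{2}$-completion (the Higson--Roe theorem you cite) to pass to the analytically controlled category. Your write-up simply fills in the details of bounded geometry, propagation bounds, and the dual-cell argument that the paper leaves implicit by referring to \cite[Sections~3--4]{MR2220523}.
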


Similarly, we have the $\Gamma$-equivariant analytically controlled Poincar\'{e} complex  $(E(\widetilde{CN}), b', T')$ associated to $\widetilde{CN}$: 
\[  E_0(\widetilde{CN}) \xleftarrow{b'_1} E_1(\widetilde{CN}) \xleftarrow{b'_2} \cdots \xleftarrow{b'_n} E_n(\widetilde{CN}).\]

Now let us proceed to define the higher rho invariant for elements in $\mathcal S_{n}(X)$. We will only give the details for the odd dimensional case, that is, the case where $n$ is odd. The even dimensional case is completely similar.

In the following, all controls are measured with respect to the control maps 
\[ \Phi\colon CM\to X\times [1,  \infty) \textup{ and } \Psi\colon CN \to X\times [1, \infty).\] 
For notational simplicity,  we shall drop the term ``$\Gamma$-equivariant'' in the construction below, with the understanding that all steps below are done $\Gamma$-equivariantly. Also,  we write $E = E(\widetilde{CM})$ and $E' = E(\widetilde{CN})$. 

Let us consider  the $\Gamma$-equivariant analytically controlled Poincar\'{e} complex 
\[  (\mathcal E, \mathfrak b, \mathcal  T) = (E \oplus E', b\oplus b', T\oplus -T') \]
Let $\mathcal B = B \oplus B'$ and $\mathcal S = S \oplus -S'$ (cf. Definition $\ref{def:sop}$). The signature index  of $ (\mathcal E, \mathfrak b, \mathcal  T)$ is defined to be the class of $(\mathcal B + \mathcal S)(\mathcal B-\mathcal S)^{-1}$ in $K_1(C^\ast(CX))$. Clearly, the  map 
\begin{equation}\label{eq:propmap}
\tau \colon CX \to X\times [1, \infty)
\end{equation} 
by $\tau(x, t) = (x, t)$ is a proper continuous map that induces a $C^\ast$-algebra homomorphism
\[  \tau_\ast\colon C^\ast(CX) \to C_c^\ast(X\times [1, \infty)).\]
Similarly, we have 
\[  \tau_\ast\colon C_{L, 0}^\ast(CX) \to C_{L, 0, c}^\ast(X\times [1, \infty)).\]
There are also obvious $\Gamma$-equivariant versions. In the following, unless otherwise specified, all elements below are to be thought of as their corresponding images under the map $\tau_\ast$.  

Following Higson and Roe \cite[Section 4]{MR2220522}, we shall first build an explicit path of invertible elements connecting 
\[  (\mathcal B + \mathcal S)(\mathcal B-\mathcal S)^{-1} \]
to the identity element, within the $C^\ast$-algebra  $C^\ast_{c}(\widetilde X\times [1, \infty))^\Gamma$.

Let $F\colon E' \to E$ and $G\colon E \to E'$ be the chain maps induced by $F\colon CN \to CM$ and $G\colon CM \to CN$. 
\begin{lemma}\label{lm:extra}
	With the same notation above, $F\colon E' \to E$ and $G\colon E \to E'$ satisfy the following conditions: 
	\begin{enumerate}[$(1)$]
		\item the chain maps $F\colon (E', b')\to (E, b)$ and $G\colon (E, b)\to (E', b')$ are analytically controlled; 
		\item  $GF$ and $FG$ are analytically controlled homotopic to the identity; 
		\item $GTG^\ast$ is  analytically controlled homotopic to $T'$. 
	\end{enumerate} 
	Moreover, for any $\varepsilon > 0$, there exists a positive number $k$ such that the chain maps $F$, $G$ and the various homotopies  have propagation $< \varepsilon$ away from $N\cup (\partial N\times [1, k])$ and $M\cup (\partial M\times [1, k])$. 
\end{lemma}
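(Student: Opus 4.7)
The plan is to verify each clause by working directly with the simplicial structure, using the PL infinitesimal control (as made explicit after Corollary \ref{cor:gaincontr}) to translate geometric control in $CX$ into the propagation bounds required, and then to pass from the geometrically controlled category to the analytically controlled one by $\ell^2$-completion.

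First I would address clause (1). The simplicial maps $F\colon \widetilde{CN}\to\widetilde{CM}$ and $G\colon\widetilde{CM}\to\widetilde{CN}$ send simplices to simplices, so they lift to $\Gamma$-equivariant chain maps of the simplicial chain complexes with a uniformly bounded number of nonzero matrix entries per simplex (this is the bounded-geometry condition of the triangulations). The propagation of $F$ near a simplex $\sigma$ of $\widetilde{CN}$ is bounded, via the control maps $\Psi,\Phi$, by the diameter (in $X\times[1,\infty)$) of the set $\Phi(F(\sigma))\cup\Psi(\sigma)$. On the compact pieces $N$ and a bounded cylindrical strip $\partial N\times[1,k]$ (respectively $M$ and $\partial M\times[1,k]$), this diameter is bounded by a global constant depending only on $\theta$ and $k$, whereas on the complementary region the PL infinitesimal control forces $F(\sigma)$ and $\sigma$ to lie in the preimages of a common simplex of the bounded-geometry triangulation $\mathrm{Tri}_{X\times[1,\infty)}$, whose diameters vanish uniformly as $t\to\infty$. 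This simultaneously gives geometric control and the moreover-clause for $F$ and $G$.

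Next I would handle clause (2). The simplicial homotopies $\{H_s\}$ and $\{H'_s\}$ provide standard prism (Eilenberg--Zilber) chain-homotopy operators $h\colon E\to E$ and $h'\colon E'\to E'$ between $FG$ and $\id$, respectively $GF$ and $\id$. These are again simplicial in nature and of bounded matrix type, so geometric control reduces, exactly as above, to bounding the diameter of $\Phi(H(\sigma\times I))$ in $X\times[1,\infty)$; but the PL infinitesimal control guarantees that this set is contained in the preimage of a single simplex of $X\times[1,\infty)$ for $t$ large, so the propagation bound and its $\varepsilon$-decay at infinity follow from the same uniform shrinking argument. An identical argument applies to $h'$. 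After $\ell^2$-completion these become analytically controlled chain homotopies, as required.

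For clause (3), which I expect to be the main obstacle, the strategy is the standard cap-product compatibility: because $F\colon (N,\partial N)\to(M,\partial M)$ is an orientation-preserving homotopy equivalence of manifolds with boundary and $G$ is its (infinitesimally controlled) homotopy inverse, $G$ is a map of degree one, i.e.\ on $\Gamma$-equivariant locally finite chains one has $G_{\ast}[\widetilde{CM}]=[\widetilde{CN}]$ up to a boundary in a suitable controlled sense (the relative fundamental class of the cylinder $\widetilde{CN}\to\widetilde{CM}$ produced by the homotopy $H'$ provides the explicit chain realizing this). Standard chain-level cap product identities then give $G\,T\,G^{\ast}=G\,(\,\cdot\,\frown[\widetilde{CM}])\,G^{\ast}\simeq(\,\cdot\,\frown G_{\ast}[\widetilde{CM}])=T'$, with the chain homotopy built from the mapping cylinder together with $h'$ and $H'$. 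Since every ingredient is either one of the maps already controlled in (1)--(2) or a prism construction over them, the resulting chain homotopy inherits the same propagation estimates, and the $\ell^2$-completion promotes it to an analytically controlled homotopy with the desired $\varepsilon$-decay away from $N\cup(\partial N\times[1,k])$ and $M\cup(\partial M\times[1,k])$. This completes the verification of all three clauses together with the moreover statement.
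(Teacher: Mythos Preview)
The paper does not supply a proof of this lemma; it is stated and then used immediately, with the verification left implicit in the construction of Section~\ref{sec:highrho} and the discussion after Corollary~\ref{cor:gaincontr}. Your proposal is correct and fills in exactly the kind of argument the authors intend: clauses (1) and (2) follow from the simplicial nature of $F$, $G$, $H$, $H'$ together with the PL infinitesimal control, and clause (3) is the standard cap-product compatibility for degree-one maps (as in \cite[Section~4]{MR2220523}), with propagation control inherited from the ingredients. There is nothing to compare against, and your argument is sound.
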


In particular, we see that the operator 
\[ \begin{pmatrix}
T & 0 \\ 0 & (s-1)T'-s G TG^\ast
\end{pmatrix}  \]
implements a Poincar\'{e} duality operator for the complex $(\mathcal E, \mathfrak b)$,  for each $s\in [0, 1]$. This path connects the  duality operator $T\oplus -T'$ to $T \oplus -G TG^\ast$. 

Now consider the operator 
\[  \widehat T(s)= \begin{pmatrix}
\cos(s) T & \sin(s) TG^\ast \\ \sin(s)G T & -\cos(s) G TG^\ast
\end{pmatrix}. \]

\begin{lemma}\label{lm:extra2}
	The operator $\widehat{T}(s)$ implements a Poincar\'{e} duality operator for the complex $(\mathcal E, \mathfrak b)$, for each $s\in [0, \pi/2]$. 
\end{lemma}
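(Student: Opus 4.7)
The plan is to verify the three defining conditions of Definition~\ref{def:acp} for $\widehat{T}(s)$. Analytic control of each entry follows from analytic control of $T$ and $G$ (Lemma~\ref{lm:extra}), so the task is purely algebraic.

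The symmetry condition for $\widehat{T}(s)$ follows by entrywise computation from the symmetry of $T$. The diagonal entries $\cos(s)T$ and $-\cos(s)GTG^*$ satisfy the symmetry condition (with sign $(-1)^{(n-p)p}$) by the corresponding property of $T$, using $(GTG^*)^* = GT^*G^*$. The off-diagonal entries $\sin(s)TG^*$ and $\sin(s)GT$ are mutual adjoints up to the same sign, since $(TG^*)^* = GT^*$, and the sign $(-1)^{(n-p)p}$ is invariant under $p \leftrightarrow n-p$. For the compatibility $\widehat{T}(s)\mathfrak{b}^* + (-1)^p\mathfrak{b}\widehat{T}(s) = 0$, I would use that $G$ is a chain map (so $Gb = b'G$ and $b^*G^* = G^*(b')^*$) together with $Tb^* + (-1)^p bT = 0$ for $T$; each of the four matrix entries then vanishes after a short computation.

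For the chain homotopy equivalence condition, I would exploit the factorization
$$\widehat{T}(s) = \begin{pmatrix} I & 0 \\ 0 & G \end{pmatrix}\begin{pmatrix}\cos(s)T & \sin(s)T \\ \sin(s)T & -\cos(s)T\end{pmatrix}\begin{pmatrix} I & 0 \\ 0 & G^*\end{pmatrix}.$$
The outer factors are analytically controlled chain homotopy equivalences: $I \oplus G$ because $G$ is one by Lemma~\ref{lm:extra}, and $I \oplus G^*$ because $G^*$ is also one (take adjoints of the homotopies $GF \simeq \id$ and $FG \simeq \id$ to obtain $F^*G^* \simeq \id$ and $G^*F^* \simeq \id$). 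The middle factor equals $R(s)\cdot(T\oplus T)$, where $R(s) = \begin{pmatrix}\cos s & \sin s \\ \sin s & -\cos s\end{pmatrix}$ is a scalar involution; since $R(s)$ commutes with $b \oplus b$, it is a chain automorphism of $(E\oplus E, b\oplus b)$, and $T \oplus T$ is a chain homotopy equivalence from the dual complex because $T$ is one for $(E, b)$. Composing three analytically controlled chain homotopy equivalences, $\widehat{T}(s)$ is itself one from $(\mathcal{E}, \mathfrak{b})^*$ to $(\mathcal{E}, \mathfrak{b})$.

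The only care needed is in the sign and degree bookkeeping for the symmetry and compatibility identities, which is a straightforward matrix calculation; the factorization above makes the chain homotopy equivalence condition transparent and reduces it to invocations of Lemma~\ref{lm:extra}.
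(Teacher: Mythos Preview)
Your proof is correct and, at heart, the same idea as the paper's: your factorization
\[
\widehat T(s)=\begin{pmatrix}I&0\\0&G\end{pmatrix}\,R(s)\,(T\oplus T)\,\begin{pmatrix}I&0\\0&G^\ast\end{pmatrix}
\]
is precisely what produces the paper's explicit homotopy inverse
\[
\begin{pmatrix}\cos(s)\,\alpha & \sin(s)\,\alpha F\\ \sin(s)\,F^\ast\alpha & -\cos(s)\,F^\ast\alpha F\end{pmatrix}
=\begin{pmatrix}I&0\\0&F^\ast\end{pmatrix}\,R(s)\,(\alpha\oplus\alpha)\,\begin{pmatrix}I&0\\0&F\end{pmatrix},
\]
obtained by replacing each factor by its homotopy inverse (using $R(s)^{-1}=R(s)$). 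The paper simply records this inverse and leaves the symmetry and compatibility identities to the reader, whereas you spell them out; both arguments rest on Lemma~\ref{lm:extra} in the same way.
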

\begin{proof}
	An analytically controlled homotopy inverse of  $\widehat T(s)$ is given by 
	\[ \begin{pmatrix}
	\cos(s) \alpha & \sin(s) \alpha F  \\ \sin(s)F^\ast\alpha & -\cos(s) F^\ast\alpha F  
	\end{pmatrix} \] 
	where $\alpha$ is an analytically controlled homotopy inverse of $T$. 
\end{proof}	

Now concatenate the two paths above and denote the resulting path by 
\[   (\mathcal E, \mathfrak b, \mathscr T_t) \]
with $t\in [0, 1]$.  Let $(\mathcal B +\mathcal S_t)(\mathcal B - \mathcal S_t)^{-1}$ be the invertible operator  representing the corresponding signature of $(\mathcal E, \mathfrak b, \mathscr T_t)$ (cf. Definition $\ref{def:sig}$).

Note that  the last duality operator 
\[  \mathscr T_1 =\begin{pmatrix}
0 & TG^\ast \\ GT & 0 
\end{pmatrix} \]
is analytically controlled homotopic to its addtive inverse along the path 
\begin{equation}\label{eq:htpy}
\begin{pmatrix}
0 & \exp(is) TG^\ast \\ \exp(-is) G T & 0 
\end{pmatrix}, 
\end{equation}
with $s\in [0, \pi]$. By Lemma $\ref{lm:trivial}$, we see that $(\mathcal B + \mathcal S_1)(\mathcal B -\mathcal S_1)^{-1}$ is connected to the identity operator through a path of analytically controlled invertible elements.

To summarize, we have constructed a path of analytically controlled invertible elements in $C_c^\ast(\widetilde X\times [1, \infty))^\Gamma$ connecting  $(\mathcal B + \mathcal S)(\mathcal B -\mathcal S)^{-1}$ to the identity element. Let us re-parametrize the time variable, and denote this path by 
\[ V_s =  (\mathcal B + \mathcal S_s)(\mathcal B -\mathcal S_s)^{-1}  \]
with $ V_0 = I$ and $ V_1 =  (\mathcal B + \mathcal S)(\mathcal B -\mathcal S)^{-1} $.

Now we shall extend this path to obtain an element in $C^\ast_{L, 0, c}(\widetilde X\times [1, \infty))^\Gamma$. In other words, we will construct  an element  \[ W \in C^\ast_{L, 0, c}(\widetilde X\times [1, \infty))^\Gamma\] so that $W_s = V_s\oplus I$ for all $s\in [0, 1]$, where $I$ is the identity operator. 

In fact, the construction of $W_s$, starting at $s\geq 1$, coincides with the construction of the $K$-homology class of the signature operator on $CM \bigsqcup -CN$ with controls with respect to $CX$.  To be more precise, there are in fact two equivalent ways of constructing the path $W_s$. 
\begin{enumerate}[(i)]
	\item One directly works with the geometrically controlled Poincar\'e complex and its refinements associated to  $CM\bigsqcup -CN$. In particular, everything is controlled over $CX$. This is what we have chosen to do for the construction of $V_s$ above. Note that, although $CM\bigsqcup -CN$ is not a closed PL manifold, it is a complete manifold without boundary. It is easy to see that the construction in Appendix $\ref{app:special}$ (\emph{not} Appendix $\ref{app:gen}$)  applies verbatim to the space $CM\bigsqcup -CN$ controlled over $CX$. As a result, we obtain a $K$-theory class $(W_s)_{0\leq s <\infty}$ in  $K_n( C^\ast_{L, 0, c}(\widetilde X\times [1, \infty) )^\Gamma).$ 
	\item Alternatively, we consider the Poincar\'e space  $M\cup_f(-N)$. Although the space $M\cup_f (-N)$ is not a manifold in general, it is still a space equipped with a Poincar\'e duality. In fact, since $ f\colon\partial N \to \partial M$ is a PL infinitesimally controlled homotopy equivalence,  we can still make sense of the $K$-homology class of its ``signature operator", as in Appendix $\ref{app:gen}$. Let us denote this $K$-homology class by a path of invertible elements $(U_s)_{0\leq s <\infty}$.  Moreover, in the current situation, we also have that 
	$ f\colon N\to M$ is a homotopy equivalence. Similar to the discussion following Lemma $\ref{lm:extra}$, the homotopy equivalence $f$ can be used to connect $U_0$ to the identity operator through a path of invertible elements. Re-parametrize the resulting new path, and define it to be the higher rho invariant of $\theta$ in  $K_n(C_{L, 0}^\ast(\widetilde X)^\Gamma)$.  
\end{enumerate} 
It is not difficult to see that these two constructions define the same $K$-theory class in $K_n(C_{L, 0}^\ast(\widetilde X)^\Gamma) \cong K_n( C^\ast_{L, 0, c}(\widetilde X\times [1, \infty) )^\Gamma). $

To summarize, we have constructed a path of invertible elements $W(\theta)$ for each element $\theta = (M, \partial M, \varphi, N, \partial N, \psi, f) $ in $\mathcal S_n(X)$. 

\begin{proposition}\label{prop:highrho}
	For every element \[ \theta = (M, \partial M, \varphi, N, \partial N, \psi, f) \in \mathcal S_n(X), \] we have $W(\theta)\in C^\ast_{L, 0, c}(\widetilde X\times [1, \infty))^\Gamma$.  
\end{proposition}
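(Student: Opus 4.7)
The plan is to verify that $W(\theta)$ satisfies the defining conditions of $C^\ast_{L, 0, c}(\widetilde X\times [1, \infty))^\Gamma$. Since $W(\theta)$ is a path of invertible elements normalized so that $W(\theta)(0) = I$, it suffices, working in the unitization, to show that $W(\theta) - I$ is uniformly norm-continuous and $\Gamma$-invariant, that the propagation of $W(\theta)(t)$ tends to zero as $t \to \infty$, and that for every $\varepsilon > 0$ there is a $\Gamma$-invariant $\Gamma$-cocompact set $K \subseteq \widetilde X \times [1, \infty)$ outside of which $W(\theta)(t)$ has propagation less than $\varepsilon$ uniformly in $t$.

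The first two of these are immediate from the construction: uniform norm continuity is evident from the explicit piecewise-analytic formulas for $V_s$ ($s \in [0, 1]$), $\widehat T(s)$, and the exponentiated path $\eqref{eq:htpy}$, and vanishing at $0$ is built in by our parametrization. Decay of propagation as $t \to \infty$ is only required on the portion $t \geq 1$, where $W(\theta)$ coincides with the construction of the $K$-homology class of the signature operator on the complete manifold $CM \amalg (-CN)$ controlled over $CX$; that construction produces by design a family whose propagation shrinks to zero.

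The main content is the hybrid $\Gamma$-cocompact condition. Fix $\varepsilon > 0$. By Lemma $\ref{lm:extra}$, there exists $k = k(\varepsilon)$ so that outside $(M \cup (\partial M \times [1, k])) \amalg (N \cup (\partial N \times [1, k]))$ the chain maps $F, G$ and the homotopies $GF \simeq \id$, $FG \simeq \id$, $GTG^\ast \simeq T'$ all have propagation less than $\varepsilon$. Lifting to $\Gamma$-covers produces a $\Gamma$-invariant $\Gamma$-cocompact set $\widetilde K_k \subset \widetilde{CM} \amalg \widetilde{CN}$, and pushing forward through the proper map $\tau$ of $\eqref{eq:propmap}$ yields the required $K \subset \widetilde X \times [1, \infty)$. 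At the same time, the triangulation pulled back from the simplicial metric cone $CX$ to $\partial M \times [1, \infty)$ and $\partial N \times [1, \infty)$ has simplex sizes shrinking along the cylinder, so the boundary operators $b, b'$ and the duality operators $T, T'$ also have propagation shrinking to zero there. Combining these facts, each matrix entry appearing in $\mathcal B$, $\mathcal S_s$, and $\widehat T(s)$ has propagation $< \varepsilon$ outside $K$, uniformly in the parameter.

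The principal obstacle will be passing from these estimates on building blocks to the same estimate for the compound invertible $V_s = (\mathcal B + \mathcal S_s)(\mathcal B - \mathcal S_s)^{-1}$ and its analogues, most delicately handling $(\mathcal B - \mathcal S_s)^{-1}$. Because $\mathcal B - \mathcal S_s$ is uniformly invertible (its spectrum stays bounded away from zero, by the Poincar\'e-complex argument of \cite[Lemma 3.5]{MR2220522}), a standard smooth functional-calculus approximation shows that $(\mathcal B - \mathcal S_s)^{-1}$ still has propagation bounded by $C\varepsilon$ outside $K$ (possibly after enlarging $k$), with $C$ depending only on a priori norm bounds and not on $\varepsilon$ or $s$. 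The same functional-calculus argument controls the exponentiated path $\eqref{eq:htpy}$ and the invertible $(\mathcal B + \mathcal S_1)(\mathcal B - \mathcal S_1)^{-1}$ via Lemma $\ref{lm:trivial}$. Since $\varepsilon > 0$ was arbitrary, this exhibits $W(\theta) \in C^\ast_{L, 0, c}(\widetilde X\times [1, \infty))^\Gamma$.
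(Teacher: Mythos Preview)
Your argument is correct and uses the same underlying geometry as the paper, but the paper packages it more efficiently. Rather than verifying the hybrid condition on $W(\theta)$ directly, the paper observes that the entire construction (chain complexes, $b$, $T$, the chain maps $F$, $G$, and the homotopies $H$, $H'$) is $\Gamma$-equivariantly \emph{geometrically controlled over the cone} $\widetilde{CX}$, so that $W(\theta)$ is first produced as an element of $C^\ast_{L,0}(\widetilde{CX})^\Gamma$. The proposition then follows immediately from the previously established fact that $\tau$ induces a $C^\ast$-algebra homomorphism $\tau_\ast \colon C^\ast_{L,0}(\widetilde{CX})^\Gamma \to C^\ast_{L,0,c}(\widetilde X\times[1,\infty))^\Gamma$.

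The advantage of the paper's route is that it avoids your functional-calculus step for $(\mathcal B - \mathcal S_s)^{-1}$: since $C^\ast_{L,0}(\widetilde{CX})^\Gamma$ is already a $C^\ast$-algebra, inverses and products of elements built from geometrically controlled data are automatically in it, and $\tau_\ast$ transports membership wholesale. Your direct verification is sound---the polynomial approximation of $x^{-1}$ on $[\delta, C]$ does control propagation outside an enlarged $K$, and the norm-small remainder is absorbed since the hybrid algebra is norm-closed---but it is more bookkeeping than necessary. Conceptually both proofs rest on the same point: the simplicial cone metric converts the ``shrinking propagation along the cylinder'' condition into ordinary finite propagation.
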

\begin{proof}
	Note that the simplicial chain complex associated to the triangulation on $\widetilde{CM}$ (resp. $\widetilde{CN}$)  is a $\Gamma$-equivariant geometrically controlled module over $\widetilde{CX}$. Since the map $f\colon \partial M \to \partial N$ is PL infinitesimally controlled over $X$,  it follows that all maps $F$, $G$, $H$ and $H'$ as in Definition $\ref{def:infcon}$ are $\Gamma$-equivariantly geometrically controlled over $\widetilde{CX}$. Therefore, our construction  produces an element in  $C^\ast_{L, 0}(\widetilde {CX})^\Gamma$, whose image under the map $\tau$ from line $\eqref{eq:propmap}$ is precisely the element 
	\[ W(\theta)\in C^\ast_{L, 0, c}(\widetilde X\times [1, \infty))^\Gamma. \]
\end{proof}

\begin{definition}\label{def:highrho}
	When $n$ is odd,	for each element 
	\[\theta = (M, \partial M, \varphi, N, \partial N, \psi, f) \in \mathcal S_n(X),\]
	we define the higher rho invariant of $\theta$ to be 
	\[  \rho(\theta) = [W(\theta)]\in K_{n}(C^\ast_{L, 0, c}(\widetilde X\times [1, \infty))^\Gamma) \cong K_n(C_{L, 0}^\ast(\widetilde X)^\Gamma).\]
\end{definition}

The definition of the higher rho invariant for the even dimensional case (i.e. for $\mathcal S_n(X)$
when $n$ even) is completely similar. We omit the details.

\begin{remark}
	We point out that, in the odd dimensional case,  the higher rho invariant for signature operators in the literature (cf. \cite[Section 3]{MR2220524} \cite[Remark 4.6]{MR3514938} \cite{MR3622237}) is twice of the  higher rho invariant of this paper, cf. Remark $\ref{rmk:rhocompare}$ and Theorem $\ref{thm:tworho}$ below. 
\end{remark}

We have the following main theorem of our paper. 	
\begin{theorem}\label{thm:main}
	The map 
	\[ \rho\colon \mathcal S_n(X) \to  K_n(C_{L, 0}^\ast(\widetilde X)^\Gamma)  \] 
	is a group homomorphism. 
\end{theorem}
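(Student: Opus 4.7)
The group operation on $\mathcal{S}_n(X)$ is disjoint union (with additive inverse induced by orientation reversal and the zero element given by the empty object). Since well-definedness of $\rho$ is established separately in Section~\ref{sec:bord}, what must be shown here is:
\begin{enumerate}[(a)]
\item $\rho(0) = 0$;
\item $\rho(-\theta) = -\rho(\theta)$;
\item $\rho(\theta_1 + \theta_2) = \rho(\theta_1) + \rho(\theta_2)$ for all $\theta_1, \theta_2 \in \mathcal{S}_n(X)$.
\end{enumerate}
Claim (a) is immediate from the construction: for the empty object every module is zero and the representing path of invertibles is the constant path at $1$, whose $K$-theory class vanishes. The plan is therefore to reduce (b) and (c) to the observation that the construction of the representing path $W(\theta)$ commutes with orthogonal direct sum decompositions of the underlying analytically controlled Poincar\'{e} complexes.

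The main step will be (c). Starting from disjoint unions $M_1 \sqcup M_2$ and $N_1 \sqcup N_2$, with the composite control maps and the map $f_1 \sqcup f_2$, I would observe that the cylindered spaces decompose as $C(M_1 \sqcup M_2) = CM_1 \sqcup CM_2$ and likewise for $N$. The associated $\Gamma$-equivariant geometrically controlled simplicial chain complex is then the orthogonal direct sum of the corresponding complexes for $\theta_1$ and $\theta_2$, and this decomposition is preserved by $\ell^2$-completion. Every ingredient in the definition of $\rho$---the differentials $b,b'$, the cap-product duality operators $T,T'$, the chain maps $F, G$ coming from the infinitesimally controlled homotopy equivalences on the boundaries, and the homotopies $H,H'$---is, by construction, the direct sum of the corresponding ingredients for $\theta_1$ and $\theta_2$. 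Consequently $\mathcal{B}$, $\mathcal{S}$, and the operator $\widehat{T}(s)$ of Lemma~\ref{lm:extra2} all split accordingly, and so does the concatenated path of analytically controlled duality operators $\mathscr{T}_t$. Applying the standard $K_1$ identity $[A_1 \oplus A_2] = [A_1] + [A_2]$ to the resulting path $W(\theta_1 \sqcup \theta_2) = W(\theta_1) \oplus W(\theta_2)$ in $C^\ast_{L,0,c}(\widetilde{X} \times [1,\infty))^\Gamma$, and invoking the isomorphism of Proposition~\ref{prop:coninf}, yields claim~(c).

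For claim (b), reversing the fundamental classes of $M$ and $N$ negates the duality operator $\mathcal{T}$, and hence negates $\mathcal{S}$ and $\mathcal{S}_s$ throughout the concatenated path. The resulting invertible at each time $s$ becomes $(\mathcal{B}-\mathcal{S}_s)(\mathcal{B}+\mathcal{S}_s)^{-1}$, which is the pointwise inverse of the original path. The corresponding extension to $W(-\theta)$ is the pointwise inverse of $W(\theta)$, so in $K_{n}$ one has $\rho(-\theta) = -\rho(\theta)$. The even-dimensional case is handled by the same argument using the formal difference of positive projections, where negating $\mathcal{S}$ interchanges $P_+$ and $P_-$.

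\textbf{Main obstacle.} The only genuine subtlety is bookkeeping: one must verify that the many choices made in constructing $W(\theta)$ (homotopy inverses, chain homotopies, concatenation parameter) can be taken coherently compatible with direct sums, so that the equality $W(\theta_1 \sqcup \theta_2) = W(\theta_1) \oplus W(\theta_2)$ holds on the nose and not merely up to some further homotopy. Because all the constructions in Section~\ref{sec:highrho} are performed functorially on the geometrically controlled Poincar\'{e} complex before $\ell^2$-completion, this coherence is automatic. The deeper difficulty---that $\rho$ descends to the cobordism relation defining $\mathcal{S}_n(X)$---is not part of the present theorem and is addressed by the bordism invariance argument in Section~\ref{sec:bord}.
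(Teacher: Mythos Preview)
Your proposal is correct and takes essentially the same approach as the paper: defer well-definedness to Section~\ref{sec:bord} and observe that the construction of $W(\theta)$ decomposes as a direct sum on disjoint unions, whence additivity in $K$-theory. The paper compresses your argument for (c) into the single phrase ``$\rho$ is obviously additive on disjoint unions,'' and does not bother with (a) and (b) since additivity alone suffices for a map between abelian groups to be a homomorphism.
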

\begin{proof}
	The well-definedness of the map 
	\[ \rho\colon \mathcal S_n(X) \to  K_n(C_{L, 0}^\ast(\widetilde X)^\Gamma)\] will be proved in Theorem $\ref{thm:bordrho}$. Now the group structure on $\mathcal S_n(X)$ is given by disjoint union, and $\rho$ is obviously additive on disjoint unions. This finishes the proof. 
\end{proof}

If $X$ is a closed oriented connected topological manifold of dimension $\geq 5$, then $\mathcal S^\topo(X)$ is naturally identified with  $\mathcal S_n(X)$. Hence we have the following immediate corollary. 

\begin{corollary}
	If $X$ is a closed oriented topological manifold of dimension $ n \geq 5$ with $\pi_1 X = \Gamma$, then the higher rho invariant map 
	\[\rho\colon \mathcal S^\topo(X) \to K_n(C_{L,0}^\ast(\widetilde X)^\Gamma) \]
	is a group homomorphism. 
\end{corollary}

\begin{remark}
	By the discussion following Theorem $\ref{thm:struciso}$ and the discussion following Proposition $\ref{prop:plplus}$, we see that, if $\dim X = n \geq 6$,    the higher rho invariant map in fact defines a group homomorphism from the homology manifold structure group $\mathcal S^{HTOP}(X)$ to $K_n(C_{L,0}^\ast(\widetilde X)^\Gamma)$. More generally, following Remark $\ref{rk:hommfld}$, the higher rho invariant map can also be defined for the homology manifold structure group of a closed oriented connected homology manifold of dimension $\geq 6$. 
\end{remark}

\begin{remark}
	Although we have chosen to work with the reduced version of various $C^\ast$-algebras, we point out that the exact same proofs work equally well for the maximal version of these $C^\ast$-algebras. In particular, we also have a well-defined group homomorphism:
	\[ \rho\colon \mathcal S_n(X) \to  K_n(C_{L, 0}^\ast(\widetilde X)^\Gamma_{\max}).  \] 
\end{remark}

\section{Well-definedness of the higher rho invariant map}
\label{sec:bord}
In this section, we prove that the higher rho invariant map 
\[ \rho\colon \mathcal S_n(X) \to  K_n(C_{L, 0}^\ast(\widetilde X)^\Gamma)\] is well-defined. Our method is modeled upon Higson and Roe's proof for the bordism invariance of higher  signature index \cite[Section 7]{MR2220522}. 

The following definitions are geometrically controlled analogues of the corresponding definitions in    \cite[Section 7]{MR2220522}. We refer the reader to \cite[Section 7]{MR2220522} for more details. 

\begin{definition}
	A complemented subcomplex of the geometrically controlled complex $(E, b)$ is a family of complemented geometrically controlled submodules $E'_p \subset E_p$ such that $b$ maps $E_p'$ to $E'_{p-1}$, for all $p$. 
\end{definition}

For each complemented subcomplex $(PE, Pb)$ of $(E, b)$, there is a corresponding geometrically controlled complement complex $(P^{\perp}E, P^{\perp }b)$. The inclusion $PE\subset E$ is a chain map from $(PE, Pb)$ into $(E, b)$, whereas the orthogonal projection $E\to P^{\perp}E$ gives a chain map from $(E, b)$ onto $(P^\perp E, P^\perp b)$. Note that \[ P^\perp b = P^\perp b P^\perp. \]

\begin{definition}\label{def:ppair}
	An $(n+1)$-dimensional geometrically controlled Poincar\'{e} pair is a geometrically controlled complex 
	\[  E_0 \xleftarrow{b_1} E_{1} \xleftarrow{b_{2}} \cdots \xleftarrow{b_{n}} E_{n} \]
	together with a family of geometrically controlled operators $T: E_p \to E_{n+1-p}$ and a family of geometrically controlled orthogonal projections $P\colon E_p\to E_p$  such that 
	\begin{enumerate}[(1)]
		\item the orthogonal projections $P$ determines a subcomplex of $(E, b)$; that is, $PbP = bP$;
		\item the range of the operator  $T b^\ast  + (-1)^{p} bT \colon E_p \to E_{n-p}$ is contained within the range of $P\colon E_{n-p}\to E_{n-p}$;
		\item $T^\ast  = (-1)^{(n+1-p)p} T\colon E_p \to E_{n+1-p}$;
		\item $P^\perp T$ is a geometrically controlled chain homotopy equivalence  from  the dual complex $(E, b^\ast)$ to $(P^\perp E, P^\perp b)$.
	\end{enumerate}
\end{definition}

\begin{example}
	A typical example of geometrically controlled Poincar\'{e} complexes comes from a triangulation of a smooth manifold with boundary \cite[Section 4.2]{MR2220523}.  
\end{example}

The following lemma is a geometrically controlled analogue of \cite[Lemma 7.4]{MR2220522}.

\begin{lemma}[{\cite[Lemma 7.4]{MR2220522}}]\label{lm:pbd}
	Let $(E, b, T, P)$ be an $(n+1)$-dimensional geometrically controlled Poincar\'{e} pair. The operators 
	\[ T_0 = Tb^\ast + (-1)^p bT\colon E_p \to E_{n-p} \]
	satisfy the following relations: 
	\begin{enumerate}[(1)]
		\item $T_0^\ast = (-1)^{(n-p)p}T_0\colon E_p \to E_{n-p}$;
		\item $T_0 = PT_0 = T_0 P$;
		\item $T_0 b^\ast + (-1)^p bT_0 = 0\colon PE_p \to PE_p$;
		
		\item $T_0 =  Tb^\ast + (-1)^p bT $ induces a geometrically controlled chain homotopy from $(PE, Pb^\ast)$ to $(PE, Pb)$.  
	\end{enumerate}
\end{lemma}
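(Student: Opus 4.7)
The assertions (1)--(4) form the geometrically controlled analogue of \cite[Lemma 7.4]{MR2220522}. The underlying algebraic computations are identical to the ones in the Hilbert-module setting there; what is new is verifying that every operator produced along the way is geometrically controlled, but this is automatic because the inputs $b$, $T$ and $P$ are geometrically controlled by hypothesis, and all constructions below are built by composition, adjunction, and orthogonal projection, operations that preserve geometric control (with the propagation of the output bounded in terms of the sum of propagations of the inputs).

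First I would dispose of (1) by a direct termwise computation: from $T_0 = Tb^{*} + (-1)^{p} b T$ on $E_p$ one gets $T_0^{*} = b T^{*} + (-1)^{p} T^{*} b^{*}$ on $E_{n-p}$; applying the symmetry $T^{*} = (-1)^{(n+1-q)q} T$ separately on each graded piece (with $q = p+1$ in the first summand and $q = p$ in the second) and using the identity $(n+1-p)p \equiv (n-p)p + p \pmod{2}$ to align the two exponents, both summands factor out the common sign $(-1)^{(n-p)p}$, giving $T_0^{*} = (-1)^{(n-p)p} T_0$.

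For (2), the equality $P T_0 = T_0$ is exactly the hypothesis in Definition~\ref{def:ppair}(2) that the range of $T_0$ lies in the range of $P$. To deduce $T_0 P = T_0$, I would take adjoints of $P T_0 = T_0$ to get $T_0^{*} P = T_0^{*}$ (using $P^{*} = P$), then apply (1) on both sides and divide by the common sign. For (3), I would use $b^{2} = 0$ and $(b^{*})^{2} = 0$ to collapse $T_0 b^{*} = (-1)^{p} b T b^{*}$ and $b T_0 = b T b^{*}$, so that $T_0 b^{*} + (-1)^{p} b T_0 = 0$ as operators on $E$; combined with (2) and the fact that $P$ commutes with $b$ on $PE$ (this is the subcomplex condition $P b P = b P$), the identity passes to $(PE, Pb, Pb^{*})$.

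The main step is (4), and this is where I expect the essential obstacle to lie. I would adapt the mapping-cone argument from the proof of \cite[Lemma 7.4]{MR2220522}. The hypothesis that $P^{\perp} T$ is a geometrically controlled chain homotopy equivalence from $(E, b^{*})$ to $(P^{\perp} E, P^{\perp} b)$ says precisely that the algebraic mapping cone $\mathrm{Cone}(P^{\perp}T)$ is geometrically controlled contractible; unwinding this mapping cone identifies $T_0$, acting on the subcomplex $(PE, Pb^{*})$, as a chain map to $(PE, Pb)$ whose mapping cone is geometrically controlled chain-equivalent to $\mathrm{Cone}(P^{\perp} T)$ up to a shift. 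A geometrically controlled chain null-homotopy of the latter then yields a geometrically controlled chain homotopy inverse for $T_0$, together with the required chain homotopies. The one genuinely technical point is to produce this inverse with bounded propagation: here I would construct it explicitly from a chosen geometrically controlled homotopy inverse $\alpha$ of $P^{\perp} T$ and the operator $T$ itself (schematically, as a combination $P\alpha + $ correction terms involving $b, b^{*}, T$), and check that each ingredient is geometrically controlled so that the composition is as well.
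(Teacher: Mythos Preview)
Your proposal is correct and follows essentially the same strategy as the paper: reproduce the algebraic computations of \cite[Lemma~7.4]{MR2220522} and then verify that every step stays in the geometrically controlled category. The paper's proof is in fact nothing more than a pointer---it says to combine the proof of \cite[Lemma~7.4]{MR2220522} with \cite[Lemma~4.2]{MR2220523} and omits all details---so your write-up is considerably more explicit.

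The only place where your route diverges slightly from the paper's is in part~(4). You propose a direct mapping-cone comparison together with an explicit construction of the homotopy inverse from a chosen inverse $\alpha$ of $P^{\perp}T$. The paper instead appeals to \cite[Lemma~4.2]{MR2220523}, which (as its other uses in the paper make clear) is a five-lemma--type statement: given a map of short exact sequences of geometrically controlled complexes in which two of the three vertical maps are geometrically controlled chain homotopy equivalences, so is the third. Applied to the sequences $0 \to (P^{\perp}E, b^{*}) \to (E, b^{*}) \to (PE, Pb^{*}) \to 0$ and $0 \to (PE, b) \to (E, b) \to (P^{\perp}E, P^{\perp}b) \to 0$ with the appropriate vertical maps built from $T$, this yields the equivalence for $T_0$ directly. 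The two mechanisms are interchangeable, but citing the ready-made lemma spares you the explicit bookkeeping of building the inverse by hand, which is the part of your sketch you flagged as the genuinely technical point.
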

\begin{proof}
	The proof is a combination of the proof of \cite[Lemma 7.4]{MR2220522} together with \cite[Lemma 4.2]{MR2220523}. We leave out the details. 
\end{proof}

The above lemma asserts  $(PE, Pb, T_0)$ is an $n$-dimensional geometrically controlled Poincar\'{e} complex.

\begin{definition}
	The geometrically controlled Poincar\'{e} complex 
	\[ (PE, Pb, T_0)\] is called the boundary of the geometrically controlled Poincar\'{e} pair $(E, b, T, P)$. 
\end{definition}

Note that there is an obvious analogous theory in the analytically controlled category. Moreover, there are obvious equivariant theories for both geometrically controlled Poincar\'{e} pairs and analytically controlled Poincar\'{e} pairs respectively. 

The following theorem is a rephrasing of a theorem of Higson and Roe \cite[Theorem 3.18]{MR2220523}.
\begin{theorem}[{\cite[Theorem 3.18]{MR2220523}}]
	Every geometrically controlled Poincar\'{e} pair naturally defines an analytically controlled Poincar\'{e} pair, by $\ell^2$-completion.  
\end{theorem}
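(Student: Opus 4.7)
The plan is to imitate the proof of the corresponding statement for Poincar\'{e} complexes (Theorem 3.14 in \cite{MR2220523}, cited earlier in this excerpt), adding only the verification that the projection $P$ and the associated boundary/pair structure behave well under $\ell^2$-completion. Throughout, one works over a fixed proper metric space $X$ (in our application $X = \widetilde{CX}$, with the equivariant decorations), and the completions $\overline{E}_p$ inherit the structure of analytically controlled $X$-modules via the $\ast$-representation of $C_0(X)$ that multiplies each basis element by its value at its supporting point.

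First I would record the single basic lemma that does all the work: if $T\colon V \to W$ is geometrically controlled over $X$, then its $\ell^2$-extension $\overline T\colon \overline V \to \overline W$ is analytically controlled. Boundedness follows from uniform boundedness of matrix coefficients together with the bounded-geometry consequence that each row and column of the matrix has at most $N$ non-zero entries (where $N$ depends only on the propagation bound and on the function $c\colon B \to X$); this is the standard Schur-test argument. Finite propagation of $\overline T$ is immediate from the corresponding bound on $T$. Local compactness holds because for any $f \in C_0(X)$ and any $\varepsilon > 0$ one may truncate $f$ to have support in a compact set $K$, whence $f\overline T$ and $\overline T f$ have matrix coefficients supported in a set of basis pairs of bounded diameter over $K$; by Definition $\ref{def:geommod}$ this is a finite rank operator up to error $\varepsilon$ in norm. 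This is essentially \cite[Theorem 3.14]{MR2220523}.

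With this lemma in hand, the verification is routine. The operators $b$, $T$, and $P$ all extend to analytically controlled operators $\overline b$, $\overline T$, $\overline P$ on the completions. Algebraic identities (1)--(3) of Definition $\ref{def:ppair}$ are norm-closed conditions (they only involve bounded compositions, adjoints, and the range/projection condition $T_0 = \overline P\, T_0 = T_0\,\overline P$), so they pass to the completion by continuity from the dense subspaces $E_p \subset \overline{E}_p$. In particular $\overline P$ remains an orthogonal projection with $\overline P\,\overline b\,\overline P = \overline b\,\overline P$, so the range of $\overline P$ is a complemented Hilbert submodule which is precisely the $\ell^2$-completion of $PE$, and the quotient $\overline{P}^{\perp}\overline{E}$ is canonically the completion of $P^\perp E$.

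The one non-trivial item is clause (4), that $\overline{P}^{\perp}\overline T$ is an analytically controlled chain homotopy equivalence from $(\overline E, \overline b{}^{\ast})$ to $(\overline{P}^{\perp}\overline E, \overline{P}^{\perp}\overline b)$. This is also where I expect the main (mild) obstacle, but it dissolves once one observes that the geometrically controlled chain homotopy inverse $U$ and the geometrically controlled chain homotopies $h$, $h'$ provided by Definition $\ref{def:ppair}$(4) are themselves maps between geometrically controlled $X$-modules; applying the basic lemma above to each of $U$, $h$, $h'$ produces the required analytically controlled inverse and analytically controlled null-homotopies, which satisfy the chain-map and homotopy identities on the dense subspaces and hence, by continuity, on the completions. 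The resulting quadruple $(\overline E, \overline b, \overline T, \overline P)$ is therefore an analytically controlled Poincar\'{e} pair, and its boundary (in the sense of Lemma $\ref{lm:pbd}$) is the $\ell^2$-completion of the geometrically controlled boundary, giving functoriality of the construction. The equivariant version is identical, since all operators above are constructed $\Gamma$-equivariantly when one starts with $\Gamma$-equivariant data.
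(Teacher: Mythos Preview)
The paper does not give its own proof of this statement; it simply cites it as \cite[Theorem 3.18]{MR2220523} and moves on. Your sketch is correct and is precisely the argument one expects (and finds) in the cited source: geometrically controlled maps extend to bounded, finite-propagation, locally compact operators on the $\ell^2$-completions, and the algebraic identities and chain-homotopy data pass to the completions by density and continuity. There is nothing to compare against in this paper itself.
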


Before we prove the well-definedness of the higher rho invariant map, let us give a proof of the bordism invariance of the $K$-homology class of signature operators (compare \cite[Theorem 2]{MR2170494}). Our proof below is modeled upon Higson and Roe's proof for the bordism invariance of higher signature index \cite[Theorem 7.6]{MR2220522}.  Note that, in the theorem below, we do \emph{not} invert $2$.  

\begin{theorem}[Bordism invariance of $K$-homology signature]\label{thm:bordK}
	Let $V$ be an $(n+1)$-dimensional oriented PL manifold with boundary $\partial V$,  equipped with a continuous map $\psi\colon V\to X$, where $X$ is a proper metric space. Then 
	\[  \ind_{L}(\partial V) = 0 \in K_n(C^\ast_{L}(\widetilde X)^\Gamma),\]
	where $\widetilde X$ is the universal cover of $X$ with $\Gamma = \pi_1 X$.
\end{theorem}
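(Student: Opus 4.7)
The plan is to adapt Higson and Roe's proof of bordism invariance \cite[Theorem 7.6]{MR2220522} from the Roe-algebra setting to the localization-algebra setting by running their argument in a family parametrized by the subdivision depth of Section \ref{sec:subdiv}. First, I would fix a triangulation of $V$ compatible with $\psi$ and apply the standard subdivision procedure iteratively to obtain triangulations $\sub^t(V)$, $t = 0, 1, 2, \dots$, together with continuous interpolations in $t$. These satisfy uniform bounded geometry, and the diameters of the simplices (measured in the pullback metric from $X$) tend uniformly to zero as $t\to\infty$. Pulling back the universal cover $\widetilde X \to X$ via $\psi$ gives a $\Gamma$-cover $\widetilde V \to V$, and the resulting $\Gamma$-equivariant chain complexes $E(t)$ with the cap product $T(t)$ against the relative fundamental class and the projection $P(t)$ onto the boundary subcomplex assemble into a norm-continuous family of $\Gamma$-equivariant geometrically, hence analytically, controlled Poincar\'e pairs $(E(t), b(t), T(t), P(t))$ over $\widetilde X$, in the sense of Definition \ref{def:ppair}.

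Next, Lemma \ref{lm:pbd}, applied pointwise in $t$, gives an $n$-dimensional Poincar\'e complex $(P(t)E(t), P(t)b(t), T_0(t))$ with $T_0(t) = T(t)b(t)^\ast + (-1)^p b(t)T(t)$, which is naturally isomorphic to the controlled Poincar\'e complex coming from $\sub^t(\partial V)$. Consequently the family of signature invertibles $(B_s(t)+S_s(t))(B_s(t)-S_s(t))^{-1}$ of Definition \ref{def:anasig}, viewed jointly in $s$ and $t$, represents exactly $\ind_L(\partial V) \in K_n(C^\ast_L(\widetilde X)^\Gamma)$.

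The substance of the proof is then a parametrized version of Higson and Roe's operator-homotopy construction. At each $t$, their argument uses $T(t)$ and $P(t)^\perp$ to build a norm-continuous path of Poincar\'e duality operators on $(P(t)E(t), P(t)b(t))$ from $T_0(t)$ to $-T_0(t)$; then by Lemma \ref{lm:trivial} the corresponding invertibles are connected to the identity. All operators appearing in this construction — the chain homotopies implementing Poincar\'e duality, their inverses modulo chain homotopy, and the rotations and compositions involved — are built from $T(t)$, $b(t)$, $P(t)$ by finite algebraic operations and are therefore analytically controlled with propagation uniformly comparable to the simplex diameter of $\sub^t(V)$. Since this diameter tends to zero as $t\to\infty$, the resulting continuous family of invertibles lives in the unitization of $C^\ast_L(\widetilde X)^\Gamma$ and provides an explicit path in $K$-theory from $\ind_L(\partial V)$ to $0$.

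The main obstacle will be uniform propagation control in the parametrized setup. Specifically, one needs to verify that the chain homotopy inverse to $P(t)^\perp T(t)$ can be chosen continuously in $t$ and with propagation bounded by a uniform multiple of the simplex diameter of $\sub^t(V)$; this is where the standard subdivision procedure and bounded geometry of the successive refinements are essential, and where one must be careful near the interpolation between $\sub^t$ and $\sub^{t+1}$. Once this uniform control is in place, the extension of \cite[Theorem 7.6]{MR2220522} from a single Poincar\'e pair to the continuous family $(E(t), b(t), T(t), P(t))$ is essentially a matter of recording that every step preserves the propagation estimate, and the vanishing of $\ind_L(\partial V)$ follows at the level of $K$-theory.
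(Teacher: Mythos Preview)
Your overall strategy matches the paper's: run Higson--Roe's bordism argument uniformly across the tower of standard subdivisions so that propagations shrink to zero, producing a path of invertibles in $(C^\ast_L(\widetilde X)^\Gamma)^+$ that connects the signature representative to the identity. Your identification of uniform propagation control as the main technical point is also correct; the paper handles it via Proposition~\ref{prop:uniformbd} applied to the direct sum $\bigoplus_n (E^{(n)}, b^{(n)}, T^{(n)}, P^{(n)})$ rather than through a continuous interpolation in $t$.

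However, your description of the core Higson--Roe step is inaccurate and, as stated, would not go through. One does \emph{not} build a path of duality operators on $(P(t)E(t), P(t)b(t))$ from $T_0(t)$ to $-T_0(t)$; there is no evident construction of such a path on the boundary complex alone. What the argument actually does (and what the paper reproduces) is pass to the mapping cone complex $(\tilde E, \tilde b_\lambda)$ of the chain map $\lambda P^\perp\colon (E,b)\to (P^\perp E, P^\perp b)$, equipped with the off-diagonal duality operator $\tilde T = \begin{psmallmatrix} 0 & TP^\perp \\ (-1)^p P^\perp T & 0 \end{psmallmatrix}$. One checks that $(PE, Pb, T_0)$ is geometrically controlled chain-homotopy equivalent to $(\tilde E, \tilde b_{-1}, \tilde T)$, then deforms $\lambda$ from $-1$ to $0$; it is only on $(\tilde E, \tilde b_0)$ that $\tilde T$ becomes operator-homotopic to $-\tilde T$ via the rotation $\begin{psmallmatrix} 0 & e^{is}TP^\perp \\ (-1)^p e^{is}P^\perp T & 0 \end{psmallmatrix}$, after which Lemma~\ref{lm:trivial} applies. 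Your proposal will succeed once you insert this mapping-cone step; without it the claimed homotopy from $T_0$ to $-T_0$ is unsupported.
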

\begin{proof}
	Fix a triangulation of $V$, together with a sequence of successive refinements $\sub^n(V)$  as in Section $\ref{sec:subdiv}$. Note that, for the geometrically controlled Poincar\'{e} pair associated to the triangulation  $\sub^n(V)$, all maps appearing in Definition  $\ref{def:ppair}$ are geometrically controlled, with their propagations go to zero as $n\to \infty$.
	
	Let us denote the geometrically controlled Poincar\'{e} pair associated to the triangulation $\sub^n(V)$ by $(E^{(n)}, b^{(n)}, T^{(n)}, P^{(n)} )$.  Since our construction below works for  these refinements simultaneously, we shall omit the superscript $(n)$ from now on. Equivalently, one consider the direct sum 
	\[   (E, b, T, P) = \bigoplus_{n=1}^\infty (E^{(n)}, b^{(n)}, T^{(n)}, P^{(n)} ).\]
	In particular, by the construction in Appendix $\ref{app:special}$, the geometrically controlled Poincar\'{e} complex $ (PE, Pb, T_0)$ produces a specific representative of the local index $\ind_L(\partial V, \psi) \in K_{n}(C_L^\ast(X))$ of the signature operator of $\partial V$ (cf. Definition $\ref{def:local}$). 
	
	Let $\lambda$ be a real number and define a complex $(\tilde E, \tilde b_\lambda)$ by 
	\[ \tilde E_p = E_p \oplus P^\perp E_{p+1} \textup{ and }  \tilde b_\lambda = \begin{pmatrix}
	b & 0  \\ \lambda P^\perp & -P^\perp b 
	\end{pmatrix}\]
	This is the mapping cone complex for the chain map \[ \lambda P^\perp\colon (E, b) \to (P^\perp E, P^\perp b).\] Together with the operators 
	\[ \tilde T = \begin{pmatrix}
	0 & TP^\perp \\ (-1)^p P^\perp T & 0 
	\end{pmatrix}\colon \tilde E_p \to \tilde E_{n-p}, \]
	the triple $(\tilde E, \tilde b_\lambda, \tilde T)$ is an $n$-dimensional geometrically controlled Poincar\'{e} complex for any $\lambda$ (including $\lambda =0$). Of course, we need to check that $\tilde T$ is indeed a geometrically controlled homotopy equivalence. This can be verified by applying \cite[Lemma 4.2]{MR2220523} to the following commutative diagram\footnote{One needs to take into account of the sign convention when verifying various identities. For example,  the map $(-1)^pP^\perp T$ carries the sign $(-1)^p$ when it maps from $E_p$ to $P^\perp E_{n-p+1}$.} :
	\[ \xymatrixcolsep{1.5pc}\xymatrix{ 0 \ar[r] & (E, b^\ast) \ar[r] \ar[d]^{(-1)^pP^\perp T} & (\tilde E, \tilde b^\ast_\lambda ) \ar[r] \ar[d]^{\tilde T} & (P^\perp E, -b^\ast P^\perp) \ar[d]^{TP^\perp} \ar[r] & 0  \\
		0 \ar[r] & (P^\perp E, -P^\perp b) \ar[r] &  (\tilde E, \tilde b_{\lambda}) \ar[r]  & (E, b) \ar[r] & 0.  } \]
	
	Note that, when $\lambda =-1$, the map  $A(v) = v\oplus 0 \in E_p \oplus P^\perp E_{p+1}  $ defines a geometrically controlled chain homotopy equivalence of geometrically controlled Poincar\'e complexes
	\[  A\colon (PE, Pb, T_0) \to (\tilde E, \tilde b_{-1}, \tilde T). \]
	Indeed, we apply  \cite[Lemma 4.2]{MR2220523} to the following commutative diagram: 
	\[ \xymatrix{ 0 \ar[r] & (PE, Pb) \ar[r]^{=} \ar[d]^{=} & (PE, Pb) \ar[r] \ar[d]^{A} & 0 \ar[d] \ar[r] & 0  \\
		0 \ar[r] & (PE, Pb) \ar[r]^{A} &  (\tilde E, \tilde b_{-1}) \ar[r]^{Q}  & (E', b') \ar[r] & 0  } \]  
	where $E'_p = P^\perp E_{p} \oplus P^\perp E_{p+1}$ with $b' =\begin{psmallmatrix}
	P^\perp b & 0  \\ -1 & -P^\perp b 
	\end{psmallmatrix} $ and $Q$ is the obvious orthogonal projection.  It is easy to see that $(E', b')$ is geometrically controlled chain homotopy equivalent to the trivial chain $0$. Moreover, we have\footnote{Note that the appearance of $(-1)^{p}$ is due to our sign convention. } 
	\[  AT_0A^\ast - \tilde T = h_{p+1}\circ \tilde b^\ast_{-1} + (-1)^p \tilde b_{-1}\circ h_p\colon \tilde E_{p} \to \tilde E_{n-p},    \] 
	where $h_p = \begin{psmallmatrix}
	T & 0 \\ 0 & 0 
	\end{psmallmatrix}\colon E_{p} \oplus P^\perp E_{p+1} \to E_{n-p+1} \oplus P^\perp E_{n-p+2}.$ This shows that $AT_0A^\ast$  and  $\tilde T$ are geometrically controlled homotopic to each other.

	We abuse our notation and denote by 
	\[   (B + S)(B - S)^{-1} \in (C^\ast_L(\widetilde X)^\Gamma)^+\] the explicit representative for the local index $\ind_L(\partial V, \psi)$ constructed by using  $(PE, Pb, T_0)$ as in Appendix $\ref{app:khom}$ (cf. Definition $\ref{def:local}$). 
	Then the same argument from Section $\ref{sec:highrho}$ produces a continuous path of invertible elements in $(C^\ast_L(\widetilde X)^\Gamma)^+$  connecting $(B + S)(B - S)^{-1}$ to   \[ (B_{-1} + S_{-1})(B_{-1}-S_{-1})^{-1} \in (C^\ast_L(\widetilde X)^\Gamma)^+,\] 
	where $(B_{-1} + S_{-1})(B_{-1}-S_{-1})^{-1} $ stands for the representative of the local index constructed out of  $(\tilde E, \tilde b_{-1}, \tilde T)$  (cf. Definition $\ref{def:local}$).  To be precise, we in fact need to stabilize $(B + S)(B - S)^{-1}$ by the identity operator,  and consider \[ (B + S)(B - S)^{-1} \oplus I\] instead. For notational simplicity, we will omit these stabilizing steps. 
	
	On the other hand, there is a continuous path of invertible elements 
	\[ (B_t + S_t)(B_t-S_t)^{-1} \in (C^\ast_L(\widetilde X)^\Gamma)^+ \] 
	representing the local index class constructed out of $(\tilde E, \tilde b_{t}, \tilde T)$ for $t\in [-1, 0]$  (cf. Definition $\ref{def:local}$). Of course, it is important to appropriately control  the propagations of various terms. This can be achieved by Proposition $\ref{prop:uniformbd}$ in Appendix $\ref{app:inv}$.  Note that, for $(\tilde E, \tilde b_{0}, \tilde T)$, the duality operator $\tilde T$ is operator homotopic to its additive inverse along the path 
	\[ \tilde T = \begin{pmatrix}
	0 & \exp(is)TP^\perp \\ (-1)^p \exp(is)P^\perp T & 0 
	\end{pmatrix} \]
	with $s\in [0, \pi]$. Now the same argument from Section $\ref{sec:highrho}$ again shows that 
	\[ (B_0 + S_0)(B_0-S_0)^{-1}\] is connected to the identity by a path of invertible elements in  $(C^\ast_L(\widetilde X)^\Gamma)^+ $. This finishes the proof. 
\end{proof}

\begin{theorem}\label{thm:bordrho}
	The higher rho invariant map 
	\[ \rho\colon \mathcal S_n(X) \to  K_n(C_{L, 0}^\ast(\widetilde X)^\Gamma)\] is well-defined.   	
\end{theorem}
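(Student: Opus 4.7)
The plan is to adapt the mapping-cone/bordism argument of Theorem \ref{thm:bordK} to the relative setting where, in addition to a geometrically controlled Poincaré pair, one has a geometrically controlled homotopy equivalence. We must show that if $\theta = (M,\partial M,\varphi,N,\partial N,\psi,f)$ is bordant to $0$ in the sense of Definition \ref{def:strequiv}, then $[W(\theta)] = 0$ in $K_n(C_{L,0}^\ast(\widetilde X)^\Gamma)$; independence from the choice of triangulations, subdivisions, and refinements follows from a similar (simpler) bordism argument applied to the cylinder $\theta \times I$, so we focus on the bordism invariance.

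First I would set up the analytic data from the bordism $(W,V,F)$. Put $CW = W \cup_{\partial_2 W}(\partial_2 W\times[1,\infty))$ and $CV = V\cup_{\partial_2 V}(\partial_2 V\times[1,\infty))$, extend the control maps $\Phi,\Psi$ to $\widetilde\Phi\colon CW\to X\times[1,\infty)$ and $\widetilde\Psi\colon CV\to X\times[1,\infty)$ by the product structure, and extend $F$ to $\widetilde F = F\cup (F|_{\partial_2 V}\times\mathrm{id})\colon CV\to CW$. Triangulate $CW$ and $CV$ compatibly with the triangulations of $CM$ and $CN$ from Section \ref{sec:highrho}, pulling back on the cylinder parts from a bounded-geometry triangulation of $X\times[1,\infty)$ whose simplex sizes shrink to zero at infinity. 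This yields $\Gamma$-equivariant geometrically controlled Poincaré pairs $(\mathcal E_W,\mathfrak b_W,\mathcal T_W,\mathcal P_W)$ and $(\mathcal E_V,\mathfrak b_V,\mathcal T_V,\mathcal P_V)$ controlled over $\widetilde{CX}$, together with a geometrically controlled chain homotopy equivalence between them induced by $\widetilde F$, which is PL infinitesimally controlled on the cylindrical pieces by Corollary \ref{cor:gaincontr} and the discussion after it.

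Next I would form the difference Poincaré pair $(\mathcal E,\mathfrak b,\mathcal T,\mathcal P) = (\mathcal E_W\oplus\mathcal E_V,\ \mathfrak b_W\oplus\mathfrak b_V,\ \mathcal T_W\oplus -\mathcal T_V,\ \mathcal P_W\oplus\mathcal P_V)$ of dimension $n+1$, and apply the mapping-cone construction of Theorem \ref{thm:bordK}: define $\tilde{\mathcal E}_p = \mathcal E_p\oplus \mathcal P^\perp\mathcal E_{p+1}$ with differential $\tilde{\mathfrak b}_\lambda = \bigl(\begin{smallmatrix}\mathfrak b & 0\\ \lambda \mathcal P^\perp & -\mathcal P^\perp \mathfrak b\end{smallmatrix}\bigr)$ and duality $\tilde{\mathcal T}$ as in Theorem \ref{thm:bordK}, giving an $n$-dimensional geometrically controlled Poincaré complex for each $\lambda$. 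Pushing forward by $\tau_\ast$ from Section \ref{sec:highrho}, this produces a norm-continuous path of invertible elements $(\widetilde B + \widetilde S_\lambda)(\widetilde B - \widetilde S_\lambda)^{-1}$ in $C^\ast_c(\widetilde X\times[1,\infty))^\Gamma$ for $\lambda\in[-1,0]$. Now I would use the homotopy equivalence $\widetilde F$ exactly as in Section \ref{sec:highrho} (Lemmas \ref{lm:extra}, \ref{lm:extra2} and equation \eqref{eq:htpy}) to further deform this path, at each $\lambda$, by passing from $\mathcal T_W\oplus -\mathcal T_V$ to the ``off-diagonal'' form involving $\widetilde F$ and then rotating. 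The result is a path, parametrized by $\lambda\in[-1,0]$, of invertible elements in $C^\ast_{L,0,c}(\widetilde X\times[1,\infty))^\Gamma$.

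Finally I would identify the two endpoints. At $\lambda = -1$, the mapping-cone complex is geometrically controlled homotopy equivalent (by the argument of Theorem \ref{thm:bordK}) to the boundary Poincaré complex $(\mathcal P\mathcal E,\mathcal P\mathfrak b,\mathcal T_0)$, which splits as the direct sum of the Poincaré complex on $\widetilde{CM}\sqcup -\widetilde{CN}$ and the Poincaré complex on the closed caps $\widetilde{\partial_2 W}\sqcup -\widetilde{\partial_2 V}$; combined with the homotopy equivalences from Section \ref{sec:highrho} (for the $M,N$ piece) and the infinitesimally controlled homotopy equivalence $F|_{\partial_2 V}$ (for the $\partial_2$ piece, which shows the latter contribution is connected to the identity inside $C^\ast_{L,0,c}$), this identifies the $\lambda=-1$ endpoint with the class of $W(\theta)$, i.e.\ with $\rho(\theta)$. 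At $\lambda=0$, the duality operator $\tilde{\mathcal T}$ is off-diagonal and hence operator homotopic to $-\tilde{\mathcal T}$ through the rotation $\mathrm{diag}(e^{is},e^{-is})\tilde{\mathcal T}$, $s\in[0,\pi]$; by Lemma \ref{lm:trivial} this makes the $\lambda=0$ endpoint equal to the identity in $K_n(C^\ast_{L,0,c}(\widetilde X\times[1,\infty))^\Gamma)$. By Proposition \ref{prop:coninf} this $K$-group is $K_n(C^\ast_{L,0}(\widetilde X)^\Gamma)$, so $\rho(\theta)=0$.

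The hard part will be Step 3: verifying that the $\partial_2$-piece of the boundary Poincaré complex is contractible \emph{inside $C^\ast_{L,0,c}$}, i.e.\ through a path whose propagations tend to zero uniformly as $t\to\infty$. This is where the PL infinitesimal control on $F|_{\partial_2 V}$, together with the shrinking-simplex triangulation on the cylinder, is essential, and one must carefully invoke the uniform bounded-geometry estimates of Proposition \ref{prop:uniformbd} (as in the proof of Theorem \ref{thm:bordK}) to control the propagations of the intermediate operators $F$, $G$, $H$, $H'$ and of the corresponding chain homotopies along the cylindrical direction.
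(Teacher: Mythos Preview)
Your approach is essentially correct and parallels the paper's, with the same core two-parameter idea: one parameter $\lambda$ runs the mapping-cone bordism null-homotopy of Theorem~\ref{thm:bordK}, and at each $\lambda$ the homotopy equivalence $F$ (which the paper observes induces geometrically controlled chain equivalences between the $(\tilde E,\tilde b_\lambda,\tilde T)$ complexes for $V$ and for $W$) supplies the path to the identity needed to land in $C^\ast_{L,0}$.

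The organizational difference is where the $\partial_2$-piece is disposed of. The paper does this \emph{first}: since $F|_{\partial_2 V}\colon\partial_2 V\to\partial_2 W$ is an infinitesimally controlled homotopy equivalence, the Poincar\'e complex of $M\cup_f(-N)$ is geometrically controlled equivalent to that of $\partial W\sqcup(-\partial V)$, so $\rho(\theta)=\rho(F|_{\partial V}\colon\partial V\to\partial W)$. After this reduction one works directly with the \emph{closed} manifolds $\partial V$, $\partial W$ over $X$, applies Theorem~\ref{thm:bordK} to $V$ and $W$ separately to get paths $\mathcal V_s,\mathcal W_s$ in $(C^\ast_L(\widetilde X)^\Gamma)^+$, and then, for each $s$, uses $F$ at $t=0$ to concatenate with a path $\mathcal U_s(t)$ down to the identity, producing $\rho_s\in(C^\ast_{L,0}(\widetilde X)^\Gamma)^+$ with $\rho_0=\rho(\theta)$ and $\rho_1=I$. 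Your route keeps cylinders attached to $\partial_2 W,\partial_2 V$, works throughout in the hybrid algebra $C^\ast_{L,0,c}(\widetilde X\times[1,\infty))^\Gamma$, and postpones the $\partial_2$-contribution to the endpoint identification at $\lambda=-1$. Either way the same infinitesimal-control estimate (Proposition~\ref{prop:uniformbd} applied along the shrinking triangulation of the cylinder) is what makes the $\partial_2$-piece trivial. The paper's ordering is slightly cleaner because after the reduction the bordism argument runs over $X$ rather than over $X\times[1,\infty)$, so one never has to verify that the intermediate mapping-cone operators on $CW,CV$ lie in the hybrid algebra; your version requires this check, which is routine but does need the propagation bounds you flagged as the ``hard part''.
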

\begin{proof}
	Let $\theta	= (M, \partial M, \varphi, N, \partial N, \psi, f)$ be an element in $\mathcal S_n(X)$. Suppose $\theta$ is cobordant to zero in $\mathcal S_n(X)$. Let 
	\[ (W, \partial W, \Phi, V, \partial V, \Psi, F)\] be a cobordism between $\theta$ and $0$ (cf. Definition $\ref{def:strequiv}$). Note that $F|_{\partial_2 V} \colon \partial_2 V\to \partial_2 W$ is an infinitesimally controlled homotopy equivalence over $X$, thus $\partial_2 V$ and $\partial_2 W$ will not contribute to the higher rho invariant of $F|_{\partial V}\colon \partial V \to \partial W$. More precisely, \[ F|_{\partial_2 V}\colon \partial_2 V \to \partial_2 W\] induces an infinitesimally controlled chain homotopy equivalence between the Poincar\'e pair associated to $\partial_2 V$ and the Poincar\'e pair associated to $\partial_2 W$. It follows that the geometrically controlled  Poincar\'e complex associated to $M\cup_f (-N)$ and its refinements are geometrically  controlled equivalent to the geometrically controlled Poincar\'e complex associated to $\partial V\bigsqcup (-\partial W)$ and its refinements. See Appendix $\ref{app:gen}$ for a related discussion. To summarize,  we have 
	\[  \rho(\theta) = \rho( F|_{\partial V}\colon \partial V \to \partial W).\]
	Therefore, it suffices to show that 
	\[ \rho( F|_{\partial V}\colon \partial V \to \partial W) = 0.\]

	In the following, we use the reversed orientation of $W$. Since no confusion will arise, let us still write $W$ to denote $-W$ for the rest of the proof.    Fix a triangulation of $V$ and of $W$, together with a sequence of successive refinements $\sub^n(V)$ and $\sub^n(W)$ as before. Note that, for the geometrically controlled Poincar\'{e} pair associated to the triangulation  $\sub^n(V)$ (resp. $\sub^n(W)$), all maps appearing in Definition  $\ref{def:ppair}$ are geometrically controlled, with their propagations go to zero as $n\to \infty$.
	
	Now the theorem follows from a combination of the proof of Theorem $\ref{thm:bordK}$ above with the construction of the higher rho invariant in Section $\ref{sec:highrho}$. Indeed, let 
	\[  (B_{\partial V} + S_{\partial V})(B_{\partial V} - S_{\partial V})^{-1} \in (C^\ast_L(\widetilde X)^\Gamma)^+  \]
	and 
	\[  (B_{\partial W} + S_{\partial W})(B_{\partial W} - S_{\partial W})^{-1} \in (C^\ast_L(\widetilde X)^\Gamma)^+ \] 
	be the representatives of the local indices for the signature operators of $\partial V$ and $\partial W$ respectively. By the proof of Theorem $\ref{thm:bordK}$, we have an explicit continuous path of invertible elements $\{\mathcal V_s\}_{0\leq s\leq 1}$ in $(C^\ast_L(\widetilde X)^\Gamma)^+ $ connecting 
	\[  \mathcal V_0 = (B_{\partial V} + S_{\partial V})(B_{\partial V} - S_{\partial V})^{-1}  \]
	to the identity operator $\mathcal V_1 = I$. Similarly, there is an explicit continuous path of invertible elements $\{\mathcal W_s\}_{0\leq s\leq 1}$ in $(C^\ast_L(\widetilde X)^\Gamma)^+ $ connecting 
	\[  \mathcal W_0 = (B_{\partial W} + S_{\partial W})(B_{\partial W} - S_{\partial W})^{-1}  \]
	to the identity operator $\mathcal W_1 = I$.

	Consider the following elements  at time $t=0$:
	\[  \mathcal V_s(0) \textup{ and }  \mathcal W_s(0)  \in C^\ast(\widetilde X)^\Gamma. \]
	Let $(E, b, T)_{V, \partial V}$ and $(E', b', T')_{W, \partial W}$ be the geometrically controlled Poincar\'{e} pairs associated to the triangulations of  $V$ and $W$ respectively. Note that we are not taking subdivisions at the moment. Then the homotopy equivalence $F\colon V \to W$ induces a geometrically controlled chain homotopy equivalence between the geometrically controlled  Poincar\'{e} pairs 
	\[ (E, b, T)_{V, \partial V} \textup{ and } (E', b', T')_{W, \partial W}. \]
	In fact,  the homotopy equivalence $F\colon V\to W$ also induces  corresponding geometrically controlled chain homotopy equivalences between various  Poincar\'{e} complexes, such as $(\tilde E, \tilde b_{\lambda}, \tilde T)_{V, \partial V}$ and $(\tilde E, \tilde b_{\lambda}, \tilde T)_{W, \partial W}$,  that appear in the proof of Theorem $\ref{thm:bordK}$. Consequently, the construction in Section $\ref{sec:highrho}$ simultaneously produces continuous paths $\{ \mathcal U_s(t)\}_{-1\leq t\leq 0}$ of invertible elements connecting 
	\[  \mathcal V_s(0) \oplus \mathcal W_s(0) \]
	to the identity operator, for $s\in [0, 1]$. 
	
	For each $s\in [0, 1]$, we concatenate the path $\{\mathcal U_s(t)\}_{-1\leq t\leq 0}$ with the path $\{\mathcal V_s(t)\oplus \mathcal W_s(t)\}_{0\leq t < \infty}$. This produces an element, denoted by $\rho_s $,  in $(C^\ast_{L, 0}(\widetilde X)^\Gamma)^+$ for each $s\in [0, 1]$. Since $\{\rho_s\}_{0\leq s\leq 1}$ is a norm continuous path of invertible elements in $(C^\ast_{L, 0}(\widetilde X)^\Gamma)^+$, it is clear that 
	\[  [\rho_0] = [\rho_1] \in K_1(C^\ast_{L, 0}(\widetilde X)^\Gamma).   \]
	On the other hand, $\rho_0$ is precisely the definition of the higher rho invariant of $F_{\partial}\colon \partial V \to \partial W$, while $\rho_1 \equiv I$ is the constant map with value the identity operator. Therefore, $\rho(\theta) = [\rho_0] = 0.$ This finishes the proof.

\end{proof}

\section{Mapping surgery to analysis}\label{sec:toana}

In this section, for each closed oriented topological manifold $X$ of dimension $\geq 5$, we prove the commutativity of the following diagram of abelian groups:
\begin{equation}\label{diag:surgery}
\begin{split} 
\scalebox{1}{\xymatrixcolsep{1.5pc}\xymatrix{  \mathcal N_{n+1}(X)  \ar[r]^{i_\ast} \ar[d]_{\ind_L}  & L_{n+1}(\Gamma ) \ar[r]^-{j_\ast} \ar[d]_{\ind}  &  \mathcal S_n(X) \ar[r]  \ar[d]_{k_n\cdot \rho} &  \mathcal N_{n}(X )  \ar[d]_{k_n\cdot  \ind_L} \\
		K_{n+1}(C_{L}^\ast(\widetilde X)^\Gamma) \ar[r]^-{\mu_\ast} & K_{n+1}(C_r^\ast(\Gamma)) \ar[r] & K_n(C_{L, 0}^\ast(\widetilde X)^\Gamma)  \ar[r]  &  K_n(C_{L}^\ast(\widetilde X)^\Gamma) 	} }
\end{split}
\end{equation}
where the maps $\ind$ and $\ind_L$ will be defined below,  $\Gamma = \pi_1 X$ and  
\[ k_n  = \begin{cases*}
1 & \textup{ if $n$ is even, }\\
2 & \textup{ if $n$ is odd.}
\end{cases*} \]

In the case of smooth manifolds, a similar commutative diagram   was proved by Higson and Roe  \cite{MR2220523}. Since the structure set of a smooth manifold does not carry a group structure, the commutative diagram of Higson and Roe is a commutative diagram of sets in an appropriate sense \cite[Section 5]{MR2220523}. Piazza and Schick 
gave a different proof of Higson and Roe's commutative diagram for smooth manifolds \cite{MR3514938}. Zenobi proved a similar commutative diagram for topological manifolds, but only treating $\mathcal S_n(X)$ as a set \cite{MR3622237}.

The local index map 
\[ \ind_L\colon  \mathcal N_{n}(X) \to  K_{n}(C^\ast_L(\widetilde X)^\Gamma) \]
is defined by assigning each element in $\mathcal N_{n}(X)$ the $K$-homology class of its signature operator. See Appendix $\ref{app:khom}$ for more details. The well-definedness of the map $\ind_L$ follows from the bordism invariance of the $K$-homology class of signature operators. See Theorem $\ref{thm:bordK}$ above. 

\begin{remark}
	Note that the well-definedness of the map  
	\[ \ind_L\colon  \mathcal N_{n}(X) \to  K_{n}(C^\ast_L(\widetilde X)^\Gamma)\] implies  Novikov's theorem on the topological invariance of the rational Pontrjagin classes \cite{MR0193644}. Also see \cite{MR1388315}.  
\end{remark}

The index map 
\[ \ind\colon L_{n+1}(\Gamma) \to K_{n+1}(C_r^\ast(\Gamma)) \] is defined as follows. 
Suppose we have  an element 
\[ \theta = (M, \partial M, \varphi, N, \partial N, \psi, f) \in L_{n+1}(\Gamma) \] 
satisfying the conditions in Definition $\ref{def:Lgrp}$. Let  $M \cup_{f} (-N) $  be the space obtained by gluing $-N$ with $M$ along the boundary by the map $f$, where $-N$ is the manifold $N$ with the reversed orientation.  Although $M\cup_f (-N)$ is \emph{not} a manifold in general, it is still a space equipped with Poincar\'{e} duality. In particular, the higher signature index of $M\cup_{f} (-N)$ makes sense. 
\begin{definition}
	For each element \[ \theta = (M, \partial M, \varphi, N, \partial N, \psi, f) \in L_{n+1}(\pi_1 X), \]  we define $\ind(\theta)$ to be the higher signature index of $M\cup_{f} (-N)$.  
\end{definition}

\begin{proposition}
	The map $\ind\colon L_{n+1}(\Gamma) \to K_{n+1}(C_r^\ast(\Gamma))$ is a well-defined group homomorphism. 	
\end{proposition}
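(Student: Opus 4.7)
The plan is to define $\ind(\theta)$ as the signature (in the sense of Definition $\ref{def:sig}$) of a finitely generated Hilbert-Poincar\'e complex over $C_r^\ast(\Gamma)$, and then to deduce well-definedness from bordism invariance of signatures for Hilbert-Poincar\'e pairs (the non-controlled analog of Theorem $\ref{thm:bordK}$).

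First I would fix PL triangulations of $M$ and $N$ and form the finitely generated Hilbert $C_r^\ast(\Gamma)$-module chain complexes $E = C_\ast(\widetilde M)\otimes_{\mathbb Z\Gamma} C_r^\ast(\Gamma)$ and $E' = C_\ast(\widetilde N)\otimes_{\mathbb Z\Gamma} C_r^\ast(\Gamma)$, where $\widetilde M$ and $\widetilde N$ are the $\Gamma$-covers induced by $\varphi$ and $\psi$. The orientation characters and fundamental classes give these the structure of Hilbert-Poincar\'e pairs over $C_r^\ast(\Gamma)$ (the non-controlled analog of Definition $\ref{def:ppair}$). Since $f|_{\partial N}\colon \partial N\to \partial M$ is a homotopy equivalence, the induced map of boundary subcomplexes is a chain homotopy equivalence, so, following the mapping-cylinder construction of Section $\ref{sec:highrho}$, one can glue $E$ with $-E'$ along their boundary subcomplexes via $f_\ast$ to produce a closed $(n+1)$-dimensional Hilbert-Poincar\'e complex $\mathcal P(\theta)$ over $C_r^\ast(\Gamma)$. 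Define $\ind(\theta)\in K_{n+1}(C_r^\ast(\Gamma))$ to be the signature of $\mathcal P(\theta)$. Independence of the choice of triangulation will follow by a standard refinement/subdivision argument, since any two triangulations admit a common subdivision and subdivision induces a Hilbert-Poincar\'e chain equivalence.

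Next I would check that this signature class vanishes whenever $\theta\sim 0$, i.e.\ when there is a cobordism $(W,\partial W,\Phi,V,\partial V,\Psi,F)$ as in the equivalence relation for $L_{n+1}(\pi_1 X;w)$. The triangulations of $W$ and $V$ give Hilbert-Poincar\'e pairs over $C_r^\ast(\Gamma)$, and the homotopy equivalence $F|_{\partial_2 V}\colon \partial_2 V\to \partial_2 W$ permits gluing them along $\partial_2$ via the same mapping-cylinder construction to produce a single $(n+2)$-dimensional Hilbert-Poincar\'e pair (in the sense of Definition $\ref{def:ppair}$, non-controlled version) whose boundary in the sense of Lemma $\ref{lm:pbd}$ is precisely $\mathcal P(\theta)$. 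The bordism invariance of signatures for Hilbert-Poincar\'e pairs, proved in \cite[Theorem 7.6]{MR2220522}, then gives $\ind(\theta)=0$. This is essentially the non-controlled specialization of the argument I used in Theorem $\ref{thm:bordK}$: introduce the mapping-cone complex $(\widetilde E,\widetilde b_\lambda,\widetilde T)$ of $\lambda P^\perp$, note it is Poincar\'e for all $\lambda$, homotope $\lambda=-1$ (which is chain equivalent to the boundary) to $\lambda=0$ (whose duality operator is operator-homotopic to its negative), and conclude via Lemma $\ref{lm:trivial}$.

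Finally, additivity is immediate: the group operation on $L_{n+1}(\pi_1 X;w)$ is disjoint union of objects, the Hilbert-Poincar\'e complex $\mathcal P(\theta_1\sqcup\theta_2)$ is canonically the direct sum $\mathcal P(\theta_1)\oplus \mathcal P(\theta_2)$, and the signature (both the even and the odd version in Definition $\ref{def:sig}$) is manifestly additive under direct sums of Hilbert-Poincar\'e complexes, hence
\[
\ind(\theta_1+\theta_2)=\ind(\theta_1)+\ind(\theta_2)\in K_{n+1}(C_r^\ast(\Gamma)).
\]
The substantive step is the second paragraph: carrying out the bordism argument so that the boundary of the $(n+2)$-dimensional Hilbert-Poincar\'e pair built from $(W,V,F)$ is identified \emph{on the nose} (not merely up to Hilbert-Poincar\'e chain equivalence) with $\mathcal P(\theta)$, which requires a careful matching of the mapping-cylinder models used in the definition of $\mathcal P$ with those used in the cobordism. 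Once that identification is in place, everything else follows from the Higson-Roe machinery already invoked in Theorem $\ref{thm:bordK}$.
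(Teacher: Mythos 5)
Your proposal is correct and follows the same route as the paper: define $\ind(\theta)$ as the signature of the Hilbert--Poincar\'e complex associated to the Poincar\'e space $M\cup_f(-N)$, then invoke bordism invariance of signatures for Hilbert--Poincar\'e pairs (Higson--Roe) for well-definedness, and additivity on disjoint unions for the group-homomorphism property. You are supplying the construction details that the paper leaves implicit; your closing worry about matching boundaries ``on the nose'' is unnecessary, since the Higson--Roe signature is already invariant under Hilbert--Poincar\'e chain homotopy equivalence.
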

\begin{proof}
	The well-definedness follows immediately from the bordism invariance of the higher signature index. Moreover, the higher signature index is clearly additive on disjoint unions, hence the index map is a group homomorphism. This finishes the proof.
\end{proof}

We need some preparation before we prove the commutativity of diagram $\eqref{diag:surgery}$ above.
Recall that, by Proposition $\ref{prop:isorel}$,  there is a natural isomorphism  
\[   \mathcal S_n(X)\cong L_{n+1}(\pi_1 X, X). \]
In the following, first we shall give another definition,  denoted by $\widehat\rho$,  of higher rho invariant by using the description of $L_{n+1}(\pi_1 X, X)$. Then we will prove that $\widehat \rho = k_n \rho$, where $k_n = 1$ if $n$ is even and $2$ if $n$ is odd. 

Recall from Definition $\ref{def:relL}$ that, for each element \[ \theta = (M, \partial_\pm M, \varphi, N, \partial N_\pm, \psi, f)\in L_{n+1}(\pi_1 X, X), \] the map $f|_{\partial_+ N} \colon \partial_+ N \to \partial_+ M$ is a homotopy equivalence, and $f_{\partial_+ N}$ restricts to a PL infinitesimally controlled homotopy equivalence 
\[ f|_{\partial(\partial_\pm N)} \colon \partial (\partial_\pm N) \to \partial(\partial_\pm M). \]  
Let $Z = M\cup_{f_+} (-N)$ be the space obtained by gluing $M$ and $N$ along the boundary $\partial_+ N$ to $\partial_+ M$ through the homotopy equivalence $f|_{\partial_+ N}$. Though $Z$ is not a manifold, it is a space equipped with Poincar\'{e} duality. Note that the ``boundary" of $Z$ is the space $ \partial_-M\cup_{\partial f_{+}} \partial_-(-N)$, where the latter is obtained by gluing $\partial_- M$ and $\partial_- N$ along the boundary $\partial(\partial_- N)$ to $\partial(\partial_- M)$ through the infinitesimally controlled homotopy equivalence $f|_{\partial(\partial_- N)}$. Let us write 
$ \partial Z \coloneqq   \partial_-M\cup_{\partial f_{+}} \partial_-(-N).$ 
\begin{figure}[H]
	\centering
	\begin{tikzpicture}[scale=1, every node/.style={transform shape}]
	\clip (-4.5,1) rectangle (6.5,6); 	
	\coordinate (center) at (0, 0);
	\draw[blue, thick] ($(center) +(-1,5)$) .. controls ($(center) + (2,4.9)$) and ($(center) +(2, 4.1)$) .. ($(center) +(-0.25, 4)$);

	\begin{scope}
	\clip	($(center) +(-0.55, 1.85)$) rectangle ($(center) +(-1.1, 5)$);
	\draw[ultra  thick, dashed] ($(center) +(-1,5)$) .. controls ($(center) +(-0.875, 1)$) and ($(center) +(-0.3, 1)$).. ($(center) +(-0.25, 4)$);
	\end{scope}
	
	\begin{scope}
	\clip	($(center) +(-0.55, 1.85)$) rectangle ($(center) +(-0.15, 4)$);
	\draw[ultra  thick] ($(center) +(-1,5)$) .. controls ($(center) +(-0.875, 1)$) and ($(center) +(-0.3, 1)$).. ($(center) +(-0.25, 4)$);
	\end{scope}

	\draw[blue, thick] ($(center) +(-1,5)$) .. controls ($(center) + (-3.5,4.9)$) and ($(center) +(-3.5, 4.1)$) .. ($(center) +(-0.25, 4)$);

	\filldraw [red] ($(center) + (-1,5)$) circle (1.5pt); 			
	\filldraw [red] ($(center) +(-0.25,4)$) circle (1.5pt); 
	
	\draw[blue, thick] ($(center) +(-0.55, 1.85)$) .. controls   ($(center) +(0.5, 3)$) and ($(center) +(2.8, 0)$) ..  ($(center) +(1.35, 4.5)$); 
	
	\draw[blue, thick] ($(center) + (-0.55, 1.87)$) .. controls ($(center) +(-2.7, 2.5)$) ..  ($(center) +(-2.8, 4.54)$); 
	
	\draw[blue, thick] ($(center) +(0, 3)$) to [out=70, in=190] ($(center) +(0.8,3.6)$); 
	\draw[blue, thick] ($(center) +(0.1, 3.2)$) to [out=-20, in=-100] ($(center) +(0.7,3.6)$); 		
	\draw[blue, thick] ($(center) +(0.7, 2.5)$) to [out=80, in=160] ($(center) +(1.5, 3.1)$); 
	\draw[blue, thick] ($(center) +(0.78, 2.7)$) to [out=-20, in=-100] ($(center) +(1.3,3.1)$);
	
	\begin{scope}[shift={(-5, 3.8)}]
	\coordinate (center) at (0, 0);
	\begin{scope}[rotate = 270]
	\draw[blue,thick] ($(center) +(0.2, 2.8)$) to [out=70, in=190] ($(center) +(1,3.4)$); 
	\draw[blue,thick] ($(center) +(0.3, 3)$) to [out=-20, in=-100] ($(center) +(0.9,3.4)$); 	
	\end{scope}
	\end{scope}

	
	\node at (0, 5.2) {$\partial_- M$};
	\node at (-1.8, 5.2) {$\partial_- N$};
	\node at (2.3, 2.5) {$M$};
	\node at (-3.2, 2.5) {$N$};
	\node at (-0.5, 1.2) {$Z = M\cup_{f_+} (-N)$}; 
	\node at (6, 3) {$X$}; 	
	\draw[->, thick] (2.7, 3) -- (5.7, 3);
	\node at (4.3, 3.3) {$\Phi = \varphi\cup_{f_+} \psi$};

	\end{tikzpicture}
	\caption{Picture for $Z= M\cup_{f_+}(-N)$.}
	
\end{figure}
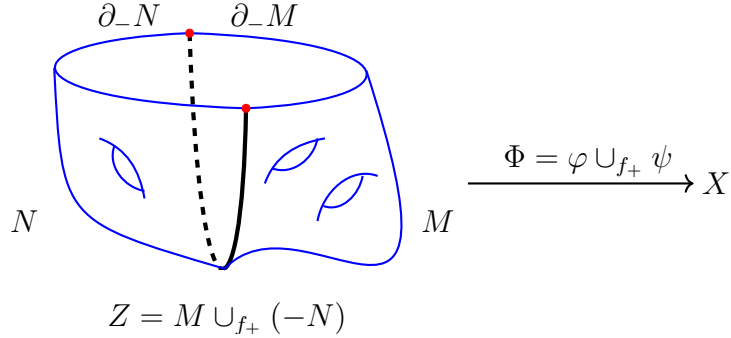
Recall that  the space obtained from  $Z$ by attaching a cylinder is denoted by 
\[ CZ = Z\cup_{\partial Z} (\partial Z\times [1, \infty)). \]
Let us fix a triangulation of $CZ$ as follows. On $Z$, it is the original triangulation of $Z$. The triangulation on $\partial Z\times [1, \infty)$ is the pullback triangulation of $\textup{Tri}_{X\times[1, \infty)}$ under the map $\Phi_\partial\times \id\colon \partial Z \times [1, \infty) \to CX,$ where $\Phi_\partial$ is the restriction of $\Phi = \varphi\cup_{f_+} \psi$ on $\partial Z$. That is,  for every simplex $\Delta^k \subset CX$,  the inverse image $(\Phi_{\partial}\times \id)^{-1}(\Delta^k)$ is a product $K\times \Delta^k$, where $K$ is some triangulated submanifold of $\partial Z$.

\begin{remark}
	To be precise, we should be using a sequence of spaces $\{Z_i\}_{i\geq 1}$, where each $Z_i = M\cup_{(f_i)_+} (-N)$ be the space obtained by gluing $M$ and $N$ along the boundary $\partial_+ N$ to $\partial_+ M$ through the homotopy equivalence $(f_i)|_{\partial_+ N}$. Here $f_i$ is the map $f$ with the additional condition that the homotopy equivalence 
	\[  f|_{\partial(\partial_\pm N)} \colon \partial (\partial_\pm N) \to \partial(\partial_\pm M) \] has control $\leq \frac{1}{i}$. Since no confusion is likely to arise, we shall abuse our notation and continue as if we are working with a single space.  
\end{remark} 

Now the geometrically controlled Poincar\'e complex 
associated to $CZ$ defines a higher signature index in $K_{n+1}( C^\ast_c(\widetilde X \times [1, \infty) )^\Gamma) $. 
\begin{definition}
	With the same notation as above, for each element
	\[\theta = (M, \partial_\pm M, \varphi, N, \partial N_\pm, \psi, f)\in L_{n+1}(\pi_1 X, X), \] 
	We define $\widehat\rho (\theta)$ to be the higher signature index of $CZ$, which is an element of $K_{n+1}( C^\ast_c(\widetilde X \times [1, \infty))^\Gamma )$.  
\end{definition}

The following lemma is an immediate consequence of bordism invariance of higher signature index. 
\begin{lemma} The map $\widehat \rho$ is a group homomorphism 
	\[ \widehat \rho \colon   L_{n+1}(\pi_1 X, X) \to  K_{n+1}( C^\ast_c(\widetilde X \times [1, \infty))^\Gamma ).  \]
\end{lemma}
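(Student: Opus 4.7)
The plan is to deduce both assertions from two ingredients already available in the paper, namely additivity of the signature index construction under direct sums of Poincar\'e complexes, and bordism invariance of the higher signature index (Theorem $\ref{thm:bordK}$ together with its equivariant, hybrid-algebra analogue used in the proof of Theorem $\ref{thm:bordrho}$). The central observation is that the assignment $\theta \mapsto CZ$ is manifestly compatible with disjoint union and that, for a null-bordism of $\theta$ in the sense of Definition $\ref{def:relL}$, the space $CZ$ itself bounds a geometrically controlled Poincar\'e pair over $\widetilde X \times [1,\infty)$.

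For the group homomorphism property, given $\theta_1, \theta_2 \in L_{n+1}(\pi_1 X, X)$ one has $CZ_{\theta_1+\theta_2} = CZ_{\theta_1} \sqcup CZ_{\theta_2}$ as geometrically controlled Poincar\'e complexes over $\widetilde X \times [1,\infty)$. The Higson--Roe signature construction (Definitions $\ref{def:sig}$ and $\ref{def:anasig}$) sends direct sums to direct sums at the level of $K$-theory, so $\widehat\rho(\theta_1+\theta_2) = \widehat\rho(\theta_1) + \widehat\rho(\theta_2)$.

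For well-definedness, suppose $\theta \sim 0$ with witnessing data $(W, \partial W, \Phi, V, \partial V, \Psi, F)$ as in the equivalence relation for $L_{n+1}(\pi_1 X, X)$. Glue $W$ and $-V$ along the piece $\partial_2$ using the homotopy equivalence $F|_{\partial_2 V}\colon \partial_2 V \to \partial_2 W$ to form a Poincar\'e space $\mathcal W = W \cup_{F|_{\partial_2 V}}(-V)$; its geometric boundary is $Z \sqcup Z'$ where $Z = M \cup_{f_+}(-N)$ is the space defining $\widehat\rho(\theta)$ and $Z' = \partial_3 W \cup_{F|_{\partial_3 V}}(-\partial_3 V)$ has infinitesimally controlled corner identifications. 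Attach the infinite cylinder along $Z'$ to obtain $C\mathcal W$. Exactly as in the construction of $\widehat\rho$, the geometrically controlled Poincar\'e pair associated to $C\mathcal W$ over $\widetilde X \times [1,\infty)$ has as its Poincar\'e boundary the complex used to define $\widehat\rho(\theta)$. Applying the geometrically controlled bordism argument from the proof of Theorem $\ref{thm:bordK}$ (together with the equivariant hybrid version used in the proof of Theorem $\ref{thm:bordrho}$) to this Poincar\'e pair yields $\widehat\rho(\theta) = 0$ in $K_{n+1}(C^*_c(\widetilde X \times [1,\infty))^\Gamma)$.

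The main obstacle will be verifying that the Poincar\'e pair associated to $C\mathcal W$ truly lives in the hybrid controlled category, i.e.\ that all the chain maps and chain homotopies coming from $F$ have propagations that go uniformly to zero outside larger and larger $\Gamma$-cocompact subsets along the cylindrical direction. This is exactly what the infinitesimal control condition on $F|_{\partial\partial_{2,-} V}\colon \partial\partial_{2,-} V \to \partial\partial_{2,-} W$ (Definition $\ref{def:relL}$, condition (6)) is designed to provide; combining it with the standard subdivision of $X \times [1,\infty)$ of Section $\ref{sec:subdiv}$ and the control estimates of Proposition $\ref{prop:gaincontrol}$ produces the required uniform propagation bounds. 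Once this control bookkeeping is in place, the bordism argument of Theorem $\ref{thm:bordK}$ transports verbatim to the hybrid algebra $C^*_c(\widetilde X \times [1,\infty))^\Gamma$ and finishes the proof.
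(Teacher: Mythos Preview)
Your approach is correct and is precisely the bordism-invariance argument the paper alludes to in its one-line proof; you have simply spelled out the details of how the null-cobordism data $(W,V,F)$ yields a Poincar\'e pair in the hybrid category bounding $CZ$. One small notational point: the boundary of $\mathcal W = W\cup_{F|_{\partial_2 V}}(-V)$ is $Z\cup_{\partial Z} Z'$ (a union along the corner $\partial Z=\partial Z'$), not a disjoint union $Z\sqcup Z'$, but your subsequent cylinder-attachment step and conclusion that $\partial(C\mathcal W)=CZ$ are unaffected.
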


Recall that we have the following natural isomorphism
\[  c_\ast\colon   \mathcal S_{n}(X) \to L_{n+1}(\pi_1 X, X) \] 	
by taking the product with the unit interval (see Section $\ref{sec:iden}$): 
\[\theta = \{M, \partial M, \varphi, N, \partial N, \psi, f\} \mapsto  \theta\times I.  \] 
Moreover, by Proposition $\ref{prop:coninf}$ and  Corollary $\ref{cor:hybrid}$, we have 
\begin{align*}
K_{n+1}(C^\ast_{c}(\widetilde X\times [1, \infty))^\Gamma  ) & \cong  K_{n}(C^\ast_{L,  0, c}(\widetilde X\times [1, \infty))^\Gamma ) \\
& \cong K_n(C^\ast_{L,0}(\widetilde X)^\Gamma).
\end{align*} 
It follows that the map $\widehat \rho$ can be viewed as a group homomorphism:  
\[ \widehat \rho \colon   \mathcal S_{n}(X) \to  K_n(C^\ast_{L,0}(\widetilde X)^\Gamma).  \]

\begin{remark}\label{rmk:rhocompare}
	In the case of smooth structure sets, it is easy to see that the definition $\widehat \rho$ above agrees with the \emph{structure invariant} of Higson and Roe \cite[Section 3]{MR2220524}.
\end{remark}

In Theorem $\ref{thm:tworho}$ below, we will prove  that   $\widehat \rho$ is equal to $k_n\cdot \rho$, where  $\rho$ is the higher rho invariant from  Definition $\ref{def:highrho}$ and $k_n = 1$ if $n$ is even and $2$ if $n$ is odd. Before doing this, let us first prove a product formula for the higher rho invariant $\rho$, which will be useful for the proof of Theorem $\ref{thm:tworho}$.

Given an element 
\[ \theta = (M, \partial M, \varphi, N, \partial N, \psi, f) \in \mathcal S_n(X), \] let $\theta\times \mathbb R \in \mathcal S_{n+1}(X\times \mathbb R)$ be the product of $\theta$ and $\mathbb R$.  Here various undefined terms take the obvious meanings (see Section $\ref{sec:iden}$ for the definition of $\theta\times I$ for example). Note that the construction in Section $\ref{sec:highrho}$ also applies to $\theta\times \mathbb R$ and defines its higher rho invariant $\rho(\theta\times \mathbb R) \in K_{n+1}(C^\ast_{ L, 0}(\widetilde X\times \mathbb R)^\Gamma)$. Also there is a natural homomorphism    
\[ \alpha \colon C^\ast_{L, 0}(\widetilde X)^\Gamma \otimes C^\ast_L(\mathbb R)  \to C_{L, 0}^\ast (\widetilde X\times \mathbb R)^\Gamma, \] which induces an isomorphism on $K$-theory. 

\begin{theorem}\label{thm:prod}  With the same notation as above, we have 
	\[  k_n\cdot \alpha_\ast\big( \rho(\theta) \otimes \ind_L(\mathbb R) \big) =  \rho(\theta\times \mathbb R)\]
	in $K_{n+1}(C^\ast_{ L, 0}(\widetilde X\times \mathbb R)^\Gamma)$, 
	where $\ind_L(\mathbb R)$ is the $K$-homology class of the signature operator on $\mathbb R$, and $k_n = 1$ if $n$ is even and $2$ if $n$ is odd. 
\end{theorem}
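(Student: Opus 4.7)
The plan is to trace the higher rho construction of Section \ref{sec:highrho} applied to $\theta\times\mathbb R$ and identify it, at the level of $K$-theory, with the external product of $\rho(\theta)$ and the $K$-homology class $\ind_L(\mathbb R)$, with the multiplicative constant $k_n$ coming from the product formula for signature operators on manifolds of mixed parity dimensions (cf.\ Remark \ref{rmk:sigprod}). Write $Y=M\cup_f(-N)$ (with its cylindrical extension $CY$) as in the proof of Theorem \ref{thm:bordrho}, and $Y\times\mathbb R$ for the corresponding space built from $\theta\times\mathbb R$. The key observation is that the geometrically controlled Poincar\'e complex $(\mathcal E_{Y\times\mathbb R},\mathfrak b_{Y\times\mathbb R},\mathcal T_{Y\times\mathbb R})$ controlled over $C(X\times\mathbb R)$ associated to a product triangulation is canonically the algebraic tensor product of the geometrically controlled Poincar\'e complex $(\mathcal E_Y,\mathfrak b_Y,\mathcal T_Y)$ over $CX$ with the geometrically controlled complex $(\mathcal E_\mathbb R,\mathfrak b_\mathbb R,\mathcal T_\mathbb R)$ associated to a bounded-geometry triangulation of $\mathbb R$, and the Poincar\'e duality operator on the product is $\mathcal T_Y\otimes\mathcal T_\mathbb R$ up to the universal scalar $k_n$ dictated by the convention of Definition \ref{def:sop} and Remark \ref{rmk:sigprod}.

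First, I would set up the tensor product model of the construction. The homotopy equivalence data $F,G,H,H'$ for $\theta\times\mathbb R$ is just $F\otimes\id,\,G\otimes\id,\,H\otimes\id,\,H'\otimes\id$ tensored with the identity chain maps/homotopies on $(\mathcal E_\mathbb R,\mathfrak b_\mathbb R)$. Consequently, each of the analytically controlled Poincar\'e duality operators $\mathscr T_t$ and the homotopy in \eqref{eq:htpy} used to build $\rho(\theta)$ tensors with the signature duality operator on $\mathbb R$ to produce exactly the analogous family used to build $\rho(\theta\times\mathbb R)$, with an overall coefficient governed by the $i^{p(p-1)+l}$ normalization in Definition \ref{def:sop}: this coefficient changes by a factor of $k_n$ when $n$ increases by one (odd-to-even, even-to-odd), reproducing the Higson--Roe product formula.

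Second, I would invoke the standard external product identification at the $C^\ast$-algebra level. The tensor product of propagation-bounded operators gives a $\ast$-homomorphism
\[
C^\ast_{L,0,c}(\widetilde X\times[1,\infty))^\Gamma\otimes C^\ast_L(\mathbb R)\longrightarrow C^\ast_{L,0,c}(\widetilde X\times\mathbb R\times[1,\infty))^\Gamma
\]
compatible with the isomorphism $\alpha$ after applying Proposition \ref{prop:coninf} and Corollary \ref{cor:hybrid}. Under this map, the Kasparov product pairing sends $[\rho(\theta)]\otimes[\ind_L(\mathbb R)]$ to the $K$-class represented by tensoring the path of invertibles $W(\theta)$ with the invertible representative of $\ind_L(\mathbb R)$ built from $(\mathcal E_\mathbb R,\mathfrak b_\mathbb R,\mathcal T_\mathbb R)$. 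Step one then identifies this tensor-product invertible with $k_n\cdot W(\theta\times\mathbb R)$ in $K$-theory, which is precisely the claim.

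The main obstacle I expect is bookkeeping the scalar $k_n$ cleanly. At the operator level, the identity $D_{Y_1\times Y_2}=k_{\dim Y_1\cdot\dim Y_2}\,(D_{Y_1}\boxtimes D_{Y_2})$ for signature operators corresponds, at the level of Hilbert--Poincar\'e complexes, to a precise identity between $\mathcal S_{Y\times\mathbb R}$ and $\mathcal S_Y\otimes\mathcal S_\mathbb R+(-1)^{\ast}\,\mathcal S_Y^\ast\otimes\mathcal S_\mathbb R$ that becomes visible only once one writes out Definition \ref{def:sop} for both factors. Rather than redoing the linear algebra, I would reduce to the analogous statement for closed manifolds (which is standard after Higson--Roe and has the same $k_n$), applying it simultaneously to the entire analytically controlled family $\{\mathscr T_t\}$ appearing in Section \ref{sec:highrho}; the homotopy $\eqref{eq:htpy}$ used to push the signature to the identity is likewise preserved under tensoring with $\mathbb R$. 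Finally, by construction $\rho(\theta\times\mathbb R)$ and $k_n\cdot\alpha_\ast(\rho(\theta)\otimes\ind_L(\mathbb R))$ are represented by invertibles that agree on the nose after this identification, which gives the equality of $K$-theory classes.
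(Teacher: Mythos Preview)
Your proposal has the right starting point---form the tensor product of the analytically controlled Poincar\'e complex for $\theta$ with that for $\mathbb R$---and this is indeed how the paper begins in Appendix~\ref{app:prod}. However, there is a genuine gap: you treat the even and odd cases uniformly, but they are structurally different and require separate arguments. When $n$ is even, $\rho(\theta)\in K_0$ and $\ind_L(\mathbb R)\in K_1$; when $n$ is odd, $\rho(\theta)\in K_1$ and $\ind_L(\mathbb R)\in K_1$, so the product lands in $K_0$. Your phrase ``tensoring the path of invertibles $W(\theta)$ with the invertible representative of $\ind_L(\mathbb R)$'' does not produce a $K_0$ class in the odd case, and you never say what does. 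The formula you write, ``$\mathcal S_{Y\times\mathbb R}=\mathcal S_Y\otimes\mathcal S_{\mathbb R}+(-1)^{\ast}\mathcal S_Y^{\ast}\otimes\mathcal S_{\mathbb R}$'', is not well-formed ($S$ is self-adjoint), and your claim that the duality operator itself carries the scalar $k_n$ is incorrect: as Appendix~\ref{app:tensor} shows, $T\hat\otimes R$ has no scalar---the factor enters elsewhere.

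The paper handles the two parities quite differently. In the \emph{even} case it writes $\partial+\partial^\ast\pm S_{T\hat\otimes R}=B\otimes 1-1\otimes iD_t\pm S\otimes 1$ explicitly (using the de~Rham model for $\mathbb R$), verifies the propagation control via wave-operator estimates, and then computes directly that the resulting path of invertibles equals $([P_+]-[P_-])\otimes[(D_t+i)(D_t-i)^{-1}]$, which is $\rho(\theta)\otimes\ind_L(\mathbb R)$ with $k_n=1$. For the \emph{odd} case the paper does \emph{not} attempt a direct $K_1\times K_1\to K_0$ computation. Instead it proves separately (Proposition~\ref{prop:odd}) that $\rho(\theta\times\mathbb R^2)=\rho(\theta)\otimes\ind_L(\mathbb R^2)$---a $K_1\times K_0\to K_1$ product, handled by another explicit matrix calculation---and then combines this with the even case applied to $\theta\times\mathbb R$ and the identity $\ind_L(\mathbb R^2)=2\,\beta_\ast(\ind_L(\mathbb R)\otimes\ind_L(\mathbb R))$ to extract the factor $k_n=2$. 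Your suggestion to ``reduce to the analogous statement for closed manifolds'' skips precisely this work: the signature product formula gives a statement in $K_\ast(C^\ast_L)$, whereas $\rho$ lives in $K_\ast(C^\ast_{L,0})$, so one must check that the entire path (including the homotopy to the identity) is compatible with the external product, which is the content of the paper's explicit computations.
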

\begin{proof}
	The proof is elementary and will be given in Appendix $\ref{app:prod}$. 
\end{proof}

To prepare for the proof of Theorem $\ref{thm:tworho}$,  let us introduce some notation.  Consider the $ C^\ast$-algebra $ \mathscr A = C^\ast_{ L, 0}(\widetilde X\times \mathbb R)^\Gamma$. Using the notation from Definition $\ref{def:localg}$, we define 
\[ \mathscr A_{-} = \bigcup_{n\in \mathbb N} C^\ast_{L, 0}(\widetilde X \times (-\infty, n]; \widetilde X\times \mathbb R)^\Gamma,  \]
\[\mathscr A_{+} = \bigcup_{n\in \mathbb N} C^\ast_{L, 0}(\widetilde X \times [-n, \infty); \widetilde X\times \mathbb R)^\Gamma \] and 
\[  \mathscr A_{\cap} =  \bigcup_{n\in \mathbb N} C^\ast_{L, 0}(\widetilde X \times [-n, n]; \widetilde X\times \mathbb R)^\Gamma. \] 
It is clear that $\mathscr A_\pm$ and $\mathscr A_\cap$  are closed two-sided ideals of $\mathscr A$. Moreover, we have $\mathscr A_+ + \mathscr A_- = \mathscr A$ and $\mathscr A_+ \cap \mathscr A_- = \mathscr A_\cap$, which gives rise to the following   Mayer-Vietoris sequence in $K$-theory: 
\[  \xymatrix{ K_0(\mathscr A_\cap) \ar[r]  &  K_0(\mathscr A_+) \oplus K_0(\mathscr A_-) \ar[r]  & K_0(\mathscr A) \ar[d]^{\partial_{MV}} \\
	K_1(\mathscr A)   \ar[u]^{\partial_{MV}} & K_1(\mathscr A_+) \oplus K_1(\mathscr A_-) \ar[l]	 & K_1(\mathscr A_\cap). \ar[l]		  } \] 
Similarly, consider the following ideals of  $C^\ast$-algebra $\mathscr B = C^\ast_L(\mathbb R)$:  
\[ \mathscr B_- = \bigcup_{n\in \mathbb N} C^\ast_{L}((-\infty, n]; \mathbb R), \]
\[    \mathscr B_+ = \bigcup_{n\in \mathbb N} C^\ast_{L}([-n, \infty); \mathbb R)\]
and \[ \mathscr B_\cap =  \bigcup_{n\in \mathbb N} C^\ast_{L}([-n, n]; \mathbb R). \]
These $C^\ast$-algebras  give rise to the following   Mayer-Vietoris sequence in $K$-theory: 
\[  \xymatrix{ K_0(\mathscr B_\cap) \ar[r]  &  K_0(\mathscr B_+) \oplus K_0(\mathscr B_-) \ar[r]  & K_0(\mathscr B) \ar[d]^{\partial_{MV}} \\
	K_1(\mathscr B)   \ar[u]^{\partial_{MV}} & K_1(\mathscr B_+) \oplus K_1(\mathscr B_-) \ar[l]	 & K_1(\mathscr B_\cap). \ar[l]		  } \] 
Note that there is a natural homomorphism 
\[ \alpha\colon C^\ast_{L, 0}(\widetilde X)^\Gamma \otimes \mathscr B \to \mathscr A,\]  which restricts to homomorphisms 
\[  \alpha\colon C^\ast_{L, 0}(\widetilde X)^\Gamma \otimes \mathscr B_\pm  \to \mathscr A_\pm \textup{ and } \alpha\colon C^\ast_{L, 0}(\widetilde X)^\Gamma \otimes \mathscr B_\cap \to \mathscr A_\cap \]
such that the following diagram commutes:
\begin{equation}\label{diag:mv}
\begin{split}
\scalebox{1}{\xymatrixcolsep{1pc}\xymatrix{ K_{n}(C^\ast_{L, 0}(\widetilde X)^\Gamma) \otimes K_1(\mathscr B) \ar[r] \ar[d]_{1\otimes \partial_{MV}} &  K_{n+1}(C^\ast_{L, 0}(\widetilde X)^\Gamma \otimes \mathscr B) \ar[r]^-{\alpha_{\ast}}_-{\cong} &  K_{n+1}(C^\ast_{ L, 0}(\widetilde X\times \mathbb R)^\Gamma) \ar[d]^{\partial_{MV}}\\ 
		K_{n}(C^\ast_{L, 0}(\widetilde X)^\Gamma) \otimes K_0(\mathscr B_\cap) \ar[r] &  K_{n}(C^\ast_{L, 0}(\widetilde X)^\Gamma \otimes \mathscr B_\cap) \ar[r]^-{\alpha_{\ast}}_-{\cong} & K_n(\mathscr A_\cap) = K_{n}(C^\ast_{ L, 0}(\widetilde X)^\Gamma).
}  }
\end{split}
\end{equation}

\begin{theorem}\label{thm:tworho}
	The following diagram commutes: 
	\[ \scalebox{1}{\xymatrixcolsep{2pc} \xymatrix{   L_{n+1}(\pi_1 X, X)  \ar[r]^-{\widehat{\rho}}  & K_{n+1}( C^\ast_c(\widetilde X \times [1, \infty))^\Gamma)  \ar[d]^{\partial_\ast} \\
			\mathcal S_n(X) \ar[u]^{c_\ast} \ar[r]^-{k_n\cdot \rho} &  K_{n}(C^\ast_{L,0, c}(\widetilde X\times [1, \infty))^\Gamma) = K_{n}(C^\ast_{ L, 0}(\widetilde X)^\Gamma), } }
	\]
	where $\partial_\ast$	is the connecting map in the $K$-theory long exact sequence associated to    
	\[\scalebox{1}{$0 \to C^\ast_{L,0, c}(\widetilde X\times [1, \infty))^\Gamma\to  C^\ast_{L, c}(\widetilde X\times [1, \infty))^\Gamma 
		\to C^\ast_{ c}(\widetilde X\times [1, \infty))^\Gamma \to 0,$} \]
	and $k_n = 1$ if $n$ is even and $2$ if $n$ is odd. 
\end{theorem}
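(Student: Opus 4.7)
The plan is to reduce the claim to the product formula (Theorem \ref{thm:prod}) via a Mayer-Vietoris argument. For $\theta = (M,\partial M,\varphi,N,\partial N,\psi,f)\in\mathcal S_n(X)$, the image $c_\ast(\theta)=\theta\times I$ produces the $(n+1)$-dimensional Poincar\'e space $Z=(M\times I)\cup_{f\times\id_{\{1\}}}(-(N\times I))$, whose boundary $\partial Z = M\cup_{f|_{\partial N}}(-N)$ is exactly the Poincar\'e space used in construction (ii) of $\rho(\theta)$ in Section \ref{sec:highrho}. By definition, $\widehat\rho(c_\ast\theta)=[\sigma(CZ)]$ where $CZ = Z\cup_{\partial Z}(\partial Z\times [1,\infty))$.

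First I would unpack $\partial_\ast[\sigma(CZ)]$ via the cylindrical structure. Since $K_\ast(C^\ast_{L,c}(\widetilde X\times [1,\infty))^\Gamma)=0$, the boundary map $\partial_\ast\colon K_{n+1}(C^\ast_c(\widetilde X\times[1,\infty))^\Gamma)\xrightarrow{\cong}K_n(C^\ast_{L,0}(\widetilde X)^\Gamma)$ is an isomorphism, and computing it amounts to extracting the boundary contribution to the signature of $CZ$. The cap $Z$ is a Poincar\'e null-bordism of $\partial Z$ (having the homotopy type of $M$ via $f$), so the local signature class of $\partial Z$ is trivialized in $K_n(C^\ast_L(\widetilde X)^\Gamma)$, yielding a distinguished lift to $K_n(C^\ast_{L,0}(\widetilde X)^\Gamma)$. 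A direct manipulation of the geometrically controlled Poincar\'e complex of $CZ$---analogous to the bordism-invariance argument of Theorem \ref{thm:bordK}, but now producing a nontrivial class rather than zero---should identify this lift with the higher rho invariant built by construction (ii) of Section \ref{sec:highrho}.

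Second, I would account for the factor $k_n$ using the product formula. The cylindrical end $\partial Z\times [1,\infty)\subset CZ$ contributes to $\sigma(CZ)$ an external-product signature. By Theorem \ref{thm:prod} and the commutative Mayer-Vietoris diagram \eqref{diag:mv}, we have $\partial_{MV}\rho(\theta\times\mathbb R)=k_n\rho(\theta)$, since $\rho(\theta\times\mathbb R)=k_n\cdot\alpha_\ast(\rho(\theta)\otimes\ind_L(\mathbb R))$ and $\partial_{MV}\ind_L(\mathbb R)$ is the multiplicative unit of $K_0(\mathscr B_\cap)$. Identifying the positive half-line of $\mathbb R$ with the cylindrical end of $CZ$, and using Remark \ref{rmk:sigprod} to see the factor $k_n$ as the external-product coefficient of signature operators, the Mayer-Vietoris boundary $\partial_{MV}$ matches $\partial_\ast$ acting on $\widehat\rho(c_\ast\theta)$; hence $\partial_\ast\widehat\rho(c_\ast\theta)=k_n\rho(\theta)$.

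The main obstacle is the identification of $\partial_\ast[\sigma(CZ)]$ with $k_n\rho(\theta)$ at the level of explicit representatives. Concretely, one must show that the lifting of $\sigma(CZ)$ from $C^\ast_c$ to $C^\ast_{L,c}$ via the local signature of $CZ$, when restricted at the smallest time coordinate, yields an invertible in $C^\ast_{L,0,c}$ whose $K$-theory class agrees with the path obtained from $\sigma(\partial Z)$ concatenated with the $f$-induced null-homotopy of construction (ii), up to the factor $k_n$. This will require carefully tracking how the Poincar\'e duality operator of $CZ$ splits along the cylinder and how the external signature product interacts with the connecting map of $0\to C^\ast_{L,0,c}\to C^\ast_{L,c}\to C^\ast_c\to 0$, relying on the machinery of Sections \ref{sec:highrho} and \ref{sec:bord}.
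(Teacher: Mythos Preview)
Your second paragraph contains the correct strategy and matches the paper's proof: one shows $\partial_\ast\widehat\rho(c_\ast\theta)=\partial_{MV}\big[\rho(\theta\times\mathbb R)\big]$ and then applies Theorem~\ref{thm:prod} together with diagram~\eqref{diag:mv} to obtain $k_n\rho(\theta)$. Your first paragraph, however, is a detour the paper avoids entirely: there is no need for a separate bordism-style identification of $\partial_\ast[\sigma(CZ)]$ with construction~(ii) of $\rho(\theta)$ before the factor $k_n$ has been accounted for, and indeed such an identification (without $k_n$) would be false when $n$ is odd.

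The ``main obstacle'' you flag in the third paragraph is dissolved by a concrete trick you have not spotted. The paper writes down an \emph{explicit} lift of $\widehat\rho(\theta)$ from $C^\ast_c(\widetilde X\times[1,\infty))^\Gamma$ to $C^\ast_{L,c}(\widetilde X\times[1,\infty))^\Gamma$: set $a_\theta(m)=\chi_m\,\widehat\rho(\theta)\,\chi_m$ for $m\in\mathbb N$, where $\chi_m$ is the characteristic function of $\widetilde X\times[m,\infty)$, and linearly interpolate in $t$. The key observation is that this \emph{same} path $a_\theta$ also serves as a lift of $\rho(\theta\times\mathbb R)$ for the Mayer--Vietoris connecting map $\partial_{MV}$, once the half-line $[1,\infty)$ is identified with one side of $\mathbb R$. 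Thus $\partial_\ast\widehat\rho(c_\ast\theta)=\partial_{MV}\big[\rho(\theta\times\mathbb R)\big]$ holds immediately, with no need to track how the Poincar\'e duality operator of $CZ$ splits along the cylinder. After that, the product formula and diagram~\eqref{diag:mv} finish the argument exactly as you describe.
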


\begin{proof}
	Recall that a standard way to construct the connecting map $\partial_\ast$ is by lifting a projection (resp. invertible) in  $C^\ast_{ c}(\widetilde X\times [1, \infty))^\Gamma$ to an element in  $C^\ast_{L, c}(\widetilde X\times [1, \infty))^\Gamma$. For an element 
	\[ \theta = \{M, \partial_\pm M, \varphi, N, \partial_\pm N, \psi, f\} \in L_{n+1}(\pi_1 X, X), \]  there exists a lifting  $a_\theta  \in  C^\ast_{L, c}(\widetilde X\times [1, \infty))^\Gamma $ of the element  $\widehat\rho(\theta) \in C^\ast_{ c}(\widetilde X\times [1, \infty))^\Gamma $ as follows. Let 
	\[ a_\theta(n) =  \chi_{n} \widehat \rho(\theta)   \chi_n, \]
	where  $\chi_n$ is the characteristic function on $\widetilde X\times [n, \infty)$. We define \[ a_\theta(t) = (n+1 -t)a_\theta(n) + (t-n)a_\theta(n+1) \]
	for all $ n \leq t \leq n+1$. It is clear that $a_\theta$ lies in  $ C^\ast_{L, c}(\widetilde X\times [1, \infty))^\Gamma $  and is a lift of $\widehat{\rho}(\theta)$.  
	
	On the other hand, by the discussion  before Proposition $\ref{prop:highrho}$, one sees that the same $a_\theta$ above is also a lift of $\rho(\theta\times \mathbb R) $, for the construction of the connecting map 
	\[ \partial_{MV}\colon K_{n+1}(C_{L,0}^\ast(\widetilde X\times \mathbb R)^\Gamma) \to K_{n}(C^\ast_{L,0}(\widetilde X)^\Gamma)  \] 
	in diagram $\eqref{diag:mv}$. In particular, we see that 
	\[ \partial_\ast\circ \widehat\rho \circ c_\ast (\theta) = \partial_{MV} \left[\rho(\theta\times \mathbb R)\right]. \]  
	Now by Theorem $\ref{thm:prod}$ and the commutative diagram $\eqref{diag:mv}$, it follows that 
	\[ \partial_\ast\circ \widehat\rho \circ c_\ast (\theta) = k_n\cdot \rho(\theta) \otimes \partial_{MV}[\ind_L(\mathbb R)] = k_n\cdot \rho(\theta). \]
	This finishes the proof.

\end{proof}
Combining the above discussion, we have the following main result of this section. 
\begin{theorem}\label{thm:stoa}
	We have the  following commutative diagram:
	\[ 
	\scalebox{1}{\xymatrixcolsep{1.5pc}\xymatrix{  \mathcal N_{n+1}(X)  \ar[r]^{i_\ast} \ar[d]_{\ind_L}  & L_{n+1}(\Gamma ) \ar[r]^-{j_\ast} \ar[d]_{\ind}  &  \mathcal S_n(X) \ar[r]  \ar[d]_{k_n\cdot \rho} &  \mathcal N_{n}(X )  \ar[d]_{k_n\cdot  \ind_L} \\
			K_{n+1}(C_{L}^\ast(\widetilde X)^\Gamma) \ar[r]^-{\mu_\ast} & K_{n+1}(C_r^\ast(\Gamma)) \ar[r] & K_n(C_{L, 0}^\ast(\widetilde X)^\Gamma)  \ar[r]  &  K_n(C_{L}^\ast(\widetilde X)^\Gamma). 	} }\]
\end{theorem}
\begin{proof}
	The commutativity of the right square and the left square follows immediately from definition. 
	
	The commutativity of the middle square is an immediate consequence of Theorem $\ref{thm:tworho}$ above. Indeed, by Theorem $\ref{thm:tworho}$, we have  the following commutative diagram:
	\[ \scalebox{1}{\xymatrix{   &   	\mathcal S_n(X) \ar[ddr]^{k_n\cdot \rho} \ar[d]_{c_\ast}   \\
			L_n(\pi_1 X)  \ar[r] \ar[ur]^{j_\ast} \ar[d]^{\ind} &  L_{n+1}(\pi_1 X, X)  \ar[d]_{\widehat \rho}   \\
			K_{n}(C^\ast(\widetilde X)^\Gamma) \ar[r] &  K_{n+1} (C_c^\ast(\widetilde X\times [1, \infty))^\Gamma) \ar[r]^-{\partial_\ast}& K_n(C^\ast_{L,0}(\widetilde X)^\Gamma), }} \]
	Here the commutativity of the lower left square follows from the definition of the index map and the map $\widehat \rho$. This finishes the proof. 
\end{proof}

\begin{remark}
	The exact same proof also applies to the maximal version, and we have the following commutative diagram:
	\[ 
	\begin{split}
	\scalebox{1}{	\xymatrixcolsep{1.5pc}\xymatrix{  \mathcal N_{n+1}(X)  \ar[r]^{i_\ast} \ar[d]_{\ind_L}  & L_{n+1}(\Gamma ) \ar[r]^-{j_\ast} \ar[d]_{\ind}  &  \mathcal S_n(X)   \ar[d]_{k_n\cdot \rho} \ar[r] & \mathcal N_{n}(X )  \ar[d]_{k_n\cdot  \ind_L} \\
			K_{n+1}(C_{L}^\ast(\widetilde X)^\Gamma_{\max}) \ar[r]^-{\mu_\ast} & K_{n+1}(C_{\max}^\ast(\Gamma)) \ar[r] & K_n(C_{L, 0}^\ast(\widetilde X)^\Gamma_{\max}) \ar[r] & K_n(C_{L}^\ast(\widetilde X)^\Gamma_{\max}).  
	} 	}  
	\end{split}
	\]
\end{remark}

Moreover, the same method can also be applied to the homology manifold surgery exact sequence of a closed oriented connected ANR homology manifold of dimension $\geq 6$. 

\begin{proposition}\label{prop:homocomm}
	Let $X$ be a closed oriented connected ANR homology manifold of dimension $\geq 6$. Suppose $\pi_1 X = \Gamma$. Then we have the following commutative diagram: 
	\[ 
	\scalebox{1}{
		\xymatrixcolsep{1.5pc}	\xymatrix{  \mathfrak N_{n+1}(X)  \ar[r]^{i_\ast} \ar[d]_{\ind_L}  & \mathfrak L_{n+1}(\Gamma) \ar[r]^-{j_\ast} \ar[d]_{\ind}  &  \mathfrak S_n(X) \ar[r]  \ar[d]_{k_n\cdot \rho} &  \mathfrak N_{n}(X )  \ar[d]_{k_n\cdot  \ind_L} \\
			K_{n+1}(C_{L}^\ast(\widetilde X)^\Gamma) \ar[r]^-{\mu_\ast} & K_{n+1}(C_r^\ast(\Gamma)) \ar[r] & K_n(C_{L, 0}^\ast(\widetilde X)^\Gamma)  \ar[r]  &  K_n(C_{L}^\ast(\widetilde X)^\Gamma), 	}  }
	\] 
	where the upper exact sequence is the $4$-periodic exact sequence from line $\eqref{perilong}$ in Theorem $\ref{thm:les}$ \textup{(}cf. Remark $\ref{rk:hommfld}$\textup{)}.  
\end{proposition}

\section{Novikov rho invariant and strong Novikov conjecture} \label{sec:novrho}

In this section, we define a homological version of the higher rho invariant from Section $\ref{sec:highrho}$.  This homological higher rho invariant will be called \emph{Novikov rho invariant} for reasons that will be explained in later part of this section. One importance of the Novikov rho invariant is that it can be used to detect nontrivial elements in the structure group of a  closed oriented topological manifold, even when the fundamental group  of the manifold is torsion free. In particular, we apply the Novikov  rho invariant to show that the structure group is not finitely generated for a class of manifolds.  Throughout the section, we assume $n \geq 5$. 

Let $X$ be a proper metric space with $\pi_1 X = \Gamma$. Suppose $\widetilde X$ is the universal cover of $X$. We have the following commutative diagram: 
\begin{equation}
\begin{split}
\scalebox{1}{\xymatrixcolsep{1pc}\xymatrix{  K^\Gamma_{n+2}(\underline E\Gamma, \widetilde X)  \ar[r] \ar[d]_{\Lambda}  & K_{n+1}^\Gamma(\widetilde X) \ar[r] \ar[d]^{\cong}  &  K_{n+1}^\Gamma(\underline E\Gamma) \ar[r] \ar[d]^{\mu_\ast} & K^\Gamma_{n+1}(\underline E\Gamma, \widetilde X)  \ar[d]_{\Lambda} \\
		K_{n+1}(C_{L, 0}^\ast(\widetilde X)^\Gamma) \ar[r]  & K_{n+1}(C_{L}^\ast(\widetilde X)^\Gamma) \ar[r] & K_{n+1}(C^\ast_r(\Gamma))  \ar[r]^-\partial   &  K_{n}(C_{L, 0}^\ast(\widetilde X)^\Gamma)  
} }
\end{split}
\end{equation}
where $K^\Gamma_{n}(\underline E\Gamma, \widetilde X) $ is the  $\Gamma$-equivariant relative $K$-homology group for the pair of spaces $(\underline E\Gamma, \widetilde X) $. For example, $K^\Gamma_{n}(\underline E\Gamma, \widetilde X)$ is the $\Gamma$-equivariant $K$-homology group of the mapping cone of the map $\widetilde X \to \underline E\Gamma$. Also, $K^\Gamma_{n}(\underline E\Gamma, \widetilde X)$ is naturally isomorphic to the $K$-theory group of the $C^\ast$-algebra mapping cone associated to the natural map $C_{L}^\ast(\widetilde X)^\Gamma \to C_{L}^\ast(\underline E\Gamma)^\Gamma$. Furthermore,  the $K$-theory  of $C_{L, 0}^\ast(\widetilde X)^\Gamma  $ is naturally isomorphic to  the $K$-theory  of the $C^\ast$-algebra mapping cone associated to the evaluation map $C_{L}^\ast(\widetilde X)^\Gamma \to C_r^\ast(\Gamma)$,
cf. \cite{MR4045309}. In view of  this mapping cone picture, the commutativity of the above diagram is clear.

We would like to see in what circumstances there exists a natural homomorphism 	$\beta \colon K_{n}(C_{L, 0}^\ast(\widetilde X)^\Gamma) \to K^\Gamma_{n+1}(\underline E\Gamma, \widetilde X)$ such that the following diagram remains commutative. 
\begin{equation}\label{diag:tophr}
\begin{split}
\scalebox{1}{\xymatrixcolsep{1pc}\xymatrix{  K^\Gamma_{n+2}(\underline E\Gamma, \widetilde X)  \ar[r] \ar[d]_{\Lambda}  & K_{n+1}^\Gamma(\widetilde X) \ar[r] \ar[d]^{\cong}  &  K_{n+1}^\Gamma(\underline E\Gamma) \ar[r] \ar[d]^{\mu_\ast} & K^\Gamma_{n+1}(\underline E\Gamma, \widetilde X)  \ar[d]_{\Lambda} \\
		K_{n+1}(C_{L, 0}^\ast(\widetilde X)^\Gamma) \ar[r] \ar@{-->}@/_/[u]_{\beta} & K_{n+1}(C_{L}^\ast(\widetilde X)^\Gamma) \ar[r] & K_{n+1}(C^\ast_r(\Gamma))  \ar[r]^-\partial   &  K_{n}(C_{L, 0}^\ast(\widetilde X)^\Gamma) \ar@{-->}@/_/[u]_{\beta} \\
		\mathcal S_{n+1}(X)  \ar[r] \ar[u]_{ \rho }  & \mathcal N_{n+1}(X ) \ar[r] \ar[u]_{\ind_L}  &  L_{n+1}(\Gamma ) \ar[r] \ar[u]_{\ind } &  \mathcal S_{n}(X )  \ar[u]_{k_{n}\cdot \rho}	}  }
\end{split}
\end{equation}

Now suppose that the strong Novikov conjecture holds for $\Gamma$, that is, the Baum-Connes assembly map $ \mu_\ast\colon K_{n+1}^\Gamma(\underline{E}\Gamma) \to K_{n+1}(C^\ast_r(\Gamma))$ is injective. In fact, let us assume a slightly stronger condition: 
\[ \mu_\ast\colon K_{n+1}^\Gamma(\underline{E}\Gamma) \to K_{n+1}(C^\ast_r(\Gamma)) \]
is a split injection. So far, in all known cases where the strong Novikov conjecture holds, the split injectivity of the Baum-Connes assembly map is known to be true as well, cf. \cite{CM90, MR2217050, MR1779613, MR1821144,  GK88, MR2980001, MR1905840, GY98, GY00}.

In this case, let us denote the splitting map by \[ \alpha\colon K_{n+1}(C^\ast_r(\Gamma))\to K_{n+1}^\Gamma(\underline{E}\Gamma),\] which induces a direct sum decomposition:
\[  K_{n+1}(C^\ast_r(\Gamma))  \cong  K_{n+1}^\Gamma(\underline{E}\Gamma) \oplus \mathscr E. \]  Then a routine diagram chase shows that
\begin{enumerate}[(1)]
	\item the homomorphism $\Lambda\colon K^\Gamma_{n+1}(\underline E\Gamma, \widetilde X) \to   K_{n}(C_{L, 0}^\ast(\widetilde X)^\Gamma) $
	is also an injection;
	\item and $\partial(\mathscr E)\cap \partial(K_{n+1}^\Gamma(\underline{E}\Gamma) ) = 0 $.   
\end{enumerate} 
It follows that  we have the following commutative diagram: 
\begin{equation}\label{diag:tophr2}
\begin{split}
\scalebox{1}{\xymatrixcolsep{0.8pc}\xymatrix{    K_{n+1}^\Gamma(\widetilde X) \ar[r] \ar[d]^{\cong}  &  K_{n+1}^\Gamma(\underline E\Gamma) \ar[r] \ar[d]_{\mu_\ast} & K^\Gamma_{n+1}(\underline E\Gamma, \widetilde X)  \ar[d]^\Lambda \ar[r] & K_{n}^\Gamma (\widetilde X)  \ar[d]^{\cong} \\
		K_{n+1}(C_{L}^\ast(\widetilde X)^\Gamma) \ar[r] \ar[d]^{=} & K_{n+1}^\Gamma(\underline{E}\Gamma) \oplus \mathscr E  \ar[r]^-\partial  \ar[d]^\alpha &  K_{n}(C_{L, 0}^\ast(\widetilde X)^\Gamma) \ar[d]^q \ar[r]	& K_{n}(C_{L}^\ast(\widetilde X)^\Gamma) \ar[d]^{=}  \\
		K_{n+1}(C_{L}^\ast(\widetilde X)^\Gamma) \ar[r] & K_{n+1}^\Gamma(\underline{E}\Gamma) \ar[r]^-\partial   &  K_{n}(C_{L, 0}^\ast(\widetilde X)^\Gamma)/\partial(\mathscr E) \ar[r] & 	K_{n}(C_{L}^\ast(\widetilde X)^\Gamma),  	} 
}
\end{split}
\end{equation}
where $q$ is the quotient map 
\[ q\colon  K_{n}(C_{L, 0}^\ast(\widetilde X)^\Gamma)\to  K_{n}(C_{L, 0}^\ast(\widetilde X)^\Gamma)/\partial(\mathscr E).  \]
Note that the last row in diagram $\eqref{diag:tophr2}$ is also a long exact sequence. 
By the five lemma, it follows that the composition \[ q\circ \Lambda \colon K^\Gamma_{n+1}(\underline E\Gamma, \widetilde X) \xrightarrow{\ \cong \ } K_{n}(C_{L, 0}^\ast(\widetilde X)^\Gamma)/\partial(\mathscr E)\] is an isomorphism.

Now we define 
\[ \beta \coloneqq  (q\circ \Lambda)^{-1} \circ q \colon K_{n}(C_{L, 0}^\ast(\widetilde X)^\Gamma) \to   K^\Gamma_{n+1}(\underline E\Gamma, \widetilde X).  \]
By definition, $\beta$ makes diagram $\eqref{diag:tophr}$ commute. 

\begin{definition}
	We define the Novikov rho invariant map  $\rho^{\textup{Nov}}$ to be the composition 
	\[  \rho^{\textup{Nov}} = \beta \circ (k_{n}\cdot \rho) \colon \mathcal S_{n}(X) \to K^\Gamma_{n+1}(\underline E\Gamma, \widetilde X),  \]
	where we have
	\[ k_{n}  = \begin{cases*}
	1 & \textup{ if $n$ is even, }\\
	2 & \textup{ if $n$ is odd.}
	\end{cases*} \]
\end{definition} 

\begin{remark}
	Note that our definition of the invariant $\rho^{\textup{Nov}}$ only works when the strong Novikov conjecture holds. This is the reason that we name this homological higher rho invariant after Novikov. To be more precise, we have assumed a slightly stronger condition that the Baum-Connes assembly map is split injective. As noted before,  in all known cases where the strong Novikov conjecture holds, the split injectivity of the Baum-Connes assembly map is known to true as well, cf. \cite{CM90, MR2217050, MR1779613, MR1821144,  GK88, MR2980001, MR1905840, GY98, GY00}. 
\end{remark}

\begin{remark}
	There is also a maximal version of the Novikov rho invariant defined above. In this case,  we assume that the Baum-Connes assembly map is split injective for the maximal group $C^\ast$-algebra $C_{\max}^\ast(\pi_1 X)$. The Novikov rho invariant is defined similarly. This split injectivity assumption for maximal group $C^\ast$-algebras is weaker than the split injectivity assumption for reduced group $C^\ast$-algebras.
\end{remark}

We invert $2$ for the rest of this section. With some minor modifications, all discussions in this paper work equally well for the real case.  Roughly speaking, whenever the imaginary number $i = \sqrt{-1}$ appears in a formula, we replace it by the matrix $\begin{psmallmatrix}
0 & 1 \\ -1 & 0
\end{psmallmatrix}$. More precisely, for a geometrically controlled Poincar\'{e} complex $(E, b, T)$, we consider the direct sum 
\[  (E, b, T)\oplus (E, b, T). \] For the operator $S$ in Definition $\ref{def:sop}$,   we define   
\[ S = \begin{psmallmatrix}
0 & 1 \\ -1 & 0
\end{psmallmatrix}^{p(p-1)+ \ell} T. \]
The same remark applies to various other formulas, such as the formula in line $\eqref{eq:htpy}$, where complex numbers are used. 
In the case of dimension $n= 0$ or $1$ mod $4$,  this  gives rise to a signature operator that is twice of the actual signature operator with real coefficients. Now taking product with $\mathbb R^2$ takes care of the case where the dimension $n=2$ or $3$ mod $4$. The analogue of the diagram $\eqref{diag:surgery}$ for the real case involves extra powers of $2$ in front of various maps.  We will leave out the details. In any case, since we have already inverted $2$, we do not lose any information by introducing these extra powers of $2$.

Recall that, after inverting $2$, the maps $\tau_1$ and $\tau_2$ in the following commutative diagram are split injective: 
\[ \xymatrix{ KO_i^\Gamma(\underline{E}\Gamma)[{\scriptstyle\frac{1}{2}}]  \ar[r]^{\mu_{\mathbb R}} \ar[d]_{\tau_1} & K_i(C_r^\ast(\Gamma, \mathbb R))[{\scriptstyle\frac{1}{2}}]  \ar[d]^{\tau_2} \\
	K_i^\Gamma(\underline{E}\Gamma)[{\scriptstyle\frac{1}{2}}] \ar[r]^\mu & K_i(C_r^\ast(\Gamma))[{\scriptstyle\frac{1}{2}}]  } \] 
where the notation $[{\scriptstyle\frac{1}{2}}]$ means $\otimes \mathbb Z[{\scriptstyle\frac{1}{2}}]$, $\mu_{\mathbb R}$ and $\mu$ are Baum-Connes assembly maps \cite{PBAC88}\cite{BCH94}, and $\tau_1$ and $\tau_2$ are induced by changing the scalars from $\mathbb R$ to $\mathbb C$. 
In particular, if we assume the split injectivity of the Baum-Connes assembly map, then we also have the Novikov rho invariant map in the real case:
\[  \rho^{\textup{Nov}} \colon \mathcal S_{n}(X) \to KO^\Gamma_{n+1}(\underline E\Gamma, \widetilde X)[{\scriptstyle\frac{1}{2}}]. \]
Moreover, we shall see that $\rho^{\textup{Nov}}$ is surjective in this case. 

Recall that, after inverting $2$, the split injectivity of Baum-Connes assembly map  implies that the split injectivity of Farrell-Jones assembly map (cf. \cite[Proposition 95, page 758]{MR2181833}), that is, 
\[ A\colon H_i^\Gamma(\underline E\Gamma; \mathbb L_\bullet)[{\scriptstyle\frac{1}{2}}] \to L_i(\Gamma)[{\scriptstyle\frac{1}{2}}] = L_i(\mathbb Z\Gamma)[{\scriptstyle \frac{1}{2}}]. \] 
is a split injection. Denote the splitting map by 
\[ \gamma\colon L_i(\Gamma)[{\scriptstyle \frac{1}{2}}] \to H_i^\Gamma(\underline E\Gamma; \mathbb L_\bullet)[{\scriptstyle \frac{1}{2}}].   \] Also, note that the natural map, which takes a $KO$-class to its associated (local) Poincar\'e complex, 
induces an isomorphism 
\[ KO_i(Y) [{\scriptstyle \frac{1}{2}}] \xrightarrow{\cong }
H_i(Y; \mathbb L_\bullet) [{\scriptstyle \frac{1}{2}}] \] for any $Y$. Indeed, this natural map induces a map between two homology theories. Hence it suffices to verify that it is an isomorphism for $Y = \{pt\}$, which follows from a straightforward calculation (cf. \cite{MR1388305}).  We still write $\gamma$ for the splitting map  
\[ L_i(\Gamma)[{\scriptstyle \frac{1}{2}}] \to H_i^\Gamma(\underline E\Gamma; \mathbb L_\bullet)[{\scriptstyle \frac{1}{2}}] \cong KO_i(\underline{E}\Gamma) [{\scriptstyle \frac{1}{2}}].\]  It follows that we have the following commutative diagram:
\begin{equation}\label{diag:toprho3}
\begin{split}
\scalebox{0.95}{\xymatrixcolsep{0.7pc}
	\xymatrix{  KO^\Gamma_{n+2}(\underline E\Gamma, \widetilde X)[{\scriptstyle \frac{1}{2}}]  \ar[r]   & KO_{n+1}^\Gamma(\widetilde X)[{\scriptstyle \frac{1}{2}}] \ar[r]   &  KO_{n+1}^\Gamma(\underline E\Gamma)  [{\scriptstyle \frac{1}{2}}] \ar[r]  & KO^\Gamma_{n+1}(\underline E\Gamma, \widetilde X)  [{\scriptstyle \frac{1}{2}}]   \\
		\mathcal S_{n+1}(X) [{\scriptstyle \frac{1}{2}}] \ar[r] \ar[u]_{ \rho^{\textup{Nov}}}  & H_{n+1}^\Gamma(\widetilde X; \mathbb L_\bullet) [{\scriptstyle \frac{1}{2}}] \ar[r] \ar[u]_{\cong}  &  L_{n+1}(\Gamma )  [{\scriptstyle \frac{1}{2}}] \ar[r]^\partial  \ar[u]_{\gamma } &  \mathcal S_{n}(X )  [{\scriptstyle \frac{1}{2}}] \ar[u]_{\rho^{\textup{Nov} }}	
} }
\end{split}
\end{equation} 
By the splitting map $\gamma\colon L_{n+1}(\Gamma) [{\scriptstyle \frac{1}{2}}]\to H_{n+1}^\Gamma(\underline E\Gamma; \mathbb L_\bullet)[{\scriptstyle \frac{1}{2}}]$, we have the following direct sum decomposition 
\[ L_{n+1}(\Gamma) [{\scriptstyle \frac{1}{2}}] \cong  KO_{n+1}^\Gamma(\underline E\Gamma)  [{\scriptstyle \frac{1}{2}}] \oplus \mathscr F \]
such that $\partial(KO_{n+1}^\Gamma(\underline E\Gamma)  [{\scriptstyle \frac{1}{2}}] ) \cap \partial (\mathscr F) = 0$. It follows that commutative diagram $\eqref{diag:toprho3}$ above descends to the following diagram: 
\begin{equation}
\begin{split}
\scalebox{0.95}{\xymatrixcolsep{0.7pc}\xymatrix{   KO^\Gamma_{n+2}(\underline E\Gamma, \widetilde X)[{\scriptstyle \frac{1}{2}}]  \ar[r]   & KO_{n+1}^\Gamma(\widetilde X)[{\scriptstyle \frac{1}{2}}] \ar[r]   &  KO_{n+1}^\Gamma(\underline E\Gamma)  [{\scriptstyle \frac{1}{2}}] \ar[r]  & KO^\Gamma_{n+1}(\underline E\Gamma, \widetilde X)  [{\scriptstyle \frac{1}{2}}]    \\
		\mathcal S_{n+1}(X) [{\scriptstyle \frac{1}{2}}]/\partial{\mathscr F} \ar[r] \ar[u]_{\widetilde\rho^{\textup{Nov}}}  & H_{n+1}^\Gamma(\widetilde X; \mathbb L_\bullet) [{\scriptstyle \frac{1}{2}}] \ar[r] \ar[u]_{\cong}  &  L_{n+1}(\Gamma )  [{\scriptstyle \frac{1}{2}}]/\mathscr F \ar[r]^\partial  \ar[u]_{\cong} &  \mathcal S_{n}(X )[{\scriptstyle \frac{1}{2}}] /\partial(\mathscr F)  \ar[u]_{\widetilde\rho^{\textup{Nov} }}	} 
}
\end{split}
\end{equation} 
By the five lemma, it follows that  
\[  \widetilde \rho^{\textup{Nov}} \colon \mathcal S_{n}(X)[{\scriptstyle \frac{1}{2}}]/\partial(\mathscr F) \xrightarrow{\ \cong \ } KO^\Gamma_{n+1}(\underline E\Gamma, \widetilde X)[{\scriptstyle \frac{1}{2}}] \]
is an isomorphism.  In particular, it implies that 
\[  \rho^{\textup{Nov}} \colon \mathcal S_{n}(X)[{\scriptstyle \frac{1}{2}}] \twoheadrightarrow KO^\Gamma_{n+1}(\underline E\Gamma, \widetilde X)[{\scriptstyle \frac{1}{2}}] \]
is surjective. 

This surjection can be used to detect many elements in $\mathcal S_{n}(X)$. 
For example, if  $KO_{n+1}^\Gamma(\underline E\Gamma)[{\scriptstyle \frac{1}{2}}]$ is not finitely generated as a module over $\mathbb Z[{\scriptstyle \frac{1}{2}}]$, then $ KO^\Gamma_{n+1}(\underline E\Gamma, \widetilde X)[{\scriptstyle \frac{1}{2}}]$ is  not finitely generated either, for any closed oriented manifold $X$. Indeed, if $X$ is a closed manifold, then $KO_{i}^\Gamma(\widetilde X)[{\scriptstyle \frac{1}{2}}]$ is a finitely generated $\mathbb Z [{\scriptstyle \frac{1}{2}}]$-module for all $i$. By the following exact sequence:
\[   \scalebox{1}{$\to KO_{n+1}^\Gamma(\widetilde X)[{\scriptstyle \frac{1}{2}}] \to  KO_{n+1}^\Gamma(\underline E\Gamma)  [{\scriptstyle \frac{1}{2}}] \to  KO^\Gamma_{n+1}(\underline E\Gamma, \widetilde X)  [{\scriptstyle \frac{1}{2}}]  \to KO_{n}^\Gamma(\widetilde X)[{\scriptstyle \frac{1}{2}}]\to ,$ } \]
we see that if $KO_{n+1}^\Gamma(\underline E\Gamma)[{\scriptstyle \frac{1}{2}}]$ is not finitely generated, then  $KO^\Gamma_{n+1}(\underline E\Gamma, \widetilde X)[{\scriptstyle \frac{1}{2}}]$ is not finitely generated either. In this case, it follows that  $\mathcal S_{n}(X)[{\scriptstyle \frac{1}{2}}]$ is not finitely generated. 

Given a discrete group $\Gamma$, let $\mathscr F\Gamma$ be the $\mathbb C$-vector space of finitely supported functions on the set of finite order elements of $\Gamma$. Define $ \mathscr F^0\Gamma$ be the subspace of $\mathscr F\Gamma$ consisting of elements $f$ such that $f(\gamma) =  f(\gamma^{-1})$. Similarly, define $\mathscr F^1\Gamma$ be the subspace of $\mathscr F\Gamma$ consisting of elements $f$ such that $f(\gamma) =  -f(\gamma^{-1})$. Then we have  (cf. \cite[Section 2]{MR3830202})
\begin{equation}\label{eq:KO}
KO_{n}^\Gamma(\underline{E}\Gamma)\otimes \mathbb C  \cong \bigoplus_{k\in \mathbb Z} H_{n+4k}(\Gamma; \mathscr F^0\Gamma) \oplus H_{n+2+4k}(\Gamma; \mathscr F^1\Gamma). 
\end{equation}
In particular, it follows that if a group contains infinitely many distinct conjugacy classes of finite order elements, then $KO_{0}^\Gamma(\underline{E}\Gamma)\otimes \mathbb C$ is not finitely generated. For example, Grigorchuk produced a family of amenable groups which contains infinitely many distinct conjugacy classes of finite order elements \cite{MR1616436}. Since each of these groups is amenable, its associated Baum-Connes assembly map is actually an isomorphism, hence in particular a split injection.  

Furthermore, the isomorphism $\eqref{eq:KO}$ also shows that  
$ KO_{n}^\Gamma(\underline{E}\Gamma)\otimes \mathbb C $ contains the group homology $\bigoplus_{k\in \mathbb Z} H_{n+4k}(\Gamma; \mathbb C)$ as a direct summand. Therefore, to find examples of groups $\Gamma$ such that  $KO_{n}^\Gamma(\underline E\Gamma)\otimes \mathbb C$ is not finitely generated, it suffices to find groups whose group homology with complex coefficients has infinite rank. Here are some examples.
\begin{enumerate}[(1)]
	\item Stalling gave a group in \cite{MR0158917}, whose $3$-dimensional homology group is not finitely generated. Since this group is obtained from free groups of two generators by using amalgamated products and HNN extensions, it follows that its associated Baum-Connes assembly map is an isomorphism \cite{MR860685}. 
	
	\item Generalizing ideas of Stalling \cite{MR0158917} and Bieri \cite{MR715779}, Bridson produces the following class of groups whose group homology is not finitely generated.  Let $F_k$ be the free group of $k$-generators with $k\geq 2$, and $\phi\colon F_k\to \mathbb Z$ be any surjective homomorphism. Denote  the direct product of $n$ copies of $F_k$ by $F_k^{(n)}$. Then $\phi$ induces a homomorphism $\phi_n \colon F_k^{(n)}\to  \mathbb Z$ which coincides with $\phi$ on each component. Let $K_n$ be the kernel of this map $\phi_n$. Then $H_n(K_n; \mathbb C)$ has infinite rank\footnote{In his paper \cite[Theorem B]{MR1680601}, Bridson proves that the integral homology $H_n(K_n; \mathbb Z)$ is not finitely generated. The same proof  shows that $H_n(K_n; \mathbb C)$ has infinite rank. }, cf. \cite[Theorem B]{MR1680601}. Also, the Baum-Connes assembly map for $K_n$ is an isomorphism. This for example follows from \cite{MR860685}.   
	\item For Thompson's group $F$, we have $H_n(F; \mathbb C) = \mathbb C\oplus \mathbb C$ for all $n\geq 1$, \cite[Theorem 7.1]{MR752825}. Recall that  Thompson's group $F$ is a-T-menable \cite{MR2006480}, hence its Baum-Connes assembly map is an isomorphism \cite{MR1821144}. 
\end{enumerate} 

Note that the examples in $(1)$, $(2)$ and $(3)$ above are torsion-free. We see that the Novikov rho invariant can detect elements in $\mathcal S_n(X)$ even when $\pi_1 X$ is torsion-free.  Let us summarize the above discussion as the following theorem. 

\begin{theorem}
	Let $X$ be a closed oriented topological manifold of dimension $n\geq 5$, and $\Gamma$ its fundamental group. Suppose the Baum-Connes assembly map for $\Gamma$ is split injective.  If   $KO_{n+1}^\Gamma(\underline E\Gamma)[{\scriptstyle \frac{1}{2}}]$ is not finitely generated as  $\mathbb Z[{\scriptstyle \frac{1}{2}}]$-module, then $\mathcal S^\topo(X)$ is not finitely generated. Consequently, we have 
	\begin{enumerate}[$(1)$]
		\item if $\bigoplus_{k\in \mathbb Z} H_{n+1+4k}(\Gamma; \mathbb C)$ is not finitely generated, then  $\mathcal S^\topo(X)$ is not finitely generated;
		\item if $n \equiv 3 \pmod 4$ and $\Gamma$ has infinitely many distinct conjugacy classes of finite order elements, then $\mathcal S^\topo(X)$ is not finitely generated.
	\end{enumerate} 
\end{theorem}

\begin{remark}
	By the work of the first and third authors, part $(2)$ of the above theorem is actually known to hold for a large class of groups, for some of which the strong Novikov conjecture has not yet been verified. In particular, part $(2)$ holds for groups that are finitely embeddable into Hilbert space \cite[Theorem 1.5 and Corollary 1.6]{MR3416114}. See Definition $\ref{def:finembed}$ below for the definition of groups that are finitely embeddable into Hilbert space.  
\end{remark}

\section{Non-rigidity of topological manifolds}\label{sec:nonrig}
In this section, we apply our main theorem (Theorem $\ref{thm:main}$) to give a lower bound of the free rank of reduced structure groups of closed oriented topological manifolds. There are in fact  two different versions of reduced structure groups, $\widetilde{\mathcal S}^\topo_{alg}(X)$ and $\widetilde {\mathcal S}^\topo_{geom}(X)$, whose precise definitions will be given below.  The group $\widetilde{\mathcal S}^\topo_{alg}(X)$ is functorial and fits well with the surgery long exact sequence. On the other hand, the group $\widetilde {\mathcal S}^\topo_{geom}(X)$ measures  the size of the collection of closed manifolds homotopic equivalent but not homeomorphic to $X$.   

Since we will be using the maximal version of various $C^\ast$-algebras throughout this section, we will omit the subscript ``max'' for notational simplicity. 

Let $X$ be an $n$-dimensional oriented closed topological manifold. Denote the monoid of orientation-preserving self homotopy equivalences of $X$ by $\homt(X)$. There are two different actions of $\homt(X)$ on $\mathcal S_n(X)$, which induce two different versions of reduced structure groups as follows,  cf. \cite{MR2508900} for the essentially same discussion in the context of algebraic surgery exact sequence. 

On one hand,  $\homt(X)$ acts naturally on ${\mathcal S}_n(X)$ by 
\[   \alpha_u(\theta) = (M, \partial M, u\circ \varphi, N, \partial N, u\circ \psi, f)   \] 
for all $u\in \homt(X)$ and all 
\[ \theta = (M, \partial M, \varphi, N, \partial N, \psi, f)\in {\mathcal S}_n(X).\]
Recall that, the natural isomorphism $\mathcal S^\topo(X) \cong  \mathcal S_n(X)$ maps  an element $ \theta  = (f, M) \in \mathcal S^\topo(X)$ to 
\[ \begin{minipage}[c]{0.38\textwidth}  \xymatrix{ M \ar[rr]^{f} \ar[dr]_f &   & X \ar[dl]^{\id}\\
	& X & 	} 
\end{minipage}
\hspace{0.03\textwidth}
\begin{minipage}[c]{0.05\textwidth}
\[ \in \mathcal S_n(X). \]
\end{minipage}
\]
In this case, $\alpha_u$  maps   
\[  \begin{minipage}[c]{0.38\textwidth} 
\xymatrix{ M \ar[rr]^{f} \ar[dr]_f &   & X \ar[dl]^{\id}\\
& X & 	} 
\end{minipage} \hspace{0.03\textwidth}
\begin{minipage}[c]{0.05\textwidth}
~~~~ to 
\end{minipage}
\hspace{0.08\textwidth}
\begin{minipage}[c]{0.38\textwidth} 
\xymatrix{ M \ar[rr]^{f} \ar[dr]_{u\circ f} &   & X \ar[dl]^{u}\\
& X & 	} 
\end{minipage}. \]
Clearly, $\alpha_u$ is a group homomorphism from ${\mathcal S}_n(X)$ to ${\mathcal S}_n(X)$. Note that this action $\alpha$ is compatible with the actions of $\homt(X)$ on other terms in the topological surgery  exact sequence.

On the other hand,  $\homt(X)$ also naturally acts on ${\mathcal S}^\topo(X)$ by compositions of homotopy equivalences, that is,    
\[   \beta_u(\theta) = (u\circ f, M)    \] 
for all $u\in \homt(X)$ and all $\theta =  (f, M) \in {\mathcal S}^\topo(X)$.  In comparison with the action $\alpha_u$ above,  the action $\beta_u$ on $ \mathcal S^\topo(X)$ maps   
\[  \begin{minipage}[c]{0.38\textwidth} 
\xymatrix{ M \ar[rr]^{f} \ar[dr]_f &   & X \ar[dl]^{\id}\\
& X & 	} 
\end{minipage} \hspace{0.03\textwidth}
\begin{minipage}[c]{0.05\textwidth}
~~~~ to 
\end{minipage}
\hspace{0.08\textwidth}
\begin{minipage}[c]{0.38\textwidth} 
\xymatrix{ M \ar[rr]^{u\circ f} \ar[dr]_{u\circ f} &   & X \ar[dl]^{\id}\\
& X & 	} 
\end{minipage}  \]
Note that the map 
\[ \beta_u \colon {\mathcal S}^\topo(X) \to {\mathcal S}^\topo(X) \] only defines a bijection of sets, and is not a group homomorphism in general. 
\begin{definition}\label{def:rdstr} With the same notation as above, we define the following reduced structure groups. 
\begin{enumerate}[(1)]
	\item 	Define 	$\widetilde{\mathcal S}_{alg}^\topo(X)$  to be the quotient group of ${\mathcal S}^\topo(X)$ by the subgroup generated by elements of the form $\theta - \alpha_u (\theta)$ for all $\theta\in \mathcal S^\topo(X)$ and all $u\in \homt(X)$. 
	\item 	Define 	$\widetilde{\mathcal S}^\topo_{geom}(X)$ to be the quotient group of ${\mathcal S}^\topo(X)$  by the subgroup generated by elements of the form $\theta - \beta_u(\theta)$ for all $\theta\in \mathcal S^\topo(X)$ and all $u\in \homt(X)$. 
\end{enumerate}

\end{definition}

Recall the following definitions and theorems  from \cite{MR3416114} and \cite{MR3590536}. Let $G$ be a countable group. An element $g\in G$ is said to have order $d$ if $ d$ is the smallest positive integer such that $g^d=e$, where $e$ is the identity element of $G$. If no such positive integer exists, we say that the order of $g$ is $\infty$.

Let us recall the notion of finite embeddability for groups in the following. We shall first recall the notion of coarse embeddability due to Gromov. 

\begin{definition}[Gromov]
A countable discrete group $\Gamma$ is said to be coarsely embeddable into Hilbert space $H$ if there exists a map $f:\Gamma \to H$ such that 
\begin{enumerate}[(1)]
	\item for any finite subset $F\subseteq \Gamma$, there exists $R >0$ such that if $\gamma^{-1}\beta\in F$, then $\|f(\gamma)- f(\beta)\|\leq R$;
	\item for any $S>0$, there exists a finite subset $F\subseteq \Gamma$ such that if $\gamma^{-1}\beta\in \Gamma-F$, then $\|f(\gamma) - f(\beta)\| \geq S$.
\end{enumerate}
\end{definition}

The notion of finite embeddability for groups, introduced by the first and third authors, is more flexible than the notion of coarse embeddability. 

\begin{definition}\label{def:finembed}
A countable discrete group $\Gamma$ is said to be finitely embeddable into Hilbert space $H$ if for any finite subset $F\subseteq \Gamma$, there exist a group $\Gamma'$ that is coarsely embeddable into $H$ and  a map $\phi: F\to \Gamma'$ such that 
\begin{enumerate}[(1)]
	\item if $\gamma, \beta$ and $\gamma\beta$ are all in $F$, then $\phi(\gamma\beta) = \phi(\gamma) \phi(\beta)$;
	\item if $\gamma$ is a finite order element in $F$, then $\ord(\phi(\gamma)) = \ord(\gamma)$. Here $\ord(\gamma)$ is the order of $\gamma$.  
\end{enumerate}
\end{definition}

The class of groups with finite embeddability into Hilbert space is quite large, including all residually finite groups, amenable groups, Gromov's monster groups, virtually torsion free groups (for example, $Out(F_n)$), and any group of analytic diffeomorphisms of a connected analytic manifold fixing a given point \cite{MR3416114}.

Let $G$ be a countable group. If $g\in G$ has  finite order $d$, then we can define an idempotent in the group algebra ${\mathbb  Q} G$ by:
$$p_g= \frac{1}{d}(\sum_{k=1}^{d} g^k).$$

For the rest of this paper, we denote the maximal group $C^*$-algebra of $G$  by $C^\ast(G)$.
\begin{definition}
We define the finite part  $ K_0 ^{\fin}( C^*(G)) $ of $K_0 ( C^*(G))$  to be the abelian subgroup of $K_0 ( C^*(G))$ generated by $[p_g]$ for all elements $g\neq e$ in $G$ with finite order.
\end{definition} 

We remark that rationally all representations of finite groups are induced from finite cyclic groups \cite{MR0450380}. This explains that the finite part of K-theory, rationally, contains all K-theory elements which can be constructed using finite subgroups, despite being constructed using only cyclic subgroups.

\begin{theorem}[{\cite[Theorem 1.4]{MR3416114}}] \label{thm:finemb}
Suppose $\Gamma$ is finitely embeddable into Hilbert space. 	If $\{g_1, \cdots, g_m\}$ is a collection of elements in $\Gamma$ with distinct finite orders  such that $g_i\neq e$ for all $1\leq i\leq m$,
then the following holds: 
\begin{enumerate}[(1)]
	\item  $\{[p_{g_1}], \cdots, [p_{g_m}]\}$ generates an abelian subgroup of $ K_0 ^{\fin}( C^\ast(\Gamma)) $ of rank $m$;
	\item  any nonzero element in the abelian subgroup of $ K_0^{\fin}( C^\ast(\Gamma)) $  generated by the elements $\{[p_{g_1}], \cdots, [p_{g_m}]\}$  is not in the image of the assembly map
	\[ \mu_\ast\colon  K_0(B\Gamma) \cong K_0^\Gamma (E\Gamma) \rightarrow K_0 (C^\ast(\Gamma)), \] where $E\Gamma$ is the universal space for proper and free $\Gamma$-actions.
\end{enumerate}
\end{theorem}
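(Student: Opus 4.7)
My plan is to combine three ingredients: reduction to a coarsely embeddable group via finite embeddability; injectivity of the maximal Baum--Connes assembly map in that setting; and trace-theoretic detection of the idempotents. First, I would apply the definition of finite embeddability to the finite subset $F = \{g_i^k : 1\leq i\leq m,\ 0\leq k < \ord(g_i)\}\subset \Gamma$. This produces a group $\Gamma'$ coarsely embeddable into Hilbert space together with a map $\phi\colon F\to \Gamma'$ preserving the partial multiplicative structure on $F$ and preserving all the orders $d_i=\ord(g_i)$. Since $\phi$ respects the products $g_i\cdot g_i^{k-1} = g_i^k$ needed to form each idempotent $p_{g_i}$, it induces a map at the level of $K$-theory sending $[p_{g_i}]\mapsto [p_{\phi(g_i)}]\in K_0^{\fin}(C^\ast(\Gamma'))$, so it is enough to prove both conclusions over $\Gamma'$.

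For the coarsely embeddable group $\Gamma'$, the key tool is that the maximal Baum--Connes assembly map $\mu^{\Gamma'}_\ast\colon K_0^{\Gamma'}(\underline E\Gamma')\to K_0(C^\ast(\Gamma'))$ is (split) injective. To detect the classes $[p_{\phi(g_i)}]$ and distinguish them from the image of $\mu^{\Gamma'}_\ast$, I would pair against tracial functionals on $C^\ast(\Gamma')$. The canonical trace $\tau$ gives $\tau([p_{\phi(g_i)}]) = 1/d_i$, while Atiyah's $L^2$-index theorem forces $\tau$ to take values with denominators controlled by orders of torsion elements on classes in the assembly image. To obtain $m$ detecting functionals at once, one considers characters of the cyclic subgroups $\langle \phi(g_i)\rangle\cong \mathbb Z/d_i\mathbb Z$; extending these to tracial states on $C^\ast(\Gamma')$ (where the maximal completion is essential) and evaluating each on every $[p_{\phi(g_j)}]$ produces an $m\times m$ matrix of rational entries depending only on $\gcd(d_i,d_j)$. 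Because the orders $d_1,\dots,d_m$ are pairwise distinct, this matrix is invertible, yielding simultaneously the rank-$m$ statement (1) and the non-membership statement (2).

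The main obstacle I expect is the rigorous extension of the character-type functionals from the finite cyclic subgroups $\langle \phi(g_i)\rangle$ to tracial states on the maximal group $C^\ast$-algebra. On the reduced completion such extensions can fail, which is exactly why the entire argument must be run in the maximal setting; a dual approach is to realize the characters via pairings with equivariant $K$-homology classes supported on finite subgroups of $\underline E\Gamma'$ and use compatibility with $\mu^{\Gamma'}_\ast$. A secondary nuisance is that the map $\phi$ from finite embeddability is only a partial homomorphism, not a genuine group map, so one must check that $K$-theoretic functoriality nonetheless carries over for the specific classes $[p_{g_i}]$; this works because all products needed to build $p_{g_i}$ already lie inside $F$ where $\phi$ is multiplicative.
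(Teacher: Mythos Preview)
The present paper does not prove this theorem; it is quoted from \cite{MR3416114} and used as a black box in Section~\ref{sec:nonrig}. So there is no proof in this paper to compare against, only the original Weinberger--Yu argument.

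Your outline matches the original strategy, but the reduction step is run in the wrong order and this is not a ``secondary nuisance''---it is the heart of the reduction. A partial multiplicative map $\phi\colon F\to\Gamma'$ on a finite set does \emph{not} induce a homomorphism $K_0(C^\ast(\Gamma))\to K_0(C^\ast(\Gamma'))$, so proving independence in $\Gamma'$ says nothing directly about $\Gamma$. The correct logic is: suppose for contradiction that $\sum c_i[p_{g_i}]=0$ (respectively, lies in the image of $\mu_\ast$). Any such relation is witnessed by finitely many elementary matrix identities over $\mathbb C\Gamma$, which involve only finitely many group elements. Enlarge $F$ to contain all of those together with the powers $g_i^k$, apply finite embeddability to this larger $F$, and then the \emph{same} identities hold verbatim in $\mathbb C\Gamma'$, yielding the corresponding relation there. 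One then derives the contradiction inside the coarsely embeddable group $\Gamma'$.

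For the detection step in $\Gamma'$, the original proof does not attempt to extend characters of cyclic subgroups to tracial states on $C^\ast(\Gamma')$; as you suspect, that extension is genuinely problematic. Instead, one uses split injectivity of the maximal Baum--Connes assembly map for coarsely embeddable groups to transfer the question into $K_0^{\Gamma'}(\underline E\Gamma')$, where classes arising from finite cyclic subgroups of pairwise distinct orders are visibly independent (via the equivariant Chern character and the decomposition by conjugacy classes of finite-order elements), and where the image of $K_0^{\Gamma'}(E\Gamma')$ is rationally concentrated at the identity conjugacy class. Your single-trace argument with Atiyah's $L^2$-index theorem does give part~(2) for one element at a time, but the ``$m\times m$ matrix of character values'' you propose is not available without the trace extensions, and those are exactly the obstacle you flag.
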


Before we go into the main result of this section, let us  recall the following key step of constructing elements in the structure group by the finite part of $K$-theory \cite[Theorem 3.4]{MR3416114}. 

\begin{example}
Let $M$ be a $(4k-1)$-dimensional closed oriented connected topological manifold with $\pi_1 M = \Gamma$. Suppose 
\[ \{g_1, \cdots, g_m\}\] is a collection of elements in $\Gamma$ with distinct finite orders  such that $g_i\neq e$ for all $1\leq i\leq m$.  Recall  the topological surgery exact sequence:
\[  \rightarrow H_{4k}(M, \mathbb{L}_\bullet) \rightarrow L_{4k}(\Gamma) \xrightarrow{\mathscr S} \mathcal S^\topo(M) \rightarrow H_{4k-1}(M, \mathbb{L}_\bullet )\rightarrow. \]
For each finite subgroup $H$ of $\Gamma$, we have the following commutative diagram:
\[ 
\xymatrix{ H_{4k}^{H}(\underline{E}H , {\mathbb L_\bullet}) \ar[r]^-A  \ar[d] &  L_{4k}(H) \ar[d] \\
	H_{4k}^{G}(\underline{E}\Gamma , {\mathbb L}_\bullet) \ar[r]^-A   &  L_{4k}(\Gamma),	}
\]
where the vertical maps are induced by the inclusion homomorphism from $H$ to $\Gamma$.
For each element $g$ in $H$ with finite order $d$, the idempotent  $p_g = \frac{1}{d}(\sum_{k=1}^{d} g^k)$ produces a class in $L_0 ( \mathbb{Q} H)$, where $L_0 ( \mathbb{Q} H)$ is the algebraic definition of $L$-groups using quadratic forms and formations with coefficients in $\mathbb Q$.  Let
$[q_g]$ be the corresponding element in $L_{4k} ( \mathbb{Q} H)$ given by periodicity.
Recall that $$L_{4k} (H) \otimes \mathbb{Q} \simeq L_{4k} (\mathbb{Q} H) \otimes \mathbb{Q}.$$
For each element $g$ in $H$ with finite order, we use the same notation $[q_g]$ to denote  the element in $L_{4k}(H)\otimes \mathbb{Q}$ corresponding to   $[q_g]\in L_{4k} ( \mathbb{Q} H)$ under the above isomorphism.

We also have the following commutative diagram:
\[ 
\xymatrix{ H_{4k}^{\Gamma}(E\Gamma , {\mathbb L_\bullet}) \otimes \mathbb Q \ar[r]^-A  \ar[d] &  L_{4k}(\Gamma) \otimes \mathbb Q \ar[d] \\
	K_0^\Gamma(E\Gamma)\otimes \mathbb Q \ar[r]^-{\mu_\ast}   &  K_{0}( C^\ast(\Gamma))\otimes \mathbb Q,	}
\]
where the left vertical map is induced by a map at the spectra level and the right vertical map is induced by the inclusion map:
$$ L_{4k}(\Gamma)\rightarrow L_{4k}( C^\ast (\Gamma) ) \cong  K_0( C^\ast (\Gamma))$$ (see \cite{MR1388305} for the last identification).

Now if $\Gamma$ is finitely embeddable into Hilbert space, then the abelian subgroup of $K_0(C^\ast(\Gamma))$ generated by $\{[p_{g_1}], \cdots, [p_{g_m}]\}$ is not in the image of
of the map 
\[ \mu_\ast:
K_0^\Gamma(E\Gamma)\to K_0(C^\ast(\Gamma)). \] It follows that 
\begin{enumerate}[(1)]
	\item any nonzero element in the abelian subgroup of $L_{4k} (  \Gamma)\otimes \mathbb{Q}$ generated by the elements $\{[q_{g_1}], \cdots, [q_{g_m}]\}$
	is not in the image of the rational assembly map \[ A: H_{4k}^{\Gamma}(E\Gamma , \mathbb L_\bullet) \otimes \mathbb{Q} \to L_{4k} ( \Gamma)\otimes \mathbb{Q}; \]
	\item the abelian subgroup of $L_{4k} (\Gamma)\otimes \mathbb{Q}$ generated by $\{[q_{g_1}], \cdots, [q_{g_m}]\}$ has rank $m$. 
\end{enumerate}

By the exactness of the surgery sequence,
we know that the map   
\begin{equation}\label{eq:struc}
\mathscr S\colon L_{4k} ( \Gamma)\otimes \mathbb{Q}\to \mathcal S^{\topo}(M)\otimes \mathbb{Q},
\end{equation} 
is injective on the abelian subgroup of  $L_{4k} (\Gamma)\otimes \mathbb{Q}$ generated by $\{[q_{g_1}], \cdots, [q_{g_n}]\}$.
\end{example}

In fact, to prove the main result (Theorem $\ref{thm:structure}$) of this section, we need to apply the above argument not only to $\Gamma$,  but also to certain semi-direct products of $\Gamma$ with  free groups of finitely many generators.

Let $\Gamma$ be a countable discrete group. Note that any set of $n$ automorphisms of $\Gamma$, say, $\psi_1, \cdots, \psi_n  \in \textup{Aut}(\Gamma) $, induces a natural action of $F_n$ the free group of $n$ generators on $\Gamma$. More precisely, if we denote the set of generators of $F_n$ by $\{s_1, \cdots, s_n\}$, then we have a homomorphism $F_n\to \textup{Aut}(\Gamma)$ by $s_i\mapsto \psi_i$. This homomorphism induces an action of $F_n$ on $\Gamma$. We denote by $\Gamma\rtimes_{\{\psi_1, \cdots, \psi_n\}} F_n$ the semi-direct product of $\Gamma$ and $F_n$ with respect to this action. If no confusion arises, we shall write $\Gamma\rtimes F_n$ instead of $\Gamma\rtimes_{\{\psi_1, \cdots, \psi_n\}} F_n$.

\begin{definition}
A countable discrete group $\Gamma$ is said to be strongly finitely embeddable into Hilbert space $H$, if $\Gamma\rtimes_{\{\psi_1, \cdots, \psi_n\}} F_n$ is finitely embeddable into Hilbert space $H$ for  all $\psi_1, \cdots, \psi_n  \in \textup{Aut}(\Gamma) $ and all $n\in \mathbb N$.
\end{definition}

We remark that all coarsely embeddable groups are strongly finitely embeddable. Indeed, if a group $\Gamma$ is coarsely embeddable into Hilbert space, then the group  $\Gamma\rtimes_{\{\psi_1, \cdots, \psi_n\}} F_n$ is also coarsely embeddable (hence finitely embeddable) into Hilbert space for $\psi_1, \cdots, \psi_n  \in \textup{Aut}(\Gamma)$ and  all $n\in \mathbb N$. Moreover, if a group $\Gamma$ has a torsion free normal subgroup $\Gamma^\prime$ such that $\Gamma/\Gamma^\prime$ is residually finite, then $\Gamma$ is strongly finitely embeddable into Hilbert space, cf. \cite[Section 4]{MR3590536}.  In particular, all residually finite groups are strongly finitely embeddable into Hilbert space.

We denote by $N_\fin(\Gamma)$ the the cardinality of the following collection of positive integers:
\[ \{ d\in \mathbb N_+\mid \exists \gamma\in \Gamma \ \textup{such that} \ \gamma\neq e \ \textup{and}\ \ord(\gamma) = d \}. \]
Then we have the following main theorem of this section. At the moment, we are only able to  prove the theorem for $\widetilde{\mathcal S}_{alg}^\topo(M)$. We will give a brief discussion after the theorem to indicate the difficulties in  proving the version  $\widetilde{\mathcal S}^\topo_{geom}(M)$.
\begin{theorem}\label{thm:structure}
Let $M$ be a closed oriented topological manifold with dimension $ n = 4k-1$ \textup{($k>1$)} and $\pi_1 M = \Gamma$. If $\Gamma$ is strongly finitely embeddable into Hilbert space, then the free rank of  $\widetilde{\mathcal S}_{alg}^\topo(M)$ is  $\geq N_\fin(\Gamma)$. 
\end{theorem}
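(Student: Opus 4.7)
Given $N_{\fin}(\Gamma)$ elements $g_1,\ldots,g_m \in \Gamma$ of distinct finite orders $d_i > 1$, I will produce $m$ classes $\theta_1,\ldots,\theta_m \in \mathcal S_n(M)\otimes\mathbb Q$ whose images in $\widetilde{\mathcal S}_n(M)\otimes\mathbb Q$ are linearly independent. The construction uses $L$-theory; the detection of non-triviality in the quotient uses the higher rho invariant $\rho$ of Theorem~\ref{thm:main}, the commutative diagram~\eqref{diag:surgery}, and the fact that the automorphisms of $\Gamma$ induced by $\homt(M)$ can be made \emph{inner} in a larger group whose finite part of $K$-theory is still detectable thanks to strong finite embeddability.

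\textbf{Construction of the classes.} Using the idempotent $p_{g_i} = \tfrac{1}{d_i}\sum_{k=1}^{d_i} g_i^k \in \mathbb Q\Gamma$, the identification $L_{4k}(\mathbb Z\Gamma)\otimes\mathbb Q \cong L_{4k}(\mathbb Q\Gamma)\otimes\mathbb Q$, and the periodicity $L_0 \simeq L_{4k}$, I obtain classes $[q_{g_i}] \in L_{4k}(\mathbb Z\Gamma)\otimes\mathbb Q$. Set $\theta_i := \mathscr S([q_{g_i}]) \in \mathcal S_{4k-1}(M)\otimes\mathbb Q$, where $\mathscr S\colon L_{4k}(\mathbb Z\Gamma)\otimes\mathbb Q \to \mathcal S_{4k-1}(M)\otimes\mathbb Q$ is the map from~\eqref{eq:struc}. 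Because $n = 4k-1$ is odd, $k_n = 2$; hence by diagram~\eqref{diag:surgery},
\begin{equation*}
\rho(\theta_i) \;=\; 2\,\partial\bigl(\ind([q_{g_i}])\bigr) \;=\; 2\,\partial\bigl([p_{g_i}]\bigr) \;\in\; K_{4k-1}\bigl(C^*_{L,0}(\widetilde M)^\Gamma\bigr)\otimes\mathbb Q,
\end{equation*}
where $\partial\colon K_{4k}(C^*(\Gamma)) \to K_{4k-1}(C^*_{L,0}(\widetilde M)^\Gamma)$ is the analytic boundary from~\eqref{diag:surgery} and Bott periodicity identifies $K_{4k}(C^*(\Gamma)) \cong K_0(C^*(\Gamma))$.

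\textbf{Detection in the reduced structure group.} Suppose for contradiction that a nontrivial rational combination $\sum_i c_i\theta_i$ vanishes in $\widetilde{\mathcal S}_n(M)\otimes\mathbb Q$. Then there exist $(\eta_\ell, h_\ell) \in \mathcal S_n(M)\times\homt(M)$ and $r_\ell \in \mathbb Q$ with
\begin{equation*}
\sum_i c_i \theta_i \;=\; \sum_\ell r_\ell\bigl(\eta_\ell - \alpha_{h_\ell}(\eta_\ell)\bigr) \quad \text{in } \mathcal S_n(M)\otimes\mathbb Q.
\end{equation*}
Each $h_\ell$ induces, up to inner automorphism, an element $\psi_\ell = \psi_{h_\ell} \in \Aut(\Gamma)$. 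Form the semi-direct product $\Gamma' := \Gamma \rtimes_{\{\psi_1,\ldots,\psi_N\}} F_N$; by strong finite embeddability of $\Gamma$, the group $\Gamma'$ is finitely embeddable into Hilbert space, so Theorem~\ref{thm:finemb} applies to the (still distinct) finite orders of $\iota(g_i) \in \Gamma'$. Now use the functorial maps induced by the inclusion $\iota\colon \Gamma \hookrightarrow \Gamma'$ on all four columns of diagram~\eqref{diag:surgery}: since $\psi_\ell = \Ad(s_\ell)$ in $\Gamma'$, the induced action of $\psi_\ell$ on $K_*(C^*_{\max}(\Gamma'))$ is trivial, so the images of $\eta_\ell - \alpha_{h_\ell}(\eta_\ell)$ in the relevant $K$-theory computed for $\Gamma'$ vanish. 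Applying $\rho$ to both sides and then pushing to $K_*(C^*_{\max}(\Gamma'))$ via an auxiliary manifold $M'$ with $\pi_1(M') = \Gamma'$ (e.g.\ the iterated mapping torus realizing the self-equivalences $h_1,\ldots,h_N$, equipped with a classifying map $M \to M'$), the right-hand side becomes zero while the left-hand side equals $2\sum_i c_i \partial([p_{\iota(g_i)}])$. By Theorem~\ref{thm:finemb}(2) applied to $\Gamma'$, the class $2\sum_i c_i[p_{\iota(g_i)}]$ lies outside the image of the (maximal) assembly map and hence, by exactness in the analytic surgery sequence for $M'$, its image under $\partial$ is nonzero. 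This contradicts the assumed relation.

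\textbf{Main obstacle.} The hardest part will be transporting the higher rho invariants from $M$ to an auxiliary space with fundamental group $\Gamma'$ so that the action of $\homt(M)$ is genuinely absorbed into inner automorphisms on the $C^*$-algebra side. The issue is that an $h \in \homt(M)$ need not be realized geometrically by a self-map compatible with the larger structure, and the outer automorphism $\psi_h$ is only well-defined modulo inner automorphisms. Concretely, one must choose lifts of the $\psi_\ell$ and construct a mapping-torus-type manifold $M'$ together with the correct classifying data so that the natural map $K_n(C^*_{L,0}(\widetilde M)^\Gamma_{\max}) \to K_n(C^*_{L,0}(\widetilde{M'})^{\Gamma'}_{\max})$ intertwines the $\alpha^*_{h_\ell}$-action with the trivial action. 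Once this transfer is set up cleanly, the combinatorial argument above together with Theorem~\ref{thm:finemb} applied to $\Gamma'$ yields the desired lower bound on the free rank of $\widetilde{\mathcal S}_n(M)$.
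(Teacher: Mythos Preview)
Your proposal follows essentially the same strategy as the paper's proof: construct the classes $\theta_i=\mathscr S([q_{g_i}])$ via $L$-theory, assume a relation modulo the $\homt(M)$-action, enlarge $\Gamma$ to $\Gamma'=\Gamma\rtimes F_N$ so that the relevant automorphisms become inner, and invoke Theorem~\ref{thm:finemb} for $\Gamma'$ (using strong finite embeddability) to derive a contradiction. Your ``iterated mapping torus'' $M'$ is exactly the paper's $X=M\times_{F_m}\widetilde W$, where $W$ is a wedge of $m$ circles and $F_m$ acts on $M$ through the chosen self-homotopy equivalences.

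Two remarks. First, the factor of $2$ is reversed: from diagram~\eqref{diag:surgery} one has $k_n\cdot\rho\circ j_\ast=\partial\circ\ind$, so with $n$ odd it is $2\rho(\theta_i)=\partial([p_{g_i}])$, not $\rho(\theta_i)=2\partial([p_{g_i}])$; this is harmless rationally. Second, the ``main obstacle'' you flag is resolved in the paper not by arguing abstractly that inner automorphisms act trivially on $K_\ast(C^\ast(\Gamma'))$, but by writing down Pimsner--Voiculescu type exact sequences simultaneously for $K^\Gamma_\ast(\widetilde M)$, $K_\ast(C^\ast(\Gamma))$, and $K_\ast(C^\ast_{L,0}(\widetilde M)^\Gamma)$, assembled into a commutative grid (diagram~\eqref{cd:pv}). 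Exactness of the bottom PV row at $K_1(C^\ast_{L,0}(\widetilde M)^\Gamma)$ shows directly that $i_\ast\colon K_1(C^\ast_{L,0}(\widetilde M)^\Gamma)\to K_1(C^\ast_{L,0}(\widetilde X)^{\Gamma\rtimes F_m})$ kills every $[x]-(\psi_j)_\ast[x]$, which is precisely the vanishing you need on the right-hand side --- and it takes place in $K_\ast(C^\ast_{L,0})$, where $\rho$ actually lives, rather than in $K_\ast(C^\ast(\Gamma'))$. This PV grid is the clean way to set up the transfer you are after.
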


\begin{proof}
A key point of the argument below is to use a semi-direct product $\Gamma\rtimes F_m$ to turn certain outer automorphisms of $\Gamma$ into inner automorphisms of $\Gamma\rtimes F_m$.

Note that  every self-homotopy equivalence $\psi \in \homt(M)$ induces a homomorphism\footnote{The homomorphism $\widetilde \psi_\ast: K_{1}(C_{L,0}^\ast(\widetilde M)^\Gamma)\to K_{1}(C_{L,0}^\ast(\widetilde M)^\Gamma)$ is defined as follows. The map $\psi\colon M\to M$ lifts to a map $\widetilde \psi\colon \widetilde M \to \widetilde M$. However, to view  $\widetilde \psi$ as a $\Gamma$-equivariant map, we need to use two different actions of $\Gamma$ on $\widetilde M$. Let $\tau$ be a right action of $\Gamma$ on $\widetilde M$ through deck transformations. Then we define a new action $\tau'$ of $\Gamma$ on $\widetilde M$ by $\tau'_g = \tau_{\psi_\ast(g)}$, where $\psi_\ast\colon \Gamma \to \Gamma$ is the automorphism induced by $\psi$. It is easy to see that $\widetilde \psi\colon \widetilde M \to \widetilde M$ is $\Gamma$-equivariant, when $\Gamma$ acts on the first copy of $\widetilde M$ by $\tau$ and the second copy of $\widetilde M$ by $\tau'$. Let us denote the corresponding $C^\ast$-algebras by $C_{L,0}^\ast(\widetilde M)_{\tau}^\Gamma$ and $C_{L,0}^\ast(\widetilde M)_{\tau'}^\Gamma$. Observe that,  despite  the two different actions of $\Gamma$ on $\widetilde M$,  the two $C^\ast$-algebras $C_{L,0}^\ast(\widetilde M)_{\tau}^\Gamma$ and $ C_{L,0}^\ast(\widetilde M)_{\tau'}^\Gamma$ are canonically identical, since an operator is invariant under the action $\tau$ if and only if it is invariant under the action $\tau'$. 	 }
\[\widetilde \psi_\ast: K_{1}(C_{L,0}^\ast(\widetilde M)^\Gamma)\to K_{1}(C_{L,0}^\ast(\widetilde M)^\Gamma).\]
Let $\mathcal I_1(C_{L,0}^\ast(\widetilde M)^\Gamma)$ be the subgroup of $K_{1}(C_{L,0}^\ast(\widetilde M)^\Gamma)$  generated by elements of the form $[x] - \widetilde \psi_\ast[x]$ for all $[x]\in K_{1}(C_{L,0}^\ast(\widetilde M)^\Gamma)$ and all $\psi\in \homt(M)$. By the definition of the higher rho invariant map (cf. Definition $\ref{def:highrho}$):
\[  \rho: \mathcal S^\topo(M) \to  K_{1}(C_{L,0}^\ast(\widetilde M)^\Gamma), \] we have
\[  \rho(\alpha_\psi(\theta)) = \widetilde \psi_\ast(\rho(\theta))  \in K_1(C_{L,0}^\ast(\widetilde M)^\Gamma) \]
for all $\theta\in \mathcal S^\topo(M)$ and $\psi\in \homt(M)$. It follows that  $\rho$ descends to a group homomorphism
\[ \widetilde{\mathcal S}^\topo_{alg}(M) \to K_{1}(C_{L,0}^\ast(\widetilde M)^\Gamma)\big/\mathcal I_1(C_{L,0}^\ast(\widetilde M)^\Gamma). \]

Now for a collection of elements $\{\gamma_1, \cdots, \gamma_\ell \mid \gamma_i\neq e\}$ with distinct finite orders, let  $\mathscr S(p_{\gamma_1}), \cdots, \mathscr S(p_{\gamma_\ell}) \in \mathcal S^\topo(M)$ be the corresponding elements from line $\eqref{eq:struc}$. To be precise, the elements $\mathscr S(p_{\gamma_1}), \cdots, \mathscr S(p_{\gamma_\ell})$ actually lie in  $\mathcal S^\topo(M) \otimes \mathbb Q$. Consequently, all abelian groups in the following need to be tensored by the rationals $\mathbb Q$. For simplicity, we shall omit $\otimes \mathbb Q$ from our notation, with the understanding that the abelian groups below are to be viewed as tensored with $\mathbb Q$.  Also, let us write 
\[  \rho (\gamma_i)  \coloneqq  \rho (\mathscr S(p_{\gamma_i}))  \in  K_{1}(C_{L,0}^\ast(\widetilde M)^\Gamma). \]
To prove the theorem, it suffices to show that the elements
\[  \rho(\gamma_1), \cdots, \rho(\gamma_\ell)\]  
are linearly independent in $K_{1}(C_{L,0}^\ast(\widetilde M)^\Gamma)\big/\mathcal I_1(C_{L,0}^\ast(\widetilde M)^\Gamma)$,  for any collection of elements 
$ \{\gamma_1, \cdots, \gamma_\ell\mid \gamma_i\neq e\}$ with distinct finite orders.

Assume to the contrary that there exist  
\[ [x_1], \cdots,  [x_m] \in K_{1}(C_{L,0}^\ast(\widetilde M)^\Gamma) \textup{ and } \psi_1, \cdots, \psi_m\in \homt(M)\] such that
\begin{equation}\label{eq:van}
\sum_{i=1}^\ell c_i \rho(\gamma_i) = \sum_{j=1}^{m} \big([x_j] - (\widetilde \psi_{j})_\ast[x_j]\big),
\end{equation}
where $c_1, \cdots, c_\ell\in \mathbb Z$ with at least one $c_i\neq 0$. In fact, we shall investigate Equation $\eqref{eq:van}$ in the group $K_1(C_{L,0}^\ast(E(\Gamma\rtimes F_m))^{\Gamma\rtimes F_m})$ and arrive at a contradiction, where $\Gamma\rtimes F_m$ is a certain semi-direct product of $\Gamma$ with the free group of $m$ generators $F_m$ and $E(\Gamma\rtimes F_m)$ is the universal space for free and proper $\Gamma\rtimes F_m$-actions. 

Let us fix a map $\sigma\colon M \to B\Gamma$ that induces an isomorphism of their fundamental groups, where $B\Gamma$ is the classifying space of $\Gamma$.  Suppose $\varphi \colon M \to M$ is an orientation preserving self homotopy equivalence of $M$. Then $\varphi$ induces an automorphism of $\Gamma$\footnote{Precisely speaking, $\varphi$ only defines an outer automorphism of $\Gamma$, and one needs to make a specific choice of a representative in $ \textup{Aut}(\Gamma)$. Any such  choice will work for our proof.}, also denoted by  $\varphi\in \textup{Aut}(\Gamma)$. Now consider the semi-direct product $\Gamma\rtimes_{\varphi} \mathbb Z$ and its associated classifying space $B(\Gamma\rtimes_{\varphi} \mathbb Z)$. Let  $
\hat \varphi$ be the element in $\Gamma\rtimes_{\varphi} \mathbb Z$ that corresponds to the generator $ 1\in \mathbb Z$. We write   \[ \Phi\colon B(\Gamma\rtimes_{\varphi} \mathbb Z)\to B(\Gamma\rtimes_{\varphi} \mathbb Z) \] for the map induced by the automorphism 
\[ \Gamma\rtimes_\varphi \mathbb Z \to \Gamma\rtimes_\varphi \mathbb Z \textup{ given by } a \mapsto \hat \varphi a \hat \varphi^{-1}.\] 
Suppose $\iota\colon B\Gamma \to B(\Gamma\rtimes_{\varphi}\mathbb Z)$ is the map induced by the inclusion $\Gamma \hookrightarrow \Gamma\rtimes_{\varphi}\mathbb Z$.  Then the map  
\[ \iota \circ  \sigma\circ  \varphi \colon M \xrightarrow{\varphi} M \xrightarrow{\sigma} B\Gamma \xrightarrow{\iota} B(\Gamma\rtimes_{\varphi}\mathbb Z)\]  is homotopy equivalent to the map 
\[  \Phi\circ \iota \circ \sigma\colon M \xrightarrow{\sigma} B\Gamma \xrightarrow{\iota} B(\Gamma\rtimes_{\varphi}\mathbb Z)\xrightarrow{\Phi} B(\Gamma\rtimes_{\varphi}\mathbb Z),\] since they induce the same map on fundamental groups. 

Let $ \widetilde\sigma\colon \widetilde M \to E\Gamma$ be the lift of the map $\sigma\colon M \to B\Gamma$. Similarly, $\widetilde \varphi\colon \widetilde M\to \widetilde M$ is the lift of $\varphi\colon M \to M$, and 
\[ \widetilde \Phi\colon E(\Gamma\rtimes_{\varphi}\mathbb Z)\to E(\Gamma\rtimes_{\varphi}\mathbb Z)\] is the lift of $\Phi\colon B(\Gamma\rtimes_{\varphi}\mathbb Z)\to B(\Gamma\rtimes_{\varphi}\mathbb Z).$

Since $ \Phi\colon B(\Gamma\rtimes_{\varphi} \mathbb Z)\to B(\Gamma\rtimes_{\varphi} \mathbb Z)$ is induced by an inner conjugation morphism on $\Gamma\rtimes_{\varphi} \mathbb Z$, the map\footnote{The $C^\ast$-algebra $C_{L,0}^\ast(E\Gamma)^\Gamma$ is the  inductive limit of $C_{L,0}^\ast(Y)^\Gamma$,  where $Y$ ranges over all $\Gamma$-cocompact subspaces of $E\Gamma$. } 
\[ \widetilde \Phi_{\ast}\colon K_1(C_{L,0}^\ast(E\Gamma)^\Gamma) \to K_1(C_{L,0}^\ast(E\Gamma)^\Gamma)\] 
is the identity map.   It follows that for each $[x]\in K_1(C_{L,0}^\ast(\widetilde M)^\Gamma)$, we have 
\begin{align*}
\widetilde \iota_\ast \widetilde \sigma_\ast ({\widetilde\varphi}_\ast[x])  
= {\widetilde \Phi}_{\ast} \widetilde \iota_\ast \widetilde \sigma_\ast ([x])  =  \widetilde \iota_\ast \widetilde \sigma_\ast ([x]) 
\end{align*}   
in $ K_1(C_{L,0}^\ast(E(\Gamma\rtimes_{\varphi}\mathbb Z))^{\Gamma\rtimes_{\varphi}\mathbb Z})$, where $\widetilde \iota_\ast \widetilde \sigma_\ast$ is the composition 
\[ \scalebox{0.95}{$K_1(C_{L,0}^\ast(\widetilde M)^\Gamma) \xrightarrow{\widetilde \sigma_\ast} K_1(C_{L,0}^\ast(E\Gamma)^\Gamma) \xrightarrow{\widetilde \iota_\ast} K_1(C_{L,0}^\ast(E(\Gamma\rtimes_{\varphi}\mathbb Z))^{\Gamma\rtimes_{\varphi}\mathbb Z}).$ } \]	   The same argument also works simultaneously for an arbitrary finite number of orientation preserving self homotopy equivalences 
\[ \psi_1, \cdots, \psi_m \in \homt(M), \] in which case we have 
\[    \widetilde \iota_\ast \widetilde \sigma_\ast (({\widetilde\psi}_i)_\ast[x])  
=  \widetilde \iota_\ast \widetilde \sigma_\ast ([x])  \textup{ in }
K_1(C_{L,0}^\ast(E(\Gamma\rtimes_{\{\psi_1, \cdots, \psi_m\}} F_m))^{\Gamma\rtimes_{\{\psi_1, \cdots, \psi_m\}} F_m}),\]
for all $[x]\in K_1(C_{L,0}^\ast(\widetilde M)^\Gamma)$.  In other words, $({\widetilde\psi}_i)_\ast[x] $ and $[x]$ have the same image in \[ K_1(C_{L,0}^\ast(E(\Gamma\rtimes_{\{\psi_1, \cdots, \psi_m\}} F_m))^{\Gamma\rtimes_{\{\psi_1, \cdots, \psi_m\}} F_m}). \] For notational simplicity, let us write $\Gamma\rtimes F_m$ for $\Gamma\rtimes_{\{\psi_1, \cdots, \psi_m\}} F_m$. If no confusion is likely to arise, we shall still denote $\widetilde \iota_\ast \widetilde \sigma_\ast ([x]) $ in $K_1(C_{L,0}^\ast(E(\Gamma\rtimes F_m))^{\Gamma\rtimes F_m})$ by $[x]$.

If we pass Equation $\eqref{eq:van}$ to $K_1(C_{L,0}^\ast(E(\Gamma\rtimes F_m))^{\Gamma\rtimes F_m})$ under the map \[ \scalebox{1}{ $K_1(C_{L,0}^\ast(\widetilde M)^\Gamma) \xrightarrow{\widetilde \sigma_\ast} K_1(C_{L,0}^\ast(E\Gamma)^\Gamma) \xrightarrow{\widetilde \iota_\ast} K_1(C_{L,0}^\ast(E(\Gamma\rtimes F_m))^{\Gamma\rtimes F_m}),$} \] then it follows from the above discussion that 
\[  \sum_{k=1}^\ell c_k \rho(\gamma_k) = 0 \textup{ in $K_1(C_{L,0}^\ast(E(\Gamma\rtimes F_m))^{\Gamma\rtimes F_m})$}  , \]
where at least one $c_k\neq 0$. By the commutative diagram $\eqref{diag:surgery}$, we have 
\begin{equation}\label{eq:vanish}
\partial_{\Gamma\rtimes F_m} \Big( \sum_{k=1}^\ell c_k [p_{\gamma_k}] \Big) = 2 \cdot \big( \sum_{k=1}^\ell c_k \rho(\gamma_k) \big) = 0,
\end{equation}
where $ \partial_{\Gamma\rtimes F_m} $ is the connecting map in the following long exact sequence: 
\begin{equation}\label{cd:bc}
\begin{gathered}
\scalebox{1}{\xymatrixcolsep{1pc}\xymatrix{K_0(C_{L,0}^\ast(E(\Gamma\rtimes F_m))^{\Gamma\rtimes F_m})\ar[r] &  K_0^{\Gamma\rtimes F_m}(E(\Gamma\rtimes F_m))  \ar[r]^-{\mu} &  K_0(C^\ast(\Gamma\rtimes F_m)) \ar[d]^{\partial_{\Gamma\rtimes F_m}} \\
		K_1(C^\ast(\Gamma\rtimes F_m)) \ar[u]& K_{1}^{\Gamma\rtimes F_m}(E(\Gamma\rtimes F_m)) \ar[l] &  K_{1}(C_{L,0}^\ast(E(\Gamma\rtimes F_m))^{\Gamma\rtimes F_m}). \ar[l]} }
\end{gathered}
\end{equation}		

Now by assumption $\Gamma$ is strongly finitely embeddable into Hilbert space. Hence $\Gamma\rtimes F_m$ is finitely embeddable into Hilbert space. By Theorem $\ref{thm:finemb}$, we have the following. 
\begin{enumerate}[(i)]
	\item $\{[p_{\gamma_1}], \cdots, [p_{\gamma_\ell}]\}$ generates an  abelian subgroup  of $K_0^\fin(C^\ast(\Gamma\rtimes F_m))$ of rank $\ell$, since $\gamma_1, \cdots, \gamma_\ell$ have distinct finite orders. In other words, 
	\[ \sum_{k=1}^n c_k [p_{\gamma_k}] \neq 0  \in K_0^\fin(C^\ast(\Gamma\rtimes F_m))\]
	if at least one $c_k\neq 0$.
	\item Every nonzero element in $K_0^\fin(C^\ast(\Gamma\rtimes F_m))$ is not in the image of the assembly map 
	\[ \mu\colon  K_0^{\Gamma\rtimes F_m}(E(\Gamma\rtimes F_m)) \to K_0(C^\ast(\Gamma\rtimes F_m)). \]
	In particular, it follows that the map 
	\[  \partial_{\Gamma\rtimes F_m}\colon  K^\fin_0(C^\ast(\Gamma\rtimes F_m)) \to K_{1}(C_{L,0}^\ast(\widetilde X)^{\Gamma\rtimes F_m})\]
	is injective.  
\end{enumerate} 
Therefore, we have  $\partial_{\Gamma\rtimes F_m} \Big( \sum_{k=1}^\ell c_k [p_{\gamma_k}] \Big) \neq 0$, which contradicts Equation $\eqref{eq:vanish}$.
This finishes the proof.
\end{proof}

\begin{remark}
An obvious analogue of Theorem $\ref{thm:structure}$ holds for homology manifold structure groups. See Remark $\ref{rk:hommfld}$ for a discussion on homology manifold structure groups. 
\end{remark}

It is tempting to use a similar argument to prove an analogue of Theorem $\ref{thm:structure}$ above for  $\widetilde{\mathcal S}^\topo_{geom}(M)$. However, there are some subtleties. 
First, note that (cf. \cite{MR2508900})  
\[  \alpha_\varphi(\theta) +  [\varphi] =  \beta_\varphi (\theta)    \]	
for all $\theta = (f, N) \in \mathcal S^\topo(M)$ and all $\varphi\in \homt(M)$, where $[\varphi] = (\varphi, M)$ is the element given by $\varphi\colon M \to M$ in $ \mathcal S^\topo(M). $
It follows that 
\[  \rho(\beta_\varphi(\theta))  = \rho(\alpha_\varphi(\theta)) + \rho([\varphi]) = \varphi_\ast(\rho(\theta)) + \rho([\varphi]). \]
In other words, in general,  $\rho(\beta_\varphi (\theta)) \neq \varphi_\ast(\rho(\theta))$, and consequently the homomorphism \[ \rho\colon {\mathcal S}^\topo(M) \to  K_1(C_{L,0}^\ast(\widetilde M)^\Gamma) \]
does \emph{not} descend to a homomorphism from the group   $\widetilde{\mathcal S}^\topo_{geom}(M)$ to  the quotient group $ K_{1}(C_{L,0}^\ast(\widetilde M)^\Gamma)\big/\mathcal I_1(C_{L,0}^\ast(\widetilde M)^\Gamma)$. 
New ingredients are needed to take care of this issue.  On the other hand, there is strong evidence which suggests an analogue of Theorem $\ref{thm:structure}$ for  $\widetilde{\mathcal S}^\topo_{geom}(M)$. For example, this has been verified by the first and third authors  for residually finite groups \cite[Theorem 3.9]{MR3416114}. Also, Chang and the first author gave a different lower bound of  $\widetilde{\mathcal S}^\topo_{geom}(M)$ that works for all non-torsion-free groups, although the lower bound is weaker \cite[Theorem 1]{MR1988288}. In any case, we hope to deal with this question in a separate paper.

We close this section by proving the following theorem, which is an analogue of the theorem of Chang and the first author cited above \cite[Theorem 1]{MR1988288}.

\begin{theorem}
Let $X$ be a closed oriented topological manifold with dimension $ n = 4k-1$ \textup{($k>1$)} and $\pi_1 X = \Gamma$. If $\Gamma$ is not torsion free, then the free rank of $\widetilde{\mathcal S}^\topo_{alg}(X)$  is $\geq 1$. 
\end{theorem}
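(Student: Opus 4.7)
The plan is to produce one element of $\mathcal{S}_n(X)\otimes\mathbb{Q}$ whose image in $\widetilde{\mathcal{S}}_n(X)\otimes\mathbb{Q}$ is detected by an $\mathrm{Aut}_h(X)$-invariant, real-valued invariant built from the higher rho invariant of Theorem $\ref{thm:main}$ together with the canonical trace on $C^\ast_{\max}(\Gamma)$. This is the $L^2$-rho invariant, and the strategy parallels Chang--Weinberger's original argument \cite{MR1988288}, but it uses our additivity result to \emph{upgrade} $\rho_{(2)}$ from a set map on $\mathcal{S}^\topo(X)$ to a group homomorphism out of $\widetilde{\mathcal{S}}_n(X)$.

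First, I will construct the candidate class. Since $\Gamma$ is not torsion free, pick $g\in\Gamma$ with $g\neq e$ and finite order $d\geq 2$, and form the idempotent $p_g=\tfrac{1}{d}\sum_{k=0}^{d-1} g^k\in\mathbb{Q}\Gamma$. Via the rational isomorphism $L_{4k}(\mathbb{Z}\Gamma)\otimes\mathbb{Q}\cong L_{4k}(\mathbb{Q}\Gamma)\otimes\mathbb{Q}$ and four-fold periodicity, $p_g$ yields a class $[q_g]\in L_{n+1}(\Gamma)\otimes\mathbb{Q}$ (here $n+1=4k$). Let $\theta_g:=\mathscr{S}([q_g])\in\mathcal{S}_n(X)\otimes\mathbb{Q}$ be the image under the surgery map. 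By commutativity of the middle square in diagram $\eqref{diag:surgery}$ (maximal version),
\[
k_n\cdot \rho(\theta_g)\;=\;\partial\bigl(\mathrm{ind}([q_g])\bigr)\;=\;\partial([p_g])\quad\text{in}\quad K_1\bigl(C^\ast_{L,0}(\widetilde X)^\Gamma_{\max}\bigr)\otimes\mathbb{Q}.
\]

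Next, I would construct the invariant $\rho_{(2)}$. The canonical trace $\tau\colon C^\ast_{\max}(\Gamma)\to\mathbb{C}$, given on the group algebra by $\tau(\gamma)=\delta_{\gamma,e}$, induces via an eta/cyclic cohomology construction a trace map $\tau_\ast\colon K_1(C^\ast_{L,0}(\widetilde X)^\Gamma_{\max})\to\mathbb{R}$ that is compatible with the boundary $\partial$, in the sense that $\tau_\ast\circ\partial=\tau-\varepsilon$ on classes of projections (where $\varepsilon$ is the augmentation). Setting $\rho_{(2)}:=\tau_\ast\circ\rho\colon\mathcal{S}_n(X)\otimes\mathbb{Q}\to\mathbb{R}$, the additivity from Theorem $\ref{thm:main}$ makes $\rho_{(2)}$ a group homomorphism. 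The key invariance is that every $\psi\in\mathrm{Aut}_h(X)$ induces $\psi_\ast\in\mathrm{Aut}(\Gamma)$, and since any automorphism of $\Gamma$ fixes $e$, we have $\tau\circ\psi_\ast=\tau$; combined with $\rho(\alpha_\psi(\theta))=\psi_\ast(\rho(\theta))$, this shows $\rho_{(2)}(\alpha_\psi(\theta))=\rho_{(2)}(\theta)$. Hence $\rho_{(2)}$ descends to
\[
\widetilde{\rho}_{(2)}\colon\widetilde{\mathcal{S}}_n(X)\otimes\mathbb{Q}\longrightarrow\mathbb{R}.
\]

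Finally, I would evaluate $\widetilde{\rho}_{(2)}(\theta_g)$. Using $k_n\cdot\rho(\theta_g)=\partial[p_g]$ and $\tau_\ast\circ\partial=\tau-\varepsilon$,
\[
k_n\cdot\widetilde{\rho}_{(2)}(\theta_g)\;=\;\tau(p_g)-\varepsilon(p_g)\;=\;\frac{1}{d}-1\;\neq\;0,
\]
since $d\geq 2$. Thus $\theta_g$ has infinite order in $\widetilde{\mathcal{S}}_n(X)$, giving free rank $\geq 1$.

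The hard part will be the analytic step of making $\tau_\ast\colon K_1(C^\ast_{L,0}(\widetilde X)^\Gamma_{\max})\to\mathbb{R}$ rigorously defined for topological (not smooth) manifolds and verifying the Atiyah-type identity $\tau_\ast\circ\partial=\tau-\varepsilon$. The existence of the smooth/PL representatives guaranteed by Proposition $\ref{prop:smooth}$ makes this tractable: one can implement $\tau_\ast$ using Cheeger--Gromov-type $L^2$-eta invariants of the signature operator on PL representatives via the Hilbert--Poincar\'e machinery of Section $\ref{sec:acp}$, and the pairing identity then follows from the standard $L^2$-signature theorem applied to the chain complex associated with $[q_g]$.
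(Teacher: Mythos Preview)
Your approach is correct in outline but takes a genuinely different route from the paper.  The paper does \emph{not} construct a numerical trace pairing $\tau_\ast\colon K_1(C^\ast_{L,0}(\widetilde X)^\Gamma_{\max})\to\mathbb R$.  Instead it simply reuses the machinery of Theorem~\ref{thm:structure}: one quotes \cite[Theorem 2.3]{MR3416114}, which says that for \emph{any} countable group $G$ and any finite-order $\gamma\neq e$, the class $[p_\gamma]$ has infinite order in $K_0(C^\ast(G))$ and no nonzero multiple lies in the image of the assembly map $K_0^G(EG)\to K_0(C^\ast(G))$.  Since this holds with no embeddability hypothesis, it applies to every semi-direct product $\Gamma\rtimes F_m$, and the Pimsner--Voiculescu argument in the proof of Theorem~\ref{thm:structure} goes through verbatim to show $\widetilde\rho(\gamma)\neq 0$ in $K_1(C^\ast_{L,0}(\widetilde X)^\Gamma)/\mathcal I_1$.

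What each approach buys: your $L^2$-rho argument is conceptually more direct for this single statement and makes the invariance under $\homt(X)$ transparent (the canonical trace is fixed by every automorphism of $\Gamma$).  However, it requires building the map $\tau_\ast$ on $K_1(C^\ast_{L,0})$ and verifying the Atiyah-type identity $\tau_\ast\circ\partial=\tau-\varepsilon$, neither of which is developed in this paper; you correctly identify this as the hard step.  The paper's route avoids that analytic construction entirely by staying inside the $K$-theoretic framework already set up for Theorem~\ref{thm:structure}, at the cost of the semi-direct-product detour.  Note also that the cited fact \cite[Theorem 2.3]{MR3416114} is itself proved using the canonical trace, so both arguments ultimately rest on the same trace computation $\tau(p_g)-\varepsilon(p_g)=\tfrac{1}{d}-1\neq 0$; the difference is where in the argument that computation is invoked.
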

\begin{proof}
Recall that for any non-torsion-free countable discrete group $G$, if $\gamma \neq e$ is a finite order element of $G$, then $[p_\gamma]$ generates a subgroup of rank one in $K_0(C^\ast(G))$ and any nonzero multiple of $[p_\gamma]$ is not in the image of the assembly map $\mu\colon K_0^\Gamma(EG)\to K_0(C^\ast(G))$ \cite[Theorem 2.3]{MR3416114}. Using this fact, the statement follows from the same proof as in Theorem $\ref{thm:structure}$.  
\end{proof}

\section{Lipschitz structures and Siebenmann periodicity map}\label{sec:lip}

In this section, we show how our approach can be adapted to deal with signature operators arising from  Lipschitz structures on topological manifolds. We also show that the higher rho invariant map defined using Lipschitz structures is  compatible with the Siebenmann periodicity map.   Throughout this section, all manifolds are assumed to have dimension $\geq 5$.

As we have seen, for our main theorem (Theorem $\ref{thm:main}$) and our main application $\ref{thm:structure}$, it suffices to work with the smooth or PL representatives, that is, the groups $\mathcal N^{C^\infty}_n(X; w)$, $L^{C^\infty}_n(\pi_1 X; w)$ and $ \mathcal S_{n}^{C^\infty}(X, w)$, or $\mathcal N^{PL}_n(X; w)$, $L^{PL}_n(\pi_1 X; w)$ and $ \mathcal S_{n}^{PL}(X, w)$, cf. Section $\ref{sec:smstr}$. On the other hand, our approach to the higher rho invariant given in Section $\ref{sec:highrho}$ applies essentially verbatim to signature operators associated to Lipschitz structures on topological manifolds. In particular, with some minor modifications given below, we can directly deal with signature operators arising from Lipschitz structures as well. 

There are two modifications that are needed for the construction of the higher rho invariant in this case. 
\begin{enumerate}[(i)]
\item We use the unbounded theory (cf. \cite[Section 5]{MR2220522}) instead of the bounded theory that is used in Section $\ref{sec:highrho}$.  For various properties of the signature operator associated to a Lipschitz structure, we refer the reader to \cite{MR799572, MR1050489} \cite[Section 3]{MR1142484}. The higher rho invariant map can be defined by the same formula as in Definition  $\ref{def:highrho}$, and the proofs are  identical.

\item To prove the well-definedness of the higher rho invariant map (for the Lipschitz case), the techniques in Section $\ref{sec:bord}$ do not quite apply to the unbounded theory. Recall that, for an even dimensional manifold $Y$ with boundary $\partial Y$,  the restriction of the signature operator $D_Y$ of $Y$ is 2 times the signature operator  $D_{\partial Y}$ on $\partial Y$. In order to take care this factor of $2$, we use the results of Stern on topological vector fields (\cite[Corollary 1.5]{MR0394659}) and techniques developed by Pedersen, Roe and the first author in \cite[Section 4]{MR1388315}.  In particular, these results allow us to use a vector field to split the signature operator\footnote{Technically speaking, we may need to punch out a disc in $Y$, replace it with an infinite cylinder, and control this cylinder appropriately over the reference control space.} $D_Y$ into two halves. The rest of the proof is similar to the proof for the commutativity of the middle square in Theorem $\ref{thm:stoa}$. 
\end{enumerate}

Let $X$ be a connected oriented closed topological manifold of dimension $n \geq 5$ with $\pi_1 X = \Gamma$. Recall that an element of $\mathcal S^\topo(X)$ is an orientation preserving homotopy equivalence $f\colon M \to X$, and an element of $\mathcal S^\topo_\partial (X\times D^4)$ is an orientation preserving homotopy equivalence $g\colon (N, \partial N) \to (X\times D^4, X\times S^3)$ such that $g$ restricts to a homeomorphism on the boundary.  As pointed out in item (i) above, by using Lipschitz structures, our construction of higher rho invariant from Definition $\ref{def:highrho}$ can be directly applied to elements in both $\mathcal S^\topo(X)$ and $\mathcal S^\topo_\partial(X\times D^4)$.   Let us denote the corresponding higher rho invariant map by 
\[\rho^{Lip}\colon \mathcal S^\topo(X) \to K_n(C_{L,0}^\ast(\widetilde X)^\Gamma) \]
and 
\[ \rho^{Lip}_s\colon \mathcal S^\topo_\partial(X\times D^4) \to K_n(C_{L,0}^\ast(\widetilde X)^\Gamma).\]
The following proposition is a consequence of Theorem $\ref{thm:main}$ and Theorem $\ref{thm:struciso}$. 

\begin{proposition}\label{prop:comp}
The higher rho invariant map is compatible with the Siebenmann periodicity map. More precisely, we have 
\[   \rho^{Lip} = \rho^{Lip}_s \circ \GP \colon \mathcal S^\topo(X) \to K_n(C_{L,0}^\ast(\widetilde X)^\Gamma), \]
where $\GP$ is the geometric periodicity map\footnote{See the discussion before Proposition $\ref{prop:strhom}$ and Theorem $\ref{thm:gp=sp}$.} due to Cappell and the first author. 
\end{proposition}
\begin{proof}
Consider the following commutative diagram from Proposition $\ref{prop:strhom}$: 
\[ 
\xymatrixcolsep{4pc}\xymatrix{ \mathcal S^\topo (X) \ar@{^{(}->}[r]^-{\GP} \ar[d]_{\iota_\ast} &  \mathcal S^\topo_\partial (X\times D^4) \ar[d]^{\beta_\ast} \\
	\mathcal S_{n}(X)  \ar@{^{(}->}[r]^-{\times \mathbb{CP}^2}  &  \mathcal S_{n+4 }(X). 
}
\]
Since every element in $\mathcal S_n(X)$ is equivalent to a smooth representative, it follows that the higher rho invariant map  $\rho^{Lip}$ for $\mathcal S^\topo(X)$ (resp. $\mathcal S^\topo_\partial (X\times D^4)$)  defined using Lipschitz structures above coincides with our  definition of the higher rho invariant for $\mathcal S_n(X)$ in Definition $\ref{def:highrho}$. Similarly, the higher rho invariant map $ \rho_s^{Lip}$ for $\mathcal S^\topo_\partial (X\times D^4)$ defined using Lipschitz structures coincides with our  definition of the higher rho invariant for $\mathcal S_{n+4}(X)$. On the other hand, by the product formula for higher rho invariants (cf. the discussion before Remark $\ref{rmk:sigprod}$), the higher rho invariant remains unchanged under the map 
$\times \mathbb{CP}^2$. By the commutative diagram above, this finishes the proof. 
\end{proof}

\begin{remark}
We point out that, if one is only interested in the well-definedness  of the higher rho invariant map  after inverting $2$, that is, if one only wants to  prove the map 
\[ \rho\colon \mathcal S_{n}(X) \to K_n(C_{L,0}^\ast(\widetilde X)^\Gamma)\otimes \mathbb Z[1/2] \]
is well-defined, then there is  a simpler argument than the one outlined in item (ii) above. Indeed, in this case,  an argument similar to the proof for the commutativity of the middle square in Theorem $\ref{thm:stoa}$ suffices. 
\end{remark}



\appendices

\section{Uniform control and uniform invertibility}\label{app:inv}

In this part of the appendix, we show that a uniform family of geometrically controlled Poincar\'{e} complexes gives rise to a uniform family of analytically controlled Poincar\'{e} complexes. 

First, let us introduce the notion of  uniform families of geometrically controlled Poincar\'{e} complexes.

\begin{definition} \label{def:uniformgc}
A uniform family of  geometrically controlled Poincar\'{e} complexes over $X$ is a family of geometrically controlled Poincar\'{e} complexes over $X$
\[ \{ (E_\lambda, b_\lambda, T_\lambda)\}_{\lambda \in \Lambda}\]
such that the following conditions are satisfied: 
\begin{enumerate}[(1)]
	\item the propagations of $b_\lambda$ and $T_\lambda$ are uniformly bounded;
	\item the propagations of the chain homotopy inverses $T'_\lambda$ of $T_\lambda$ are uniformly bounded;
	\item the propagations of the chain homotopies between $T'_\lambda\circ  T_\lambda$ and $1$ are uniformly bounded,   and the propagations of the chain homotopies between $T_\lambda\circ  T'_\lambda$ and $1$ are uniformly bounded;
	\item the matrix coefficients of all maps above (including the chain homotopies) are uniformly bounded.
\end{enumerate}
\end{definition}

There is a natural counterpart of the above notion of uniform families in the analytically controlled category. Recall from Definition $\ref{def:sop}$ that,  for an analytically controlled Poincar\'{e} complex $(E, b, T)$, we have 
\[ B = b + b^\ast \textup{ and } S = i^{p(p-1)+l} T. \] 

\begin{definition} 
A uniform family of analytically controlled Poincar\'{e} complexes over $X$ is a family of analytically controlled Poincar\'{e} complexes over $X$
\[ \{ (E_\lambda, b_\lambda, T_\lambda)\}_{\lambda \in \Lambda}\]
such that the following conditions are satisfied: 
\begin{enumerate}[(1)]
	\item the norms of $b_\lambda$ and $T_\lambda$ are uniformly bounded;
	\item the norms of the chain homotopy inverses $T'_\lambda$ of $T_\lambda$ are uniformly bounded;
	\item there exist $\varepsilon > 0 , C>0$ such that  
	\[ \varepsilon <  \| B_\lambda \pm S_\lambda \| < C. \] 
\end{enumerate}
\end{definition}

\begin{proposition}\label{prop:uniformbd}
Suppose  $\{(E_\lambda, b_\lambda, T_\lambda)\}_{\lambda \in \Lambda}$ is a uniform family of  geometrically controlled Poincar\'{e} complexes over $X$. Then their $\ell^2$-completions give rise to a uniform family of analytically controlled Poincar\'{e} complexes over $X$. In particular, there exists $\varepsilon> 0, C>0$ such that 
\[   \varepsilon < \|B_\lambda \pm S_\lambda \| < C \]
for all $\lambda \in \Lambda$. 
\end{proposition}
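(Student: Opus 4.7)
The plan is to reduce the uniform statement to the invertibility of $B \pm S$ for a single (huge) geometrically controlled Poincar\'e complex, by assembling the entire family into a disjoint sum. This is exactly the strategy already used in the proof of Lemma \ref{lm:uninvert}, where the family in question was a sequence of refinements of a single triangulation.

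First, I would form the disjoint union space $Y = \bigsqcup_{\lambda \in \Lambda} X_\lambda$, where each $X_\lambda$ is a copy of $X$ placed at a sufficiently large distance from the others so that $Y$ is a proper metric space. (For the intended applications $\Lambda$ is countable, so this presents no difficulty; in general one can reduce to countable subfamilies since the conclusion is a pointwise statement about each $\lambda$.) Then I would assemble
\[ (E, b, T) \,=\, \bigoplus_{\lambda \in \Lambda} (E_\lambda, b_\lambda, T_\lambda), \]
viewed as a chain complex of geometrically controlled $Y$-modules. The uniform bounds on propagations and matrix coefficients of the $b_\lambda$, $T_\lambda$, the chain homotopy inverses $T'_\lambda$, and the chain homotopies between $T_\lambda T'_\lambda$, $T'_\lambda T_\lambda$ and the identity directly imply that the assembled operators $b$, $T$, $T'$, and the assembled chain homotopies are geometrically controlled morphisms over $Y$ in the sense of Definition \ref{def:geomap}. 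Verifying the three axioms of Definition \ref{def:gcp} is then a diagonal check, reducing to the same axioms for each summand. Hence $(E, b, T)$ is a geometrically controlled Poincar\'e complex over $Y$.

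By the passage theorem \cite[Theorem 3.14]{MR2220523}, the $\ell^2$-completion $(\overline{E}, b, T)$ is an analytically controlled Poincar\'e complex over $Y$. In particular, by \cite[Lemma 3.5]{MR2220522}, the operators $B \pm S$ on $\overline{E}$ are bounded and invertible, so there exist constants $\varepsilon > 0$ and $C > 0$ with
\[ \varepsilon \,\leq\, \|B \pm S\| \,\leq\, C. \]
Since the Hilbert space decomposition $\overline{E} = \bigoplus_\lambda \overline{E_\lambda}$ is preserved by both $B$ and $S$, which restrict to the corresponding operators $B_\lambda$ and $S_\lambda$ on each summand, this gives
\[ \varepsilon \,\leq\, \|B_\lambda \pm S_\lambda\| \,\leq\, C \]
uniformly in $\lambda$. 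The uniform upper bounds on $\|b_\lambda\|$, $\|T_\lambda\|$, and $\|T'_\lambda\|$ follow by the same mechanism (or by a direct Schur test using the propagation and matrix coefficient bounds).

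The only technical point that requires care is arranging $Y$ to be a proper metric space; for countable $\Lambda$ this is routine by placing the copies of $X$ at rapidly increasing pairwise distances, and the verification that propagations remain uniformly bounded in $Y$ is immediate because each $b_\lambda$, $T_\lambda$, etc., is supported within a single copy $X_\lambda$. Everything else is a straightforward reduction to the single-complex invertibility result of Higson--Roe, which is the main input.
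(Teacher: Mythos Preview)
Your proposal is correct and follows essentially the same approach as the paper: assemble the family into a single geometrically controlled Poincar\'e complex over the disjoint union $\coprod_\lambda X$, pass to the $\ell^2$-completion, invoke \cite[Lemma 3.5]{MR2220522} for invertibility of the assembled $B\pm S$, and read off the uniform bounds on each summand. Your discussion of the properness of $Y$ and the countability of $\Lambda$ is a bit more careful than the paper's terse version, but the argument is the same.
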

\begin{proof}
Note that $\bigoplus_\lambda E_\lambda$ is a geometrically controlled Pincar\'{e} complex over $\bigsqcup_\lambda X $. Let $\mathcal E$ be the $\ell^2$ completion of $\bigoplus_\lambda E_\lambda$. Then by the discussion in Section $\ref{sec:geomhp}$ and $\ref{sec:acp}$, the operators 
\[   \bigoplus_{\lambda  \in \Lambda }(B_\lambda  + S_\lambda ) \textup{ and } \bigoplus_{\lambda \Lambda}  (B_\lambda  - S_\lambda )\]
are bounded and invertible \cite[Lemma 3.5]{MR2220522}. In particular, there exist $\varepsilon > 0 , C>0$  such that 
\[  \varepsilon < \big\| \bigoplus_{\lambda \in\Lambda}  (B_\lambda  + S_\lambda )\big\|_{\mathcal E} < C  \textup{ and }  \varepsilon < \big\| \bigoplus_{\lambda \in \Lambda} (B_\lambda  - S_\lambda )\big\|_{\mathcal E} < C.  \]
It follows that $ \varepsilon < \|B_\lambda\pm S_\lambda\|_{\mathcal E_\lambda} < C $ for all $\lambda \in \Lambda $, where $\mathcal E_j$ is the $\ell^2$-completion of $E_\lambda$.  
\end{proof}

\section{K-homology class of signature operator }\label{app:khom}

In this section of the appendix, we give a detailed construction of the $K$-homology classes of  signature operators on PL manifolds. The material of this section is very much inspired by \cite{higsonxie-witt}.  We will only give the details for the odd case. The even case is similar.

\subsection{Special case: closed PL manifolds}\label{app:special}
In this subsection, we construct the $K$-homology classes of signature operators on closed PL manifolds. The construction for the more general case of elements in $\mathcal N_m(X)$ will be considered in the next subsection. 

Let $M$ be a closed oriented PL manifold of dimension $m$. Assume that $M$ is equipped with a triangulation that has bounded geometry. Suppose there is a control map $\varphi\colon M \to X$. Let  $\sub(M)$ be the  subdivision from Section $\ref{sec:subdiv}$. Here is an outline of the construction of the $K$-homology class of the signature operator on $M$.
\begin{enumerate}[(1)]
\item Consider the vector space $  \bigoplus_{k} C_k^{\Delta}(M)$, where $C_k^\Delta(M)$ is the complex vector space of simplicial $k$-chains in the triangulation of $M$.  It  carries a natural geometrically controlled $X$-module structure, where the basis of $H$ consists of simplices of $\sub(M)$ and each basis element is labeled by the image  of its barycenterunder the control map $\varphi$. Similarly, we have geometrically controlled $X$-modules 
$\bigoplus_{k} C_k^\Delta(\sub^j(M)). $
\item Based on the geometrically controlled $X$-module $E^{(j)}$, we obtain a geometrically controlled Poincar\'e complex of $M$ for each $j\in \mathbb N$ (cf. Example $\ref{ex:geoPoincare}$), whose propagation approaches zero, as $j$ goes to $\infty$. 
\item We connect these geometrically controlled Poincar\'e complexes in a canonical way to form a continuous family of geometrically controlled Poincar\'e complexes parametrized by $t\in [0, \infty)$ such that the propagation of each Poincar\'e complex goes to zero, as $t$ goes to $\infty$. The $K$-homology signature class of $M$ will be an element of $K_m(C_L^\ast(X))$ that is naturally determined by this family. 
\end{enumerate}

Let us first  carry out the details for connecting the geometrically controlled Poincar\'e complexes on  
\[ Q_0 = \bigoplus_{k} C_k^{\Delta}(M)  \textup{ and  } Q_2 = \bigoplus_{k} C_k^{\Delta}(\sub(M)). \]  

Note that $Q_0$ is naturally a vector subspace of $Q_2$, but $Q_0$ is \emph{not} a geometrically controlled $X$-submodule of $Q_2$, since the natural basis of $Q_0$ is not a subbasis of the natural basis of $Q_2$ and furthermore their labelings are not compatible either. In order to fix this issue, we shall introduce an auxiliary  geometrically controlled $X$-module structure on $\bigoplus_{k} C_k^{\Delta}(\sub(M))$ as follows. 
\begin{enumerate}
\item[($\hat a$)] For a simplex $\sigma = \Delta^k\in M$, suppose $\sub(\sigma)$ is the union of distinct $k$-simplices $\{ \omega_i\}$. Instead of the usual basis $\{\omega_i\}$ for the vector space $V_\sigma = C_k^{\Delta}(\sub(\sigma))$, we shall construct a new basis that  contains  $\sigma = \sum_{i}\omega_i $ as one of the basis elements. For example, start with the vector $\sum_{i}\omega_i \in V_\sigma$, and linear-independently choose any other  vectors to  form a basis of $V_\sigma$.  
\item[($\hat b$)] All basis elements of $V_\sigma$ from part (a) are labeled by the image of the barycenter of $\sigma$ under the control map $\varphi$. 
\item[($\hat c$)] We apply the same construction to each simplex of $M$, and obtain a basis of $\bigoplus_{k} C_k^{\Delta}(\sub(M))$ with the corresponding labeling. 
\end{enumerate} 
Let us write $\widehat Q_1$ for $\bigoplus_{k} C_k^{\Delta}(\sub(M))$ with this new geometrically controlled $X$-module structure.  In fact, to make our exposition more transparent, we shall introduce yet another geometrically controlled $X$-module structure  on $\bigoplus_{k} C_k^{\Delta}(\sub(M))$ as follows. 
\begin{enumerate}[(a)]
\item For a simplex $\sigma = \Delta^k\in M$, suppose $\sub(\sigma)$ is the union of some distinct $k$-simplices $\{ \omega_i\}$, which in particular forms a basis for the vector space $V_\sigma = C_k^{\Delta}(\sub(\sigma))$. 
\item All basis elements $\omega_i$ of $V_\sigma$  are labeled by the image of the barycenter of $\sigma$ under the control map $\varphi$. 
\item We apply the same construction to each simplex of $M$, and obtain a basis of $\bigoplus_{k} C_k^{\Delta}(\sub(M))$ with the corresponding labeling. 
\end{enumerate} 
We denote the resulting geometrically controlled $X$-module by $Q_1$. 

Clearly, $Q_0$ is a geometrically controlled $X$-submodule of $\widehat Q_1$. Moreover, $\widehat Q_1$ and $Q_1$ are isomorphic as geometrically controlled $X$-modules through an isomorphism which has zero propagation. 

Let $b$ and $S$ be the differential and duality operator of the simplicial chain complex (with its $X$-module structure $\widehat Q_1$)
\[ \cdots \xrightarrow{b} C_{k+1}^\Delta(\sub(M))\xrightarrow{b} C_k^\Delta(\sub(M)) \xrightarrow{b} C_{k-1}^\Delta(\sub(M)) \xrightarrow{b} \cdots.  \]
which contains   \[ \cdots 
\to  C_{k+1}^\Delta(M)\to C_k^\Delta(M) \to C_{k-1}^\Delta(M) \to  \cdots  \]
as a chain subcomplex, where the  $X$-module structure of the latter is  $Q_0$. 

We endow $ \widehat Q_1$ (resp. $Q_0$, $Q_1$ and $Q_2$)  with the inner product for which the basis elements are orthonormal. From now on,  we shall identify the dual space of $ \widehat Q_1$ (resp. $Q_0$, $Q_1$ and $Q_2$) with $\widehat  Q_1$ (resp. $Q_0$, $ Q_1$ and $Q_2$) by this inner product.    With respect to the orthogonal decomposition of $\widehat  Q_1 = Q_0 \oplus Q_0^\perp$, we have 
\[  b = \begin{pmatrix}
b_{11} & b_{12} \\ 0 & b_{22}
\end{pmatrix} \textup{ and } S = \begin{pmatrix}
S_{11} & S_{12} \\
S_{21} & S_{22}
\end{pmatrix}, \] 
where $b_{11}$  and $S_{11}$ are the differential and duality operator of the chain complex 
\[ \cdots \to C_{k+1}^\Delta(M)\to C_k^\Delta(M) \to C_{k-1}^\Delta(M) \to  \cdots. \] 
In particular, $b_{11} S_{11} = S_{11} b_{11}^\ast$ and  $bS = Sb^\ast$. If no confusion is likely to arise, we shall denote these geometrically controlled Poincar\'e complexes by $(Q_0, b_{11}, S_{11})$ and $(\widehat  Q_1, b, S)$. 

Now let us define
\[  b_t = \begin{pmatrix}
b_{11} & t b_{12} \\ 0 & b_{22}
\end{pmatrix}  \textup{ and }  S_t = \begin{pmatrix}
S_{11} & tS_{12} \\
tS_{21} & S_{22}
\end{pmatrix} \]
where $t\in [0, 1]$. A straightforward calculation shows that \[ b_tS_t = S_t b_t^\ast. \] Consider the short exact sequence 
\[ 0 \to   (Q_0, b_{11}) \xrightarrow{\iota} (\widehat Q_1, b) \to (Q_0^\perp, b_{22}) \to 0. \]
Since $\iota$ is a quasi-isomorphism (i.e. an isomorphism on homology), it follows that $(Q_0^\perp, b_{22})$ is acyclic (i.e. with zero homology). Hence the short exact sequence 
\[ 0 \to   (Q_0, b_{11}) \xrightarrow{\iota} (\widehat  Q_1, b_t) \to (Q_0^\perp, b_{22}) \to 0\]
implies that $(Q_0, b_{11}) \xrightarrow{\iota} (\widehat Q_1, b_t) $ is a quasi-isomorphism for all $t \in [0, 1]$. Therefore, in the following commutative diagram, the vertical maps are quasi-isomorphisms. 
\[ \xymatrix{ (Q_0, b_{11}^\ast) \ar[r]^{S_{11}}  & (Q_0, b_{11}) \ar[d]^-\iota  \\
(\widehat Q_1, b^\ast_t) \ar[u] \ar[r]^{S_t} & (\widehat Q_1, b_t) 
} \]
It follows that $S_t$ is a quasi-isomorphism for all $t\in [0, 1]$, since $S_{11}$ is a duality operator and thus a quasi-isomorphism. In fact, by localizing the calculation at stars of simplices and using a Mayer-Vietoris argument as in \cite[Section 4]{MR2220523}, it is not difficult to show that the maps $\iota\colon (Q_0, b_{11}) \to (\widehat Q_1, b_t) $ and   $S_t$ are geometrically controlled chain equivalences, and $\{(\widehat Q_1, b_t, S_t)\}_{t\in [0, 1]}$ form a uniform family of geometrically controlled Poincar\'e complexes in the sense of Definition $\ref{def:uniformgc}$. 

Now let us turn to the geometrically controlled Poincar\'e complexes $( Q_1, b, S)$ and $(Q_2, b, S)$. Here $Q_1$ and $Q_2$ carry the same differential $b$ and duality operator $S$. The only difference between the two complexes is their geometrically controlled $X$-module structures. We shall introduce a new family of geometrically controlled $X$-module structures on the underlying space $\bigoplus_{k} C_k^{\Delta}(\sub(M))$ which will connect $(Q_1, b, S)$ and $(Q_2, b, S)$. For a simplex $\sigma = \Delta^k$ in $M$, suppose $\sub(\sigma)$ is the union of distinct $k$-simplices $\{ \omega_i\}$. Let $\gamma_i\colon [1, 2] \to M$ be the unique linear path that starts with the barycenter of $\sigma$ and ends with the barycenter of  $\omega_i$. Now for each $t\in [1, 2]$, we introduce the following geometrically controlled $X$-module on $\bigoplus_{k} C_k^{\Delta}(\sub(M))$. 
\begin{enumerate}[(I\textsubscript{t})]
\item For a simplex $\sigma = \Delta^k\in M$, suppose $\sub(\sigma)$ is the union of some distinct $k$-simplices $\{ \omega_i\}$, which forms a basis for the vector space $V_\sigma = C_k^{\Delta}(\sub(\sigma))$. 
\item All basis elements $\omega_i$ of $V_\sigma$  are labeled by the image of $\gamma_i(t)$ under the control map $\varphi\colon M \to X$. 
\item We apply the same construction to each simplex of $M$, and obtain a basis of $\bigoplus_{k} C_k^{\Delta}(\sub(M))$ with the corresponding labeling. 
\end{enumerate} 
We denote the resulting geometrically controlled $X$-module by ${Q}_t$. Note that, when $t=1$ or $2$, the definition of $Q_t$ is consistent with the definition of $Q_1$ or $Q_2$ above. Again, it is not difficult to see that $\{ (Q_t, b, S)\}_{t\in [1, 2]}$ form a uniform family of geometrically controlled Poincar\'e complexes in the sense of Definition $\ref{def:uniformgc}$.

To summarize, we have constructed a uniform family of geometrically controlled Poincar\'e complexes $\{ (Q_t, b_t, S_t)\}_{t\in [0, 2]}$, where 
\begin{equation}\label{eq:unifam}
(Q_t, b_t, S_t) = \begin{cases}
(\widehat Q_1, b_t, S_t) & \textup{ if } 0\leq t \leq 1, \\
(Q_t, b, S) & \textup{ if } 1\leq t \leq 2.
\end{cases}
\end{equation}
We shall explain our notation. On one hand, when $t=1$,  we have identified $(\widehat Q_1, b, S)$ with $(Q_1, b, S)$ by an isomorphism with zero propagation. On the other hand, when $t =0$, there appears to be a conflict of notation. Indeed,  $(\widehat Q_1, b_0, S_0)$ is \emph{not} a chain complex modeled on the vector space $Q_0$, but rather a chain complex modeled on $\widehat Q_1$ with 
\[  b_0 = \begin{pmatrix}
b_{11} & 0  \\ 0 & b_{22}
\end{pmatrix}  \textup{ and }  S_0 = \begin{pmatrix}
S_{11} & 0 \\
0 & S_{22}
\end{pmatrix}. \]
However, $(\widehat Q_1, b_0, S_0)$ is geometrically controlled chain equivalent to $(Q_0, b_{11}, S_{11})$.  Indeed, we have that  
\[ (\widehat Q_1, b_0, S_0) = (Q_0, b_{11}, S_{11}) \oplus (Q_0^\perp, b_{22}, S_{22}), \] and $(Q_0^\perp, b_{22}, S_{22})$ is acyclic.
In other words, in terms of $K$-theory, $(\widehat Q_1, b_0, S_0) $ is equivalent to $(Q_0, b_{11}, S_{11})$; and $(\widehat Q_1, b_0, S_0) $ can be viewed as a stabilization of $(Q_0, b_{11}, S_{11})$. More precisely, let $B_0 = b_0 + b_0^\ast$, $ B_{11} = b_{11} + b_{11}^\ast$ and $ B_{22} = b_{22} + b_{22}^\ast$. Then we have
\[   B_0 \pm S_0  = (B_{11} \pm S_{11}) \oplus (B_{22} \pm S_{22}). \]
Since $(Q_0^\perp, b_{22}, S_{22})$ is acyclic,  we see that  $ uS_{22}$ is a duality operator for $(Q_0^\perp, b_{22})$ for all $u\in [0, 1]$. It follows that the invertible element (cf. Definition \ref{def:sig})
\begin{align*}
& (B_0 + S_0)(B_0 - S_0)^{-1}  =  \begin{pmatrix}
\frac{B_{11} + S_{11}}{B_{11} - S_{11}}  & \\
& \frac{B_{22} + S_{22}}{B_{22} - S_{22}}
\end{pmatrix}
\end{align*} 
is connected to the invertible element 
\[  \begin{pmatrix}
\frac{B_{11} + S_{11}}{B_{11} - S_{11}} & \\
& 1
\end{pmatrix} \] 
through the following path of invertible elements 
\[  \begin{pmatrix}
\frac{B_{11} + S_{11}}{B_{11} - S_{11}} & \\
& \frac{B_{22} + uS_{22}}{B_{22} - uS_{22}}
\end{pmatrix}. \] 
Therefore, $(\widehat Q_1, b_0, S_0) $ is just a $K$-theoretic stabilization of the geometrically controlled Poincar\'e complex  $(Q_0, b_{11}, S_{11})$. If no confusion is likely to arise, we shall continue this slight abuse of notation and say that the uniform family of geometrically controlled Poincar\'e complexes $\{ (Q_t, b_t, S_t)\}_{t\in [0, 2]}$ connects $(Q_0, b_{11}, S_{11})$ and $(Q_2, b, S)$. We shall implicitly assume this type of stabilization throughout the following discussion.

Let us write $Q_{2j} = \bigoplus_{k} C_k^\Delta(\sub^j(M))$. We can apply the same construction above to form a uniform family of geometrically controlled Poincar\'e complexes connecting $Q_{2j}$ to  $Q_{2(j+1)}$. Denote this uniform family of geometrically controlled Poincar\'e complexes
by $\{(Q_t, b_t, S_t)\}_{t\in [2j, 2j+2]}$. In fact, it is not difficult to see that the union of these families 
\[ \{(Q_t, b_t, S_t)\}_{t\in [0, \infty)}\]
is a uniform family of geometrically controlled Poincar\'e complexes. By Proposition $\ref{prop:uniformbd}$, there exist $\varepsilon >0$ and $C>0$ such that 
\[ \varepsilon < \|B_t \pm S_t\| < C   \]  
for all $t\in [0, \infty)$, where $B_t = b_t + b_t^\ast$.

Let $p(x)$ be a polynomial on  $[\varepsilon, C]\cup [-C, -\varepsilon]$ such that  
\[ \sup_{x\in [\varepsilon, C]} | p(x) - x ^{-1}| < \frac{1}{C}.\]
Then $\| p(B_t - S_t) - (B_t - S_t)^{-1}\| < \frac{1}{\|B_t - S_t\|}$, which implies that $p(B_t - S_t)$ is invertible. Moreover, the element 
\[ U_t \coloneqq ( B_t + S_t)\cdot p(B_t-S_t)  \]
has finite propagation. Since the propagations of $B_t$ and $S_t$ go to zero as $t$ goes to $\infty$, it follows that the propagation of $( B_t + S_t)\cdot p(B_t-S_t)$ goes to zero, as $t$ goes to infinity.

To summarize, we  obtain a norm-bounded and  uniformly continuous path of invertible elements
\[  U: [0, \infty) \to C^\ast(X)^+ \] 
such that the propagation of $U_t$ goes to zero, as $t\to \infty$. Here $C^\ast(X)^+$ is the unitization of $C^\ast(X)$. 

\begin{definition}\label{def:local}
The local index  $\ind_L(M, \varphi)$ of the signature operator of $M$ under the control $\varphi\colon M \to X$ is defined to be the $K$-theory class of the path $U$ in $K_1(C^\ast_L(X))$.
\end{definition}

The even dimensional case is similar. We omit the details.

\subsection{General case: elements in $\mathcal N_m(X)$}
\label{app:gen}

In this subsection, we construct the $K$-homology classes of signature operators for elements in $\mathcal N_m(X)$. See Definition \ref{def:norm} for a description of  $\mathcal N_m(X)$.

Let $\xi = (M, \partial M, \varphi, N, \partial, N, f) \in \mathcal N_m(X)$ (cf. Definition $\ref{def:norm}$). 
In this case, we consider the space  $M\cup_f (-N)$ obtained by gluing $M$ and $-N$ along the boundary by the map $f\colon \partial N \to \partial M$. Although $M\cup_f (-N)$ is not a manifold, it is still a space equipped with a Poincar\'e duality. In fact, since $f\colon\partial N \to \partial M$ is a PL infinitesimally controlled homotopy equivalence,  we can still make sense of the $K$-homology class of its ``signature operator". 

More precisely, let 
\[ \big(E_M^{(n)}, b_M^{(n)}, T_M^{(n)}, P_M^{(n)} \big),  \textup{ resp. $\big(E_N^{(n)}, b_N^{(n)}, T_N^{(n)}, P_N^{(n)} \big)$ }\] be the geometrically controlled Poincar\'{e} pair associated to the triangulation $\sub^n(M)$, resp. $\sub^n(N)$. Define  $\big(E^{(n)}, b^{(n)}\big)$ to be the quotient complex of the inclusion
\[ \big(P_N^{(n)}E_N^{(n)}, P_N^{(n)} b_N^{(n)}\big) \xrightarrow{ f^{(n)}P_N^{(n)} \oplus P_N^{(n)}} \big(E_M^{(n)}, b_M^{(n)}\big) \oplus \big(E_N^{(n)}, b_N^{(n)}\big) \]
where $f^{(n)}$ is a chain homotopy equivalence
\[ f^{(n)}\colon \big( P_N^{(n)} E_N^{(n)}, P_N^{(n)}b_N^{(n)}, T^{(n)}_{0, N}\big) \to  \big( P_M^{(n)} E_M^{(n)}, P_M^{(n)}b_M^{(n)}, T^{(n)}_{0, M}\big) \] induced by the PL infinitesimally controlled homotopy equivalence $f\colon \partial N \to \partial M$. Here $T^{(n)}_{0, N}$ and $T^{(n)}_{0, M}$ are the Poincar\'e duality operators on the boundary as defined in Lemma $\ref{lm:pbd}$.   Note that 
\begin{enumerate}[(1)]
\item the natural inclusion 
\[\hspace{1cm}\iota_n\colon (E_M^{(n)}, b_M^{(n)}, T_M^{(n)}, P_M^{(n)} )\to  (E_M^{(n+1)}, b_M^{(n+1)}, T_M^{(n+1)}, P_M^{(n+1)} )\] is a geometrically controlled homotopy equivalence of Poincar\'e pairs; 
\item since $f\colon \partial N \to \partial M$ is a PL infinitesimally controlled homotopy equivalence, $f^{(n)}$ can be chosen so that the propagation of $f^{(n)}$ goes to $0$, as $n$ approaches infinity; 
\item under the above inclusion $\iota_n$, the map  $f^{(n)}$ is geometrically controlled chain homotopic to $f^{(n+1)}$; moreover, the propagation of the homotopy goes to $0$, as $n$ goes to infinity; 
\end{enumerate}  

In particular, it follows that the maps $T_M^{(n)}$ and $T_N^{(n)}$ induce a natural Poincar\'{e} duality operator $T^{(n)}$ on  $(E^{(n)}, b^{(n)})$. To summarize, $ (E^{(n)}, b^{(n)}, T^{(n)})$ is a geometrically controlled Poincar\'{e} complex such that the propagations of all relevant maps go to $0$, as $n$ goes to infinity. 

Now essentially the same construction from Section $\ref{app:special}$ above can be applied to $\{ (E^{(n)}, b^{(n)}, T^{(n)} ) \}_{n\geq 1}$ and ``connect them  together" to produce a uniform family of geometrically controlled Poincar\'e complexes. Technically speaking, some extra care is needed for the chain equivalences $\{f^{(n)}\}$. We need to specify how to connect $f^{(n)}$ to $f^{(n+1)}$. Let us define $g^{(n)} = \iota_n^\ast \circ f^{(n+1)} \circ \iota_n $,  which fits into the following commutative diagram: 
\[ \scalebox{1}{\xymatrixcolsep{2pc}\xymatrix{ \big( P_N^{(n)} E_N^{(n)}, P_N^{(n)}b_N^{(n)}, T^{(n)}_{0, N}\big)  \ar[r]^-{\iota_n} \ar@{-->}[d]^-{g^{(n)}} &   \big( P_N^{(n+1)} E_N^{(n+1)}, P_N^{(n+1)}b_N^{(n+1)}, T^{(n+1)}_{0, N}\big) \ar[d]^-{f^{(n+1)}} \\
	\big( P_M^{(n)} E_M^{(n)}, P_M^{(n)}b_M^{(n)}, T^{(n)}_{0, M}\big) 	 & \big( P_M^{(n+1)} E_M^{(n+1)}, P_M^{(n+1)}b_M^{(n+1)}, T^{(n+1)}_{0, M}\big)\ar[l]_-{\iota^\ast_n}
}}\]
where $\iota^\ast_n$ is the adjoint of $\iota_n$. We observe that for each $t\in [0, 1]$, the map 
\begin{align*}
h_t  \coloneqq  (1-t)f^{(n)} + t g^{(n)}\colon 
\big( P_N^{(n)} E_N^{(n)}, P_N^{(n)}b_N^{(n)}, T^{(n)}_{0, N}\big)\to \big( P_M^{(n)} E_M^{(n)}, P_M^{(n)}b_M^{(n)}, T^{(n)}_{0, M}\big) 
\end{align*}   is a geometrically controlled chain equivalence of Poincar\'e complexes. The family $\{h_t\}_{0\leq t\leq 1}$ provides a natural way to deform $f^{(n)}$ into $g^{(n)}$. 
Now  consider the orthogonal decompositions  
\[ P_N^{(n+1)} E_N^{(n+1)} =  P_N^{(n)} E_N^{(n)} \oplus (P_N^{(n)} E_N^{(n)})^\perp \] and  
\[ P_M^{(n+1)} E_M^{(n+1)} =  P_M^{(n)} E_M^{(n)} \oplus (P_M^{(n)} E_M^{(n)})^\perp. \] If we write $f^{(n+1)}$ as a matrix with respect to these decompositions, then   the map 
\[ g^{(n)} \colon \big( P_N^{(n)} E_N^{(n)}, P_N^{(n)}b_N^{(n)}, T^{(n)}_{0, N}\big)\to \big( P_M^{(n)} E_M^{(n)}, P_M^{(n)}b_M^{(n)}, T^{(n)}_{0, M}\big)  \]
becomes a matrix entry of $f^{(n+1)}$. Now the construction in Section $\ref{app:special}$ above goes through in a straightforward way. 

Consequently, for each $\xi\in \mathcal N_m(X)$, we obtain a $K$-theory class  in $K_m(C^\ast_{L}(X))$. We denote this class by $\ind_L(\xi)$, and call it the $K$-homology class of signature operator associated to $\xi$, or simply the local index of $\xi$.

\section{Tensor products of Poincar\'e complexes}\label{app:tensor}
In this section, we briefly review tensor products of Poincar\'e complexes. The discussion below works simultaneously for geometrically controlled or analytically controlled Poincar\'e complexes. We will not specify which category, and simply call them Poincar\'e complexes. 

Let $(E, d, T)$ and $(F, b, R)$  be two  Poincar\'e complexes of dimension $n$ and $m$ respectively. Recall that the tensor product of two chain complexes is naturally a double complex. The total complex $(E\otimes F, \partial)$ of this double complex can be described as follows: 
\begin{enumerate}[(1)]
\item the $k$-th term of the total chain complex is 
\[ (E\otimes F)_{k} = \bigoplus_{k= p+q} E_{p}\otimes F_q;   \]
\item the differential is defined as 
\[  \partial (x\otimes y) =  dx \otimes y  + (-1)^{|x|} x\otimes b y \]
in $(E_{p-1}\otimes  F_q) \oplus (E_{p}\otimes F_{q-1})$, 
for all $x\otimes y \in E_p\otimes F_q$, 	where $|x| = p$ if $x\in E_p$. 
\end{enumerate}
Roughly speaking,  $\partial  = d\hat\otimes 1 + 1\hat\otimes b$, where $\hat \otimes $ stands for graded tensor products. And the sign convention is that a sign $(-1)^{|\alpha|\cdot |\beta|}$ is introduced whenever a symbol $\alpha$ (a chain element or a map) passes over another symbol $\beta$  (a chain element or a map). Now it is  easy to verify that  
\[  \partial^\ast (x\otimes y) = d^\ast x \otimes y  + (-1)^{|x|} x\otimes b^\ast y \]
in $ (E_{p+1}\otimes  F_q) \oplus (E_{p}\otimes F_{q+1})$, 
for all $x\otimes y \in E_p\otimes F_q$. 

The Poincar\'e duality operator $T$ and $R$ also naturally induce a Poincar\'e duality operator $T\hat\otimes R$ on $(E\otimes F, \partial)$ as follows.  
\begin{definition}
We define 
\[ (T\hat\otimes R) (x\otimes y) \coloneqq  (-1)^{(n-|x|)\cdot|y|} Tx\otimes Ry. \] 
\end{definition}
The following lemma shows that $T\hat\otimes R$ satisfies the conditions in Definition $\ref{def:hilpoin}$, hence implements a Poincar\'e duality operator for $(E\otimes F, \partial)$. 
\begin{lemma} We have 
\begin{enumerate}[$(1)$]
	\item 	$ (T\hat\otimes R )\partial^\ast v + (-1)^{k}\partial (T\hat\otimes R) v = 0$ for  all $v\in (E\otimes F)_k$;
	\item 	$ (T\hat\otimes R )^\ast v = (-1)^{(n+m-k)k}(T\hat\otimes R) v$ for all $v\in (E\otimes F)_k$. 
\end{enumerate}

\end{lemma}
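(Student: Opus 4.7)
\medskip

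The plan is to verify both identities by direct sign bookkeeping on simple tensors $v = x\otimes y \in E_p\otimes F_q$ with $k = p+q$, then extend by linearity. The individual Poincar\'e duality conditions for $(E,d,T)$ and $(F,b,R)$ provide the only analytic input; everything else is Koszul sign chasing, making both items effectively a computation.

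For (1), I will expand $\partial^\ast v = d^\ast x\otimes y + (-1)^p x\otimes b^\ast y$ and apply $T\hat\otimes R$ termwise using the defining formula, then substitute the individual duality relations $Td^\ast x = (-1)^{p+1} dTx$ on $E_p$ and $Rb^\ast y = (-1)^{q+1} bRy$ on $F_q$ (item (2) of Definition~\ref{def:hilpoin} applied separately). Separately, I will expand $\partial(T\hat\otimes R)v$, noting $Tx\in E_{n-p}$ so that $\partial(Tx\otimes Ry) = dTx\otimes Ry + (-1)^{n-p} Tx\otimes bRy$. The identity $(T\hat\otimes R)\partial^\ast v + (-1)^k\partial(T\hat\otimes R)v = 0$ then reduces to checking that the exponents of the signs in front of $dTx\otimes Ry$ and $Tx\otimes bRy$ in the two expressions differ by an odd integer. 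A direct arithmetic gives differences $2q-1$ and $-1$ respectively, both odd, so the four terms cancel pairwise.

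For (2), I will first compute the adjoint of $T\hat\otimes R$ using the inner product: for $u\in E_{n-p}, w\in F_{m-q}$,
\[
\langle (T\hat\otimes R)(x\otimes y), u\otimes w\rangle = (-1)^{(n-p)q}\langle x, T^\ast u\rangle\langle y, R^\ast w\rangle,
\]
which yields $(T\hat\otimes R)^\ast(u\otimes w) = (-1)^{a(m-c)} T^\ast u\otimes R^\ast w$ on $E_a\otimes F_c$ after relabeling with $a=n-p$, $c=m-q$. Substituting the scalar duality $T^\ast u = (-1)^{(n-a)a} Tu$ and $R^\ast w = (-1)^{(m-c)c} Rw$ from item (1) of Definition~\ref{def:hilpoin}, I compare the resulting sign with $(-1)^{(n+m-k)k}$ times the defining sign $(-1)^{(n-a)c}$ of $(T\hat\otimes R)(u\otimes w)$. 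The exponents differ by $2(n-a)c$, which is even, so the identity holds.

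The ``hard part'' is really only notational: getting consistent sign conventions between $\hat\otimes$, the chain differential $\partial$ on the total complex, the individual duality relations, and the adjoint operation. Once these are pinned down on simple tensors, both (1) and (2) follow from direct arithmetic modulo $2$ of integer linear combinations of $n,m,p,q$. No deeper input (no homotopy equivalence of $T\hat\otimes R$, no chain-level construction) is needed for this lemma; those considerations enter only when one proceeds to verify item (3) of Definition~\ref{def:hilpoin} for the tensor product, which is not asserted here.
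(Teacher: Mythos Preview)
Your proposal is correct and follows essentially the same approach as the paper: verify both identities on simple tensors $x\otimes y\in E_p\otimes F_q$ by direct sign bookkeeping, invoking the individual duality relations for $T$ and $R$ from Definition~\ref{def:hilpoin}. The paper carries out exactly this computation for part~(1) and then says ``the calculation for Part~(2) is similar; we leave out the details,'' so your treatment of~(2) via the inner-product characterization of the adjoint in fact supplies more detail than the paper itself.
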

\begin{proof}
Let $x\otimes y \in E_p\otimes F_q$ with $p+q = k$. Then we have 
\begin{align*}
\partial (T\hat\otimes R)(x\otimes y) = &  (-1)^{(n-|x|)\cdot|y|}\partial(Tx\otimes Ry) \\
=&  (-1)^{(n-|x|)\cdot|y|} \big( dTx\otimes Ry + (-1)^{|Tx|} Tx\otimes bRy\big)
\end{align*} 
and 
\begin{align*}
(T\hat\otimes R)\partial^\ast(x\otimes y) 	= &  (T\hat\otimes R) \big(d^\ast x\otimes y + (-1)^{|x|}x\otimes b^\ast y\big)\\
= & (-1)^{(n-|x|-1)\cdot |y|} Td^\ast x\otimes Ry + \\  & \hspace{2cm} (-1)^{|x|}(-1)^{(n-|x|)\cdot (|y|+1)} Tx\otimes Rb^\ast y \\
= &  (-1)^{(n-|x|-1)\cdot |y|} Td^\ast x\otimes Ry + \\
& \hspace{2cm} (-1)^{n}(-1)^{(n-|x|)\cdot |y|} Tx\otimes Rb^\ast y.
\end{align*} 
It follows that 
\[ (T\hat\otimes R)\partial^\ast (x\otimes y)  +  (-1)^{k}\partial (T\hat\otimes R) (x\otimes y) = 0,  \]
since $Td^\ast x + (-1)^{|x|} dT x = 0$ and $Rb^\ast y + (-1)^{|y|} bR y = 0$. This prove part $(1)$. 

The calculation for Part $(2)$ is similar. We leave out the details.

\end{proof}

\section{Product formula for higher rho invariants}\label{app:prod}
In the section, we prove Theorem $\ref{thm:prod}$, which gives  a product formula for higher rho invariants.

Given \[ \theta = (M, \partial M, \varphi, N, \partial N, \psi, f) \in \mathcal S_n(X), \] let $\theta\times \mathbb R \in \mathcal S_{n+1}(X\times \mathbb R)$ be the product of $\theta$ and $\mathbb R$.  Here various undefined terms take the obvious meanings (see Section $\ref{sec:iden}$ for the definition of $\theta\times I$ for example). Note that the construction in Section $\ref{sec:highrho}$ also applies to $\theta\times \mathbb R$ and defines its higher rho invariant $\rho(\theta\times \mathbb R) \in K_{n+1}(C^\ast_{ L, 0}(\widetilde X\times \mathbb R)^\Gamma)$. Also there is a natural homomorphism    \[ \alpha \colon C^\ast_{L, 0}(\widetilde X)^\Gamma \otimes C^\ast_L(\mathbb R)  \to C_{L, 0}^\ast (\widetilde X\times \mathbb R)^\Gamma, \] which induces an isomorphismon of  $K$-theory. 

\begin{theorem}  With the same notation as above, we have 
\[  k_n\cdot \alpha_\ast\big( \rho(\theta) \otimes \ind_L(\mathbb R) \big) =  \rho(\theta\times \mathbb R)\]
in $K_{n+1}(C^\ast_{ L, 0}(\widetilde X\times \mathbb R)^\Gamma)$, 
where $\ind_L(\mathbb R)$ is the $K$-homology class of the signature operator on $\mathbb R$, and $k_n = 1$ if $n$ is even and $2$ if $n$ is odd. 
\end{theorem}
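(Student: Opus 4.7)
The plan is to reduce the product formula to the algebraic product structure of Poincar\'e complexes developed in Appendix $\ref{app:tensor}$, together with the analytic product formula for signature operators recalled in Remark $\ref{rmk:sigprod}$.

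First, I would identify the geometric data on the product side. Choose triangulations of $M$, $N$, $\partial M$, $\partial N$ as in Section $\ref{sec:highrho}$ and equip $\mathbb R$ with a triangulation of bounded geometry whose mesh shrinks toward infinity. The spaces $C(M\times \mathbb R)$ and $C(N\times \mathbb R)$ then carry natural product triangulations, and the associated $\Gamma$-equivariant analytically controlled chain complexes are geometrically controlled homotopy equivalent (with propagations tending to zero) to the tensor products
\[ (E(\widetilde{CM}),b,T)\otimes (E(\widetilde{\mathbb R}),d,R), \qquad (E(\widetilde{CN}),b',T')\otimes (E(\widetilde{\mathbb R}),d,R). \]
The PL infinitesimally controlled homotopy equivalence $f|_{\partial N}$ extends to a PL infinitesimally controlled homotopy equivalence $f|_{\partial N}\times \id$ for $\theta\times \mathbb R$ over $X\times\mathbb R$, and the induced chain maps between the tensor product complexes are precisely $F\otimes 1$ and $G\otimes 1$, with homotopies $H\otimes 1$ and $H'\otimes 1$. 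Consequently, every ingredient entering the construction in Section $\ref{sec:highrho}$ for $\rho(\theta\times \mathbb R)$ is the tensor product of the corresponding ingredient for $\rho(\theta)$ with the identity on $(E(\widetilde{\mathbb R}),d,R)$.

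Next I would compare the signature-type invertibles. By the tensor product duality formula $T\hat\otimes R$ from Appendix $\ref{app:tensor}$, the associated operator $\mathcal S$ for the product complex splits, up to signs dictated by the grading convention, as an external product of the operators $\mathcal S_\theta$ and $S_{\mathbb R}$ constructed for $\theta$ and for the signature operator on $\mathbb R$ respectively. The same grading convention is the source of the factor discussed in Remark $\ref{rmk:sigprod}$: the external product of Poincar\'e complexes of dimensions $n$ and $1$ produces the signature operator of the product up to a factor of $k_n$ ($1$ when $n$ is even, $2$ when $n$ is odd). I would then verify, by straightforward but careful bookkeeping, that the path of invertibles $\{\mathscr T_t\}$ (the concatenation of the two paths of duality operators and the final rotation path $\eqref{eq:htpy}$) for $\theta\times \mathbb R$ is a reparametrization, up to operator homotopy rel endpoints, of the external product of the corresponding path for $\theta$ (of length $k_n$) with the time-independent signature path for $\mathbb R$.

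From this it follows at the level of $K$-theory that
\[ [\rho(\theta\times \mathbb R)] = k_n\cdot \alpha_\ast\bigl([\rho(\theta)]\otimes [\ind_L(\mathbb R)]\bigr) \]
in $K_{n+1}(C^\ast_{L,0}(\widetilde X\times \mathbb R)^\Gamma)$, since the external product pairing $K_n(C^\ast_{L,0}(\widetilde X)^\Gamma)\otimes K_1(C^\ast_L(\mathbb R))\to K_{n+1}(C^\ast_{L,0}(\widetilde X\times \mathbb R)^\Gamma)$ is induced, at the level of representatives, by exactly this external product construction, and the path $W(\theta)\otimes U(\mathbb R)$ is (a representative of) the image of $[\rho(\theta)]\otimes [\ind_L(\mathbb R)]$ under $\alpha_\ast$.

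The main obstacle I anticipate is not the $K$-theoretic identification but the bookkeeping needed to show that the tensor product of the two concrete paths of invertibles constructed in Section $\ref{sec:highrho}$ and Appendix $\ref{app:khom}$ is operator homotopic, through analytically controlled invertibles of vanishing propagation, to the concrete path built directly for $\theta\times \mathbb R$; the grading signs in the tensor product duality operator $T\hat\otimes R$ must be matched precisely with the sign conventions used in Definition $\ref{def:sop}$ and in the rotations $\eqref{eq:htpy}$, and this is where the $k_n$ discrepancy surfaces. Once this sign/rotation analysis is carried out, the remaining uniform boundedness and invertibility estimates follow from Proposition $\ref{prop:uniformbd}$ applied to the family of tensor product Poincar\'e complexes associated to the successive standard subdivisions.
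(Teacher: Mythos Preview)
Your outline has the right intuition---everything should tensor---but the step you flag as ``the main obstacle'' is in fact the whole proof, and your proposal does not supply it. The external $K$-theory product $K_n\otimes K_1\to K_{n+1}$ is not realized by simply tensoring the two paths of invertibles together; its concrete form depends on the parity of $n$ (projection $\otimes$ invertible when $n$ is even, invertible $\otimes$ invertible when $n$ is odd), and you never say what ``the external product of the corresponding path for $\theta$ with the time-independent signature path for $\mathbb R$'' actually is at the level of operators. Invoking Remark~\ref{rmk:sigprod} does not settle this: that remark concerns Riemannian signature operators, whereas here you must match explicit paths in $C^\ast_{L,0}$.

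The paper's proof is organized quite differently from your sketch. It treats the even and odd cases separately. For $n$ even it uses the \emph{unbounded} de~Rham complex of $\mathbb R$ (not a triangulation), computes $\partial+\partial^\ast\pm S_{T\hat\otimes R}=(B\pm S)\otimes 1-1\otimes iD$ explicitly, represents $\rho(\theta\times\mathbb R)$ by the concrete path $V_t=\bigl((B+S)\otimes 1-1\otimes iD_t\bigr)\bigl((B-S)\otimes 1-1\otimes iD_t\bigr)^{-1}$ (with a separate propagation argument via Fourier transform of the wave operator), and then homotopes $B\pm S$ to their spectral sign $P_\pm-Q_\pm$ to reduce $V_t$ algebraically to $([P_+]-[P_-])\otimes[(D_t+i)(D_t-i)^{-1}]$. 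For $n$ odd it does \emph{not} attempt a direct comparison at all; instead it first proves the analogous formula for $\theta\times\mathbb R^2$ (another explicit matrix computation with the Hodge decomposition $F=F^+\oplus F^-$), and then bootstraps back to $\mathbb R$ using the even case applied to $\theta\times\mathbb R$ together with the identity $\ind_L(\mathbb R^2)=2\cdot\beta_\ast(\ind_L(\mathbb R)\otimes\ind_L(\mathbb R))$; the factor $2$ here is exactly the $k_n$ for $n$ odd. Your uniform treatment of both parities via ``reparametrization up to operator homotopy'' would need to reproduce both of these computations, and as written it does neither.
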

\begin{proof}
We will prove the even case  and the odd case separately. 

\textbf{Even case.} Let us first consider the case where $n$ is even.  We use the unbounded theory to define the $K$-homology class of the signature operator on $\mathbb R$. The Hilbert-Poincar\'e complex $(F, d, R)$ associated to the signature operator on $\mathbb R$ is 
\[ \Omega^0_{L^2}(\mathbb R) \xleftarrow{d} \Omega^1_{L^2}(\mathbb R) \]
where $d$ is the adjoint of the de Rham differential map and the duality operator $R$ is the Hodge star operator. See \cite[Section 5]{MR2220522} and \cite[Section 5]{MR2220523}.

Let $(E, b, T)$ be the analytically controlled Poincar\'e complex associated to  the space $M\cup_{f}(-N)$ as in the definition of $\rho(\theta)$. See Section $\ref{sec:highrho}$ and Appendix $\ref{app:khom}$. Then the tensor product 
\[ (E\otimes F, \partial, T\hat\otimes R) \]
gives rise to a specific representative of the  Hilbert-Pioncar\'e complex associated to  $\theta\times \mathbb R$. See Appendix $\ref{app:tensor}$ for more details on tensor products of Poincar\'e complexes.   It is straightforward to verify that the self-adjoint duality operator $S_{T\hat{\otimes} R}$ (as in Definition $\ref{def:sop}$) associated to $T\hat\otimes R$ is precisely $S_T\otimes R$. For notational simplicity,  let us write $S = S_T$.  Note that we have 
\[ \partial = b \otimes 1 + 1\otimes d \colon E_{even} \otimes F_1 \to E_{odd}\otimes F_1 \oplus E_{even}\otimes F_0.    \]
\[ \partial^\ast = b^\ast \otimes 1 - 1\otimes d^\ast \colon E_{odd} \otimes F_0 \to E_{even}\otimes F_0 \oplus E_{odd}\otimes F_1.    \]
Let us identify $F_1 = \Omega^1_{L^2}(\mathbb R)$ with $F_0 = \Omega^0_{L^2}(\mathbb R)$ by  $ h\textup{dt} \mapsto h$.  With this identification,  $d$ is skew-adjoint, that is,  $d^\ast = -d$. Moreover, we have the following: 
\begin{enumerate}[(i)]
	\item \begin{align*}
	(E\otimes F)_{odd} & = (E_{even}\otimes F_1) \oplus (E_{odd}\otimes F_0) \\
	& \cong (E_{even}\oplus E_{odd})\otimes F_0  = E\otimes F_0;
	\end{align*}
	\item  \begin{align*}
	(E\otimes F)_{even} & = (E_{even}\otimes F_0) \oplus (E_{odd}\otimes F_1) \\
	& \cong (E_{even}\oplus E_{odd})\otimes F_0 = E\otimes F_0;
	\end{align*}
	\item  
	\begin{align*}
	\partial + \partial^\ast \pm S\otimes R  = B\otimes 1 - 1\otimes i D \pm   S\otimes 1 \colon 
	E\otimes F_0 \to E\otimes F_0, 
	\end{align*} 
	where  $B = b + b^\ast$ and $D = i\cdot  d$ with $i = \sqrt{-1}$.
\end{enumerate} 

\begin{claim}\label{cl:inv}
	$\rho(\theta\times \mathbb R)$ is represented by a path of invertibles: 
	\begin{equation}\label{eq:invert}
	V_t = 	\frac{B\otimes 1 - 1\otimes iD_t + S\otimes 1}{B\otimes 1 - 1\otimes iD_t - S\otimes 1} \colon 
	E\otimes F_0 \to E\otimes F_0,
	\end{equation} 
	where $D_t = (1+t)^{-1}D $ is the operator $D$ rescaled by $(1+t)^{-1}$, for $0\leq t < \infty$.
\end{claim}

\begin{proof}[Proof of Claim $\ref{cl:inv}$]
	We point out that, when defining $V_t$,  we should in fact use refinements $B_j$ of $B$ and $S_j$ of $S$ respectively such as in Appendix $\ref{app:khom}$. For notational simplicity, we will leave out the details, and continue writing $B$ and $S$ instead.  Note that there exists $\delta > 0 $ such that  $(B_j\pm S_j)^2 > \delta$ for all $j$. 	
	Furthermore, we have 
	\begin{equation}\label{eq:terms}
	\begin{aligned}
	& \frac{B\otimes 1 - 1\otimes iD_t + S\otimes 1} {B\otimes 1 - 1\otimes iD_t - S\otimes 1}	  \\
	= & \frac{\big((B+S)\otimes 1 - 1\otimes iD_t\big)  \big( (B-S)\otimes 1 + 1\otimes iD_t\big)}{ (B-S)^2\otimes 1 + 1\otimes D_t^2} \\
	= & 1+  \frac{\big( 2S(B-S)\otimes 1 +2S\otimes iD_t\big)}{(B-S)^2\otimes 1 + 1\otimes D_t^2}. 
	\end{aligned}
	\end{equation}
	Clearly, we have 
	\[ 
	(B-S)^2\otimes 1 + 1\otimes D_t^2 =  ((B-S)^2 -\delta)\otimes 1 +   1\otimes (\delta + D_t^2),\]
	where both $(B-S)^2 -\delta$ and $(\delta +  D_t^2)$ are invertible. 
	It follows that 
	\begin{align*}
	& \left( (B-S)^2\otimes 1 + 1\otimes D_t^2\right)^{-1} \\
	= & \big( 1 \otimes (\delta +  D_t^2)^{-1}\big)  \big[ \big( (B-S)^2 -\delta\big)\otimes (\delta + D_t^2)^{-1} +  1 \otimes 1\big]^{-1}. 
	\end{align*} 
	Now one can use an argument similar to the discussion in \cite[Page 841-843]{Xie2014823} to show that all terms in line $\eqref{eq:terms}$ can be approximated  arbitrarily well operator-norm-wise by elements with arbitrary small propagations. Alternatively, we can argue as follows. 
	Recall the following functional calculus by using Fourier transform and wave operator: 
	\[ f(D_t) = \frac{1}{2\pi}\int_{-\infty}^{\infty} e^{i\xi D_t} \hat f(\xi) d\xi, \]
	where $\hat f$ is the Fourier transform of $f$. Now by the propagation estimate of the wave operator $e^{i\xi D_t}$,  there exists $G_t$ such that  
	\[ \|(\delta + D_t^2)^{-1} -G_t\| \] is sufficiently small for all $t$ and the propagation of $G_t$ goes to $0$, as $t$ goes to infinity.  A similar statement holds for $D_t(\delta + D_t^2)^{-1}$. 
	
	Now approximate $\big[ \big( (B-S)^2 -\delta\big)\otimes (\delta + D_t^2)^{-1} +  1 \otimes 1 \big]^{-1} $ by 
	\[ \big[ \big( (B-S)^2 -\delta\big)\otimes G_t +  1 \otimes 1\big]^{-1}. \]
	Note that 	there exist $\varepsilon_0>0$ and $C>0$ such that 
	\[ \varepsilon_0 < \|\big( (B-S)^2 -\delta\big)\otimes G_t +  1 \otimes 1\| < C\]
	uniformly for all $t$. Now use a polynomial to approximate the function $h(x) = x^{-1}$ on the interval $[\varepsilon_0, C]$. We see that   there exists $K_t$ that approximates 
	\[ \big[ \big( (B-S)^2 -\delta\big)\otimes (\delta + D_t^2)^{-1} +  1 \otimes 1 \big]^{-1} \] sufficiently well, and the propagation of $K_t$ goes to $0$, as $t$ goes to infinity. 
	
	To summarize, for any $\varepsilon >0$, there exists a path of invertible elements $( V^\varepsilon_t)_{0\leq t < \infty}$  such that
	\begin{enumerate}[(1)]
		\item $V_0^\varepsilon = 1$,
		\item  $\|V_t - V^\varepsilon_t\| <\epsilon$ for all $t\in [0, \infty)$, 
		\item and  the propagation of $ V^\varepsilon_t$ goes to $0$, as $t$ goes to infinity.
	\end{enumerate}  
	It follows that the path $(V_t)_{0 \leq t< \infty}$ is a representative of the $K$-theory class \[ \rho(\theta\times \mathbb R) \in K_{n+1}(C_{L, 0}^\ast(\widetilde X\times \mathbb R)^\Gamma). \] 	This finishes the proof. 
\end{proof}	

Now we return to the proof of the theorem. 	Recall that  $B\pm S$ is a self-adjoint invertible operator. Therefore,  $B\pm S$ is homotopic to $P_\pm  - Q_\pm$ through a path of invertible elements, where $P_\pm$ is the positive projection of $B\pm S$ and $Q_\pm$ is the negative projection of $B\pm S$. Note that $P_\pm + Q_\pm = 1$. 
We see that the path
\[ B\otimes 1 - 1\otimes iD_t \pm S\otimes 1 =  (B\pm S)\otimes 1 - 1\otimes iD_t\] is homotopic to the path 
\begin{align*}
& (P_\pm -Q_\pm)\otimes 1 - (P_\pm +Q_\pm)\otimes iD_t \\
= & P_\pm\otimes (1-iD_t) + Q_\pm\otimes (-iD_t-1).
\end{align*} 
To be precise,  again we need to approximate $P_\pm$ and $Q_\pm$ by elements with appropriate propagations, and use these approximations instead of $P_\pm$ and $Q_\pm$. This is straightforward. In particular, the calculation below can be easily modified to work for these approximations of $P_\pm$ and $Q_\pm$ as well. For notational simplicity, we will leave out the details, and continue using $P_\pm$ and $Q_\pm$.

A routine calculation shows that 
\begin{align*}
& \big(P_\pm\otimes (-iD_t+1) + Q_\pm\otimes (-iD_t-1)\big)^{-1} \\
=&  P_\pm\otimes (-iD_t+1)^{-1} + Q_\pm\otimes (-iD_t-1)^{-1}.
\end{align*} 
It follows that, at the $K$-theory level, the path $(V_t)_{0\leq t <\infty}$  is equivalent to  
\begin{align*}
& \big(P_+\otimes (-iD_t+1) + Q_+\otimes (-iD_t-1)\big)  \\
& \hspace{2cm}\cdot\big(P_-\otimes (-iD_t+1)^{-1} + Q_-\otimes (-iD_t-1)^{-1}\big)  \\
= 	&  P_+P_-\otimes 1 + P_+Q_-\otimes (iD_t-1)(iD_t+1)^{-1}  \\
& \hspace{2cm}+ Q_+P_-\otimes (iD_t+1)(iD_t-1)^{-1}  + Q_+Q_-\otimes 1\\
=  &  \big( P_+\otimes (iD_t-1)(iD_t+1)^{-1} + (1-P_+)\otimes 1\big)  \\
& \hspace{2cm}\cdot \big( P_-\otimes (iD_t+1)(iD_t-1)^{-1} + (1-P_-)\otimes 1\big) \\
= & ([P_+]-[P_-])\otimes\big( (D_t+i)(D_t-i)^{-1}\big) 
\end{align*}
where the last term is precisely $\rho(\theta)\otimes \ind_L(\mathbb R)$. 
To summarize, when $n$ is even,  we have proved that 
\[  \alpha_\ast\big( \rho(\theta) \otimes \ind_L(\mathbb R) \big) =  \rho(\theta\times \mathbb R).\]

\textbf{Odd case.} Now we consider the case where $n$ is odd. Note that we have the following  commutative diagram:
\begin{equation}\label{diag:prod}
\begin{split}
\scalebox{1}{\xymatrixcolsep{1.7pc}\xymatrix{ K_{n}(C^\ast_{L, 0}(\widetilde X)^\Gamma) \otimes K_1(C_{L}^\ast(\mathbb R)) 
		\otimes K_1(C_{L}^\ast(\mathbb R)) \ar[r]^-{\alpha_\ast\otimes 1}_-{\cong} \ar[d]_-{1\otimes \beta_\ast}^-{\cong} & K_{n+1}(C^\ast_{ L, 0}(\widetilde X\times \mathbb R)^\Gamma) \otimes K_1(C_{L}^\ast(\mathbb R)) \ar[d]_{\cong}^{\gamma_\ast}\\ 
		K_{n}(C^\ast_{L, 0}(\widetilde X)^\Gamma) \otimes K_0(C_{L}^\ast(\mathbb R^2)) \ar[r]^-{\tau_\ast}_-\cong   & K_{n}(C^\ast_{ L, 0}(\widetilde X\times \mathbb R^2)^\Gamma).
} }
\end{split}
\end{equation} 
where various isomorphisms are induced by  the following natural homomorphisms:
\[ \alpha \colon C^\ast_{L, 0}(\widetilde X)^\Gamma \otimes C_{L}^\ast(\mathbb R) \to C^\ast_{ L, 0}(\widetilde X\times \mathbb R)^\Gamma,\]
\[  \beta \colon C_{L}^\ast(\mathbb R)
\otimes C_{L}^\ast(\mathbb R) \to C_{L}^\ast(\mathbb R^2),  \]
\[ \gamma \colon C^\ast_{L, 0}(\widetilde X\times \mathbb R)^\Gamma \otimes C_{L}^\ast(\mathbb R) \to C^\ast_{ L, 0}(\widetilde X\times \mathbb R^2)^\Gamma \]
and 
\[ \tau \colon C_{L, 0}^\ast(\widetilde X)^\Gamma
\otimes C_{L}^\ast(\mathbb R^2) \to C_{L, 0}^\ast(\widetilde X\times \mathbb R^2)^\Gamma.\]

In Proposition $\ref{prop:odd}$ below, we will show that  \[  \tau_\ast\big( \rho(\theta)\otimes \ind_L(\mathbb R^2) \big) =  \rho(\theta\times \mathbb R^2), \]
where $\ind_L(\mathbb R^2)$ is the $K$-homology class of the signature operator on $\mathbb R^2$. Assuming this for the moment, by the commutativity of diagram $\eqref{diag:prod}$, it follows that 
\begin{align*}
\gamma_\ast \left[\rho(\theta\times \mathbb R) \otimes \ind_L(\mathbb R)  \right] 
= &  \rho(\theta\times \mathbb R^2)  \\
= &   2\cdot  \tau_\ast \left[\rho(\theta)\otimes \beta_\ast \big(\ind_{L}(\mathbb R)\otimes \ind_{L}(\mathbb R) \big)\right] \\
= & 2\cdot \gamma_\ast \left[ \alpha_\ast\big(\rho(\theta)\otimes \ind_L(\mathbb R)\big) \otimes \ind_L(\mathbb R)  \right], 
\end{align*}
where the first equality is a consequence of the even case. Here we have used the fact that 
\[  \ind_L(\mathbb R^2) = 2\cdot  \beta_\ast\big(\ind_L(\mathbb R)\otimes \ind_L (\mathbb R)\big). \]  Therefore, we have 
\[ \rho(\theta\times \mathbb R) \otimes \ind_L(\mathbb R)  = 2\cdot\alpha_\ast\big(\rho(\theta) \otimes \ind_L(\mathbb R)\big) \otimes \ind_L(\mathbb R),    \]
which implies that   $\rho(\theta\times \mathbb R) =2\cdot\alpha_\ast\big(\rho(\theta) \otimes \ind_L(\mathbb R)\big).$ This finishes the proof.  
\end{proof}

\begin{proposition} \label{prop:odd} We have $ \tau_\ast\big( \rho(\theta)\otimes \ind_L(\mathbb R^2) \big) =  \rho(\theta\times \mathbb R^2), $
where $\ind_L(\mathbb R^2)$ is the $K$-homology class of the signature operator on $\mathbb R^2$.
\end{proposition}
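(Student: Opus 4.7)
\emph{Proof proposal.} The plan is to mirror the strategy used in the even case of Theorem $\ref{thm:prod}$ (Claim $\ref{cl:inv}$ and the manipulations following it), but replacing the one-dimensional Hilbert--Poincar\'{e} complex $(F, d, R)$ associated to $\mathbb{R}$ by the two-dimensional complex $(F', d', R')$ coming from the $L^2$-de Rham complex of $\mathbb{R}^2$, while keeping careful track of the odd parity of $\theta$. Let $(E, b, T)$ be the analytically controlled Hilbert--Poincar\'{e} complex associated to $M \cup_f (-N)$; it has odd dimension $n$. Form the tensor product complex $(E \otimes F', \partial, T\,\widehat\otimes\, R')$ as in Appendix $\ref{app:tensor}$. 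This is an analytically controlled Hilbert--Poincar\'{e} complex of odd dimension $n+2$, and by construction it is (a representative of) the Hilbert--Poincar\'{e} complex associated to $\theta \times \mathbb{R}^2$ in the sense of Section $\ref{sec:highrho}$. Its self-adjoint duality operator is $S_{T \widehat{\otimes} R'} = S_T \otimes R'$.

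The next step is to write down an explicit path of invertibles representing $\rho(\theta \times \mathbb{R}^2)$. Since $n+2$ is odd, this is of the form $(\mathcal{B}+\mathcal{S}_t)(\mathcal{B}-\mathcal{S}_t)^{-1}$ acting on $(E \otimes F')_{ev}$, with $\mathcal{B} = \partial + \partial^{\ast}$ and $\mathcal{S}_t$ coming from a suitable rescaling (by $(1+t)^{-1}$) of the component of $\mathcal{S}$ along the $\mathbb{R}^2$-direction. Using the Fourier-analytic functional calculus for the Dirac-type operator on $\mathbb{R}^2$, together with the propagation estimate for the associated wave operator (exactly as in the proof of Claim $\ref{cl:inv}$), this operator is homotopic through invertibles to one in which the $E$-variables and the $\mathbb{R}^2$-variables are separated, with propagations going to zero in both variables as $t \to \infty$. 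This produces a well-defined class in $K_{1}(C^\ast_{L,0}(\widetilde X \times \mathbb{R}^2)^\Gamma)$.

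The third step is the algebraic separation. Replace $B \pm S$ on the $E$-side by its spectral form $P_{\pm} - Q_{\pm}$ as in the even case (after the usual harmless approximation by operators of finite propagation). For the $\mathbb{R}^2$-side, use the Hodge decomposition of the de Rham complex on $\mathbb{R}^2$ to identify the contribution along the $F'$-factor with a class representing $\ind_L(\mathbb{R}^2) \in K_0(C^\ast_L(\mathbb{R}^2))$. Once the two factors are disentangled, apply the standard external product formula
\begin{equation*}
K_{1}(A) \otimes K_{0}(B) \longrightarrow K_{1}(A \otimes B), \qquad [u] \otimes [p] \longmapsto [u \otimes p + 1 \otimes (1-p)],
\end{equation*}
to identify the resulting class as $\tau_{\ast}(\rho(\theta) \otimes \ind_L(\mathbb{R}^2))$.

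The main obstacle will be Step~3, the explicit algebraic separation. In the even case of Theorem $\ref{thm:prod}$, the factorization
$(B\pm S)\otimes 1 - 1\otimes i D_t \simeq P_{\pm}\otimes(-iD_t+1) + Q_{\pm}\otimes(-iD_t-1)$
inverted coordinate-wise in a straightforward way because $D$ was a scalar first-order operator on $\mathbb{R}$. Here the analogue involves the full signature operator on $\mathbb{R}^2$, which carries a non-trivial Clifford action, so the analogous block decomposition mixes the spectral projections $P_{\pm}, Q_{\pm}$ with the $2\times 2$ Clifford structure on $F'$. The bookkeeping is essentially that of the Bott element, and the parity mismatch (odd $E$, even $F'$, with product again odd) has to be handled so that no spurious factor of $2$ appears; this is precisely what distinguishes Proposition $\ref{prop:odd}$ from the odd case of Theorem $\ref{thm:prod}$ itself. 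Once this explicit identification is carried out, continuity of all paths in the propagation-controlled norm will follow as in Appendix $\ref{app:khom}$ and Claim $\ref{cl:inv}$, completing the proof.
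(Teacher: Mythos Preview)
Your overall plan—form the tensor product Hilbert--Poincar\'e complex, represent $\rho(\theta\times\mathbb R^2)$ by an explicit path of invertibles, then algebraically separate the $E$- and $\mathbb R^2$-factors to land on the external product formula—is indeed the paper's strategy, and your acknowledgment that Step~3 is the main obstacle is accurate. However, two concrete errors and one genuine gap would prevent this outline from going through as written.

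First, the formula $S_{T\widehat\otimes R'}=S_T\otimes R'$ is incorrect for a two-dimensional second factor. A direct computation (from Definition~\ref{def:sop} and Appendix~\ref{app:tensor}) gives $S_{T\widehat\otimes R}=S_T\otimes S_R$ on $E\otimes F_{even}$ and $S_{T\widehat\otimes R}=-S_T\otimes S_R$ on $E\otimes F_{odd}$; the parity-dependent sign is essential and drives the subsequent block structure. (You have carried over the formula from the even case of Theorem~\ref{thm:prod}, where the second factor was one-dimensional and the identity $S_{T\widehat\otimes R}=S_T\otimes R$ happened to hold.) Second, the rescaling producing the localization-algebra path should be applied to the Dirac-type operator $D=d+d^\ast$ on $\mathbb R^2$ (i.e.\ $D_t=(1+t)^{-1}D$), not to the duality operator $\mathcal S$; rescaling $\mathcal S$ would not yield an element of $C^\ast_{L,0}$.

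The real gap is Step~3. Your proposed tools---spectral projections $P_\pm,Q_\pm$ of $B\pm S$ together with a Hodge decomposition on $\mathbb R^2$---do not directly give the needed separation. The paper instead works with the eigenspace decomposition $F=F^+\oplus F^-$ of the symmetry $S_R$, and then makes non-obvious identifications $E_{odd}\otimes F\cong (E\otimes F)_{odd}$ and $(E\otimes F)_{even}\cong E_{even}\otimes F$ by multiplying certain summands by $(B\pm S_T)\otimes 1$. In these coordinates, $\partial+\partial^\ast\pm S_{T\widehat\otimes R}$ becomes homotopic (via normalizing functions $f(x)=(1+x^2)^{-1/2}$, $g(x)=x(1+x^2)^{-1/2}$) to an expression built from the symmetry $\mathscr S_t=(S_2)_tS_1(S_2)_t$ with $S_1=S_R$ and $(S_2)_t=g(D_t)+S_Rf(D_t)$. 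Setting $P_t=(\mathscr S_t+1)/2$, the representative factors as $[(B+S_T)(B-S_T)^{-1}]\times([P_t]-[\text{fixed projection}])$, which is exactly $\tau_\ast(\rho(\theta)\otimes\ind_L(\mathbb R^2))$ in the $K_1\otimes K_0\to K_1$ product form you wrote down. The point is that the $\mathbb R^2$-factor contributes a \emph{path of projections} $P_t$ (representing $\ind_L(\mathbb R^2)\in K_0$), not something coming from Hodge theory; your outline does not yet contain the mechanism that produces this $P_t$.
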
		
\begin{proof}
The proof is similar to the even case above, but the details are more involved. Here again the precise details of the proof would involve a discussion about approximations by finite propagation elements. Since this is very similar to the even case above, we will leave out the details.   

Let $(F, d, R)$ be the Hilbert-Poincar\'e complex associated to $\mathbb R^2$:
\[ \Omega^0_{L^2}(\mathbb R^2) \xleftarrow{d} \Omega^1_{L^2}(\mathbb R^2) \xleftarrow{d} \Omega^2_{L^2}(\mathbb R^2)\]
where $d$ is the dual of the de Rham differential and $R$ is the Hodge star operator. Let us write 
\[ F_{even} = \Omega^0_{L^2}(\mathbb R^2) \oplus \Omega^2_{L^2}(\mathbb R^2) \textup{ and } F_{odd} = \Omega^1_{L^2}(\mathbb R^2).  \] 

Let $(E, b, T)$ be the analytically controlled Poincar\'e complex associated to  the space $M\cup_{f}(-N)$ as in the definition of $\rho(\theta)$. See Section $\ref{sec:highrho}$ and Appendix $\ref{app:khom}$. Then the tensor product 
\[ (E\otimes F, \partial, T\hat\otimes R) \]
gives rise to a specific representative of the  Hilbert-Pioncar\'e complex associated to  $\theta\times \mathbb R^2$. Let  $S_{T\hat{\otimes}R}$, $S_T$ and $S_R$ be the self-adjoint operators (as in Definition $\ref{def:sop}$) associated to $T\hat{\otimes}R$, $T$ and $R$ respectively. Now a straightforward calculation shows that 
\begin{enumerate}[(1)]
	\item $S_{T\hat{\otimes}R} = S_T\otimes S_R$ on $E\otimes F_{even}$, and $S_{T\hat{\otimes}R} =  - S_T\otimes S_R$ on $E\otimes F_{odd}$;
	\item $\partial= b\otimes 1 + 1\otimes d $ on $E_{even}\otimes F$ and $\partial = b\otimes 1 - 1\otimes d $ on $E_{odd}\otimes F$. 
\end{enumerate}
It follows that 
\[  \partial +\partial^\ast\pm S_{T\hat{\otimes}R} =  B \otimes 1  - 1\otimes D \pm  S_T\otimes S_R, \]
on $E_{odd}\otimes F_{even}$
and 
\[  \partial +\partial^\ast  \pm S_{T\hat{\otimes}R} =  B \otimes 1  + 1\otimes D\mp  S_T\otimes S_R, \]
on $E_{even}\otimes F_{odd}$, 
where $B = b + b^\ast$ and $D= d + d^\ast$. Let $F^{\pm}$ be the eigenspace of $S_R$ belonging to the  eigenvalue $\pm 1$.  We make the following identifications: 
\[  \scalebox{1}{$E_{odd}\otimes F =  \substack{ \displaystyle E_{odd}\otimes F_{odd}^+ \\ \oplus \\ \displaystyle E_{odd}\otimes F_{odd}^- \\ \oplus \\ \displaystyle E_{odd}\otimes F_{even}^+ \\ \oplus \\ \displaystyle E_{odd}\otimes F_{even}^-  }  \xrightarrow[ \quad \substack{ \\ \oplus \\ 1\otimes 1 \\ \oplus \\ 1\otimes 1 } \quad ]{ \quad \substack{ - (B+S_T)\otimes 1 \\ \oplus \\ (B-S_T)\otimes 1 \\ \\ } \quad } \substack{ \displaystyle E_{even}\otimes F_{odd}^+ \\ \oplus \\ \displaystyle E_{even}\otimes F_{odd}^- \\ \oplus \\ \displaystyle E_{odd}\otimes F_{even}^+ \\ \oplus \\ \displaystyle E_{odd}\otimes F_{even}^-  }= (E\otimes F)_{odd},$ } \]
and 
\[  \scalebox{1}{$(E\otimes F)_{even} =  \substack{ \displaystyle E_{even}\otimes F_{even}^+ \\ \oplus \\ \displaystyle E_{even}\otimes F_{even}^- \\ \oplus \\ \displaystyle E_{odd}\otimes F_{odd}^+ \\ \oplus \\ \displaystyle E_{odd}\otimes F_{odd}^-  }  \xrightarrow[ \quad \substack{ \\ \oplus \\ - (B-S_T)\otimes 1 \\ \oplus \\ -(B+S_T)\otimes 1 } \quad  ]{ \quad \substack{   1\otimes 1 \\ \oplus \\ -1\otimes 1 } \quad } \substack{ \displaystyle E_{even}\otimes F_{even}^+ \\ \oplus \\ \displaystyle E_{even}\otimes F_{even}^- \\ \oplus \\ \displaystyle E_{even}\otimes F_{odd}^+ \\ \oplus \\ \displaystyle E_{even}\otimes F_{odd}^-  }  = E_{even}\otimes F.$} \]
With these identifications, we have 
\begin{align*}
& \partial +\partial^\ast + S_{T\hat{\otimes}R} =\scalebox{1}{$\begin{cases}
	\vspace{2mm}
	(B + S_T)\otimes 1 + (B+S_T)\otimes D &  \textup{ on }  E_{odd}\otimes F^+_{even}, \\
	\vspace{2mm}
	(B - S_T)^2(B+S_T)\otimes 1  + (B+S_T)\otimes D  &  \textup{ on }  E_{odd}\otimes F^+_{odd}, \\
	\vspace{2mm}
	- (B - S_T)\otimes 1 + (B-S_T)\otimes D  &  \textup{ on }  E_{odd}\otimes F^-_{even}, \\
	- (B + S_T)^2(B-S_T)\otimes 1 +(B-S_T)\otimes D  &  \textup{ on }  E_{odd}\otimes F^-_{odd}.
	\end{cases} $}
\end{align*} 
Note that 
\[ (B\pm S_T)^2\colon E_{even} \xrightarrow{B\pm S_T} E_{odd} \xrightarrow{B\pm S_T} E_{even}\] are positive invertible operators. It follows that the invertible element \[ \partial + \partial^\ast + S_{T\hat{\otimes}R}\] is homotopic to 
\begin{equation}\label{eq:simple}
\begin{aligned}
\begin{psmallmatrix}
B+S_T & \\  & B-S_T
\end{psmallmatrix}S_R + \begin{psmallmatrix}  B-S_T & \\ &  B + S_T
\end{psmallmatrix}D 
\colon E_{odd}\otimes F \to E_{even}\otimes F
\end{aligned}
\end{equation} 
Here the matrix form is written with respect to  the decomposition $F = F^+ \oplus F^-$. Note that $D$ is off-diagonal in this case.  Now the invertible element from line $\eqref{eq:simple}$ in turn is homotopic to the invertible element  
\[ V =  \begin{psmallmatrix}
B+S_T & \\  & B-S_T
\end{psmallmatrix}S_Rf(D) + \begin{psmallmatrix}  B-S_T & \\  &  B + S_T
\end{psmallmatrix}g(D)\]
where\footnote{In fact, any normalizing function $g$ and   $f(x) = \sqrt{1-g^2(x)}$ will work.} $g(x)  = x(1+x^2)^{-1/2}$ and 
\[ f(x) = \sqrt{1-g^2(x)} = (1+x^2)^{-1/2}. \]  
Similarly,  $\partial + \partial^\ast - S_{T\hat{\otimes}R}$ is homotopic to
\[  U = \begin{psmallmatrix}
B-S_T & \\  & B+S_T
\end{psmallmatrix}S_Rf(D) + \begin{psmallmatrix} B-S_T & \\ &  B + S_T
\end{psmallmatrix}g(D)\]
Note that we have 
\[  U^{-1} = \big(S_Rf(D) + g(D)\big) \begin{psmallmatrix}
(B-S_T)^{-1} & \\  & (B+S_T)^{-1}
\end{psmallmatrix}  \]
since $\big(S_Rf(D) + g(D)\big)^2 = 1$. 

Similarly, if we replace $D$ by $D_t = (1+t)^{-1}D$, we have 
\[ V_t =  \begin{psmallmatrix}
B+S_T & \\  & B-S_T
\end{psmallmatrix}S_Rf(D_t) + \begin{psmallmatrix}  B-S_T & \\  &  B + S_T
\end{psmallmatrix}g(D_t)\]
and 
\[  U_t = \begin{psmallmatrix}
B-S_T & \\  & B+S_T
\end{psmallmatrix}S_Rf(D_t) + \begin{psmallmatrix} B-S_T & \\ &  B + S_T
\end{psmallmatrix}g(D_t).\]
It follows that the path of invertibles $(V_tU_t^{-1})_{0\leq t <\infty}$ is a representative of the $K$-theory class  $\rho(\theta\times \mathbb R^2)$. 
Now note that 
\begin{align*}
& V_tU_t^{-1} =    \left[B\otimes 1  + S_T\otimes \big(S_R(f^2(D_t) - g^2(D_t)) + 2f(D_t)g(D_t)\big)\right]  \\ 
& \hspace{4cm}\cdot \begin{psmallmatrix}
(B-S_T)^{-1} & \\  & (B+S_T)^{-1} 
\end{psmallmatrix}.
\end{align*}
Following the notation of \cite[Section 5.2.1]{MR2220523}, let us denote 
\[ S_1 = S_R \textup{ and } (S_2)_t =  g(D_t) +S_Rf(D_t). \]  We immediately see that 
\[ S_R(f^2(D_t) - g^2(D_t)) + 2f(D_t)g(D_t) = (S_2)_t S_1 (S_2)_t. \] 
Let us denote the latter by $\mathscr S_t \coloneqq  (S_2)_tS_1(S_2)_t$. Note that $\mathscr S_t$ is a symmetry for each $t$. We define a projection $P_t \coloneqq  (\mathscr S_t + 1)/2$. To summarize,  $\rho(\theta\times \mathbb R^2)$ is represented by  the path of invertibles
\begin{align*}
& (B\otimes 1 + S_T \otimes \mathscr S_t) \big[\begin{psmallmatrix}
( B-S_T)  & 0 \\ 0 & 0  
\end{psmallmatrix} +  \begin{psmallmatrix}
0  & 0 \\ 0 & ( B+S_T)  
\end{psmallmatrix}\big]^{-1} \\
= &  \big[ (B +S_T)\otimes P_t + (B -S_T) \otimes (1-P_t)\big] \\
& \hspace{3cm}\cdot \big[\begin{psmallmatrix}
( B-S_T)^{-1}  & 0 \\ 0 & 0  
\end{psmallmatrix} +  \begin{psmallmatrix}
0  & 0 \\ 0 & ( B+S_T)^{-1}
\end{psmallmatrix}\big]\\
= &  \begin{psmallmatrix}
\frac{B+S_T}{B-S_T}  & 0 \\ 0 & 1  
\end{psmallmatrix}\otimes P_t + \begin{psmallmatrix}
1  & 0 \\ 0 & 	\frac{B-S_T}{B+S_T}  
\end{psmallmatrix}\otimes (1-P_t)\\
= &  \Big[ \begin{psmallmatrix}
\frac{B+S_T}{B-S_T}  & 0 \\ 0 & \frac{B+S_T}{ B-S_T}
\end{psmallmatrix}\otimes P_t + \begin{psmallmatrix}
1 & 0 \\ 0 & 1
\end{psmallmatrix} \otimes (1-P_t)\Big] \cdot \begin{psmallmatrix}
1  & 0 \\ 0 & 	\frac{B-S_T}{B+S_T}  
\end{psmallmatrix}\\
= &  [ (B+S_T)(B-S_T)^{-1}] \times \big( [P_t] -  [\begin{psmallmatrix}
0  & 0 \\ 0 & 1  
\end{psmallmatrix}]\big)
\end{align*}
which is precisely\footnote{There appears to be a switch of sign convention in \cite[Section 5.2.1]{MR2220523}.  Our sign convention is consistent with the usual sign convention in the literature.   } $\tau_\ast( \rho(\theta) \otimes \ind_L(\mathbb R^2))$. This finishes the proof.   
\end{proof}

\nocite{AC94, PBAC88, MR1988288, GY98, DFW2016, MR2220524, MR1388305, MR1145337, MR866491, MR1204787, MR2947546, MR1076524, Xie2014823, Chen:2019aa, Xie:2017aa, Xie:2018aa}

\bibliographystyle{plain}


\end{document}